\newtheorem{thm}{Theorem}[section]
\newtheorem{prop}[thm]{Proposition}
\newtheorem{lem}[thm]{Lemma}
\newtheorem{exam}[thm]{Example}
\newtheorem{rmk}[thm]{Remark}
\newtheorem{dfn}[thm]{Definition}
\newtheorem{cons}[thm]{Construction}
\newtheorem{cond}[thm]{Conditions}
\numberwithin{equation}{section}
\newcommand{\frakg}{{\mathfrak g}}
\newcommand{\frakm}{{\mathfrak m}}
\newcommand{\frakA}{{\mathfrak A}}
\newcommand{\frakF}{{\mathfrak F}}
\newcommand{\frakM}{{\mathfrak M}}
\newcommand{\frakS}{{\mathfrak S}}
\newcommand{\frakX}{{\mathfrak X}}
\newcommand{\bA}{{\mathbb A}}
\newcommand{\bC}{{\mathbb C}}
\newcommand{\bE}{{\mathbb E}}
\newcommand{\bF}{{\mathbb F}}
\newcommand{\bL}{{\mathbb L}}
\newcommand{\bM}{{\mathbb M}}
\newcommand{\bN}{{\mathbb N}}
\newcommand{\bQ}{{\mathbb Q}}
\newcommand{\bZ}{{\mathbb Z}}
\newcommand{\calA}{{\mathcal A}}
\newcommand{\calB}{{\mathcal B}}
\newcommand{\calC}{{\mathcal C}}
\newcommand{\calE}{{\mathcal E}}
\newcommand{\calF}{{\mathcal F}}
\newcommand{\calG}{{\mathcal G}}
\newcommand{\calI}{{\mathcal I}}
\newcommand{\calL}{{\mathcal L}}
\newcommand{\calO}{{\mathcal O}}
\newcommand{\calR}{{\mathcal R}}
\newcommand{\calT}{{\mathcal T}}
\newcommand{\calU}{{\mathcal U}}
\newcommand{\calX}{{\mathcal X}}
\newcommand{\calY}{{\mathcal Y}}
\newcommand{\Ani}{{\mathrm{Ani}}}
\newcommand{\Bun}{{\mathrm{Bun}}}           
\newcommand{\colim}{{\mathrm{colim}}}       
\newcommand{\Gal}{{\mathrm{Gal}}}           
\newcommand{\Hom}{{\mathrm{Hom}}}           
\newcommand{\Ima}{{\mathrm{Im}}}            
\newcommand{\Isom}{{\mathrm{Isom}}}         
\newcommand{\Ker}{{\mathrm{Ker}}}           
\newcommand{\Lan}{{\mathrm{Lan}}} 
\newcommand{\Map}{\mathrm{Map}}
\newcommand{\Mod}{{\mathrm{Mod}}}           
\newcommand{\nil}{\mathrm{nil}}
\newcommand{\Rep}{{\mathrm{Rep}}}           
\newcommand{\Ring}{\mathrm{Ring}}
\newcommand{\Spf}{{\mathrm{Spf}}}           
\newcommand{\Spec}{{\mathrm{Spec}}}         
\newcommand{\Sym}{{\mathrm{Sym}}}           
\newcommand{\Tor}{{\mathrm{{Tor}}}}         
\newcommand{\Vect}{{\mathrm{Vect}}}         
\newcommand{\PreStk}{{\mathrm{PreStk}}}
\newcommand{\GL}{{\mathrm{GL}}}             
    \newcommand{\Aut}{\mathrm{Aut}}         
\newcommand{\cl}{{\mathrm{\rm cl}}}             
\newcommand{\cyc}{{\mathrm{cyc}}}           
\newcommand{\dR}{{\mathrm{dR}}}             
\newcommand{\gen}{{\mathrm{gen}}} 
\newcommand{\Herr}{{\mathrm{Herr}}}    
\newcommand{\Nilp}{\mathrm{Nilp}}
\newcommand{\perf}{\mathrm{perf}}           
\newcommand{\univ}{\mathrm{univ}}
\newcommand{\bfA}{{\textbf{A}}}
\newcommand{\bfC}{{\textbf{C}}}
\newcommand{\RHom}{{\mathrm{RHom}}}
\newcommand\rightthreearrow{\substack{\rightarrow\\[-1em] \rightarrow \\[-1em] \rightarrow}
}
\DeclareSymbolFontAlphabet{\mathbb}{AMSb} 
\DeclareSymbolFontAlphabet{\mathbbl}{bbold}
\newcommand{\Prism}{{\mathlarger{\mathbbl{\Delta}}}} 
\newcommand{\Fun}{\mathrm{Fun}}
\newcommand{\Ev}{\mathrm{Ev}}
\newcommand{\Twist}{\mathrm{Twist}}
\newcommand{\disc}{\mathrm{disc}}
\newcommand{\surj}{\mathrm{surj}}
\newcommand{\Res}{\mathrm{Res}}
\newcommand{\CAlg}{\mathrm{CAlg}}
\newcommand{\cn}{\mathrm{cn}}
\newcommand{\bfAni}{\textbf{Ani}}
\begin{document}
\title{Classicality of derived Emerton–Gee stack II: generalised reductive groups}
\author{Yu Min}
\date{}
\maketitle

\begin{abstract}
We use the Tannakian formalism to define the Emerton--Gee stack for general groups. For a flat algebraic group $G$ over $\bZ_p$, we are able to prove the associated Emerton--Gee stack is a formal algebraic stack locally of finite presentation over $\Spf(\bZ_p)$. We also define a derived stack of Laurent $F$-crystals with $G$-structure on the absolute prismatic site, whose underlying classical stack is proved to be equivalent to the Emerton--Gee stack.

In the case of connected reductive groups, we show that the derived stack of Laurent $F$-crystals with $G$-structure is classical in the sense that
when restricted to truncated animated rings, it is the \'etale sheafification of the left Kan extension of the Emerton--Gee stack along the inclusion from classical commutative rings to animated rings. Moreover, when $G$ is a generalised reductive group, the classicality result still holds for a modified version of the Emerton--Gee stack. In particular, this completes the picture that the derived stack of local Langlands parameters for the Langlands dual group of a reductive group is classical.
\end{abstract}

\tableofcontents

\section{Introduction}
Let $K$ be a finite extension of $\bQ_p$ with the ring of integers $\calO_K$. In \cite{Min23}, we constructed a derived stack of Laurent $F$-crystals of rank $d$ on the absolute prismatic site $(\calO_K)_{\Prism}$, whose underlying classical stack is equivalent to the Emerton--Gee stack for $\GL_d$. Moreover, we showed this derived stack is determined by the Emerton--Gee stack, in the sense that it is essentially the left Kan extension of the Emerton--Gee stack along the inclusion from classical commutative rings to animated rings. In other words, this left Kan extension, which is defined in an abstract way, admits a nice moduli interpretation in terms of Laurent $F$-crystals.

In this paper, we will deal with more general groups $G$. The motivation is two-fold. On the one hand, as in \cite{EG22}, the Emerton--Gee stack plays an important role in proving the existence of crystalline lifts of residual Galois representations valued in $\GL_d$. It also provides a natural geometric setting for the weight part of Serre conjecture and the Breuil--M\'ezard conjecture. So in order to study these questions for more general groups $G$, it is desirable to have a $G$-version of the Emerton--Gee stack. On the other hand, the Emerton--Gee stack is regarded as the natural geometric object on the spectral side of a conjectural categorical $p$-adic local Langlands correspondence in \cite{EGH22}. Its counterpart in the $l\neq p$ case, i.e. the (derived) stack of $L$-parameters, is shown to be classical for reductive groups (cf. \cite{FS21},\cite{DHKM20},\cite{Zhu20}). So it is natural to expect the existence of a derived Emerton--Gee stack for general groups and show it is indeed classical in the case of reductive groups. This will also enable us to study the parabolic induction of the (ind-)coherent sheaves on the spectral side.

\subsection{The Emerton--Gee stack for general groups}
We first explain how to construct a stack of \'etale $(\varphi,\Gamma)$-modules for a general group $G$. The natural method of dealing with general groups is using the Tannakian formalism. Let $G$ be a flat affine group scheme of finite type over $\Spec(\bZ_p)$ and $\Rep(G)$ be the category of algebraic representations on finite free $\bZ_p$-modules. We can give a direct definition as follows.

\begin{dfn}
    Let $\calX_G:\Nilp_{\bZ_p}\to {\rm Groupoids}$ be the functor sending each $R\in \Nilp_{\bZ_p}$ to the groupoid of exact $\otimes$-functors $\Fun^{\rm ex,\otimes}(\Rep(G),\Mod^{\varphi,\Gamma}(\bfA_R))$, where $\Nilp_{\bZ_p}$ is the category of $p$-nilpotent rings, ${\rm Groupoids}$ is the $2$-category of groupoids, and $\Mod^{\varphi,\Gamma}(\bfA_R)$ is the category of \'etale $(\varphi,\Gamma)$-modules with $R$-coefficient of all ranks. 
\end{dfn}

The main issue is to show this prestack is indeed a well-behaved stack. We can prove the following theorem.
\begin{thm}[Theorem \ref{main-1}]\label{main-1-intro}
    Let $G$ be a flat affine group scheme of finite type over $\Spec(\bZ_p)$. The prestack $\calX_G$ is a formal algebraic stack locally of finite presentation over $\Spf(\bZ_p)$.
\end{thm}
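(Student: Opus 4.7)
The plan is to reduce to the case $G=\GL_n$. Since $G$ is a flat affine group scheme of finite type over $\bZ_p$, there is a closed immersion $G\hookrightarrow \GL_n$ for some $n$, and the fppf quotient $X:=\GL_n/G$ exists as an algebraic space (in fact a quasi-affine scheme) of finite presentation over $\bZ_p$. For $G=\GL_n$ the result is already known: $\calX_{\GL_n}$ is the Emerton--Gee stack, shown in \cite{EG22} and reinterpreted in \cite{Min23} to be a formal algebraic stack locally of finite presentation over $\Spf(\bZ_p)$.

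First I would carry out a Tannakian reformulation. The groupoid of exact $\otimes$-functors $\Rep(G)\to \Mod^{\varphi,\Gamma}(\bfA_R)$ should be identified, by Tannakian duality in the $R$-linear abelian tensor category $\Mod^{\varphi,\Gamma}(\bfA_R)$, with the groupoid of $G$-torsors therein. Via the embedding $G\hookrightarrow \GL_n$, a $G$-torsor is equivalent to a pair $(M,\sigma)$, where $M$ is a $\GL_n$-torsor---i.e.\ a rank-$n$ étale $(\varphi,\Gamma)$-module, or equivalently a point of $\calX_{\GL_n}(R)$---and $\sigma$ is a reduction of the structure group from $\GL_n$ to $G$. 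Functoriality in $R$ then produces a morphism of prestacks
$$\pi:\calX_G\longrightarrow \calX_{\GL_n},\qquad (M,\sigma)\mapsto M.$$

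Second I would verify that $\pi$ is representable by algebraic spaces of finite presentation, which suffices. The fiber of $\pi$ over a point $M\in\calX_{\GL_n}(R)$ is the functor of $G$-reductions of $M$, which is a twisted form of $X$ associated to $M$; since $X$ is quasi-affine and of finite presentation over $\bZ_p$, this twisted form is representable by a quasi-affine algebraic space of finite presentation over $R$. Combining the representability and finite presentation of $\pi$ with the known structure of $\calX_{\GL_n}$ then gives that $\calX_G$ is itself a formal algebraic stack locally of finite presentation over $\Spf(\bZ_p)$.

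The main obstacle will be making the Tannakian identification in the first step genuinely functorial in $R$, so that it yields a morphism of stacks rather than only pointwise equivalences of groupoids. This amounts to (a) establishing that $\Mod^{\varphi,\Gamma}(\bfA_R)$ carries a symmetric monoidal structure compatible with base change $R\to R'$; (b) showing that $\calX_G$ satisfies fppf descent, so that it is a stack in the first place, which should follow from the corresponding descent for étale $(\varphi,\Gamma)$-modules used in \cite{Min23}; and (c) matching the abstract notion of a $G$-torsor in this tensor category with that of an exact $\otimes$-functor out of $\Rep(G)$. Step (c) is modelled on the analogous Tannakian reduction carried out in the $\ell\neq p$ setting by \cite{Zhu20,DHKM20,FS21}, and is where most of the technical work should lie, with extra care required for the $p$-adic formal nature of the base.
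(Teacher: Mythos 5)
There is a genuine gap, and it sits exactly where the paper says the difficulty lies. Your reduction to $\GL_n$ is the natural first idea, but the claim that ``the fiber of $\pi$ over a point $M\in\calX_{\GL_n}(R)$ is a twisted form of $X=\GL_n/G$ and hence representable by a quasi-affine algebraic space of finite presentation over $R$'' conflates two different bases. The twisted form $X_M$ is a (quasi-affine, say) scheme over $\Spec(\bfA_R)$, not over $\Spec(R)$. The fiber of $\calX_G\to\calX_{\GL_n}$ over the $R$-point $M$ is the functor on $R$-algebras $S\mapsto\{(\varphi,\Gamma)\text{-equivariant sections of }X_{M_{\bfA_S}}\to\Spec(\bfA_S)\}$, i.e.\ a restriction-of-scalars along $R\to\bfA_R$, which is an infinite-dimensional extension ($\bfA_R$ is essentially $R((T))$). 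There is no general representability theorem for such a functor: in the analogue for $\Bun_G$ over a curve this is Olsson's Hom-stack/Quot-functor result, and it crucially uses properness of the curve, which has no counterpart here. The finiteness has to be extracted from the $(\varphi,\Gamma)$-structure, and even then the answer is weaker than you assert: the paper only obtains representability in \emph{formal} schemes (the surjectivity locus is detected on reduced quotients, which forces the de Rham prestack / formal completion to appear), so ``quasi-affine algebraic space of finite presentation over $R$'' is not what one gets.

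Concretely, the paper replaces your $G\hookrightarrow\GL_n$ with Broshi's presentation $G\simeq\underline\Aut(V,L)$, so that a reduction of structure group becomes a surjection $t(\calE)\twoheadrightarrow\calF$ of \'etale $(\varphi,\Gamma)$-modules satisfying a local-triviality condition. The morphism $\calX_G\to\calX_{\GL_d}\times\calX_{\GL_{t(d)-1}}$ is then factored as a monomorphism $\calX_G\to\calX_G^{\circ}$ followed by a forgetful map $\calX_G^{\circ}\to\calX_{\GL_d}\times\calX_{\GL_{t(d)-1}}$; the second map is handled by proving representability of the surjective Hom functor $\underline\Hom^{\varphi,\Gamma}_{\surj}$ (Lemma \ref{representability-phi}, where the $\varphi$-structure bounds the Laurent coefficients and the de Rham prestack enters), and the first requires running Artin's criteria by hand, including the effectivity of formal objects (Proposition \ref{effectivity}) and an obstruction theory built from the $(\varphi,\Gamma)$-equivariant $H^1$ of the relative cotangent complex of $BG\to(BG)^{\circ}$ (Section \ref{obstruction}). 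None of this is subsumed by the twisted-form argument you propose; your second step is where essentially all of the work of the theorem actually lives.
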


Before we explain our strategy of studying $\calX_G$, we first recall the existing works for some general groups. When $G$ is the Langlands dual group over $\bZ_p$ of a tame group over $K$, the corresponding Emerton--Gee stack was defined in the same way (more precisely, it is the same as our Definition \ref{intro-dfn}) and studied in \cite{Lin23}. And for the group ${\rm GSp}_4$, this was studied in \cite{Lee23} using the concrete description of ${\rm GSp}_4$. The strategy in \cite{Lin23} is roughly the same as \cite{EG22}, i.e. first deal with an affine Grassmannian, which has reasonable geometric properties, then study the stack of \'etale $\varphi$-modules using the results in the previous step, and finally consider the additional $\Gamma$-action to study the \'etale $(\varphi,\Gamma)$-modules. But as observed in \cite{Lin23}, the Tannakian formalism is not always compatible with the important step in \cite{EG22} reducing the case of ramified $K$ to that of unramified $K$.

In order to deal with more general groups, we take a different perspective. In fact, we aim to extract as much information as possible from the $\GL_d$ case. To motivate this, let us recall a more classical context. Suppose $G$ is a linear algebraic group over $\bC$ and $X$ is a smooth projective curve over $\bC$. The classical way to prove $\Bun_G$, the stack of $G$-torsors on $X$, is an algebraic stack locally of finite type is as follows. One first fixes an embedding $G\hookrightarrow \GL_n$. For $\Bun_{\GL_n}$, one can construct explicit smooth atlas. Then one can try to prove the morphism $\Bun_G\to \Bun_{\GL_n}$ is schematic and locally of finite presentation. We want to emphasize that the projectivity of $X$ plays an important role in the second step. 

Now taking $K=\bQ_p$ for simplicity, one could regard $\bfA_{\bZ_p}=\bZ_p((T))^\wedge_p$ as a ``curve" and view $\bfA_R=R((T))=\bfA_{\bZ_p}``\otimes"_{\bZ_p}R$ as the base change of the curve along $\bZ_p\to R$ for a $p$-nilpotent ring $R$. There is no projectivity involved but the additional $(\varphi,\Gamma)$-action can provide some finiteness. So we could hope to prove the morphism $\calX_G\to \calX_{\GL_n}$ is schematic. But it seems still hard to show this.

\begin{rmk}
    From the above perspective, the Emerton--Gee stack shares a similarity with $\Bun_G$ associated to the Fargues--Fontaine curve. This similarity will be discussed elsewhere.
\end{rmk}

We can go a bit further. In \cite{Bro13}, Broshi proved the stack of $G$-torsors over a flat proper curve $X$ over any field $k$ is an algebraic stack locally of finite type using a new perspective on $G$-torsors. In fact, for a flat affine group scheme $G$ of finite type over $\bZ_p$, Broshi proved that one can find an algebraic representation $V$ of $G$, a tensorial construction $t(V)$ and a line bundle $L\subset V$ which splits, such that $G\simeq \underline\Aut(V,L)$\footnote{For those groups which have reductive generic fibers, this result is proved in \cite[Proposition 1.3.2]{kisin2010integral}.}, where $\underline\Aut(V,L)$ sends a $\bZ_p$-algebra $R$ to the set of automorphisms $f:V_R\to V_R$ such that $t(f):t(V)_R\to t(V)_R$ fixes $L_R$. Using this description, Broshi further proved that a $G$-torsor over a scheme $Y$ over $\bZ_p$ is the same as a so-called $Y$-twist $(\calE_Y,\calL_Y)$, i.e. a pair of vector bundles on $Y$ such that there exists a fppf cover $\tilde Y\to Y$ such that $(\calE_{\tilde Y},\calL_{\tilde Y})$ is isomorphic to $(V_{\tilde Y},L_{\tilde Y})$. In particular, there is a morphism $\Bun_G\to \Bun_{\GL_d}\times \Bun_{\GL_{t(d)-1}}$ where $d=\dim_{\bZ_p}V$ and $t(d)=\dim_{\bZ_p}t(V)$ (we secretly changed the locally split line bundle to the corresponding quotient bundle). So now one can try to prove $\Bun_G\to \Bun_{\GL_d}\times \Bun_{\GL_{t(d)-1}}$ is schematic. This is the route we will follow in the context of \'etale $(\varphi,\Gamma)$-modules.

More precisely, let $G$ be a flat affine group scheme of finite type over $\bZ_p$. Fix a pair $(V,L)$ such that $G\simeq \underline\Aut(V,L)$. We will show \'etale $(\varphi,\Gamma)$-modules with $G$-structure are the same as \'etale $(\varphi,\Gamma)$-twists (cf. Lemma \ref{Lemma-Gamma}). Due to some technical issue concerning topology, this is first proved for finite type $\bZ_p$-algebras in Subsection \ref{subsec2.1} and then for general algebras in Subsection \ref{subsec2.2} after we have shown the stack of \'etale $\varphi$-modules with $G$-structure is limit preserving. Then we can get a natural morphism $\calX_G\to \calX_{\GL_d}\times \calX_{\GL_{t(d)-1}}$. We can further factor this morphism through another stack $\calX_G^{\circ}$ parametrising tuples $(\calE,\calF,f:t(\calE)\twoheadrightarrow \calF)$, which consist of a vector bundle $\calE$ of rank $d$, a vector bundle $\calF$ of rank $t(d)-1$ as well as a surjective morphism $f:t(\calE)\twoheadrightarrow \calF$. Then we get a natural monomorphism $i:\calX_G\to \calX_G^{\circ}$ and a forgetful morphism $\pi:\calX_G^{\circ}\to \calX_{\GL_d}\times \calX_{\GL_{t(d)-1}}$.

For the morphism $\pi:\calX_G^{\circ}\to \calX_{\GL_d}\times \calX_{\GL_{t(d)-1}}$, we can actually show it is representable in formal schemes. This boils down to showing the surjective Hom functor $\underline\Hom^{\varphi,\Gamma}_{\surj}(\calE,\calL)$ between two \'etale $(\varphi,\Gamma)$-modules is representable in formal schemes (cf. Lemma \ref{representability-phi}). In the context of $\Bun_G$ associated to a curve, this is the representability of the Quot functor. In our context, the interpretation of surjectivity is involved. In fact, the de Rham prestack functor naturally appears, which accounts for the representability in formal schemes instead of schemes.

For the monomorphism $i:\calX_G\to \calX_G^{\circ}$, the situation is much more complicated. Again, in the context of $\Bun_G$ associated to a curve, the representability of this morphism is basically guaranteed by \cite{Ols06}. But there is no shortcut in our context. The basic tool for proving such representability is Artin's criteria. We have to check all the axioms by hand (cf. Proposition \ref{etale}, \ref{effectivity}, \ref{etale-gamma} and \ref{main-2}). Among them, the effectivity of formal objects and the openness of versality are much more delicate yet more interesting. For the effectivity of formal objects, the subtlety lies in that the Emerton--Gee stack for $\GL_n$ is only a formal algebraic stack. So the formal objects consisting of \'etale $(\varphi,\Gamma)$-modules are not expected to be effective. This makes the effectivity of formal objects in our context quite surprising.

In order to prove the openness of versality, Artin introduced the deformation theory and the obstruction theory and some crucial conditions they need to satisfy in \cite{Art74}. As $i:\calX_G\to \calX_G^{\circ}$ is a monomorphism, the deformation theory is actually trivial in our context. But we do need to construct a reasonable obstruction theory and prove it satisfies Artin's conditions (see Section \ref{obstruction}). This might be the most difficult  part in the proof of Theorem \ref{main-1-intro}. In fact, we have to use derived algebraic geometry, more precisely the theory of cotangent complexes of derived Artin stacks, to deal with some technical issues. The basic observation is that we can similarly define a stack $(BG)^{\circ}$ and a map $BG\to (BG)^{\circ}$ as above. Regarding them as derived stacks, the relative cotangent complex of $BG\to (BG)^{\circ}$ will naturally admit a $(\varphi,\Gamma)$-structure and we can find an obstruction class stable under the $(\varphi,\Gamma)$-action, which will finally give us a good candidate of the desired obstruction theory.

\begin{rmk}
    It is not obvious to us if the obstruction theory we constructed also works for \'etale $\varphi$-modules. The subtlety lies in that we are not sure whether the Frobenius invariants and coinvariants of finitely generated \'etale $\varphi$-modules are still finite over the coefficient ring or not.
\end{rmk}

\begin{rmk}\label{rmk-quasi-compactness}
    In \cite{Lin23}, Lin can also prove the quasi-compactness of the corresponding Emerton--Gee stacks. This requires to study the mod $p$ representations in detail. In fact, we have been informed by Lin that the quasi-compactness in our case can be deduced from his paper \cite{lin2023deligne} at least when $p>3$.
\end{rmk}

\subsection{Derived Emerton--Gee stack for general groups $G$}
As in the case of $\GL_d$, we can also construct a derived stack of Laurent $F$-crystals with $G$-structure on the asbolute prismatic site $(\calO_K)_{\Prism}$, which is hoped to be classical as well. For technical reasons, we will use the transveral absolute prismatic site $(\calO_K)_{\Prism}^{\rm tr}$, which simply consists of all the transveral prisms in $(\calO_K)_{\Prism}$, i.e. prisms $(A,I)$ with $A/I[p]=0$.

\begin{dfn}
Let $\textbf{Nilp}_{\bZ_p}$ be the $\infty$-category of $p$-nilpotent animated rings and $\bfAni$ be the $\infty$-category of anima. We define the derived stack of Laurent $F$-crystals $\frakX_G:\textbf{Nilp}_{\bZ_p}\to \bfAni$ by sending each $R\in\textbf{Nilp}_{\bZ_p}$ to the anima 
\[
\frakX_G(R):=\varprojlim_{(A,I)\in (\calO_K)_{\Prism}^{\rm tr}}BG(A\widehat\otimes_{\bZ_p}^{\bL}R[\frac{1}{I}])^{\varphi=1}
\]
\end{dfn}

To study $\frakX_G$, one might try to use the Tannakian formalism again. Unfortunately, there seems to be no Tannaka duality in the setting of ($p$-nilpotent) animated rings. This causes lots of trouble when we try to prove the classicality of the derived stack $\frakX_G$, or even to prove it satisfies the sheaf property as there is no analogue of Drinfeld--Mathew's descent result in the case of general groups, which is one of the reasons that we resort to transversal prisms. To fix this issue, we will take a general reduction procedure, i.e. start with discrete animated rings, where the Tannaka duality is available, and then use cotangent complex to linearise everything.

As in \cite{Min23}, we can still prove the following result.
\begin{thm}[Theorem \ref{classical}]\label{step1}
    Let $G$ be a flat affine group scheme of finite type over $\bZ_p$. There is an equivalence between the stack $\calX_G$ and the underlying classical stack $^{\rm cl}\frakX_G$ of the derived stack $\frakX_G$.
\end{thm}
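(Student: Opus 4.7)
The plan is to compare both sides after evaluating on a classical $p$-nilpotent ring $R\in\Nilp_{\bZ_p}$, exhibit a natural equivalence of groupoids for each such $R$, and then verify that this fibrewise equivalence upgrades to one of stacks. By definition ${}^{\rm cl}\frakX_G$ is the restriction of $\frakX_G$ to classical rings, so the first task is to compute $\frakX_G(R)$ for $R$ discrete. My first observation would be that for a transversal prism $(A,I)$ the ring $A$ is $\bZ_p$-flat, so the derived completed tensor product $A\widehat\otimes_{\bZ_p}^{\bL}R[\frac{1}{I}]$ is concentrated in degree zero and agrees with the ordinary completed tensor product $A\widehat\otimes_{\bZ_p}R[\frac{1}{I}]$. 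Consequently the anima $BG(A\widehat\otimes_{\bZ_p}^{\bL}R[\frac{1}{I}])$ is just the ordinary $1$-groupoid of $G$-torsors on a classical ring, and this is precisely the reason we restrict to the transversal site in the definition of $\frakX_G$.

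Next I would invoke classical Tannakian duality: for any classical commutative ring $B$ and any flat affine group scheme $G$ of finite type over $\bZ_p$, the groupoid of $G$-torsors on $\Spec B$ is equivalent to $\Fun^{\rm ex,\otimes}(\Rep(G),\Vect(B))$, where $\Vect(B)$ denotes finite projective $B$-modules. Applying this functorially in $(A,I)$ and then passing to the colimit over $(\calO_K)^{\rm tr}_{\Prism}$ together with Frobenius fixed points, one obtains that $\frakX_G(R)$ is equivalent to the groupoid of exact $\otimes$-functors from $\Rep(G)$ into the category of Laurent $F$-crystals with $R$-coefficients on the transversal absolute prismatic site.

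The final step is to apply the equivalence, established in the $\GL_d$ case in \cite{Min23} following Bhatt--Scholze, between Laurent $F$-crystals with $R$-coefficients on $(\calO_K)^{\rm tr}_{\Prism}$ and the category $\Mod^{\varphi,\Gamma}(\bfA_R)$ of \'etale $(\varphi,\Gamma)$-modules. Composing the Tannakian functors from the previous step with this equivalence identifies $\frakX_G(R)$ with $\Fun^{\rm ex,\otimes}(\Rep(G),\Mod^{\varphi,\Gamma}(\bfA_R))=\calX_G(R)$, and naturality in $R$ together with \'etale descent on both sides should upgrade this to an equivalence of stacks on $\Nilp_{\bZ_p}$.

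The main obstacle, I expect, is justifying the interchange of the Tannakian dictionary with the colimit over the transversal prismatic site while keeping track of the Frobenius-fixed-point structure: one must check that an exact $\otimes$-functor into the colimit of fibre categories is the same as a compatible system of exact $\otimes$-functors into each fibre, i.e.\ a Laurent $F$-crystal with $G$-structure. The transversality hypothesis is crucial precisely here, because it is what makes each $A\widehat\otimes_{\bZ_p}R[\frac{1}{I}]$ classical and hence amenable to ordinary Tannakian duality; any looseness at this step would force one into animated rings, where (as noted in the paragraph preceding the statement) Tannakian duality is unavailable.
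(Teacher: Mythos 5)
Your overall route is the same as the paper's: restrict to discrete $R$, use transversality to see that $A\widehat\otimes^{\bL}_{\bZ_p}R[\frac{1}{I}]$ is classical, apply Tannaka duality prism by prism, commute $\Fun^{\rm ex,\otimes}(\Rep(G),-)$ past the limit over the site (using that the transition functors are exact $\otimes$-functors), and then feed in the $\GL_d$-level identification of Laurent $F$-crystals with \'etale $(\varphi,\Gamma)$-modules. That is exactly the proof of Theorem \ref{classical}.

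There is, however, one step you cite as if it were already available in the needed generality, and it is not: the equivalence
$\Vect((\calO_K)_{\Prism},\calO_{\Prism,R}[\frac{1}{\calI_{\Prism}}])^{\varphi=1}\simeq \Mod^{\varphi,\Gamma}(\bfA_R)$
from \cite{Min23} is only proved there for $R$ of finite type over $\bZ/p^a$ (Proposition \ref{finite type}). To use it for an arbitrary $R\in\Nilp_{\bZ_p}$ you must extend it, and this is not handled by ``naturality in $R$ together with \'etale descent'': \'etale descent does not get you from finite type algebras to arbitrary ones. The paper's Lemma \ref{all} does this by writing $R=\varinjlim_i R_i$ over finite type subalgebras and showing both sides are the filtered colimit of the corresponding categories. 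Essential surjectivity is the known groupoid-level statement, but full faithfulness requires comparing Hom's, i.e.\ showing $\varinjlim_{i,j\to k}\Hom(\bM_{ik},\bN_{jk})\simeq\Hom(\bM,\bN)$ on the prismatic side and the analogous statement for $(M^{\vee}\otimes N)^{\varphi=1,\Gamma=1}$ on the $(\varphi,\Gamma)$-side; the latter uses $\varphi$-stable lattices over $\bfA^+_{K^{\rm basic}}$ and the finiteness input of \cite[Lemma 2.24]{Min23}. You also need the resulting equivalence to be an \emph{exact $\otimes$}-equivalence (not merely an equivalence of categories) for the Tannakian identification of $\Fun^{\rm ex,\otimes}(\Rep(G),-)$ on both sides to go through. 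Once this lemma is in place, your argument closes up and agrees with the paper's.
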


This is the starting point of proving  the classicality of the derived stack $\frakX_G$. We first look at the case where $G$ is a connected reductive group. The basic tool is the same as \cite{Min23}, i.e. using \cite[Chapter 1, Proposition 8.3.2]{GR17}. In fact, this is the only way we know to prove classicality of a general derived prestack. \cite[Chapter 1, Proposition 8.3.2]{GR17} enables us to compare two derived prestacks admitting a deformation theory by comparing their pro-cotangent complexes at classical points. We refer to \cite[Section 3]{Min23} for more details on the derived deformation theory.

\begin{thm}[Theorem \ref{main-connected}]\label{classicality}
    Let $G$ be a connected reductive group over $\bZ_p$. The derived stack $\frakX_G$ is classical up to nilcompletion, i.e. $(\Lan ^{\cl}\frakX_G)^{\#,\nil}\simeq \frakX_G^{\nil}$, where $(\Lan ^{\cl}\frakX_G)^{\#,\nil}$ is the nilcompletion of the \'etale sheafification of the left Kan extension of $^{\cl}\frakX_G$ along the inclusion $\Nilp_{\bZ_p}\hookrightarrow \textbf{Nilp}_{\bZ_p}$.
\end{thm}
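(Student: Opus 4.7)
The plan is to follow the strategy of \cite{Min23} for $\GL_d$ and invoke \cite[Chapter 1, Proposition 8.3.2]{GR17}, which reduces a nilcomplete equivalence of derived prestacks admitting a deformation theory to (a) an equivalence of underlying classical prestacks and (b) an equivalence of pro-cotangent complexes at every classical point. First I construct the comparison map: by Theorem~\ref{step1} the underlying classical stack ${}^{\cl}\frakX_G$ is canonically equivalent to $\calX_G$, so the \'etale sheaf property of $\frakX_G$ (available thanks to our restriction to transversal prisms) combined with the universal property of the left Kan extension along $\Nilp_{\bZ_p}\hookrightarrow \bfNilp_{\bZ_p}$ supplies a canonical morphism
\[
\alpha \colon (\Lan\, {}^{\cl}\frakX_G)^{\#} \longrightarrow \frakX_G.
\]
Condition (a) is then immediate from Theorem~\ref{step1}, so the entire content of the theorem is condition (b).

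Both prestacks admit a deformation theory. For the source this was essentially carried out in \cite[Section 3]{Min23}. For $\frakX_G$ one extracts the deformation theory from the observation that, for each transversal prism $(A,I)$, the functor $R\mapsto BG(A\widehat\otimes_{\bZ_p}^{\bL} R[\tfrac{1}{I}])$ admits the standard cotangent complex governed by the adjoint representation $\mathfrak g$, and these pro-cotangent complexes assemble along the colimit indexed by $(\calO_K)_{\Prism}^{\mathrm{tr}}$ and the fixed-point functor $(-)^{\varphi=1}$. Concretely, at a classical point $x\colon \Spec R\to \frakX_G$ corresponding to a Laurent $F$-crystal with $G$-structure $P$, the (shifted) tangent complex is expected to be the $(\varphi,\Gamma)$-cohomology of the adjoint bundle $\mathrm{ad}(P)$. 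The crucial point of the reductive hypothesis is that $\mathfrak g$ is a finite free $\bZ_p$-module carrying a $G$-action, so $\mathrm{ad}(P)$ is a genuine \'etale $(\varphi,\Gamma)$-module over $R$ whose cohomology behaves classically.

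The main obstacle is matching this with the pro-cotangent complex of $(\Lan\, {}^{\cl}\frakX_G)^{\#}$ at the same classical point, since the latter is controlled by the classical moduli $\calX_G$ rather than directly by the prismatic site. To set up the comparison I would choose a faithful representation $G\hookrightarrow \GL_n$ and use that $\mathrm{ad}(P)$ is realised as a subquotient of the $\End$-bundle of the associated rank-$n$ Laurent $F$-crystal $\calE$, so the $\GL_d$-computation of \cite{Min23} identifies the pro-cotangent complex of $\Lan \calX_{\GL_n}$ at $\calE$ with the $(\varphi,\Gamma)$-cohomology of $\End(\calE)$. The Tannakian description of $\calX_G$ underlying Theorem~\ref{main-1-intro}, applied at classical rings, transports this computation to the $G$-side and shows that $\alpha$ carries tangent complex to tangent complex. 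The most delicate point will be verifying that the resulting two-term $(\varphi,\Gamma)$-complex really is the pro-cotangent complex of the Kan extension --- not merely the naive cotangent of $\calX_G$ --- which rests on finiteness of the $(\varphi,\Gamma)$-cohomology of $\mathrm{ad}(P)$ over finite type $R$, available in the reductive setting. With these identifications in hand, \cite[Chapter 1, Proposition 8.3.2]{GR17} yields that $\alpha^{\nil}$ is an equivalence, proving the theorem.
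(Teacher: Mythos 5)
There is a genuine gap. Your outline correctly sets up the comparison via \cite[Chapter 1, Proposition 8.3.2]{GR17} and correctly identifies the tangent complex of $\frakX_G^{\nil}$ at a classical point with the (shifted, dualised) Herr complex of $\mathrm{ad}(P)$, but the heart of the theorem is the matching of this with the pro-cotangent complex of $(\Lan\,{}^{\cl}\frakX_G)^{\#}$, and your proposed resolution of that step does not work. The issue is not ``finiteness of the $(\varphi,\Gamma)$-cohomology of $\mathrm{ad}(P)$'': a left Kan extension of a classical stack a priori only records classical (discrete) deformations, and showing that its pro-cotangent complex nevertheless agrees with the full derived one is exactly the \emph{content} of classicality, not a formal consequence of finiteness. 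The paper's proof requires the arithmetic input of Pa\v{s}k\={u}nas--Quast (Theorem \ref{inf}): the framed local Galois deformation ring is a local complete intersection, which via \cite[Lemma 7.5]{GV18} gives the infinitesimal classicality $(\Lan\,{}^{\cl}\calR^{c}_{G_K,G/G})_{\bar\rho}\simeq \calR^{c}_{G_K,G/G,\bar\rho}$ at finite-field points. Your proposal never invokes this (or any substitute for it), so the ``most delicate point'' you flag is left genuinely unproved.

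Two further steps of the actual argument are also missing or replaced by something that would fail. First, the reduction from arbitrary classical finite-type points to finite-field points is not automatic: one must show that both pro-cotangent complexes at finite-type points are honest complexes (Propositions \ref{cot-1} and \ref{cot-2}); for the Kan-extension side this rests on $\calX_G$ being a formal algebraic stack (Theorem \ref{main-1}), which you do not use. Second, your plan to compute the cotangent complex of $\Lan\calX_G$ by choosing a faithful embedding $G\hookrightarrow\GL_n$ and ``transporting'' the $\GL_n$-computation does not go through: the morphism $\calX_G\to\calX_{\GL_n}$ is neither flat nor smooth, so cotangent complexes of the Kan extensions do not simply restrict along it, and realising $\mathrm{ad}(P)$ as a subquotient of $\End(\calE)$ gives no control on $T^*(\Lan\calX_G)$ in terms of $T^*(\Lan\calX_{\GL_n})$. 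The paper instead compares $(\Lan\calX_G)_{\bar\rho}$ with the (Kan extension of the) classical unframed representation functor $\calR^{c}_{G_K,G/G}$, then with its derived version via Pa\v{s}k\={u}nas--Quast, and finally with $\calF_{K,G,\bar\rho}\simeq\frakX_{G,\bar\rho}$ via Proposition \ref{derivedrep-laurent}, where connectedness of $G$ enters through Lang's theorem ($\pi_0(BG(R))=\{*\}$ for Artinian local $R$), not merely through finiteness of $\mathrm{ad}$.
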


To prove this theorem, we have to show both $(\Lan ^{\cl}\frakX_G)^{\#,\nil}$ and $\frakX_G^{\nil}$ admit a deformation theory as required by \cite[Chapter 1, Proposition 8.3.2]{GR17}, and prove they are locally almost of finite type, which makes it sufficient to compare pro-cotangent complexes at  points valued in discrete algebras of finite type. The proofs of these results are quite different from \cite{Min23}: we argue by induction on the truncation degrees of animated rings and use the cotangent complexes.

Next in order to compare pro-cotangent complexes at  points valued in discrete algebras of finite type, the naive idea is to use some arguments involving faithfully flat descent and Nakayama's lemma to reduce to the points valued in finite fields. This works for complexes but not for pro-complexes. Fortunately, we can actually show the pro-cotangent complexes at  points valued in discrete algebras of finite type are indeed complexes. For $(\Lan ^{\cl}\frakX_G)^{\#,\nil}$, the important input is that $\calX_G$ is a formal algebraic stack\footnote{It might be even better to know $\calX_G$ is an ind-algebraic stack. This is true if $\calX_G$ is quasi-compact, which is the case at least when $p>3$ by Remark \ref{rmk-quasi-compactness}. }. For $\frakX_G^{\nil}$, we can directly show the pro-cotangent complex is given by the shifted Herr complex associated to the adjoint \'etale $(\varphi,\Gamma)$-module corresponding to the point.

\begin{rmk}
    In \cite{EG22}, it is shown that the Herr complex associated to the adjoint \'etale $(\varphi,\Gamma)$-module corresponding to a point of finite type provides a good obstruction theory for the (classical) Emerton--Gee stack. This makes our derived extension of the Emerton--Gee stack fit into a picture similar to the perfect obstruction theory introduced in \cite{BF97} and studied in \cite{STV15}, where derived extensions of Deligne--Mumford stacks are studied in order to provide a $[-1,0]$-perfect obstruction theory. The shifted Herr complex is a perfect complex of amplitude $[-1,1]$ as for Emerton--Gee stacks, we essentially consider the ``unframed local deformation theory" instead of the ``framed local deformation theory".
\end{rmk}

Once we are reduced to classical points valued in finite fields, we can use the same strategy of \cite[Proposition 3.51]{Min23}, combined with Theorem \ref{inf} below due to Paškūnas--Quast,  to prove Theorem \ref{classicality}. This strategy requires us to introduce the work \cite{Zhu20} of Xinwen Zhu on derived representations and discuss how to relate two different kinds of left Kan extension: one is from discrete commutative rings to animated rings, the other is from discrete finite Artinian local rings to animated Artinian local rings. We also remark that Theorem \ref{inf} will guarantee an infinitesimal version of Theorem \ref{classicality} by \cite[lemma 7.5]{GV18}.

Now we shift our focus to the generalised reductive groups introduced in \cite{PQ24}.

\begin{dfn}[{\cite[Definition 2.5]{PQ24}}]
    A generalised reductive group scheme over a scheme $S$ is a smooth
aﬃne $S$-group scheme $G$, such that the geometric fibres of $G^0$ are reductive and $G/G^0\to S$ is finite.
\end{dfn}
Paškūnas and Quast are able to prove the following important result about the framed local deformation ring, which generalises the $\GL_n$-case proved in \cite{BIP23}.

\begin{thm}[{\cite[Theorem 1.1]{PQ24}}]\label{inf}
   Let $G$ be a generalised reductive group over $\bZ_p$. Let $k_f$ be a finite field of characteristic $p$. Let $\bar\rho:G_K\to G(k_f)$ be a residual representation. Then the framed local Galois deformation ring $R^{\square}_{\bar \rho}$ is a local complete intersection.
\end{thm}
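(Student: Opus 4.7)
The plan is to follow the strategy of Böckle--Iyengar--Paškūnas for $\GL_n$ and extend it to generalised reductive groups. I would first reduce to the case where $G$ is connected reductive, i.e.\ $G = G^0$. Since $G/G^0$ is finite, and since any coefficient ring $A$ in the framed deformation functor is a complete local $W(k_f)$-algebra with residue field $k_f$, the étale group scheme $G/G^0$ satisfies $(G/G^0)(A)=(G/G^0)(k_f)$. Consequently, for every lift $\rho_A$ of $\bar\rho$ the composition with $G\to G/G^0$ is forced to equal $\pi\circ\bar\rho$. Thus the framed deformation functor for $(G,\bar\rho)$ identifies with that of $(G^0,\bar\rho)$ twisted by the fixed component-group data, and this identification preserves the LCI property.

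For connected reductive $G$, the plan is to exhibit a minimal presentation $R^{\square}_{\bar\rho}\simeq W(k_f)[[x_1,\ldots,x_d]]/(f_1,\ldots,f_r)$ where the embedding dimension $d$ and the number of relations $r$ are computed via Galois cohomology of the adjoint representation $\mathrm{ad}\,\bar\rho$. Classical deformation theory gives $d=h^1(G_K,\mathrm{ad}\,\bar\rho)+\dim\frakg-h^0(G_K,\mathrm{ad}\,\bar\rho)$ (the extra $\dim\frakg-h^0$ accounting for the framing) and only the inequality $r\le h^2(G_K,\mathrm{ad}\,\bar\rho)$. Combined with flatness of $R^{\square}_{\bar\rho}$ over $W(k_f)$, the local Euler characteristic formula $h^0-h^1+h^2=-[K:\bQ_p]\dim\frakg$ and local Tate duality would then determine $\dim R^{\square}_{\bar\rho}=d-r+1$ once equality $r=h^2$ is achieved, so that $R^{\square}_{\bar\rho}$ is a complete intersection of the expected codimension.

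The main obstacle, and in my view the heart of the theorem, is precisely the equality $r=h^2$. I would proceed in the spirit of BIP by introducing the derived framed deformation ring $\mathbf{R}^{\square}_{\bar\rho}$ on animated $W(k_f)$-algebras and proving it is concentrated in degree zero, equivalently that the cotangent complex at the closed point lies in degrees $[-1,0]$. The strategy would be: (i) compute the cotangent complex in terms of a shifted Galois cohomology complex of $\mathrm{ad}\,\bar\rho$ (closely related to the Herr-type complex appearing earlier in the paper); (ii) use a faithful embedding $G\hookrightarrow\GL_N$ together with Broshi's tensor description $G\simeq\underline{\Aut}(V,L)$ recalled in the introduction to write $\mathbf{R}^{\square}_{G,\bar\rho}$ as a (derived) fibre product over $\mathbf{R}^{\square}_{\GL_N,\cdot}$; (iii) invoke the $\GL_N$-case of BIP together with a relative cotangent complex analysis to transport the concentration property to $G$.

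The key technical difficulty I anticipate is that the adjoint representation $\frakg$ of $G$ is not in general a direct summand of $\mathfrak{gl}_N$ in bad characteristic, so the derived fibre product description need not split cleanly. Handling this requires a careful choice of faithful representation — plausibly the one coming from Broshi's construction, in which $G$ is cut out of $\GL_N\times\GL_{t(N)-1}$ by a Plücker-type surjectivity condition — and a direct verification that the relative derived deformation ring has cotangent complex concentrated in the right degrees. Once this is established, combining with the reduction in the first paragraph completes the proof for all generalised reductive groups.
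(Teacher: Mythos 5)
This statement is not proved in the paper at all: it is \cite[Theorem 1.1]{PQ24}, quoted verbatim from Pa\v{s}k\=unas--Quast and used as a black-box input (its only role here is to supply, via \cite[Lemma 7.5]{GV18}, the infinitesimal classicality of the derived deformation functor). So there is no internal proof to compare against, and what you have written is an attempt at the external theorem itself.

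As such an attempt, it has two genuine gaps. First, the opening reduction to $G=G^0$ is incorrect as stated: the rigidity $(G/G^0)(A)=(G/G^0)(k_f)$ only shows that the composite $G_K\to G(A)\to (G/G^0)(A)$ is constant in the deformation, not that $\bar\rho$ factors through $G^0(k_f)$. In general $\bar\rho$ has nontrivial image in the component group, the lifts live in the (disconnected) preimage of that image, and the adjoint action on $\frakg=\mathrm{Lie}(G^0)$ is twisted by the outer action of the component group; the problem does not become a $G^0$-deformation problem, which is precisely why \cite{PQ24} must treat generalised reductive groups directly. Second, and more seriously, the step you yourself identify as the heart --- showing the $h^2$ relations form a regular sequence, equivalently the upper bound $\dim R^{\square}_{\bar\rho}/\varpi\le d-h^2$ --- is addressed only by the proposal to ``transport the concentration property'' from $\GL_N$ along a faithful embedding via a relative cotangent complex analysis. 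There is no such transport: $R^{\square}_{G,\bar\rho}$ is not a derived fibre product of $\GL_N$-deformation rings in any way that propagates the LCI property (the relevant relative (co)tangent complex of $BG\to B\GL_N$ contributes $\mathfrak{gl}_N/\frakg$ in the wrong degrees, and flatness/regular-sequence statements do not descend along such diagrams). In both \cite{BIP23} and \cite{PQ24} the dimension bound is the hard geometric content, obtained by a direct analysis of the special fibre of the space of framed representations (stratification by the image of $\bar\rho$, Lang's theorem, d\'evissage through parabolics), and your proposal does not engage with any substitute for it.
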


Note that in the setting of generalised reductive groups, for a representation $\rho:G_K\to G(R)$, we will only consider the $G^\circ$-conjugation instead of the whole $G$-conjugation. This is then compatible with the setting of the Langlands parameters. In order to be compatible with Galois representations, we have to consider a modified version of the Emerton--Gee stack.

\begin{dfn}[Definition \ref{dfn-gen-EG}, \ref{dfn-gen-derived}]\label{intro-dfn}
      For a generalised reductive group $G$ over $\bZ_p$, we define a stack $\calX_G^{\gen}$ by the following pullback diagram
    \begin{equation}
        \xymatrix{
        \calX_G^{\gen}\ar[r]\ar[d]& \Spf(\bZ_p)\ar[d]\\
        \calX_G\ar[r]& \calX_{\bar G}.
        }
    \end{equation}
    where $\bar G:=G/G^{\circ}$ and the right vertical map $\Spf(\bZ_p)\to \calX_{\bar G}$ means the trivial \'etale $(\varphi,\Gamma)$-module with $\bar G$-structure. As in practice we really care about $\calX_G^{\gen}$ instead of $\calX_G$, we call $\calX_G^{\gen}$ the Emerton--Gee stack associated to the generalised reductive group $G$.
    
    We can also define a modified derived stack $\frakX_G^{\gen}$ of Laurent $F$-crystals by replacing $\calX_G$ (resp. $\calX_{\bar G}$)  in the above diagram with $\frakX_G$ (resp. $\frakX_{\bar G}$).
\end{dfn}

    It is essential to consider this modified version of (derived) Emerton--Gee stack when we want to relate it to Galois representations. In fact, let $R$ be a finite Artinian local ring, a Galois representation $\rho:G_K\to G(R)$ is only concerned with the trivial $G$-torsor over $R$. When $G$ is a connected reductive group, every $G$-torsor over $R$ is trivial, which can be proved by reducing to the finite field case and using Lang's theorem. If $G/G^{\circ}$ is nontrivial, then there might be non-trivial $G$-torsors over $R$. So we can not expect \'etale $(\varphi,\Gamma)$-modules with $R$-coefficient to be the same as Galois representations over $R$ for such group $G$. 

    Now we state the classicality result for generalised reductive groups.

    \begin{thm}[Theorem \ref{main-gen}]\label{gen}
         Let $G$ be a generalised reductive group over $\bZ_p$. The derived stack $\frakX^{\rm gen}_G$ is classical up to nilcompletion, i.e. $(\Lan ^{\cl}\frakX^{\rm gen}_G)^{\#,\nil}\simeq \frakX_G^{\nil}$, where $(\Lan ^{\cl}\frakX^{\rm gen}_G)^{\#,\nil}$ is the nilcompletion of the \'etale sheafification of the left Kan extension of $^{\cl}\frakX^{\rm gen}_G$ along the inclusion $\Nilp_{\bZ_p}\hookrightarrow \textbf{Nilp}_{\bZ_p}$.
    \end{thm}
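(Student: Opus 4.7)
The plan is to follow the proof of Theorem \ref{classicality} quite closely, with the modifications needed to accommodate the pullback defining $\frakX_G^{\gen}$, and replacing the connected-reductive input with the generalised-reductive version of the Paškūnas--Quast result, which is precisely Theorem \ref{inf}. So the overall architecture is unchanged: invoke \cite[Chapter 1, Proposition 8.3.2]{GR17} to reduce classicality-up-to-nilcompletion to the comparison of pro-cotangent complexes at classical points valued in discrete algebras of finite type, reduce further to finite field points, and finish by applying Theorem \ref{inf} together with \cite[Lemma 7.5]{GV18}.

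The first step is to verify the deformation-theoretic prerequisites for both $(\Lan\,{}^{\cl}\frakX^{\rm gen}_G)^{\#,\nil}$ and $(\frakX_G^{\gen})^{\nil}$, namely the existence of pro-cotangent complexes and local almost finite-type-ness. Both transfer from the corresponding statements in the connected reductive case via the pullback square defining $\frakX_G^{\gen}$, since the two operations $\Lan$ and $(-)^{\#,\nil}$ interact well with pullbacks against a closed point. Here the key observation is that $\Spf(\bZ_p)\to\calX_{\bar G}$, being the trivial $\bar G$-torsor, cuts out a formally smooth/rigid locus: although $\bar G$ may fail to be étale over $\bZ_p$ when $p$ divides its order, at the trivial torsor point the local structure of $\calX_{\bar G}$ contributes no nontrivial deformation theory to track, and the formal algebraic stack structure on $\calX_G^{\gen}$ follows directly from that of $\calX_G$ (Theorem \ref{main-1-intro}) via pullback against this monomorphism.

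Next I would identify the pro-cotangent complex at a classical point $x: \Spec R \to \frakX_G^{\gen}$ with $R$ a finite-type discrete $\bZ_p$-algebra. By the pullback description and Step 1 above, this agrees with the pro-cotangent complex of $\frakX_G$ at the underlying $G$-valued point, which (as in the proof of Theorem \ref{classicality}) is shown to be an honest complex---namely the shifted Herr complex associated to the adjoint representation of $G$ on $\frakg^{\circ}$ via the Galois representation attached to $x$. The appearance of $\frakg^{\circ}$ rather than $\frakg$ is precisely the effect of the modification: restricting to the trivial $\bar G$-torsor kills the component-group contribution and leaves only the Lie algebra of $G^{\circ}$. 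A flat descent plus Nakayama argument then reduces the comparison of pro-cotangent complexes to classical points valued in finite fields, and there Theorem \ref{inf} applies verbatim.

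The hard part is exactly the deformation-theoretic bookkeeping in the first step: making rigorous sense of the interaction of the pullback $\calX_G\times_{\calX_{\bar G}}\Spf(\bZ_p)$ with the Kan extension and nilcompletion functors, and verifying that the pro-cotangent complex of $\frakX_G^{\gen}$ at a finite-type classical point genuinely reduces to an honest complex (not merely a pro-object). This in turn requires a careful analysis of $\calX_{\bar G}$ at the trivial torsor---essentially, showing that the cotangent complex of $\calX_{\bar G}$ at this point is concentrated in the expected degrees, so that the fibre sequence relating the cotangent complexes of $\frakX_G^{\gen}$, $\frakX_G$ and $\frakX_{\bar G}$ does not introduce pro-terms. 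Once this is in place, the rest of the argument is a routine transcription of the proof of Theorem \ref{classicality} with $G^{\circ}$-adjoint coefficients throughout.
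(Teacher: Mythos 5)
Your overall architecture is the same as the paper's: run the machinery of Theorem \ref{classicality} (deformation theory, local almost finite type, pro-cotangent complexes being honest complexes, reduction to finite-field points) and feed in Theorem \ref{inf} at the end. However, you have mislocated the difficulty, and the step you dismiss as ``routine transcription'' hides the one genuinely new argument. In the connected reductive case, the bridge between $\frakX_G$ and the derived deformation functor $\calF_{K,G,\bar\rho}$ (Proposition \ref{derivedrep-laurent}) rests on the fact that $\pi_0(BG(R))=\{*\}$ for every Artinian local $R$ with finite residue field, which is proved by Lang's theorem for connected $G$. For a generalised reductive group this fails precisely because $H^1(k_f,\bar G)$ need not vanish --- that is the whole reason $\frakX_G^{\gen}$ is introduced. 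So before you can ``apply Theorem \ref{inf} verbatim'' at a finite-field point, you must prove the generalised analogue: $\frakX_G^{\gen}(R)\simeq \calF_{K,G}^{\gen}(R)$ for Artinian local animated $R$. The paper's Proposition \ref{generalised rep-phiGamma} does this, and it requires real work: the long exact sequence of pointed sets for $0\to G^{\circ}\to G\to\bar G\to 0$ over $k_f$, Lang's theorem applied to $G^{\circ}$ to get the pullback square identifying $BG^{\circ}(k_f)$ with $BG(k_f)\times_{B\bar G(k_f)}*_{\rm triv}$, and a Postnikov-tower induction using cotangent complexes to propagate this to truncated $R$. It also requires setting up the $G^{\circ}$-conjugation versions $\calR^{c}_{G_K,G/G^{\circ}}$ and $\calF^{\gen}_{K,G,\bar\rho}$ so that the three-step comparison of \cite[Proposition 3.51]{Min23} goes through. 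None of this appears in your proposal.

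Two smaller corrections. First, $\bar G=G/G^{\circ}$ is automatically finite \emph{\'etale}: $G$ is smooth and $G^{\circ}$ is open, so the quotient is smooth, and smooth plus finite gives \'etale regardless of whether $p$ divides the order. Hence the cotangent complex of $B\bar G$ (and of $\calX_{\bar G}$ at the trivial point) vanishes outright, which is how the paper disposes of your ``hard part'' in one line: $T^*(\frakX_G^{\gen}/\frakX_G)=T^*(\Spf(\bZ_p)/\frakX_{\bar G})\otimes\calO_{\frakX_G^{\gen}}=0$, so all the deformation-theoretic statements for $\frakX_G$ transfer to $\frakX_G^{\gen}$ with no bookkeeping and no pro-terms can appear. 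Second, since $G^{\circ}$ is the identity component, $\mathrm{Lie}(G^{\circ})=\mathrm{Lie}(G)=\frakg$; the passage to $\frakX_G^{\gen}$ changes which torsors and which conjugation are allowed, not the adjoint coefficients of the Herr complex, so there is no ``$\frakg^{\circ}$ versus $\frakg$'' distinction to track.
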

    In the case where $G$ is the $L$-group of a reductive group over $K$, Theorem \ref{gen} then completes the picture that the derived stack of local Langlands parameters is classical for arbitrary reductive groups.

    \subsection*{Acknowledgments}
We would like to thank Toby Gee and Vytautas Paškūnas for very helpful discussions regarding this work. We also thank Zhongyipan Lin for useful correspondence and suggestions and Ning Guo for some helpful discussions.

The author has received funding from the European Research Council (ERC) under the European Union's Horizon 2020 research and innovation programme (grant agreement No. 884596).





\section{Emerton--Gee stack for general groups}
Let $K$ be a finite extension of $\bQ_p$ with residue field $k$. Let $G$ be a flat affine group scheme of finite type over $\bZ_p$. We will construct a moduli stack of \'etale $(\varphi,\Gamma)$-modules with $G$-structue and show it is a formal algebraic stack locally of finite presentation over $\Spf(\bZ_p)$.

We write $K(\zeta_{p^{\infty}})$ to denote the extension of $K$ obtained by adjoining all $p$-power roots of unity. The Galois group $\Gal(K(\zeta_{p^{\infty}})/K)$ is naturally identified with an open subgroup of $\bZ_p^{\times}$. Let $K_{\cyc}$ denote the unique subextension of $K(\zeta_{p^{\infty}})$ whose Galois group over $K$ is isomorphic to $\bZ_p$ and $k_{\infty}$ be its residue field. We write $\Gamma:=\Gal(K_{\cyc}/K))$ and fix a topological generator $\gamma\in \Gamma$.

As in \cite{EG22}, for any $p$-nilpotent ring $R$, we write $\bfA_R^+=(W(k_{\infty})\otimes_{\bZ_p}R)[[T]]$ and $\bfA_R=(W(k_{\infty})\otimes_{\bZ_p}R)((T))$. There are compatible $\varphi$-action and $\Gamma$-action on $\bfA_R$. But if $K$ is ramified, $\bfA_R^+$ might not be $\varphi$-stable.

\begin{dfn}
     An \'etale $(\varphi,\Gamma)$-module with $R$-coefficient is a finitely projective module $M$ over $\bfA_R$ together with a semi-linear $\varphi$-action $\varphi_M:M\to M$ and a continuous semi-linear $\Gamma$-action such that
     \begin{enumerate}
         \item The $\varphi$-action commutes with the $\Gamma$-action;
         \item The $\bfA_R$-linear map $\varphi^*_MM\to M$ induced by $\varphi_M$ is an isomorphism.
     \end{enumerate}
     Let $\Mod^{\varphi,\Gamma}(\bfA_R)$ denote the category of \'etale $(\varphi,\Gamma)$-modules over $\bfA_R$. Forgetting about the $\Gamma$-action, we can also define the category $\Mod^{\varphi}(\bfA_R)$ of \'etale $\varphi$-modules over $\bfA_R$.
\end{dfn}

\begin{dfn}[The Emerton--Gee stack for $\GL_d$]
    The Emerton--Gee stack for $\GL_d$ is the functor $\calX_{\GL_d}:\Nilp_{\bZ_p}\to {\rm Groupoids}$ which sends each $p$-nilpotent ring $R$ to the $1$-groupoid of \'etale $(\varphi,\Gamma)$-modules of rank $d$ over $\bfA_R$.
\end{dfn}

To define \'etale $(\varphi,\Gamma)$-modules with $G$-structure, we will adopt the approach of the Tannakian formalism.

\begin{dfn}
Let $R\in \Nilp_{\bZ_p}$. An \'etale $(\varphi,\Gamma)$-modules with $G$-structure and $R$-coefficient is an exact $\otimes$-functor $F:\Rep(G)\to \Mod^{\varphi,\Gamma}(\bfA_R)$, where $\Rep(G)$ is the category of algebraic representations of $G$ on finite free $\bZ_p$-modules. Let $F_1, F_2$ be two such functors. A morphism $F_1\to F_2$ is a natural transformation $\calU: F_1\to F_2$ satisfying $\calU_{V_1\otimes V_2}\simeq\calU_{V_1}\otimes\calU_{V_2}$.

Let $\Mod^{\varphi,\Gamma}_{G}(\bfA_R)$ denote the category of \'etale $(\varphi,\Gamma)$-modules with $G$-structure and $R$-coefficient and $\Mod^{\varphi,\Gamma}_{G}(\bfA_R)^{\simeq}$ be its underlying groupoid.
\end{dfn}

Now we can define the prestack of  \'etale $(\varphi,\Gamma)$-modules with $G$-structure.
\begin{dfn}
    Let $\calX_G: \Nilp_{\bZ_p}\to {\rm Groupoids}$ be the prestack sending each $R\in \Nilp_{\bZ_p}$ to the groupoid $\Mod^{\varphi,\Gamma}_{G}(\bfA_R)^{\simeq}$.
\end{dfn}

In order to study the prestack $\calX_G$, we need to have a good understanding of $\Mod^{\varphi,\Gamma}_{G}(\bfA_R)^{\simeq}$. Note that if we forget about the $(\varphi,\Gamma)$-structure, then we get a functor $\calX_G(R)\to BG(\bfA_R)$ for each $R$. Traditionally, there are three equivalent definitions of $G$-torsors, i.e. the geometric one, the Tannakian one and the cohomological one (cf. \cite[Appendix to Lecture 19]{WS20}).

For our purpose, we also need another perspective introduced in \cite{Bro13}. Let us first recall some important notions in \cite{Bro13}.

\begin{dfn}[Tensorial construction]
   Let $V$ be an algebraic representation of $G$ on finite free $\bZ_p$-modules. By a tensorial construction $t(V)$, we mean some finite iteration of the operations $\bigwedge^i, \otimes, \oplus, \Sym^i, (\cdot)^{\vee}$.
\end{dfn}

Broshi gave another description of the algebraic group $G$, which leads to a new interpretation of the $G$-torsors.

\begin{thm}[{\cite[Theorem 1.1]{Bro13}}]\label{Broshi-G}
Let $G$ be a flat affine group scheme of finite type over a Dedekind scheme $X$. Then there is an algebraic representation $V$ of $G$, a tensorial construction $t(V)$ and a locally split line bundle $L\subset t(V)$ such that $G\simeq \underline\Aut(V,L)$, where $\underline\Aut(V,L)$ is the representable functor whose $T$-points are automorphisms $f$ of $V\otimes \calO_T$ such that $f(L\otimes \calO_T)=L\otimes \calO_T$.
\end{thm}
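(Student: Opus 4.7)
The plan is to adapt the classical ``Chevalley''-style construction (realising $G$ as the stabiliser of a line in some tensor representation) and make it work integrally over the Dedekind base $X$. The three essential inputs are: (a) a faithful representation $V$ of $G$ on a finite locally free $\calO_X$-module; (b) a sub-$\calO_X$-module $M$ of some tensorial construction $t(V)$ whose stabiliser inside $\GL(V)$ is exactly $G$; and (c) a passage from $M$ to a locally split line inside another tensorial construction.

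First I would produce a faithful representation. Writing $\calO_G$ as the filtered colimit of its finitely generated $G$-stable $\calO_X$-submodules (which exist because the coaction is locally finite, by the standard argument that every section of $\calO_G$ lies in some finitely generated $G$-stable submodule), and using that $G$ is of finite type over $X$, one finds a finitely generated $G$-stable $\calO_X$-submodule $W \subset \calO_G$ containing algebra generators of $\calO_G$. Replacing $W$ by its saturation inside $\calO_G$ (which is still $G$-stable because the coaction is $\calO_X$-flat, and which is finite locally free over $X$ since $X$ is Dedekind and $\calO_G$ is $\calO_X$-flat) produces a faithful representation $V := W$, i.e. a closed immersion $G \hookrightarrow \GL(V)$.

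Second I would cut out $G$ inside $\GL(V)$. Let $I$ be the ideal sheaf of $G$ in $\GL(V)$, viewed as a subobject of $\calO_{\GL(V)}$ under left translation by $G$. By local finiteness again, $I$ is the filtered union of its finitely generated $G$-stable $\calO_X$-submodules; a sufficiently large such submodule $N$ has the property that $\{g \in \GL(V) : g \cdot N = N\} = G$. Since $\calO_{\GL(V)}$ sits naturally inside tensorial constructions of $V \oplus V^\vee$ (after inverting $\det$, which itself is a tensorial piece), one embeds $N$ into some explicit $t(V)$ and then replaces it by its saturation $M \subset t(V)$, which is a finitely generated $G$-stable $\calO_X$-submodule that is locally a direct summand of $t(V)$ and still satisfies $\underline{\Aut}(V, M) = G$.

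Third I would line-ify. Let $r$ be the rank of $M$; then $L := \bigwedge^r M$ is a line bundle and embeds into $\bigwedge^r t(V)$, itself a tensorial construction. Any automorphism of $V$ preserving $M$ preserves $L$ tautologically, and the converse follows from the Plücker recovery $M = \{v \in t(V) : v \wedge L = 0 \text{ in } \bigwedge^{r+1} t(V)\}$ locally on $X$, after further enlarging the tensor construction if necessary. Hence $\underline{\Aut}(V, L) = \underline{\Aut}(V, M) = G$, which is the desired identification.

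The main obstacle is the interplay between saturation and $G$-stability in the second and third steps: over a field every finitely generated $G$-stable submodule is automatically a direct summand on the relevant Grassmannian, but over a Dedekind scheme one has to pass to saturations and check that (i) saturation preserves $G$-stability (uses flatness of $G$ over $X$, so that the coaction commutes with the localisations at codimension-one primes defining saturation), and (ii) the stabiliser of the saturation in $\GL(V)$ is no larger than that of the original submodule. The latter is the delicate point that fails in the non-flat setting and is exactly what pins Broshi's theorem to the hypothesis that $G$ is flat over a Dedekind base.
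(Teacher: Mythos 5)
This statement is imported verbatim from Broshi (\cite[Theorem 1.1]{Bro13}); the paper gives no proof of it, so there is nothing internal to compare against, and your proposal has to stand on its own as a reconstruction of Broshi's argument. Its overall skeleton is the right one — a Chevalley-type argument: faithful representation via local finiteness of the coaction, cutting out $G$ as the transporter of a finitely generated $G$-stable piece of the ideal of $G$ in $\GL(V)$, embedding that piece in a tensorial construction, and passing to a top exterior power to get a line. Steps 1 and 3 are essentially fine (the Pl\"ucker recovery $M=\{w: w\wedge\omega=0\}$ does require $M$ to be a local direct summand, which you note).

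The genuine gap is exactly at the point you flag and then leave unresolved: you replace the $G$-stable submodule by its saturation $M$ inside $t(V)$ and assert that one must ``check \dots that the stabiliser of the saturation is no larger than that of the original submodule,'' but you give no argument, and none is available in the generality in which you set it up --- the stabiliser of a larger lattice can genuinely be strictly larger, and $G\subseteq\underline\Aut(V,M)\subseteq\underline\Aut(V,M^{\mathrm{sat}})$ with equality at the first inclusion does not force equality at the second (comparing generic fibres only shows the two stabilisers agree over the generic points, not that the closed subscheme $\underline\Aut(V,M^{\mathrm{sat}})$ has no extra components in closed fibres). The way Broshi (and Kisin in the reductive case) actually dispose of this is to avoid saturation altogether: if $W\subset\calO_{\GL(V)}$ is a finitely generated $G$-stable submodule such that $W\cap I$ generates the ideal $I$ of $G$, then $W/(W\cap I)$ injects into $\calO_{\GL(V)}/I=\calO_G$, which is $\calO_X$-flat because $G$ is flat, hence torsion-free over the Dedekind base; so $W/(W\cap I)$ is finitely generated torsion-free, hence locally free, and $W\cap I$ is \emph{automatically} a local direct summand of $W$. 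This is precisely where the flatness hypothesis enters, and it removes the saturation step (and with it the stabiliser-enlargement problem) rather than requiring one to control it. You should restructure step 2 along these lines; as written, the proposal identifies the crux but does not prove it. A secondary, fixable wrinkle: a finitely generated submodule of $\calO_{\GL(V)}=\Sym^\bullet(\End(V)^\vee)[1/\det]$ sits naturally as a sub of a \emph{quotient} of a tensorial construction (cf.\ the paper's Lemma \ref{Lem-rep}), so realizing $L$ as a genuine submodule of some $t(V)$ over $\bZ_p$ requires juggling duals (e.g.\ $\Sym^i(V)^\vee\hookrightarrow (V^\vee)^{\otimes i}$) rather than the bare embedding you assert.
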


In particular, this theorem applies to $X=\Spec(\bZ_p)$. This is indeed a generalisation of Chevalley's theorem to mixed characteristic.

From now on, we fix a triple $(V,t(V),L)$ as in Theorem \ref{Broshi-G} such that $G\simeq \underline\Aut(V,L)$. Then we have the following definition.

\begin{dfn}[Twist of $(V,L)$]\label{Y-twist}
    Let $Y$ be a scheme over $\bZ_p$. We define a $Y$-twist to be a pair $(\calE_Y,\calL_Y)$ where $\calE_Y$ is a vector bundle on $Y$ and $\calL_Y$ is a locally split line bundle in $t(\calE_Y)$ such that $(\calE_Y,\calL_Y)$ is fppf locally isomorphic to $(V,L)$, i.e. there is a fppf cover $Y'\to Y$ and an isomorphism $f: \calE_{Y'}\xrightarrow{\simeq}V_{Y'}$ such that $f(\calL_{Y'})=L_{Y'}$. An isomorphism of $Y$-twists $g:(\calE_Y,\calL_Y)\to (\calE'_Y,\calL'_Y)$ is an isomorphism of vector bundles $g:\calE_Y\to \calE'_Y$ such that $g(\calL_Y)=\calL'_Y$. Let ${\rm Twist}_Y(V,L)$ denote the groupoid of $Y$-twists.
\end{dfn}

One nice consequence of the characterisation $G\simeq \underline\Aut(V,L)$ is the following equivalent descriptions of $G$-torsors.

\begin{thm}[{\cite[Theorem 1.2, Corollary 1.3]{Bro13}}]\label{G-description}
    Let $G$ be a flat affine group scheme of finite type over $\bZ_p$. For any scheme $Y$ over $\bZ_p$, there is a natural equivalence functorial in $Y$ of the following groupoids.
    \begin{enumerate}
        \item The groupoid $BG(Y)$ of geometric $G$-torsors over $Y$.
        \item The groupoid $\Fun^{{\rm ex},\otimes}(\Rep(G),\Bun(Y))$ of exact $\otimes$-functors $F:\Rep(G)\to \Bun(Y)$, where $\Bun(Y)$ is the category of vector bundles on $Y$.
        \item The groupoid ${\rm Twist}_Y(V,L)$ of $Y$-twists of $(V,L)$.
    \end{enumerate}

    Explicitly, given a geometric $G$-torsor $P$, the corresponding functor $F\in \Fun^{{\rm ex},\otimes}(\Rep(G),\Bun(Y))$ sends $V\in \Rep(G)$ to $P\times^G (V\otimes_{\bZ_p}\calO_Y)$. Given a functor $F\in \Fun^{{\rm ex},\otimes}(\Rep(G),\Bun(Y))$, the corresponding $Y$-twist is given by $(F(V),F(L))$. Given a $Y$-twist $(\calE_Y,\calL_Y)$, the corresponding $G$-torsor is $\underline\Isom((V_Y,L_Y),(\calE_Y,\calL_Y))$.
\end{thm}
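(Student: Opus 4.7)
The plan is to establish the two equivalences $BG(Y) \simeq \Fun^{{\rm ex},\otimes}(\Rep(G),\Bun(Y))$ and $BG(Y) \simeq \Twist_Y(V,L)$ by exhibiting explicit functors in both directions and checking they are mutually quasi-inverse; compatibility with the third description then follows automatically from the explicit formulas in the statement, since $(F(V),F(L))$ agrees with $(P\times^G V, P\times^G L)$ when $F$ is the Tannakian functor attached to $P$.

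For the Tannakian equivalence, I would send a $G$-torsor $P$ to the functor $V \mapsto P \times^G \underline\Spec(\Sym^{\bullet}(V^{\vee} \otimes_{\bZ_p} \calO_Y))$; exactness and tensor-compatibility are formal from the associated bundle construction, and the output lies in $\Bun(Y)$ because $P$ is fppf-locally trivial. For the inverse, given an exact $\otimes$-functor $F$, one reconstructs the torsor as $\underline\Isom^{\otimes}(\omega_0, F)$, where $\omega_0: \Rep(G) \to \Bun(Y)$ is the trivial fibre functor $V \mapsto V \otimes_{\bZ_p} \calO_Y$; checking that this is a $G$-torsor uses faithfully flat descent together with the fact that $F$ and $\omega_0$ agree fppf-locally on $Y$.

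For the second equivalence, the key input is Broshi's presentation $G \simeq \underline\Aut(V,L)$ from Theorem \ref{Broshi-G}. The forward direction sends $P$ to $(P \times^G V,\, P \times^G L)$; because $G$ stabilises $L \subset t(V)$, the line bundle $P \times^G L$ sits naturally inside $t(P \times^G V) = P \times^G t(V)$, and fppf-local triviality of $P$ provides the required local identification with $(V_Y, L_Y)$. The inverse sends a $Y$-twist $(\calE_Y, \calL_Y)$ to $\underline\Isom((V_Y,L_Y), (\calE_Y, \calL_Y))$, which by construction is a right pseudo-torsor under $\underline\Aut(V_Y,L_Y) = G_Y$, and the fppf-local triviality built into the definition of $Y$-twist forces it to be a genuine $G$-torsor. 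Verifying that the two compositions are canonically isomorphic to the identity reduces to unwinding associated-bundle and Isom-sheaf constructions on a common fppf cover that trivialises everything simultaneously.

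The main obstacle I anticipate is the Tannakian reconstruction step: showing that for an arbitrary exact $\otimes$-functor $F$ the Isom-sheaf $\underline\Isom^{\otimes}(\omega_0, F)$ is nonempty fppf-locally on $Y$. Over a field this is the classical Saavedra--Deligne--Milne result, but here one needs an integral form valid for flat affine group schemes of finite type over $\bZ_p$, which is more delicate. A cleaner strategy is to bypass this step entirely and prove the theorem via the third description: the equivalence $BG(Y) \simeq \Twist_Y(V,L)$ relies only on Broshi's $\underline\Aut(V,L)$-presentation together with a straightforward fppf descent argument, and one then transports the Tannakian description across by post-composing with tensorial operations, using that every algebraic representation of $G$ is a subquotient of some tensorial construction on $V$.
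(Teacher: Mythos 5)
The paper does not actually prove this statement: it is imported verbatim from Broshi, cited as \cite[Theorem 1.2, Corollary 1.3]{Bro13}, with only a remark that the faithful-flatness hypothesis on $Y$ in \emph{loc.\ cit.} can be dropped. So there is no in-paper argument to measure yours against; what you have written is an attempted reconstruction of Broshi's proof, and it should be judged on those terms.

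Your treatment of the equivalence between (1) and (3) is sound: once $G\simeq\underline\Aut(V,L)$ is granted, the assignments $P\mapsto (P\times^G V,\,P\times^G L)$ and $(\calE_Y,\calL_Y)\mapsto \underline\Isom((V_Y,L_Y),(\calE_Y,\calL_Y))$ are mutually inverse by a routine descent argument, exactly as in \cite{Bro13}. The gap is in the equivalence with (2). You correctly identify that the hard step is showing $\underline\Isom^{\otimes}(\omega_0,F)$ is fppf-locally nonempty for an arbitrary exact $\otimes$-functor $F$, but your proposed workaround does not avoid it. To ``transport the Tannakian description across'' you must send $F$ to the pair $(F(V),F(L))$ and show that this pair is a $Y$-twist, i.e.\ fppf-locally isomorphic to $(V_Y,L_Y)$; that local triviality is precisely the statement you set out to bypass, merely restated for the single pair $(V,L)$ instead of for all of $\Rep(G)$, and it does not follow from exactness and tensor-compatibility by ``straightforward fppf descent.'' The subquotient lemma (every $W\in\Rep(G)$ is a subquotient of a tensorial construction on $V$) only helps in the opposite direction — reconstructing $F(W)$ for all $W$ once $F(V)$ and a local trivialization are in hand — and even there one must verify independence of the chosen presentation of $W$. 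Broshi's actual proof of his Theorem 1.2 establishes the local triviality by reducing to the local rings of the Dedekind base and using the generic fibre, where field-theoretic Tannakian reconstruction is available; some input of this kind is unavoidable and is absent from your sketch.
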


Given a $Y$-twist $(\calE,\calL)$, let us explain in detail why $P\times^G (V\otimes_{\bZ_p}\calO_Y)=\underline\Isom((V_Y,L_Y),(\calE_Y,\calL_Y))\times^GV_Y$ is isomorphic to $\calE_Y$. In fact, the vector bundle $P\times^G (V\otimes_{\bZ_p}\calO_Y)$ corresponds to the descent data $(V_P, \epsilon: V_P\otimes_{\calO_P,\pi_1}\calO_{P^1}\cong V_P\otimes_{\calO_P,\pi_2}\calO_{P^1}$, where $P^1=P\times_YP$ and $\pi_1,\pi_2$ correspond to the projections from $P^1$ to $P$. It turns out $\epsilon$ is just the action given by the unique element in $G(P^1\to Y)$ turning $pr_2:P^1\to P$ into $pr_1: P^1\to P$. Note that there is a universal isomorphism $\alpha:V_P\cong \calE_P$ corresponding to the identity $id: P\to P=\underline\Isom((V_Y,L_Y),(\calE_Y,\calL_Y))$. Then we have $pr_1^*\alpha: V_{P^1}\cong \calE_{P^1}$ (resp. $pr_2^*\alpha: V_{P^1}\cong \calE_{P^1}$) corresponding to the projection $pr_1:P^1\to P$ (resp. $pr_2: P^1\to P$). Now recall that the action of $G=\underline\Isom((V_Y,L_Y),(V_Y,L_Y))$ on $P=\underline\Isom((V_Y,L_Y),(\calE_Y,\calL_Y))$ is just by composition. Then one can check there is a commutative diagram
\begin{equation*}
    \xymatrix{
    V_P\otimes_{\calO_P,\pi_1}\calO_{P^1}\ar[r]^{\epsilon}\ar[d]^{pr_1^*\alpha}& V_P\otimes_{\calO_P,\pi_2}\calO_{P^1}\ar[d]^{pr_2^*\alpha}\\
    \calE_{P^1}\ar[r]^{id}& \calE_{P^1}.
    }
\end{equation*}
This actually gives an isomorphism of descent data between $(V_P, \epsilon: V_P\otimes_{\calO_P,\pi_1}\calO_{P^1}\cong V_P\otimes_{\calO_P,\pi_2}\calO_{P^1})$ and $(\calE_P, id: \calE_{P^1}\to \calE_{P^1})$. The latter just corresponds to the vector bundle $\calE_Y$. So we get an isomorphism $P\times^G (V\otimes_{\bZ_p}\calO_Y)\cong \calE_Y$.

\begin{rmk}
    The assumption that $Y$ is faithfully flat over $\bZ_p$ in \cite[Theorem 1.2]{Bro13} is unnecessary as mentioned in the footnote of \cite[Theorem 19.5.1]{WS20}. Also, we often do not distinguish a geometric $G$-torsor from its associated cohomological $G$-torsor (see \cite[Theorem 19.5.1]{WS20}).
\end{rmk}

\subsection{\'Etale $(\varphi,\Gamma)$-modules with $G$-structure}\label{subsec2.1}
Let $R\in \Nilp_{\bZ_p}$. The goal of this subsection is to show \'etale $(\varphi,\Gamma)$-modules with $G$-structure and $R$-coefficient are equivalent to \'etale $(\varphi,\Gamma)$-$\bfA_R$-twists, which are $\bfA_R$-twists equipped with compatible $(\varphi,\Gamma)$-actions.
Note that we can apply Theorem \ref{G-description} to $Y=\Spec(\bfA_R)$. In particular, we get an equivalence $\Fun^{{\rm ex},\otimes}(\Rep(G),\Bun(\bfA_R))\simeq {\rm Twist}_{\bfA_R}(V,L)$. Then the Frobenius map $\varphi_{\bfA_R}:\bfA_R\to \bfA_R$ induces a functor $\Fun^{{\rm ex},\otimes}(\Rep(G),\Bun(\bfA_R))\to \Fun^{{\rm ex},\otimes}(\Rep(G),\Bun(\bfA_R))$ by composing with the exact $\otimes$-functor $\varphi^*_{\bfA_R}:\Bun(\bfA_R)\to \Bun(\bfA_R)$. In the same way, we also have $\varphi^*_{\bfA_R}:{\rm Twist}_{\bfA_R}(V,L)\to {\rm Twist}_{\bfA_R}(V,L)$ as pullback preserves tensorial constructions. Then we get a commutative diagram
\begin{equation}\label{Diagram-1}
    \xymatrix@=1cm{
    \Fun^{{\rm ex},\otimes}(\Rep(G),\Bun(\bfA_R))\ar[d]^{\Ev}\ar[r]^{\varphi^*_{\bfA_R}} & \Fun^{{\rm ex},\otimes}(\Rep(G),\Bun(\bfA_R))\ar[d]^{\Ev}\\
    {\rm Twist}_{\bfA_R}(V,L)\ar[r]^{\varphi^*_{\bfA_R}} & {\rm Twist}_{\bfA_R}(V,L).
    }
\end{equation}

Note that the category $\Mod^{\varphi}(\bfA_R)$ of \'etale $\varphi$-modules over $\bfA_R$ can be defined via the following $2$-fiber product in $(2,1)$-category of categories
\begin{equation}\label{Diagram-2}
    \xymatrix@=1cm{
    \Mod^{\varphi}(\bfA_R)\ar[d]\ar[r] & \Bun(\bfA_R)\ar[d]^{(\varphi^*_{\bfA_R},id)}\\
    \Bun(\bfA_R)\ar[r]^-{\Delta}& \Bun(\bfA_R)\times \Bun(\bfA_R).
    }
\end{equation}

\begin{dfn}
    Let $\Twist^{\varphi}_{\bfA_R}(V,L)$ denote the $2$-fiber product
    \begin{equation}
    \xymatrix@=1cm{
    \Twist^{\varphi}_{\bfA_R}(V,L)\ar[d]\ar[r] & \Twist_{\bfA_R}(V,L)\ar[d]^{(\varphi^*_{\bfA_R},id)}\\
    \Twist_{\bfA_R}(V,L)\ar[r]^-{\Delta}& \Twist_{\bfA_R}(V,L)\times \Twist_{\bfA_R}(V,L).
    }
\end{equation}
and $\Fun^{{\rm ex},\otimes,\varphi}(\Rep(G),\Bun(\bfA_R))$ denote the $2$-fiber product
\begin{equation}
    \xymatrix@=1cm{
    \Fun^{{\rm ex},\otimes,\varphi}(\Rep(G),\Bun(\bfA_R))\ar[d]\ar[r] & \Fun^{{\rm ex},\otimes}(\Rep(G),\Bun(\bfA_R))\ar[d]^{(\varphi^*_{\bfA_R},id)} \\
    \Fun^{{\rm ex},\otimes}(\Rep(G),\Bun(\bfA_R))\ar[r]^-{\Delta} & \Fun^{{\rm ex},\otimes}(\Rep(G),\Bun(\bfA_R))\times \Fun^{{\rm ex},\otimes}(\Rep(G),\Bun(\bfA_R)).
    }
\end{equation}
\end{dfn}

Explicitly, an object in $\Twist^{\varphi}_{\bfA_R}(V,L)$ is a pair of \'etale $\varphi$-modules $(\calE_{\bfA_R}, \calL_{\bfA_R})$ such that the inclusion $\calL_{\bfA_R}\to t(\calE_{\bfA_R})$ is compatible with the induced Frobenius action on $t(\calE_{\bfA_R})$. Note that $\calL_{\bfA_R}$ is not required to be locally split as \'etale $\varphi$-modules.
\begin{lem}\label{Lem-varphi}
    There is an equivalence of groupoids $\Ev:\Fun^{{\rm ex},\otimes}(\Rep(G),\Mod^{\varphi}(\bfA_R))\xrightarrow{\simeq} \Twist^{\varphi}_{\bfA_R}(V,L)$.
\end{lem}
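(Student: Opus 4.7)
The plan is to reduce the statement to Theorem \ref{G-description} (Broshi's theorem) applied to the scheme $\Spec(\bfA_R)$, exploiting the fact that both sides of the desired equivalence are defined as $2$-fiber products over the Frobenius pullback functor.

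First I would unwind the definition of $\Mod^{\varphi}(\bfA_R)$ as the $2$-fiber product in \eqref{Diagram-2} to identify the left-hand side $\Fun^{{\rm ex},\otimes}(\Rep(G),\Mod^{\varphi}(\bfA_R))$ with the $2$-fiber product $\Fun^{{\rm ex},\otimes,\varphi}(\Rep(G),\Bun(\bfA_R))$ introduced above. Concretely, an exact $\otimes$-functor $F:\Rep(G)\to \Mod^{\varphi}(\bfA_R)$ is the same data as an exact $\otimes$-functor $F':\Rep(G)\to \Bun(\bfA_R)$ together with a natural isomorphism of exact $\otimes$-functors $\varphi^*_{\bfA_R}\circ F'\simeq F'$. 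This uses that post-composition with the exact $\otimes$-functor $\varphi^*_{\bfA_R}$ preserves exactness and tensor structure, so the universal property of the $2$-fiber product translates immediately into the $(\varphi$-)data on the target side.

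Next, Theorem \ref{G-description} applied to $Y=\Spec(\bfA_R)$ supplies the evaluation equivalence $\Ev:\Fun^{{\rm ex},\otimes}(\Rep(G),\Bun(\bfA_R))\xrightarrow{\simeq}\Twist_{\bfA_R}(V,L)$, and the commutative square \eqref{Diagram-1} expresses the compatibility of $\Ev$ with the Frobenius pullback on both legs. Here one uses naturality of $\Ev$ in the scheme $\Spec(\bfA_R)$ together with the fact that $\varphi^*_{\bfA_R}$ commutes with tensorial constructions $t(-)$, so that $\varphi^*_{\bfA_R}(\calL_{\bfA_R})\subset \varphi^*_{\bfA_R}(t(\calE_{\bfA_R}))=t(\varphi^*_{\bfA_R}\calE_{\bfA_R})$ genuinely defines an $\bfA_R$-twist. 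Since formation of $2$-fiber products is functorial in equivalences of diagrams, combining $\Ev$ on the two legs with the commutativity of \eqref{Diagram-1} induces an equivalence $\Fun^{{\rm ex},\otimes,\varphi}(\Rep(G),\Bun(\bfA_R))\xrightarrow{\simeq}\Twist^{\varphi}_{\bfA_R}(V,L)$, which together with the identification from the first step yields the lemma.

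The argument is essentially formal once Broshi's theorem is in hand. The only mild obstacle I expect is bookkeeping of the $2$-categorical coherence data in the $2$-fiber products — specifically, checking that the isomorphism $\varphi^*_{\bfA_R}\circ F'\simeq F'$ of exact $\otimes$-functors on the left corresponds, under $\Ev$, to an isomorphism of twists $\varphi^*_{\bfA_R}(\calE_{\bfA_R},\calL_{\bfA_R})\simeq (\calE_{\bfA_R},\calL_{\bfA_R})$ that is compatible with the inclusion $\calL_{\bfA_R}\hookrightarrow t(\calE_{\bfA_R})$. This coherence is provided by the naturality of $\Ev$ together with the compatibility of tensorial constructions with pullback, but should be spelled out carefully to avoid hidden commutativity checks.
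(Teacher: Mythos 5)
Your proposal is correct and follows essentially the same route as the paper: identify $\Fun^{{\rm ex},\otimes}(\Rep(G),\Mod^{\varphi}(\bfA_R))$ with the $2$-fiber product $\Fun^{{\rm ex},\otimes,\varphi}(\Rep(G),\Bun(\bfA_R))$ via Diagram \ref{Diagram-2}, then transport the $\varphi$-data across the evaluation equivalence of Theorem \ref{G-description} using the commutativity of Diagram \ref{Diagram-1}. The paper's proof is just a more terse version of the same argument.
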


\begin{proof}
    As the two functors $\Mod^{\varphi}(\bfA_R)\to \Bun(\bfA_R)$ sending $(\varphi^*M\xrightarrow{\simeq}M)$ to $\varphi^*M$, $M$ respectively are both exact $\otimes$-functors, we then have \[\Fun^{{\rm ex},\otimes}(\Rep(G),\Mod^{\varphi}(\bfA_R))\simeq \Fun^{{\rm ex},\otimes,\varphi}(\Rep(G),\Bun(\bfA_R))\] by Diagram \ref{Diagram-2} and the fact that the morphisms between exact $\otimes$-functors are required to be symmetrical monoidal. Then this lemma follows from Diagram \ref{Diagram-1}.
\end{proof}

Now choosing a topological generator $\gamma$ of $\Gamma$, we can consider the subgroup $\Gamma_{\rm disc}:=\langle\gamma\rangle\subset \Gamma$ generated by $\gamma$. We can then define $\Fun^{{\rm ex},\otimes}(\Rep(G),\Mod^{\varphi,\Gamma_{\rm disc}}(\bfA_R))$ and $\Twist^{\varphi,\Gamma_{\disc}}_{\bfA_R}(V,L)$ similarly. 

\begin{lem}\label{Gamma_disc}
    There is an equivalence of groupoids $\Ev:\Fun^{{\rm ex},\otimes}(\Rep(G),\Mod^{\varphi,\Gamma_{\rm disc}}(\bfA_R))\xrightarrow{\simeq} \Twist^{\varphi,\Gamma_{\disc}}_{\bfA_R}(V,L)$.
\end{lem}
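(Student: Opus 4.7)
The plan is to reduce Lemma \ref{Gamma_disc} to Lemma \ref{Lem-varphi} by expressing both sides as $2$-fiber products encoding the action of the single generator $\gamma$.

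Since $\Gamma_{\disc}=\langle\gamma\rangle$ is the free group of rank one, and since $\Gamma_{\disc}$ is discrete so no continuity constraint intervenes, giving a semilinear $\Gamma_{\disc}$-action on an \'etale $\varphi$-module $M$ commuting with $\varphi$ is precisely the datum of an $\bfA_R$-linear isomorphism $\gamma^*M\xrightarrow{\sim}M$ in $\Mod^\varphi(\bfA_R)$. Here $\gamma^*$ denotes the base change functor along the ring automorphism $\gamma:\bfA_R\to\bfA_R$, which is an exact $\otimes$-auto-equivalence. This yields a $2$-pullback description
\[
\Mod^{\varphi,\Gamma_{\disc}}(\bfA_R)\simeq \Mod^\varphi(\bfA_R)\times_{\Mod^\varphi(\bfA_R)\times\Mod^\varphi(\bfA_R)}\Mod^\varphi(\bfA_R),
\]
where the two maps to the product are $\Delta$ and $(\gamma^*,\id)$ respectively. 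The same reasoning, together with the observation that $\gamma^*$ is compatible with tensorial constructions and carries $\bfA_R$-twists to $\bfA_R$-twists, yields the analogous $2$-pullback description of $\Twist^{\varphi,\Gamma_{\disc}}_{\bfA_R}(V,L)$.

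Next I would apply $\Fun^{\rm ex,\otimes}(\Rep(G),-)$ to the first pullback square. Because the connecting functors $\Delta$ and $(\gamma^*,\id)$ are exact $\otimes$-functors, and because morphisms in $\Fun^{\rm ex,\otimes}(\Rep(G),-)$ are symmetric monoidal natural transformations, the fiber-product condition can be tested object-wise on $\Rep(G)$, so $\Fun^{\rm ex,\otimes}(\Rep(G),-)$ commutes with this $2$-pullback. Lemma \ref{Lem-varphi} then provides an equivalence between each vertex of the resulting pullback diagram and the corresponding vertex of the $2$-pullback defining $\Twist^{\varphi,\Gamma_{\disc}}_{\bfA_R}(V,L)$.

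To conclude, I need to know that these pointwise equivalences assemble into an equivalence of $2$-pullback squares, i.e. that the evaluation functor $\Ev$ of Lemma \ref{Lem-varphi} is natural under base change along $\gamma$. This is immediate from the explicit description of $\Ev$ given in Theorem \ref{G-description}, since pullback along $\gamma$ commutes with the tensorial constructions used to form $t(V)$ and with the formation of the pair $(F(V),F(L))$. Taking $2$-fiber products on both sides then produces the claimed equivalence. I do not expect any serious obstacle: once the $2$-pullback descriptions are in place the entire argument is formal, and the only point requiring a moment's attention is the naturality of $\Ev$ with respect to $\gamma^*$, which is transparent from the construction.
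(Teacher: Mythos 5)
Your proposal is correct and matches the paper's intent exactly: the paper's proof of this lemma is literally ``Similar to Lemma \ref{Lem-varphi}'', i.e. one replaces $\varphi^*$ by $\gamma^*$ in the $2$-fiber-product diagrams, uses that $\Fun^{\rm ex,\otimes}(\Rep(G),-)$ commutes with these $2$-pullbacks, and invokes the naturality of $\Ev$ under base change along $\gamma$. Your write-up simply makes that reduction explicit.
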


\begin{proof}
    Similar to Lemma \ref{Lem-varphi}.
\end{proof}

By replacing $\Gamma_{\disc}$ with $\Gamma$, we can define the groupoids $\Fun^{{\rm ex},\otimes}(\Rep(G),\Mod^{\varphi,\Gamma}(\bfA_R))$ and $\Twist^{\varphi,\Gamma}_{\bfA_R}(V,L)$ by requiring the $\Gamma$-actions to be continuous. As $\Mod^{\varphi,\Gamma}(\bfA_R)$ is a full subcategory of $\Mod^{\varphi,\Gamma_{\rm disc}}(\bfA_R)$ and the inclusion functor is an exact $\otimes$-functor, we see $\Fun^{{\rm ex},\otimes}(\Rep(G),\Mod^{\varphi,\Gamma}(\bfA_R))$ is also a full subcategory of $\Fun^{{\rm ex},\otimes}(\Rep(G),\Mod^{\varphi,\Gamma_{\disc}}(\bfA_R))$. By Lemma \ref{Gamma_disc}, we get a fully faithful functor $\Fun^{{\rm ex},\otimes}(\Rep(G),\Mod^{\varphi,\Gamma}(\bfA_R))\to \Twist^{\varphi,\Gamma_{\disc}}_{\bfA_R}(V,L)$, whose essential image lies in $\Twist^{\varphi,\Gamma}_{\bfA_R}(V,L)$.

Then we have the following equivalence.

\begin{lem}\label{Lemma-Gamma}
    The evaluation functor $Ev:\Fun^{{\rm ex},\otimes}(\Rep(G),\Mod^{\varphi,\Gamma}(\bfA_R))\xrightarrow{\simeq} \Twist^{\varphi,\Gamma}_{\bfA_R}(V,L)$ is an equivalence.
\end{lem}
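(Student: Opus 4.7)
The plan is to establish essential surjectivity of $\Ev$, since the paragraph preceding the statement already records that the induced functor is fully faithful: by Lemma \ref{Gamma_disc} the composition
\[
\Fun^{{\rm ex},\otimes}(\Rep(G),\Mod^{\varphi,\Gamma}(\bfA_R)) \hookrightarrow \Fun^{{\rm ex},\otimes}(\Rep(G),\Mod^{\varphi,\Gamma_{\disc}}(\bfA_R)) \xrightarrow{\Ev} \Twist^{\varphi,\Gamma_{\disc}}_{\bfA_R}(V,L)
\]
is fully faithful and factors through $\Twist^{\varphi,\Gamma}_{\bfA_R}(V,L)$.

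Given $(\calE,\calL)\in \Twist^{\varphi,\Gamma}_{\bfA_R}(V,L)$, I would first view it as an object of $\Twist^{\varphi,\Gamma_{\disc}}_{\bfA_R}(V,L)$ and apply Lemma \ref{Gamma_disc} to produce an exact $\otimes$-functor $F:\Rep(G)\to \Mod^{\varphi,\Gamma_{\disc}}(\bfA_R)$ with $(F(V),F(L))\simeq (\calE,\calL)$. The remaining task is to show that for every $W\in \Rep(G)$, the $\Gamma$-action on the finite projective $\bfA_R$-module $F(W)$ is continuous for its canonical topology. The key structural input will be the Chevalley-type theorem for flat affine group schemes of finite type over $\bZ_p$: since $V$ is a faithful representation ($G$ embeds in $\GL(V)$), any $W\in \Rep(G)$ is a subquotient of $\bigoplus_{i} t_i(V)$ for finitely many tensorial constructions $t_i$. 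Applying the exact tensor functor $F$ realises $F(W)$ as a subquotient in $\Mod^{\varphi,\Gamma_{\disc}}(\bfA_R)$ of $\bigoplus_i t_i(\calE)$, and each $t_i(\calE)$ automatically has continuous $\Gamma$-action since the operations $\otimes,\oplus,\Sym^i,\bigwedge^i,(\cdot)^{\vee}$ preserve continuity of $\Gamma$-actions on finite projective $\bfA_R$-modules.

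The hard part will be propagating continuity to subquotients, which I would isolate as a separate topological lemma: for a short exact sequence $0\to M'\to M\to M''\to 0$ in $\Mod^{\varphi,\Gamma_{\disc}}(\bfA_R)$ with $M,M',M''$ all finite projective over $\bfA_R$ and the $\Gamma$-action on $M$ continuous, the induced $\Gamma$-actions on $M'$ and $M''$ should be continuous for the canonical topologies. Because $M''$ is finite projective, the sequence is Zariski-locally split on $\Spec(\bfA_R)$, so the canonical topologies on $M'$ and $M''$ coincide with the subspace and quotient topologies inherited from $M$, and continuity of the $\Gamma$-action on $M$ descends to both. Applying this to the subquotient presentation of $F(W)$ furnishes the required continuity and completes the argument.
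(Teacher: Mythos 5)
Your argument is correct, but it follows a genuinely different route from the paper's. The paper proves essential surjectivity by passing to the geometric torsor $P=\Spec(S)=\underline\Isom((V_{\bfA_R},L_{\bfA_R}),(\calE_{\bfA_R},\calL_{\bfA_R}))$, equipping the (non-finitely-generated) ring $S$ with a filtered colimit topology, proving continuity of the $\Gamma$-action on $S$ via explicit presentations of the Isom-schemes and an open-mapping lemma (Lemmas \ref{Key-0} and \ref{key-1}), and then deducing continuity on $F_P(W)$ as a closed finite projective submodule of $S\otimes_{\bZ_p}W$; this forces the restriction to finite type $R$ (Noetherian Banach ring arguments) and a separate reduction of the general case via limit preservation at the end of Subsection \ref{subsec2.2}. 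You instead stay entirely inside finite projective modules: every $W$ is a subobject with finite free cokernel of a quotient of a tensorial construction on the faithful representation $V$ (this is exactly Lemma \ref{Lem-rep} of the paper, proved later but independently, so there is no circularity in invoking it), tensorial constructions visibly preserve continuity of semilinear $\Gamma$-actions on finite projective modules, and continuity passes to the relevant subobjects and quotients because in each step the cokernel is finite projective, so the sequence splits globally as $\bfA_R$-modules and the subspace/quotient topologies agree with the canonical ones. (Note the splittings are global, not just Zariski-local, and they need not be $\Gamma$-equivariant --- only the module-theoretic topology matters.) Your route is more elementary and, as far as I can see, works uniformly for all $p$-nilpotent $R$ without the finite-type-plus-colimit two-step. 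What the paper's heavier machinery buys is reusability: the filtered colimit topology and Lemma \ref{Key-0} are cited again in the obstruction-theory section (Lemma \ref{continuity}), and the proof also yields the remark that the groupoid is equivalent to that of $G$-torsors with continuous $\Gamma$-action; your argument would not supply those auxiliary tools.
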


It is clear that this functor is fully faithful. So it remains to prove it is essentially surjective. To prove this, we need to use the explicit functors described in Theorem \ref{G-description}, especially the geometric $G$-torsors. We assume $R$ is of finite type over $\bZ_p$ at the moment and leave the general case to the next section after we prove the stack of \'etale $(\varphi,\Gamma)$-twists is limit preserving.


Given an $\bfA_R$-twist $(\calE_{\bfA_R},\calL_{\bfA_R})\in \Twist^{\varphi,\Gamma}_{\bfA_R}(V,L)$, the geometric $G$-torsor corresponding is \[P:=\underline\Isom((V_{\bfA_R},L_{\bfA_R}),(\calE_{\bfA_R},\calL_{\bfA_R})).\] In particular, we have compatible $\varphi$-action and $\Gamma$-action on $P$ induced by those on $(\calE_{\bfA_R},\calL_{\bfA_R})$. More precisely, we can consider the isomorphism sheaf $P':=\underline\Isom((V_{\bfA_R},L_{\bfA_R}),(\varphi^*\calE_{\bfA_R},\varphi^*\calL_{\bfA_R}))$. As $(V,L)$ is actually defined over $\bZ_p$ which is $\varphi$-invariant, there is a natural isomorphism $(V_{\bfA_R},L_{\bfA_R})\simeq (\varphi^*V_{\bfA_R},\varphi^*L_{\bfA_R})$. Then for any scheme $Y$ over $\Spec(\bfA_R)$, we have $\Hom(Y,P')=\Hom(\varphi_*Y,P)$ where $\varphi_*Y$ is the $\bfA_R$-scheme $Y\to \Spec(\bfA_R)\xrightarrow{\varphi}\Spec(\bfA_R)$. This implies $P'=\varphi^*P$. So we get an isomorphism $\varphi^*P\xrightarrow{\simeq}P$ over $\Spec(\bfA_R)$ induced by $(\varphi^*\calE_{\bfA_R},\varphi^*\calL_{\bfA_R})\xrightarrow{\simeq}(\calE_{\bfA_R},\calL_{\bfA_R})$. The same reason shows that the $G$-torsor $P$ is equipped with a compatible $\Gamma$-action. We will have to discuss the ``continuity" of the $\Gamma$-action on $P$. To this end, we need to introduce a natural topology on $\calO(P)$.

Note that $\bfA_R$ is a Banach ring with the $T$-adic topology. In particular, $\bfA_R$ contains a topologically nilpotent unit. As $R$ is finite type over $\bZ_p$, we see that $\bfA_R$ is Noetherian. Then by \cite[Remark 2.2.11]{KL15}, any finite $\bfA_R$-module is naturally a finite Banach $\bfA_R$-module.

We collect some useful lemmas about finitely generated modules over $\bfA_R$.

\begin{lem}
    \begin{enumerate}
        \item Let $M$ be a finitely generated $\bfA_R$-module. Then any submodule $N$ of $M$ is closed.
        \item For any $\bfA_R$-linear morphism $f:M_1\to M_2$ of finite $\bfA_R$-modules, $f$ is continuous and strict (i.e. the quotient topology on $f(M_1)$ is the same as the subspace topology inherited from $M_2$).
    \end{enumerate}
\end{lem}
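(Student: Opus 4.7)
The plan rests on the fact that $\bfA_R$ is a Noetherian Banach ring with topologically nilpotent unit $T$, so every finitely generated $\bfA_R$-module admits a canonical Banach topology, characterised as the quotient topology from any presentation $\pi:\bfA_R^n\twoheadrightarrow M$. Independence of $\pi$ follows from the observation that any $\bfA_R$-linear map between free modules is automatically continuous (being given by a matrix over $\bfA_R$), so lifting one presentation along another shows the identity on $M$ is continuous between any two such quotient topologies, and hence they agree by symmetry.

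For part (1), I would descend to the integral structure. Set $\bfA_R^+:=(W(k_\infty)\otimes_{\bZ_p}R)[[T]]$, which is Noetherian and $T$-adically complete since $R$ is of finite type over $\bZ_p$, with $\bfA_R=\bfA_R^+[1/T]$. Given a submodule $N\subseteq M$, which is automatically finitely generated by Noetherianity of $\bfA_R$, I would choose an $\bfA_R^+$-lattice $M^+\subseteq M$ and consider $N^+:=N\cap M^+$. By the Artin--Rees lemma together with Krull's intersection theorem applied to the Noetherian ring $\bfA_R^+$, the submodule $N^+$ is closed in $M^+$ for the $T$-adic topology. Inverting $T$ then shows $N=N^+[1/T]$ is closed in $M=M^+[1/T]$ for the Banach topology.

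For part (2), continuity of $f$ would follow by lifting: fix presentations $\bfA_R^n\twoheadrightarrow M_1$ and $\bfA_R^m\twoheadrightarrow M_2$, lift $f$ to a continuous matrix map $\bfA_R^n\to\bfA_R^m$ whose composition with the projection to $M_2$ factors through $M_1$, and invoke the universal property of the quotient topology to deduce continuity of $f$. For strictness, note that $f(M_1)$ is a finite $\bfA_R$-module by Noetherianity, and part (1) shows it is closed in $M_2$. Both the quotient topology on $f(M_1)$ induced from $M_1$ and the subspace topology induced from $M_2$ are Banach topologies on the same finite $\bfA_R$-module $f(M_1)$, and the open mapping theorem for Banach modules over $\bfA_R$ then forces these two Banach topologies to coincide.

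The main technical obstacle is the closedness statement in part (1); once it is in hand, the rest of the lemma is a formal consequence of the open mapping theorem and the uniqueness of the canonical Banach topology on a finite module. The closedness itself relies crucially on the hypothesis that $R$ is of finite type over $\bZ_p$, which is exactly what ensures that the integral ring $\bfA_R^+$ is Noetherian and hence that the classical Artin--Rees/Krull apparatus from commutative algebra applies; this explains why the general case of $R$ is deferred until after a limit-preserving argument in the next subsection.
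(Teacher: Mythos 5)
Your proof is correct, and for part (1) it takes a genuinely different route from the paper. The paper establishes closedness by invoking the open mapping theorem of Kedlaya--Liu (valid since $\bfA_R$ is a Banach ring containing the topologically nilpotent unit $T$) and then running the proof of \cite[Proposition 3.7.2.1]{BGR84}; that argument ultimately rests on knowing that the natural topology on every finite $\bfA_R$-module is Hausdorff. Your descent to the integral model $\bfA_R^+=(W(k_\infty)\otimes_{\bZ_p}R)[[T]]$ proves exactly that Hausdorffness input by hand: since $\bfA_R^+$ is Noetherian and $T$-adically complete, $T$ lies in the Jacobson radical, so Krull's intersection theorem gives $\bigcap_n T^n(M^+/N^+)=0$, i.e.\ $\bigcap_n(N^++T^nM^+)=N^+$; multiplying by a power of $T$ then yields $\bigcap_n(N+T^nM^+)=N$, which is closedness of $N$ for the Banach topology on $M$ (whose neighbourhood basis at $0$ is $\{T^nM^+\}$). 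This is more elementary and self-contained than the paper's citation chain, at the cost of being tied to the explicit integral structure; the paper's route is shorter but leans on the reader checking that the BGR argument transfers. Your part (2) is essentially the paper's argument: continuity by lifting along presentations, and strictness by playing the two complete topologies on the closed submodule $f(M_1)$ against each other via the open mapping theorem (the paper phrases this as uniqueness of the natural topology, \cite[Remark 2.2.11(c)]{KL15}, which is the same content).

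Two small points you should make explicit if you write this up: first, Krull's intersection theorem in the form $\bigcap_n T^n\overline{M}=0$ needs $1+T\bfA_R^+\subseteq(\bfA_R^+)^\times$, which holds because $\bfA_R^+$ is a power series ring in $T$; second, the step ``inverting $T$'' should be spelled out as the reduction $\bigcap_n(N+T^nM^+)=N$ to $\bigcap_n(N^++T^nM^+)=N^+$ by clearing denominators, since the Banach topology on $M$ is not literally the $T$-adic topology of an $\bfA_R^+$-module.
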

\begin{proof}
    \begin{enumerate}
        \item Note that $\bfA_R$ contains a topological nilpotent unit $T$. Then by \cite[Lemma 2.2.6(b), Theorem 2.2.8]{KL15}, the open mapping theorem is valid. Now by the same proof of \cite[Proposition 3.7.2.1]{BGR84}, we see $N$ is closed.
        \item By  \cite[Remark 2.2.11(c)]{KL15}, the continuity is clear. Now by \cite[Theorem 2.2.8]{KL15}, the natural topology on $f(M_1)$ is equivalent to the quotient toplogy. So we just need to show the natural topology on $f(M_1)$ is equivalent to the subspace topology inherited from $M_2$. As the subspace topology makes $f(M_1)$ a finite Banach $\bfA_R$-module by Item (1), this follows from \cite[Remark 2.2.11(c)]{KL15}.
    \end{enumerate}
\end{proof}

\begin{dfn}[Filtered colimit topology]
    For any $\bfA_R$-module $M$, we can write $M=\colim M_i$ as the filtered colimit of all its finite $\bfA_R$-submodules. By endowing each $M_i$ with its natural topology, we can equip $M$ with the filtered colimit topology, i.e. every subset $N\subset M$ is open if and only if the intersection $N\cap M_i$ is open in $M_i$ for each $i$. 
\end{dfn}

The next lemma shows the compatibility of the filtered colimit topology and the natural topology for finite $\bfA_R$-modules.
\begin{lem}
    For any finitely generated $\bfA_R$-module $M$, the natural topology is equivalent to the filtered colimit topology.
\end{lem}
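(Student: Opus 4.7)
The plan is to prove both inclusions between the two topologies using the preceding lemma on continuity and strictness of $\bfA_R$-linear maps between finitely generated modules.

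First I would verify that the natural topology on $M$ is finer than (or equal to) the filtered colimit topology. Let $U\subset M$ be open for the natural topology. For any finite $\bfA_R$-submodule $M_i\subset M$, the inclusion $M_i\hookrightarrow M$ is $\bfA_R$-linear between finite $\bfA_R$-modules, hence continuous by part (2) of the previous lemma. Therefore $U\cap M_i$ is open in $M_i$ with its natural topology, and this holds for all $i$, so $U$ is open in the filtered colimit topology.

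For the converse inclusion, the key observation is simply that $M$ itself is a finitely generated $\bfA_R$-submodule of $M$, so it occurs as one of the $M_i$'s in the filtered colimit $M=\colim_i M_i$. Consequently, if $U\subset M$ is open in the filtered colimit topology then, taking $M_i=M$, the set $U=U\cap M$ must already be open in the natural topology on $M$. This gives the reverse inclusion and completes the proof.

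I do not expect any serious obstacle here: the entire argument is formal once one notes that $M$ is cofinal among its own finite submodules. The only subtle input is that the natural topology on $M$, viewed as the ``top'' member of the filtered system, genuinely agrees with the topology induced from the Banach structure used to define each $M_i$; but this is exactly guaranteed by part (2) of the previous lemma, which ensures that the natural topology on any finite $\bfA_R$-module is intrinsic and independent of the presentation.
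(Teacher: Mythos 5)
Your proof is correct and follows essentially the same route as the paper's: one direction via continuity of the inclusions $M_i\hookrightarrow M$ (the paper phrases this through the strictness/subspace-topology part of the preceding lemma, but continuity alone suffices exactly as you use it), and the converse via the observation that $M$ is itself cofinal among its finite submodules. No gaps.
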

\begin{proof}
    Let $U$ be an open subset of $M$ under the natural topology. Then for any submodule $M_i$ of $M$, the intersection $U\cap M_i$ is open in $M_i$ under the subspace topology on $M_i$. But the subspace topology is equivalent to the natural topology, we see $U\cap M_i$ is open in $M_i$ under the natural topology. So $U$ is open in $M$ under the filtered colimit topologt. For thr converse, let $U'$ be an open subset of $M$ under the filtered colimit topology, then by the definition of the filtered colimit topology, $U'$ is open in $M$ under the natural topology.
\end{proof}

\begin{lem}
    Let $M$ be an $\bfA_R$-module equipped with the filtered colimit topology. Then any $\bfA_R$-submodule $N$ of $M$ is closed in $M$.
\end{lem}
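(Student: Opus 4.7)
The plan is to unwind the definition of the filtered colimit topology and reduce the statement to the already established closedness result for submodules of finite $\bfA_R$-modules. Recall that under the filtered colimit topology, a subset $C \subset M$ is closed precisely when $C \cap M_i$ is closed in $M_i$ (with its natural topology) for every finitely generated $\bfA_R$-submodule $M_i \subset M$ appearing in the presentation $M = \colim M_i$. So it suffices to verify this criterion for $C = N$.

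First I would fix an arbitrary finitely generated $\bfA_R$-submodule $M_i$ of $M$ and form the intersection $N \cap M_i$. This is automatically an $\bfA_R$-submodule of the finitely generated module $M_i$. By the previous lemma (which asserts that every submodule of a finite $\bfA_R$-module is closed in the natural topology, using Noetherianness of $\bfA_R$ and the open mapping theorem of \cite{KL15}), the submodule $N \cap M_i$ is closed in $M_i$. Equivalently, $M_i \setminus (N \cap M_i) = (M \setminus N) \cap M_i$ is open in $M_i$.

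Since this holds for every $M_i$, the complement $M \setminus N$ meets each $M_i$ in an open subset, hence $M \setminus N$ is open in $M$ by the very definition of the filtered colimit topology. Therefore $N$ is closed, as desired.

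There is no real obstacle here: the statement is essentially a formal consequence of the previous lemma plus the tautology that closed sets in a filtered colimit topology are detected on each piece $M_i$. The only small point to mention carefully is that $N \cap M_i$ need not a priori be finitely generated as an abstract $\bfA_R$-module, but this is irrelevant since the previous lemma applies to arbitrary submodules of a finite $\bfA_R$-module (and in any case $\bfA_R$ is Noetherian under our running hypothesis that $R$ is of finite type over $\bZ_p$, so $N \cap M_i$ is automatically finite).
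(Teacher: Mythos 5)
Your proof is correct and follows essentially the same route as the paper: intersect $N$ with each finite submodule $M_i$, observe that $N\cap M_i$ is a (finite, by Noetherianness) submodule of $M_i$ and hence closed by the previous lemma, and conclude via the definition of the filtered colimit topology. Your extra remark that the closed-set criterion is dual to the stated open-set criterion is a worthwhile clarification but does not change the argument.
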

\begin{proof}
    Note that the intersection of $N$ with each finite $\bfA_R$-submodule $M_i$ of $M$ is again finite $\bfA_R$-submodule of $M_i$. As any finite $\bfA_R$-submodule of $M_i$ is closed in $M_i$, we see that $N$ is closed in $M$.
\end{proof}

Write $P=\Spec(S)$ with $S$ being faithfully flat over $\bfA_R$. Equip $S$ with the filtered colimit topology.

\begin{lem}\label{Key-0}
    The $\Gamma$-action on $S$ is continuous.
\end{lem}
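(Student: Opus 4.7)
The plan is to realise $S$ Tannakianly as a filtered colimit of \'etale $(\varphi,\Gamma)$-modules and then transfer continuity from each finite piece to the colimit topology. Since $G$ is affine, flat, and of finite type over $\bZ_p$, its coordinate ring $\calO(G)$ is exhausted by its finite $G$-stable $\bZ_p$-subrepresentations under the regular coaction, and by passing to $\bZ_p$-flat subquotients one can arrange
\[
\calO(G) = \varinjlim_{i} V_i,\quad V_i\in \Rep(G),
\]
with each $V_i$ finite free over $\bZ_p$ and $G$-stable. Let $F\in \Fun^{{\rm ex},\otimes}(\Rep(G),\Mod^{\varphi,\Gamma}(\bfA_R))$ be the functor corresponding (via Theorem \ref{G-description} and the fully faithful part of Lemma \ref{Lemma-Gamma}) to the twist $(\calE_{\bfA_R},\calL_{\bfA_R})$. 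Interpreting $\calO(P)$ as the twist of $\calO(G)$ by the torsor $P$, one obtains a natural $\Gamma$-equivariant identification
\[
S\;\simeq\;\varinjlim_i F(V_i),
\]
which I would verify fppf-locally on $\Spec(\bfA_R)$, where $P$ trivialises and both sides reduce to $\bfA_R\otimes_{\bZ_p}\calO(G)$.

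Each $F(V_i)$ is a finite projective $\bfA_R$-module belonging to $\Mod^{\varphi,\Gamma}(\bfA_R)$, and hence carries its natural Banach topology together with a continuous semilinear $\Gamma$-action; the transition maps $F(V_i)\to F(V_j)$ are $\bfA_R$-linear between finite modules, and therefore automatically continuous and $\Gamma$-equivariant by functoriality and the lemmas already established in this subsection. A cofinality argument then shows that the $F(V_i)$ are cofinal among all finitely generated $\bfA_R$-submodules of $S$: any such submodule $M$ is generated by finitely many elements $s_1,\dots,s_n$, each $s_k$ lies in some $F(V_{i_k})$, and any $j$ dominating the indices $i_k$ gives $M\subset F(V_j)$. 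Consequently the filtered colimit topology on $S$ agrees with the colimit topology along $\{F(V_i)\}$, and joint continuity of the $\Gamma$-action on each piece transports to joint continuity on the colimit.

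The step I expect to be most delicate is the exhaustion $\calO(G)=\varinjlim V_i$ with the $V_i$ actually lying in $\Rep(G)$: the standard Hopf-algebra argument produces finitely generated $G$-stable $\bZ_p$-subrepresentations, but one needs them to be finite \emph{free}, not merely finitely generated, in order to apply $F$. This should be handled by killing $\bZ_p$-torsion in each finitely generated subrepresentation, using the flatness of $G$ to ensure the quotient remains a $G$-subrepresentation and the $\bZ_p$-flatness of $\calO(G)$ to ensure nothing is lost in the colimit. A secondary subtlety is matching the natural Banach topology on each finite $\bfA_R$-module $F(V_i)$ with the topology used to define continuity of $\Gamma$ on objects of $\Mod^{\varphi,\Gamma}(\bfA_R)$, but this is again supplied by the structural lemmas on finite $\bfA_R$-modules established above.
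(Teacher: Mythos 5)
There is a genuine gap: your argument is circular at the key step. Lemma \ref{Key-0} is an ingredient in the proof of the \emph{essential surjectivity} part of Lemma \ref{Lemma-Gamma}; at this point in the paper all that is known is that the twist $(\calE_{\bfA_R},\calL_{\bfA_R})$ corresponds, via Theorem \ref{G-description} and Lemma \ref{Gamma_disc}, to an exact $\otimes$-functor $F$ valued in $\Mod^{\varphi,\Gamma_{\disc}}(\bfA_R)$, i.e.\ the modules $F(W)$ carry a $\gamma$-action whose \emph{continuity is not yet known}. When you write that $F$ lands in $\Mod^{\varphi,\Gamma}(\bfA_R)$ ``via the fully faithful part of Lemma \ref{Lemma-Gamma}'' and conclude that each $F(V_i)$ ``carries \dots a continuous semilinear $\Gamma$-action,'' you are asserting exactly the essential surjectivity statement that Lemma \ref{Key-0} is needed to prove. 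Full faithfulness of $\Ev$ does not place a given twist in its essential image, so there is no non-circular source for the continuity of $\Gamma$ on $F(V_i)$ for a general subrepresentation $V_i\subset\calO(G)$: the only modules whose $\Gamma$-action is continuous by hypothesis are $\calE_{\bfA_R}$, $\calL_{\bfA_R}$, and their tensorial constructions.

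The surrounding machinery in your proposal is sound (the identification $S\simeq\varinjlim_iF(V_i)$, the automatic freeness of finitely generated subcomodules of the torsion-free module $\calO(G)$, the cofinality of the $F(V_i)$ among finite $\bfA_R$-submodules of $S$, and the compatibility of the filtered colimit topology with $\Gamma\times(-)$ for compact $\Gamma$), so the strategy could be repaired: one must first prove continuity on each $F(V_i)$ by exhibiting $V_i$ as a subquotient of a tensorial construction on $V$ (as in Lemma \ref{Lem-rep}), applying $F$, and descending continuity along the resulting strict surjections and closed embeddings of finite $\bfA_R$-modules. But this is essentially the paper's own argument in disguise: the paper avoids the detour through $\Rep(G)$ entirely by presenting $S$ directly as a quotient of $\Sym^\bullet_{\bfA_R}(V_{\bfA_R}\otimes\calE_{\bfA_R}^{\vee})\otimes_{\bfA_R}\Sym^\bullet_{\bfA_R}(L_{\bfA_R}\otimes\calL_{\bfA_R}^{\vee})$, whose $\Gamma$-action is continuous because it is built from $\calE_{\bfA_R}$ and $\calL_{\bfA_R}$, and then transporting continuity along the surjection using the openness statement of Lemma \ref{key-1}. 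You should either adopt that presentation or supply the subquotient argument explicitly; as written, the proof does not go through.
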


\begin{proof}
    Note that $P=\Spec(S)=\underline\Isom((V_{\bfA_R},L_{\bfA_R}),(\calE_{\bfA_R},\calL_{\bfA_R}))$. We have the following pullback diagram
    \begin{equation}\label{Diagram-5}
        \xymatrix@=1cm{
        \underline\Isom((V_{\bfA_R},L_{\bfA_R}),(\calE_{\bfA_R},\calL_{\bfA_R}))\ar[d]\ar[r] & \underline\Isom(L_{\bfA_R},\calL_{\bfA_R})\ar[d]\\
        \underline\Isom(V_{\bfA_R},\calE_{\bfA_R})\ar[r] & \underline\Hom(L_{\bfA_R},t(\calE_{\bfA_R}))
        }
    \end{equation}
    where the right vertical map is induced by the inclusion $\calL_{\bfA_R}\hookrightarrow t(\calE_{\bfA_R})$ and the bottom horizonal map is induced by the inclusion $L_{\bfA_R}\hookrightarrow t(V_{\bfA_R})$.

   Note that $\underline\Isom(V_{\bfA_R},\calE_{\bfA_R})=\Spec(S_1)$ is a closed subscheme of $\underline\Hom(V_{\bfA_R},\calE_{\bfA_R})\times \underline\Hom(\calE_{\bfA_R},V_{\bfA_R})=\Spec(\tilde S_1)$, where $\tilde S_1=\Sym_{\bfA_R}^{\bullet}(V_{\bfA_R}\otimes \calE_{\bfA_R}^{\vee})\otimes_{\bfA_R}\Sym_{\bfA_R}^{\bullet}(V_{\bfA_R}^\vee\otimes \calE_{\bfA_R})$. There is a $\Gamma$-action on $\tilde S_1$ induced by the $\Gamma$-action on $\calE_{\bfA_R}$. As the $\Gamma$-action on $\calE_{\bfA_R}$ is continuous, we see that the $\Gamma$-action on $V_{\bfA_R}\otimes \calE_{\bfA_R}^{\vee}$ is also continuous, which implies that the induced $\Gamma$-action on $\tilde S_1$ is also continuous with respect to the filtered colimit topology. 
   
   Let $\underline\Isom(L_{\bfA_R},\calL_{\bfA_R})=\Spec(S_2)$ be the closed subscheme of $\underline\Hom(L_{\bfA_R},\calL_{\bfA_R})\times \underline\Hom(\calL_{\bfA_R},L_{\bfA_R})=\Spec(\tilde S_2)$, where $S_2=\Sym_{\bfA_R}^{\bullet}(L_{\bfA_R}\otimes \calL_{\bfA_R}^{\vee})\otimes_{\bfA_R}\Sym_{\bfA_R}^{\bullet}(L_{\bfA_R}^{\vee}\otimes \calL_{\bfA_R})$. Then the $\Gamma$-actions on $L_{\bfA_R}\otimes \calL_{\bfA_R}^{\vee}$ and $\tilde S_2$ are also continuous. 

   Then we see the induced $\Gamma$-action on $\tilde S_1\otimes_{\bfA_R}\tilde S_2$ is also continuous. Write $\underline\Hom(L_{\bfA_R},t(\calE_{\bfA_R}))=\Spec(S_3)$. We then have a surjective $\bfA_R$-morphism $\tilde S_1\otimes_{\bfA_R}\tilde S_2\to S_1\otimes_{S_3}S_2$ with compatible $\Gamma$-actions, i.e. there is a commutative diagram
   \begin{equation}
       \xymatrix@=1cm{
       \Gamma\times \tilde S_1\otimes_{\bfA_R}\tilde S_2\ar@{->>}[d]\ar@{->>}[r] & \tilde S_1\otimes_{\bfA_R}\tilde S_2\ar@{->>}[d] \\
       \Gamma \times S_1\otimes_{S_3}S_2\ar@{->>}[r] & S_1\otimes_{S_3}S_2.
       }
   \end{equation}
  The upper horizontal map is continuous. Then by Lemma \ref{key-1}, the bottom horizontal map is also continuous, i.e. the $\Gamma$-action on $S=S_1\otimes_{S_3}S_2$ is continuous.

\end{proof}


\begin{lem}\label{key-1}
    Let $f:M\twoheadrightarrow N$ be a surjective $\bfA_R$-linear morphism of $\bfA_R$-modules. Equip $M$ and $N$ with the filtered colimit topology. Then we have \begin{enumerate}
        \item The morphism $f$ is continuous.
        \item If $M_1$ is a saturated closed (resp. open) subset of $M$, i.e. $M_1=f^{-1}(f(M_1))$, then $f(M_1)$ is a closed (resp. open) subset of $N$. In particular, this means $f$ is indeed open. 
    \end{enumerate} 
\end{lem}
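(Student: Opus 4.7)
The plan is to reduce every topological assertion about $f$ to the analogous assertion for a restriction $f|_{M_j}: M_j \to N_j$ between finite $\bfA_R$-submodules. By the preceding lemma, such a restriction is continuous and strict, and is therefore a topological quotient map of (finite Banach) $\bfA_R$-modules, which is the only tool I need.

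For (1), given an open $U \subset N$, I want to show that $f^{-1}(U) \cap M_i$ is open in $M_i$ for every finite $\bfA_R$-submodule $M_i \subset M$. Since $f(M_i)$ is again a finite $\bfA_R$-submodule of $N$, the intersection $U \cap f(M_i)$ is open in $f(M_i)$ by the definition of the filtered colimit topology on $N$. Then $f^{-1}(U) \cap M_i = (f|_{M_i})^{-1}(U \cap f(M_i))$ is open in $M_i$ by continuity of $f|_{M_i}$.

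For (2), I first reduce to finite quotients. Given a finite $\bfA_R$-submodule $N_j \subset N$, I choose a finite $\bfA_R$-submodule $M_j \subset M$ with $f(M_j) = N_j$ by lifting generators of $N_j$ through the surjection $f$. A short diagram chase using the hypothesis $M_1 = f^{-1}(f(M_1))$ gives the key identity
\[
f(M_1) \cap N_j = f(M_1 \cap M_j),
\]
and an analogous chase shows that $M_1 \cap M_j$ is saturated with respect to $f|_{M_j}: M_j \to N_j$. Since $f|_{M_j}$ is a continuous strict surjection of finite Banach $\bfA_R$-modules, it is a quotient map of topological abelian groups, and hence sends saturated closed (resp. saturated open) sets to closed (resp. open) sets. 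Applied to $M_1 \cap M_j$ (which is closed, resp. open, in $M_j$ by the filtered colimit topology on $M$), this gives $f(M_1) \cap N_j$ closed, resp. open, in $N_j$ for every $j$, so $f(M_1)$ is closed, resp. open, in $N$.

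The final assertion that $f$ is itself open is then a formal consequence: given an arbitrary open $U \subset M$, translation by a fixed $a \in M$ is a homeomorphism of $M$ in the filtered colimit topology, because each $T_a|_{M_i}: M_i \to M_i + \langle a \rangle$ factors as the continuous inclusion $M_i \hookrightarrow M_i + \langle a \rangle$ followed by translation in the finite Banach module $M_i + \langle a \rangle$. Consequently $U + \ker f = \bigcup_{k \in \ker f}(U + k)$ is open, and it is tautologically saturated with respect to $f$, so (2) produces $f(U + \ker f) = f(U)$ open in $N$. The main technical nuisance in this plan is the bookkeeping around saturation and the verification that translations are homeomorphisms in the filtered colimit topology; both are routine once one commits to enlarging finite submodules so as to absorb any given element, but they do have to be spelled out carefully since the filtered colimit topology is not a priori a topological module structure.
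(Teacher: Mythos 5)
Your proof is correct and follows essentially the same strategy as the paper: reduce everything to restrictions between finite $\bfA_R$-submodules and invoke the continuity/strictness (open mapping) statement of the preceding lemma. The only substantive differences are organizational. In part (2) the paper fixes a finite submodule $N_0\subset N$ and expresses $N_0\setminus(N_0\cap f(M_1))$ as the union, over \emph{all} finite $M_i$ with $f(M_i)=N_0$, of the open images $f_i(M_i\setminus(M_1\cap M_i))$; you instead fix a single $M_j$ surjecting onto $N_j$ and prove the saturation identity $f(M_1)\cap N_j=f(M_1\cap M_j)$, then apply the quotient-map property of $f|_{M_j}$. The two are equivalent, and yours is marginally cleaner. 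In the final step your treatment is actually more careful than the paper's: the paper simply asserts that $M'+\ker f$ is open, whereas you correctly observe that this requires translations to be homeomorphisms for the filtered colimit topology (which is not a priori a module topology) and supply the enlargement argument $M_i\hookrightarrow M_i+\langle a\rangle$ that justifies it. So there is no gap; if anything you have patched a small one.
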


\begin{proof}
\begin{enumerate}
    \item Let $N_1$ be a closed subset of $N$. Then we need to prove that for any finite $\bfA_R$-submodule $M_0$ of $M$, the intersection $f^{-1}(N_1)\cap M_0$ is closed in $M_0$. Note that $f(M_0)$ is a finite $\bfA_R$-submodule of $N$. Therefore, $N_1\cap f(M_0)$ is closed in $f(M_0)$. Let $f_0:M_0\to f(M_0)$ be the restriction of $f$ on $M_0$. Then we have $f^{-1}(N_1)\cap M_0=f_0^{-1}(N_1\cap f(M_0))$, which is then closed in $M_0$ as $f_0$ is continuous.
    
    \item Let $M_1$ be a saturated closed subset of $M$ and $N_0$ be a finite $\bfA_R$-submodule of $N$. Consider $N_0\backslash (N_0\cap f(M_1))$, the complement of $N_0\cap f(M_1)$ in $N_0$. It suffices to prove $N_0\backslash (N_0\cap f(M_1))$ is open in $N_0$. Let $M_i$ be any finite $\bfA_R$-submodule of $M$ such that $f(M_i)=N_0$ and $f_i:M_i\to N_0$ be the restriction of $f$ on $M_i$. By the open mapping theorem, we see that $f_i$ is open. In particular, the image of the complement of $M_1\cap M_i$ in $M_i$ under the map $f_i$ is open in $N_0$. Take the union of images of all finite $\bfA_R$-submodules which map onto $N_0$, which is exactly $N_0\backslash (N_0\cap f(M_1))$. This proves the case of saturated closed subsets, which also implies the case of saturated open subsets. 
    
      Now suppose $M'\subset M$ is an open subset of $M'$. Let $I$ be the kernel of $f$. Then we see that the saturation ${\rm Sat}(M'):=f^{-1}(f(M'))$ is exactly the set $I+M'$, which is open. So this means $f(M')$ is open.
\end{enumerate}
     
\end{proof}

Now we finish the proof of Lemma \ref{Lemma-Gamma} for finite type $\bZ_p$-algebra $R$.
\begin{proof}[Proof of Lemma \ref{Lemma-Gamma} for finite type $\bZ_p$-algebra $R$]
    Given a geometric $G$-torsor $P=\Spec(S)$ coming from $\Twist^{\varphi,\Gamma}_{\bfA_R}(V,L)$, the associated functor $F_P\in \Fun^{{\rm ex},\otimes}(\Rep(G),\Mod^{\varphi,\Gamma_{\disc}}(\bfA_R))$ is given by $P\times^G(W\otimes_{\bZ_p}\calO_{\bfA_R}))$ for any $W\in \Rep(G)$, which is a vector bundle due to the fppf descent of vector bundles. In particular, we have the equaliser
    \begin{equation}\label{equaliser}
    \xymatrix{
    F_P(W)\ar[r] & \calO(P)\otimes_{\bZ_p}W\ar@<.5ex>[r]\ar@<-.5ex>[r] &\calO(P)\otimes_{\bZ_p} \calO(P)\otimes_{\bZ_p}W
    }
\end{equation}
where $\calO(P)\otimes_{\bZ_p}W$ (resp. $\calO(P)\otimes_{\bZ_p} \calO(P)\otimes_{\bZ_p}W$) is the vector bundle on $P$ (resp. $P^1:=P\times_{\Spec(\bfA_R)}P$) given by the pullback of $P\times^G(W\otimes_{\bZ_p}\calO_{\bfA_R}))$ along $P\to \Spec(\bfA_R)$ (resp. $P^1\to \Spec(\bfA_R)$). The $\varphi$-action and $\Gamma$-action on $F_P(W)$ are inherited from $P$. 

Now we want to prove the $\Gamma$-action on $F_P(W)$ is continuous. Note that $F_P(W)$  is a finite projective $\bfA_R$-submodule of $S\otimes_{\bZ_p}W$. Then the subspace topology on $F_P(W)\subset S\otimes_{\bZ_p}W$ is equivalent to its natural topology. To see this, let $U$ be an open subset of $F_P(W)$ under the subspace topology, then there exists an open subset $U'$ of $S\otimes_{\bZ_p}W$ such that $U'\cap F_P(W)=U$. By the definition of filtered colimit topology, we see $U$ is an open subset of $F_P(W)$ under the natural topology. For the converse, let $V$ be an open subset of $F_P(W)$ under the natural topology. As $F_P(W)$ is a closed subset of $S\otimes_{\bZ_p}W$, we see the union of $V$ and the complement $S\otimes_{\bZ_p}W\backslash F_P(W)$ is open in $S\otimes_{\bZ_p}W$, whose intersection with $F_P(W)$ is $V$. So $V$ is open under the subspace topology.

Now as the $\Gamma$-action on $S\otimes_{\bZ_p}W$ is continuous by Lemma \ref{Key-0}, we see $\Gamma\times F_P(W)\to F_P(W)$ must also be continuous. So $F_P$ is indeed in $\Fun^{{\rm ex},\otimes}(\Rep(G),\Mod^{\varphi,\Gamma_{}}(\bfA_R))$. We are done.
\end{proof}

\begin{rmk}
    Let $R$ be a finite type $\bZ_p$-algebra. The proof of Lemma \ref{Lemma-Gamma} shows that these two groupoids are also equivalent to the groupoid of $G$-torsors equipped with compatible semi-linear $\varphi$ and $\Gamma$-actions such that the $\Gamma$-action is continuous with respect to the filtered colimit topology.
\end{rmk}

\subsection{Surjective Hom functor of \'etale $\varphi$-modules}\label{subsec2.2}
In this section, we first study the prestack $\calR_G$ of \'etale $\varphi$-modules with $G$-structure and then use this to show $\calX_G$ is also a formal algebraic stack locally of finite presentation over $\Spf(\bZ_p)$.

Recall we fixed a pair $(V,L)$ such that $G\cong \underline\Aut(V,L)$. 
\begin{dfn}

 Let $\calR_G$ denote the prestack $\Nilp_{\bZ_p}\to {\rm Groupoids}$ sending each $R\in \Nilp_{\bZ_p}$ to the groupoid $\Fun^{{\rm ex},\otimes}(\Rep(G),\Mod^{\varphi}(\bfA_R))$ ($\xrightarrow{\simeq} \Twist^{\varphi}_{\bfA_R}(V,L)$ by Lemma \ref{Lem-varphi}).
        
\end{dfn}

Instead of using locally split line bundles in the definition of twists of $(V,L)$, the following lemma enables us to work with surjections of vector bundles, which will be more convenient for our aim.

\begin{lem}\label{easy-description}
    The groupoid $\Twist^{\varphi}_{\bfA_R}(V,L)$ (resp. $\Twist^{\varphi,\Gamma}_{\bfA_R}(V,L)$) is equivalent to the groupoid of pairs $(\calE_{\bfA_R},f:t(\calE_{\bfA_R})\twoheadrightarrow \calF_{\bfA_R})$ where $\calE_{\bfA_R}$ is an \'etale $\varphi$-module (resp. \'etale $(\varphi,\Gamma)$-modules) and $f$ is a surjective morphism of \'etale $\varphi$-modules (resp. \'etale $(\varphi,\Gamma)$-modules) such that $ker(f)$ is finite projective over $\bfA_R$ of rank $1$, and there exists a fppf cover $Y\to \Spec(\bfA_R)$ such that $(\calE_{Y},f:t(\calE_{Y})\twoheadrightarrow \calF_{Y})$ is isomorphic to $(V_Y, pr: t(V_Y)\to t(V_Y)/L_Y)$. 
\end{lem}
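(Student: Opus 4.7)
The plan is to produce mutually quasi-inverse functors by exchanging a locally split sub-line bundle $\calL \subset t(\calE)$ with the surjection onto its cokernel $t(\calE) \twoheadrightarrow t(\calE)/\calL$. First I would define the forward functor: to a twist $(\calE_{\bfA_R}, \calL_{\bfA_R}) \in \Twist^{\varphi}_{\bfA_R}(V,L)$, assign the pair $(\calE_{\bfA_R},\, f: t(\calE_{\bfA_R}) \twoheadrightarrow \calF_{\bfA_R})$ with $\calF_{\bfA_R} := t(\calE_{\bfA_R})/\calL_{\bfA_R}$ and $f$ the canonical projection. Since $\calL_{\bfA_R}$ is locally split in $t(\calE_{\bfA_R})$, the cokernel $\calF_{\bfA_R}$ is finite projective over $\bfA_R$, hence inherits a natural \'etale $\varphi$-structure (respectively $(\varphi,\Gamma)$-structure, with continuity of $\Gamma$ on the quotient inherited from that on the ambient module). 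By construction $\Ker(f) = \calL_{\bfA_R}$ is finite projective of rank $1$.

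In the opposite direction, I would send $(\calE_{\bfA_R},\, f: t(\calE_{\bfA_R}) \twoheadrightarrow \calF_{\bfA_R})$ to $(\calE_{\bfA_R},\, \Ker(f))$. By hypothesis $\Ker(f)$ is a finite projective rank $1$ submodule of $t(\calE_{\bfA_R})$, and the short exact sequence $0 \to \Ker(f) \to t(\calE_{\bfA_R}) \to \calF_{\bfA_R} \to 0$ is locally split since $\calF_{\bfA_R}$ is finite projective; in particular $\Ker(f)$ is a locally split line bundle in $t(\calE_{\bfA_R})$. The $\varphi$- and $\Gamma$-stability of $\Ker(f)$ is automatic from $f$ being a morphism of \'etale $\varphi$-modules (resp. $(\varphi,\Gamma)$-modules). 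These two assignments are mutual inverses via the canonical identifications $\Ker(t(\calE) \twoheadrightarrow t(\calE)/\calL) = \calL$ and $t(\calE)/\Ker(f) \simeq \calF$; on morphisms, an isomorphism $g: \calE \to \calE'$ with $t(g)(\calL) = \calL'$ is the same datum as an isomorphism inducing a compatible isomorphism of the cokernels commuting with $f$ and $f'$.

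The one point that requires genuine attention, and which I expect to be the main obstacle, is the translation of the fppf-local triviality condition. The key observation is that $(V, L) \mapsto (V,\, \mathrm{pr}: t(V) \twoheadrightarrow t(V)/L)$ is a functorial construction defined over $\bZ_p$ that commutes with arbitrary base change; dually, $\Ker$ commutes with flat base change and preserves rank $1$ finite projective submodules given a finite projective cokernel. Consequently an fppf cover $Y \to \Spec(\bfA_R)$ trivializing $(\calE_Y, \calL_Y) \simeq (V_Y, L_Y)$ also trivializes the surjection datum $(\calE_Y, f_Y) \simeq (V_Y, \mathrm{pr}_Y)$, and conversely. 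The $(\varphi,\Gamma)$-case follows from the $\varphi$-case without extra effort, since a $\varphi$-stable submodule (or quotient) inside an \'etale $(\varphi,\Gamma)$-module that is stable under $\Gamma$ automatically carries a continuous $\Gamma$-action, inherited from the ambient topology on $t(\calE_{\bfA_R})$ via the subspace or quotient topology.
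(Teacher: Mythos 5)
Your proposal is correct and follows the same route as the paper, whose entire proof is the observation that the quotient of $t(\calE_{\bfA_R})$ by a locally split line bundle is again a vector bundle — exactly the pivot of your two mutually inverse functors. Your additional remarks (the kernel of a surjection onto a projective module is automatically locally split, and kernels/cokernels commute with flat base change so the fppf-triviality conditions match) are the routine verifications the paper leaves implicit.
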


\begin{proof}
    This follows from the fact: If $\calL_{\bfA_R}$ is locally split in $t(\calE_{\bfA_R})$ as vector bundles, then the quotient $t(\calE_{\bfA_R})/\calL_{\bfA_R}$ is also a vector bundle.
\end{proof}

 Let $d=\dim_{\bZ_p}V$ and $t(d)=\dim_{\bZ_p}t(V)$. Attached to the chosen pair $(V,L)$, there is a natural morphism of prestacks $\pi:\calR_G\to \calR_{d}\times_{\Spf(\bZ_p)}\calR_{t(d)-1}$ defined by 
 \[
(\calE_{\bfA_R},f:t(\calE_{\bfA_R})\twoheadrightarrow \calF_{\bfA_R})\mapsto (\calE_{\bfA_R},\calF_{\bfA_R})
 \]
 where $\calR_d$, resp. $\calR_{t(d)-1}$, is the prestack of finite projective \'etale $\varphi$-modules of rank $d$, resp. $t(d)-1$.

 In order to study the morphism $\pi$, let us introduce another prestack $\calR^{\circ}_G$ which sends each $R\in \Nilp_{\bZ_p}$ to the groupoid of pairs $(\calE_{\bfA_R},f:t(\calE_{\bfA_R})\twoheadrightarrow \calF_{\bfA_R})$ where $\calE_{\bfA_R}$ is an \'etale $\varphi$-module of rank $d$, $\calF_{\bfA_R}$ is an \'etale $\varphi$-module of rank $t(d)-1$, and $f$ is a surjective map of \'etale $\varphi$-modules. So the morphism $\pi$ factors through the natural map $\pi^{\circ}:\calR^{\circ}_G\to \calR_{d}\times_{\Spf(\bZ_p)}\calR_{t(d)-1}$.

\begin{prop}\label{surj1}
 The morphism $\pi^{\circ}:\calR^{\circ}_G\to \calR_{d}\times_{\Spf(\bZ_p)}\calR_{t(d)-1}$ is representable in formal schemes which locally are completions of finite type affine schemes. In particular, $\calR^{\circ}_G$ is limit preserving.
\end{prop}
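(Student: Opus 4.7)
The plan is to reduce the representability of $\pi^{\circ}$ to the representability of a ``surjective Hom'' functor, to which the previously established Lemma \ref{representability-phi} will apply. Concretely, I would fix a test object $\Spf(R) \to \calR_d \times_{\Spf(\bZ_p)} \calR_{t(d)-1}$ corresponding to a pair $(\calE, \calF)$ of \'etale $\varphi$-modules over $\bfA_R$ of ranks $d$ and $t(d)-1$, and identify the fiber product $\calR_G^{\circ} \times_{\calR_d \times \calR_{t(d)-1}} \Spf(R)$ with the functor $R' \mapsto \underline{\Hom}^{\varphi}_{\surj}(t(\calE_{\bfA_{R'}}), \calF_{\bfA_{R'}})$ of surjective $\varphi$-equivariant $\bfA_{R'}$-linear morphisms. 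This identification is immediate from the definition of $\calR_G^{\circ}$, so the proposition amounts to representability of this functor by a formal scheme locally topologically of finite type over $\Spf(R)$.

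Next, I would invoke Lemma \ref{representability-phi} applied to the pair $(t(\calE), \calF)$. The \'etale $\varphi$-module $t(\calE)$ has rank $t(d)$ and is built from $\calE$ via the operations $\otimes, \oplus, \Sym^i, \wedge^i, (-)^\vee$, each of which preserves the property of being a finite projective \'etale $\varphi$-module; hence $t(\calE)$ satisfies the hypotheses of that lemma, and we conclude that $\underline{\Hom}^{\varphi}_{\surj}(t(\calE), \calF)$ is representable by a formal scheme locally topologically of finite type over $\Spf(R)$. Unwinding, this is exactly the statement that $\pi^{\circ}$ is representable in formal schemes locally topologically of finite type.

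For the final assertion that $\calR_G^{\circ}$ is limit preserving, I would observe that $\calR_d$ and $\calR_{t(d)-1}$ are themselves limit preserving (a standard property of the moduli of \'etale $\varphi$-modules of fixed rank), hence so is their product; and any prestack representable over a limit-preserving base by formal schemes locally topologically of finite type is itself limit preserving, since $p$-adically complete topologically finitely presented algebras commute with filtered colimits of test rings in $\Nilp_{\bZ_p}$. Combining these two facts with the representability just established yields the limit-preserving property for $\calR_G^{\circ}$.

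The main obstacle is Lemma \ref{representability-phi} itself, whose proof must contend with the fact that $\bfA_R$ is infinite over $R$, so that neither the Hom space nor its surjective locus is a priori finite-type over $R$. The $\varphi$-equivariance has to cut $\underline{\Hom}^{\varphi}(t(\calE), \calF)$ down to a topologically finitely presented object over $R$, and the genuinely new feature is that surjectivity is representable only after passing to formal schemes, reflecting the appearance of the de Rham prestack alluded to in the introduction. Given that input, the present proposition is essentially a formal unwinding together with a limit-preservation argument.
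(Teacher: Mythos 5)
Your proposal is correct and follows essentially the same route as the paper: identify the fiber of $\pi^{\circ}$ over a point $(\calE,\calF)$ with the functor $\underline{\Hom}^{\varphi}_{\surj}(t(\calE),\calF)$ and invoke Lemma \ref{representability-phi}. The only adjustment is one of ordering: since Lemma \ref{representability-phi} is stated for $R$ of finite type, you should first use the limit-preservation of $\calR_{d}\times_{\Spf(\bZ_p)}\calR_{t(d)-1}$ (which you already record) to reduce to such $R$ before applying the lemma, exactly as the paper does.
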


\begin{proof}
Consider any point $\Spec(R)\to \calR_{d}\times_{\Spf(\bZ_p)}\calR_{t(d)-1}$. Let $\calA_R$ be the following pullback of prestacks
\begin{equation}
    \xymatrix@=1cm{
    \calA_R\ar[d]\ar[r] & \Spec(R)\ar[d]\\
    \calR^{\circ}_G\ar[r]^-{\pi^{\circ}} & \calR_{d}\times_{\Spf(\bZ_p)}\calR_{t(d)-1}.
    }
\end{equation}

We need to prove $\calA_R$ is a formal scheme which locally is a completion of finite type affine schemes over $R$.

 As $\calR_{d}\times_{\Spf(\bZ_p)}\calR_{t(d)-1}$ is limit preserving, we can assume $R$ is of finite type over $\bZ_p$. The point $\Spec(R)\to \calR_{d}\times_{\Spf(\bZ_p)}\calR_{t(d)-1}$ gives us $\calE_{\bfA_R}$, which is an \'etale $\varphi$-module of rank $d$, and $\calF_{\bfA_R}$, which is an \'etale $\varphi$-module of rank $t(d)-1$. Then for any finite type $R$-algebra $S$, $\calA_R(S)$ is the setoid (by \cite[\href{https://stacks.math.columbia.edu/tag/05UI}{Tag 05UI}]{stacks-project}) of the surjective morphisms from $t(\calE_{\bfA_S})$ to $\calF_{\bfA_S}$, where $\calE_{\bfA_S}$ (resp. $\calF_{\bfA_S}$) is the base change of $\calE_{\bfA_R}$ (resp. $\calF_{\bfA_R}$) along $\bfA_R\to \bfA_S$. In particular, this means as a prestack over $R$, $\calA_R$ is equivalent to the functor $\underline\Hom_{\surj}^{\varphi}(t(\calE_{\bfA_R}),\calF_{\bfA_R})$ sending each $R$-algebra $S$ to the set of surjective morphisms from $t(\calE_{\bfA_S})$ to $\calF_{\bfA_S}$ as \'etale $\varphi$-modules. So it suffices to prove this surjective Hom functor is representable by a formal scheme  which locally is a completion of finite type affine schemes over $R$, which is provided by Lemma \ref{representability-phi}.

 \end{proof}

\begin{lem}\label{representability-phi}
    Let $\calE_1,\calE_2$ be finite projective two \'etale $\varphi$-modules over $\bfA_R$ where $R$ is of finite type over $\bZ/p^a$. Then the surjective Hom functor $\underline\Hom_{\surj}^{\varphi}(\calE_1,\calE_2)$ is representable by a formal scheme  which locally is a completion of finite type affine schemes over $R$.
\end{lem}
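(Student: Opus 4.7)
I would realize $\underline\Hom^{\varphi}_{\surj}(\calE_1,\calE_2)$ as an open subfunctor of the $\varphi$-equivariant Hom functor $\underline\Hom^{\varphi}(\calE_1,\calE_2)$, and first establish the representability of the latter by a formal scheme locally topologically of finite type over $R$.

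\textbf{Step 1: representability of $\underline\Hom^{\varphi}(\calE_1,\calE_2)$.} After passing to a Zariski cover of $\Spec\bfA_R$, I trivialize $\calE_1,\calE_2$ so that their $\varphi$-structures are given by matrices $A_1,A_2\in\GL_{r_i}(\bfA_R)$. An $\bfA_S$-linear map then corresponds to a matrix $X\in M_{r_2\times r_1}(\bfA_S)$, and $\varphi$-equivariance reads $\varphi(X)\,A_1 = A_2\,X$. Expanding $X=\sum_{n} X_n T^n$ with coefficients in $W(k_{\infty})\otimes S$, the invertibility of $A_1,A_2$ combined with the degree-multiplying behavior of $\varphi$ on $T$ forces the coefficients $X_n$ for $|n|$ sufficiently large to be determined recursively, modulo $T$-adic completion, by a bounded range of ``middle'' coefficients. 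This realizes $\underline\Hom^{\varphi}(\calE_1,\calE_2)$ as $\Spf$ of a topologically finitely generated $R$-algebra, by a direct adaptation of the $\GL_d$-analysis in \cite{EG22}.

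\textbf{Step 2: openness of the surjective locus.} For a $\varphi$-equivariant map $f:\calE_1\to\calE_2$, the cokernel $C=\Coker(f)$ is a finitely generated $\bfA_S$-module with a semilinear $\varphi$-action, and hence has $\varphi$-stable support in $\Spec\bfA_S$. Exploiting the correspondence between \'etale $\varphi$-modules over $\bfA_S$ and \'etale sheaves on $\Spec S$, I would conclude that the non-surjective locus in $\Spec S$ is closed; equivalently, surjectivity is an open subfunctor of $\underline\Hom^{\varphi}$. Combined with Step 1, this yields the desired representability.

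\textbf{Main obstacle.} The main difficulty lies in Step 2: descending the open surjectivity condition from $\Spec\bfA_S$ (where it is the classical Fitting-ideal openness for maps between finite projective modules) to $\Spec S$. Without the $\varphi$-structure this descent fails, which is precisely why the moduli ends up as a formal scheme rather than an honest scheme --- the relevant surjectivity condition is naturally phrased on the de Rham prestack, as indicated in the motivating discussion preceding the lemma.
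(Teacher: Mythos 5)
Your Step~1 is essentially correct (it is \cite[Lemma 5.4.8]{EG21}, after first reducing to $K^{\rm basic}$ so that $\bfA_R^+$ is $\varphi$-stable; note that it gives an honest affine scheme of finite presentation over $R$, not merely a formal one). The genuine gap is Step~2: surjectivity is \emph{not} an open subfunctor of $\underline\Hom^{\varphi}(\calE_1,\calE_2)$, and the mechanism you propose cannot establish it. The root of the problem is that maximal ideals of $\bfA_S$ need not contract to maximal ideals of $S$, so the cokernel $C$ of a map of finite $\bfA_S$-modules can be nonzero while every fibre $C\otimes_S k(s)$, $s\in\Spec(S)$, vanishes; hence ``the non-surjective locus in $\Spec S$'' is not cut out by the support of $C$, closed or otherwise. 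For a toy model, take $S=k[\lambda]$ and multiplication by $\lambda+T$ on $\bfA_S=k[\lambda]((T))$: the cokernel surjects onto $k((T))\neq 0$ via $\lambda\mapsto -T$, yet $\lambda+T$ becomes a unit in $k(s)((T))$ at \emph{every} point $s$ of $\bA^1_k$ (leading coefficient $\bar\lambda$ is a unit when $\bar\lambda\neq 0$, and the element is the unit $T$ when $\bar\lambda=0$). So every field-valued point lies in the surjective locus while the tautological $k[\lambda]$-point does not; an open subscheme containing every point would be the whole space, a contradiction. The locus is instead $(\bA^1\setminus\{0\})\sqcup\Spf(k[\lambda]^{\wedge}_{(\lambda)})$, a formal scheme that is not a scheme. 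Your closing paragraph in fact concedes the point: if Step~2 held, $\underline\Hom^{\varphi}_{\surj}$ would be an open subscheme of an affine scheme of finite presentation, and the lemma would assert representability by a scheme. (Two further local problems with Step~2: the cokernel, though it does inherit an \'etale $\varphi$-structure by the five lemma, is only finitely generated and not projective, so the dictionary with \'etale sheaves does not apply to it; and $\varphi$-stability of a closed subset of $\Spec\bfA_S$ does not imply it is pulled back from $\Spec S$.)

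What is actually true, and what the paper's proof exploits, is the following stratified picture. By Nakayama, surjectivity of $g$ depends only on $S_{\rm red}$ (Lemma~\ref{surj-reduced}); writing $\bar g=\sum_i\bar g_iT^i$ over $S_{\rm red}$, surjectivity is governed by the leading nonvanishing coefficient $\bar g_{i_0}$, and the $\varphi$-equivariance serves only to confine $i_0$ to a finite window $[-s_1,s_2]$. Since the position $i_0$ jumps on closed subsets, the surjective locus decomposes as a finite disjoint union over $i$ of loci $\{\bar g_j=0\ \text{for}\ j<i\}\cap\{\bar g_i\ \text{surjective}\}$, where the first condition is imposed on $S_{\rm red}$ rather than on $S$ and is therefore the de Rham prestack of a closed immersion, i.e.\ a formal completion (Lemma~\ref{dR-open-closed}); this is the content of the pieces $\frakA_i$ in the paper's argument and the sole source of the formal, non-algebraic, structure. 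To repair your proof you would need to replace the claimed openness by this description, and also supply the reduction from projective to free modules via idempotents at the end.
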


Before we prove Lemma \ref{representability-phi}, let us first recall the notion of de Rham prestack. The reason why we care about de Rham prestack is that the surjectivity of any morphism of modules can be checked after base changing to the reduced coefficient ring.

\begin{lem}\label{surj-reduced}
    Let $A$ be commutative ring. Let $M,N$ be two finitely generated $A$-modules. Then a map $f:M\to N$ is surjective if and only if $f_{\rm red}:M\otimes_AA_{\rm red}\to N\otimes_AA_{\rm red}$ is surjective.
\end{lem}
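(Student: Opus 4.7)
The assertion is a standard Nakayama-type statement, so the plan is short. The \emph{only if} direction is immediate: the functor $-\otimes_A A_{\rm red}$ is right exact, so a surjection $f\colon M\twoheadrightarrow N$ induces a surjection $f_{\rm red}$.

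For the \emph{if} direction, set $C:=\operatorname{coker}(f)$, a finitely generated $A$-module. Since $-\otimes_A A_{\rm red}$ is right exact, the hypothesis gives $C\otimes_A A_{\rm red}=0$, i.e.\ $C=\operatorname{nil}(A)\cdot C$. I would now invoke the general form of Nakayama's lemma (the determinant trick, or equivalently \cite[Tag 00DV]{stacks-project}): if $M$ is a finitely generated $A$-module and $I$ is an ideal with $IM=M$, then there exists $a\in A$ with $a\equiv 1\pmod I$ such that $aM=0$. Applied to $M=C$ and $I=\operatorname{nil}(A)$, this yields $a\in A$ with $a-1\in\operatorname{nil}(A)$ and $aC=0$. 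Because $a-1$ is nilpotent, $a=1+(a-1)$ is a unit in $A$, and hence $C=0$, i.e.\ $f$ is surjective.

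The only mild subtlety is that $\operatorname{nil}(A)$ is in general not a nilpotent ideal (it is only locally nilpotent on elements), so one cannot conclude directly that some power of $\operatorname{nil}(A)$ annihilates $C$; the determinant version of Nakayama is what handles this uniformly. Finite generation of $C$, which is automatic from finite generation of $N$, is essential for the argument.
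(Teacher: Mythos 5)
Your proof is correct, and it takes a different (though equally standard) route from the paper's. The paper argues locally: it reduces $K=\Coker(f)=0$ to $K\otimes_A A_{\frakm}=0$ for every maximal ideal $\frakm$, applies Nakayama over the local ring $A_{\frakm}$ to reduce further to $K\otimes_A A/\frakm=0$, and then observes that $A$ and $A_{\rm red}$ have the same residue fields. You instead argue globally with the nilradical: from $C\otimes_A A_{\rm red}=0$ you get $C=\nil(A)\cdot C$ and invoke the determinant-trick form of Nakayama to produce $a\in 1+\nil(A)$ with $aC=0$, whence $a$ is a unit and $C=0$. Your version avoids localization entirely and is self-contained at the level of the global ring; your side remark is also well taken, since $\nil(A)$ need not be a nilpotent ideal for non-Noetherian $A$, so the determinant trick (rather than iterating $C=\nil(A)^nC$) is genuinely needed. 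The paper's version, by passing through residue fields, makes transparent exactly which hypothesis is used (surjectivity is detected on closed fibres), which is the form in which the lemma is actually applied later (to detect surjectivity of maps of \'etale $\varphi$-modules after reduction). Either argument is complete.
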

\begin{proof}
    Let $K$ be the cokernel of $f$. Then that $f$ is surjective is equivalent to $K=0$. Note that $K=0$ is equivalent to $K\otimes_AA_{\frakm}=0$ for all maximal ideals $\frakm$ of $A$. By Nakayama's lemma, this is equivalent to $K\otimes_AA/\frakm=0$ for all maximal ideals of $A$. Now the lemma follows from $A/\frakm=A_{\rm red}/{\frakm}$. 
\end{proof}

\begin{dfn}[de Rham prestack]
    Let $X$ be a prestack. Define its de Rham prestack $X_{\rm dR}$ by $X_{\rm dR}(A):=X(A_{\rm red})$ for any commutative ring $A$.
\end{dfn}

We give some useful lemmas concerning the de Rham prestack functor.
\begin{lem}\label{limit preserving of dR space}
    If $X$ a limit preserving prestack over $\bZ_p$, then $X_{\dR}$ is also limit preserving.
\end{lem}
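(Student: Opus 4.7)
The plan is to reduce the statement to the single observation that the reduction functor $A \mapsto A_{\rm red}$ on commutative rings commutes with filtered colimits. Concretely, I would unpack the definition: $X_{\dR}$ is limit preserving means that for any filtered system $(A_i)$ of commutative $\bZ_p$-algebras with colimit $A=\colim_i A_i$, the natural map
\[
\colim_i X_{\dR}(A_i) \longrightarrow X_{\dR}(A)
\]
is an equivalence of groupoids (or anima). By definition of $X_{\dR}$, this is the same as asking that $\colim_i X((A_i)_{\rm red}) \to X((\colim_i A_i)_{\rm red})$ is an equivalence.

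The first step is to prove $(\colim_i A_i)_{\rm red} \simeq \colim_i (A_i)_{\rm red}$. For this, note that an element $x\in \colim_i A_i$ is represented by some $x_j\in A_j$, and $x^n = 0$ in the colimit if and only if there exists some $j' \geq j$ such that the image of $x_j^n$ in $A_{j'}$ vanishes, i.e.\ the image of $x_j$ in $A_{j'}$ is nilpotent. Hence the nilradical of $\colim_i A_i$ is the filtered colimit of the nilradicals of the $A_i$. Since filtered colimits are exact in the category of abelian groups (and in particular commute with quotients by the appropriate ideals), this gives the desired identification $(\colim_i A_i)_{\rm red} \simeq \colim_i (A_i)_{\rm red}$.

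Then the second step is routine: apply the hypothesis that $X$ is limit preserving to the filtered system $((A_i)_{\rm red})$, which yields
\[
\colim_i X((A_i)_{\rm red}) \xrightarrow{\;\simeq\;} X\bigl(\colim_i (A_i)_{\rm red}\bigr) \simeq X\bigl((\colim_i A_i)_{\rm red}\bigr),
\]
where the last identification is what we established in the first step. Composing gives the equivalence $\colim_i X_{\dR}(A_i) \xrightarrow{\simeq} X_{\dR}(A)$, as required.

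There is no real obstacle here; the only point worth stating carefully is the commutation of reduction with filtered colimits, and this is a standard fact that follows from the pointwise nature of nilpotence together with exactness of filtered colimits of modules.
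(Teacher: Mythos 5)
Your proof is correct and follows essentially the same route as the paper: both reduce the lemma to the fact that $(-)_{\rm red}$ commutes with the relevant filtered colimit and then invoke limit-preservation of $X$. The only difference is that you establish commutation of reduction with arbitrary filtered colimits (via nilradicals and exactness of filtered colimits), whereas the paper only treats the canonical system of finite type $\bZ_p$-subalgebras, where injectivity of $A_{i,\rm red}\to A_{\rm red}$ makes the identification immediate; your version is marginally more general but not a different argument.
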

\begin{proof}
    Let $A$ be any commutative ring. Write $A=\colim A_i$ as the filtered colimit of all its finite type $\bZ_p$-subalgebras. There is a natural map $\colim A_{i,\rm red}\to A_{\rm red}$, which is obviously surjective. Note that for any inclusions $A_i\hookrightarrow A$, we also get injective maps $A_{i,\rm red}\hookrightarrow A_{\rm red}$. So the map $\colim A_{i,\rm red}\to A_{\rm red}$ is also injective. This imples $X_{\dR}(A)=X(A_{\rm red})=\colim X(A_{i,\rm red})=\colim X_{\rm dR}(A_i)$. We are done.
\end{proof}

\begin{lem}\label{dR-open-closed}
    Let $f:X\to Y$ be a morphism of limit preserving prestacks over $\bZ_p$. 
    \begin{enumerate}
        \item If $f$ is an open immersion, then $f_{\rm dR}:X_{\rm dR}\to Y_{\rm dR}$ is also an open immersion.
        \item If $f$ is a closed immerison, then $f_{\rm dR}:X_{\rm dR}\to Y_{\rm dR}$ is a formal completion of a closed immersion.
    \end{enumerate}
\end{lem}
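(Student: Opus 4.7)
The plan is to verify both claims by computing the pullback $X_{\rm dR} \times_{Y_{\rm dR}} \Spec(A)$ for an arbitrary affine test point $\Spec(A) \to Y_{\rm dR}$. By the definition of the de Rham prestack, such a test point is the same data as a map $\Spec(A_{\rm red}) \to Y$, and for any $A$-algebra $B$ the $B$-valued points of the pullback consist of an $A$-algebra structure $\phi\colon A \to B$ together with a lift to $X(B_{\rm red})$ of the composite $\Spec(B_{\rm red}) \to \Spec(A_{\rm red}) \to Y$. I would treat the two cases in parallel, reducing each time to lifting a resulting subscheme of $\Spec(A_{\rm red})$ up to $\Spec(A)$.

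For part (1), because $f$ is (relatively representably) an open immersion, $X \times_Y \Spec(A_{\rm red})$ is an open subscheme $U \subseteq \Spec(A_{\rm red})$. Using that $|\Spec(A)|$ and $|\Spec(A_{\rm red})|$ coincide as topological spaces, $U$ lifts canonically to an open subscheme $\widetilde U \subseteq \Spec(A)$. Since factoring through an open subscheme is purely a topological condition on the underlying map of spaces, the requirement that $\phi_{\rm red}$ factors through $U$ is equivalent to $\phi$ factoring through $\widetilde U$; hence the pullback equals $\widetilde U$, an open subscheme of $\Spec(A)$, and $f_{\rm dR}$ is an open immersion.

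For part (2), the same set-up yields $X \times_Y \Spec(A_{\rm red}) = V(J)$ for an ideal $J \subseteq A_{\rm red}$. I would lift $J$ to $\widetilde J := \ker(A \to A_{\rm red}/J) \subseteq A$, defining a closed subscheme $Z := V(\widetilde J) \subseteq \Spec(A)$ whose reduction recovers $V(J)$. The condition ``$\phi_{\rm red}$ factors through $V(J)$'' then translates to ``every element of $\widetilde J$ maps to a nilpotent in $B$'', which is precisely the functor of points on $\Nilp_{\bZ_p}$ of the formal completion of $Z$ inside $\Spec(A)$. Thus $f_{\rm dR}$ pulls back to such a formal completion at every affine test point, as claimed.

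The main subtlety to wrestle with will be making ``formal completion'' rigorous as a functor of points on arbitrary test rings $B \in \Nilp_{\bZ_p}$, rather than only on Noetherian or finite-type ones where $(\Spec A)^{\wedge}_Z$ has its classical meaning. Here the limit-preserving hypothesis on $X$ and $Y$, together with Lemma \ref{limit preserving of dR space}, will let me reduce to the case when $A$ is of finite type over $\bZ_p$; then $\widetilde J$ is finitely generated, so the conditions ``$\widetilde J^n \cdot B = 0$ for some $n$'' and ``$\widetilde J$ maps to nilpotents in $B$'' coincide, and the identification with the standard formal completion of $Z$ inside $\Spec(A)$ is unambiguous.
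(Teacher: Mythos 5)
Your proposal is correct and follows essentially the same route as the paper: reduce via limit-preservation to a finite-type test point, lift the resulting open (resp.\ closed) subscheme of $\Spec(A_{\rm red})$ to $\Spec(A)$, and identify the pullback as that open subscheme (resp.\ the formal completion $\Spf(A^{\wedge}_{\widetilde J})$). The only cosmetic difference is in part (2): the paper restricts the test ring $D$ to be of finite type so that its nilradical is nilpotent, whereas you keep the test ring arbitrary and instead use finite generation of the lifted ideal $\widetilde J$ to equate ``maps to nilpotents'' with ``$\widetilde J^{n}B=0$ for some $n$'' --- both justifications are fine.
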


\begin{proof}
Let $x:\Spec(C)\to Y_{\rm dR}$ be a $C$-point of $Y_{\rm dR}$. By Lemma \ref{limit preserving of dR space}, $Y_{\dR}$ is limit preserving. Then we may assume $C$ is of finite type over $\bZ_p$. The point $x$ corresponds to a point $\tilde x:\Spec(C_{\rm red})\to Y$. 
    \begin{enumerate}
        \item  If $f$ is an open immersion, then the fiber product $X_{\tilde x}=X\times_{Y,\tilde x}\Spec(C_{\rm red})$ is an open subscheme of $\Spec(C_{\rm red})$. In particular, $X_{\tilde x}$ induces an open subscheme $X_{x}$ of $\Spec(C)$ whose underlying reduced scheme is just $X_{\tilde x}$. 

        Let $\Spec(D)\to \Spec(C)$ be a map whose reduced map $\Spec(D_{\rm red})\to \Spec(C_{\rm red})$ factors through $X_{\tilde x}\to \Spec(C_{\rm red})$. Then $\Spec(D)\to \Spec(C)$ must uniquely factor through $X_{x}\to \Spec(C)$. This implies that $X_{ x}$ is just the fiber product $X_{\dR}\times_{Y_{\dR},x}\Spec(C)$.

        \item If $f$ is a closed immersion, then the fiber product $X_{\tilde x}=X\times_{Y,\tilde x}\Spec(C_{\rm red})$ is a closed subscheme of $\Spec(C_{\rm red})$. Write $X_{\tilde x}=\Spec(C_{\rm red}/I)$. Let $\Spec(D)$ be of finite type over $\bZ_p$ and $\Spec(D)\to \Spec(C)$ be a map whose reduced map $\Spec(D_{\rm red})\to \Spec(C_{\rm red})$ factors through $X_{\tilde x}\to \Spec(C_{\rm red})$, i.e. we have the following commutative diagram
        \begin{equation}
            \xymatrix@=1cm{
            C\ar[d]\ar[rr] &  & D\ar[d]\\
            C_{\rm red}\ar[r] & C_{\rm red}/I\ar[r] & D_{\rm red}.
            }
        \end{equation}
Let $\tilde I$ be the kernel of $C\to C_{\rm red}/I$. Then $\tilde I$ is sent to the nilradical ${\rm rad}(D)$ of $D$ under the map $C\to D$. As $D$ is of finite type over $\bZ_p$, there exists some integer $n\geq 1$ such that ${\rm rad}(D)^n=0$, which implies the map $C\to D$ factors through $C/{\tilde I}^n\to D$. So we see that the fiber product $X_{\dR}\times_{Y_{\dR},x}\Spec(C)$ is the formal scheme $\Spf(C^{\wedge}_{\tilde I})$.
    \end{enumerate}
\end{proof}

Now in order to prove Lemma \ref{representability-phi}, we want to reduce to the basic case. Recall the following definition.

\begin{dfn}[{\cite[Definition 3.2.3]{EG22}}]
    If $K$ is a finite extension of $\bQ_p$, we set $K^{\rm basic}:=K\cap K_0(\zeta_{p^{\infty}})$, where $K_0$ is the maximal absolutely unramified subextension of $K$.
\end{dfn}
Then we have coefficient rings $\bfA_K$ and $\bfA_{K^{\rm basic}}$. By the paragraph just below \cite[Definition 3.2.3]{EG22}, we know that $\bfA_K$ is a finite free $\bfA_{K^{\rm basic}}$ of rank $[K,K^{\rm basic}]$ and the inclusion $\bfA_{K^{\rm basic}}\subset \bfA_K$ is $\varphi$-equivariant.

For any $R$-algebra $S$, we write $\bfA_{K^{\rm basic},S}$ and $\bfA_{K,S}$ as the corresponding coefficient rings to distinguish  $K$ from $K^{\rm basic}$. By the proof of \cite[Lemma 3.2.5]{EG22}, we know that for a finite projective \'etale $\varphi$-module $M_S$ over $\bfA_{K^{\rm basic},S}$, the additional structure required to make $M_S$ into a finite projective \'etale $\varphi$-module over $\bfA_{K,S}$ is the data of a morphism of \'etale $\varphi$-modules over $\bfA_{K^{\rm basic},S}$
\[
f:\bfA_{K,S}\otimes_{\bfA_{K^{\rm basic},S}}M_S\to M_S
\]
which satisfies the following conditions
\begin{enumerate}
    \item the composite $M_S\xrightarrow{i}\bfA_{K,S}\otimes_{\bfA_{K^{\rm basic},S}}M_S\xrightarrow{f} M_S$ is the identity morphism;
    \item the kernel of $f$ is $\bfA_{K,S}$-stable.
\end{enumerate}
Then the $\bfA_{K,S}$-structure on $M_S$ is given by $a\cdot m:=f(a\otimes m)$. Using this description, we can obtain the following lemma.

\begin{lem}\label{reduction-basic}
     Let $\calE_1,\calE_2$ be finite projective two \'etale $\varphi$-modules over $\bfA_{K,R}$ where $R$ is of finite type over $\bZ/p^a$. Let $\calE_1^{\circ},\calE_2^{\circ}$ be their underlying \'etale $\varphi$-modules over $\bfA_{K^{\rm basic,R}}$ respectively. Then the natural morphism $\underline\Hom^{\varphi}(\calE_1,\calE_2)\to \underline\Hom^{\varphi}(\calE_1^{\circ},\calE_2^{\circ})$ is a closed immersion of affine schemes of finite presentation over $R$ and so is $\underline\Isom^{\varphi}(\calE_1,\calE_2)\to \underline\Isom^{\varphi}(\calE_1^{\circ},\calE_2^{\circ})$.
\end{lem}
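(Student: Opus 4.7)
The plan is to identify $\underline\Hom^{\varphi}(\calE_1,\calE_2)$ as the scheme-theoretic vanishing locus of a natural morphism of affine schemes of finite presentation, so that it appears as a closed subscheme of $\underline\Hom^{\varphi}(\calE_1^{\circ},\calE_2^{\circ})$. By the paragraph preceding the statement, giving $\calE_i$ is the same as giving $\calE_i^{\circ}$ together with a $\varphi$-equivariant $\bfA_{K^{\rm basic},R}$-linear map $f_i:\bfA_{K,R}\otimes_{\bfA_{K^{\rm basic},R}}\calE_i^{\circ}\to \calE_i^{\circ}$ satisfying the two compatibility conditions. Since $\bfA_{K,R}$ is finite free over $\bfA_{K^{\rm basic},R}$, the tensor $\bfA_{K,\bullet}\otimes\calE_1^{\circ}$ is again a finite projective \'etale $\varphi$-module over $\bfA_{K^{\rm basic},\bullet}$ with Frobenius acting diagonally. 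A $\varphi$-equivariant $\bfA_{K^{\rm basic},S}$-linear map $g$ upgrades to a morphism of $\bfA_{K,S}$-\'etale $\varphi$-modules iff it commutes with the $\bfA_K$-action, i.e.\ iff
\[
\Phi(g) := f_2\circ(\id_{\bfA_{K,S}}\otimes g) - g\circ f_1 = 0
\]
as a morphism $\bfA_{K,S}\otimes\calE_1^{\circ}\to \calE_2^{\circ}$.

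I would first check that $g\mapsto \Phi(g)$ is a natural transformation
\[
\Phi:\underline\Hom^{\varphi}(\calE_1^{\circ},\calE_2^{\circ})\to\underline\Hom^{\varphi}(\bfA_{K,\bullet}\otimes\calE_1^{\circ},\calE_2^{\circ});
\]
this follows directly from the $\varphi$-equivariance of $f_1,f_2$ and of $\id\otimes g$ whenever $g$ is. Then I would invoke the standard representability of Hom functors between finite projective \'etale $\varphi$-modules (as in \cite{EG22}) to conclude that both source and target of $\Phi$ are affine schemes of finite type over the Noetherian ring $R$ (finite type equals finite presentation here). By Yoneda, $\Phi$ corresponds to an honest morphism of affine schemes, and $\underline\Hom^{\varphi}(\calE_1,\calE_2)$ is the preimage of the zero section. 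Since the target carries a natural structure of an abelian group scheme over $R$ given by addition of morphisms, its zero section is a closed immersion of finite presentation, and hence so is the pullback $\Phi^{-1}(0)\hookrightarrow \underline\Hom^{\varphi}(\calE_1^{\circ},\calE_2^{\circ})$.

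For the Isom statement, I would use that a $\varphi$-equivariant $\bfA_{K,S}$-linear map is an isomorphism iff its underlying $\bfA_{K^{\rm basic},S}$-linear map is, the set-theoretic inverse automatically inheriting $\bfA_K$-linearity and $\varphi$-equivariance. This makes the evident square
\[
\xymatrix@=0.8cm{
\underline\Isom^{\varphi}(\calE_1,\calE_2)\ar[r]\ar[d] & \underline\Hom^{\varphi}(\calE_1,\calE_2)\ar[d]\\
\underline\Isom^{\varphi}(\calE_1^{\circ},\calE_2^{\circ})\ar[r] & \underline\Hom^{\varphi}(\calE_1^{\circ},\calE_2^{\circ})
}
\]
cartesian, with horizontal arrows open immersions. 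Base-changing the closed immersion on the right produces the closed immersion on the left. The main bookkeeping obstacle is checking representability of the target of $\Phi$ by a finite-type affine scheme; this reduces to the standard representability of Hom between finite projective \'etale $\varphi$-modules, which I would quote from the earlier literature rather than redo.
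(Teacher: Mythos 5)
Your proof is correct, and its core is the same as the paper's: both arguments start from the observation (taken from the discussion around \cite[Lemma 3.2.5]{EG22}) that the $\bfA_{K,S}$-structure is encoded by the maps $f_i:\bfA_{K,S}\otimes_{\bfA_{K^{\rm basic},S}}\calE_i^{\circ}\to\calE_i^{\circ}$, so that $\bfA_{K,S}$-linearity of $g$ becomes the single relation $g\circ f_1=f_2\circ(\id\otimes g)$. Where you diverge is the last step. The paper reduces to free modules, expands $g$ in Laurent coefficients, and uses the pole bounds from \cite[Lemma 5.4.8]{EG21} to see the relation as a finite system of polynomial equations in finitely many coordinates, cutting out a closed subscheme directly. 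You instead package the relation as a natural transformation $\Phi$ into $\underline\Hom^{\varphi}(\bfA_{K,\bullet}\otimes\calE_1^{\circ},\calE_2^{\circ})$ and pull back the zero section; this is coordinate-free, avoids the reduction to the free case, and automatically delivers finite presentation, at the cost of the (true, and implicitly used by the paper as well) observation that $\bfA_{K,S}\otimes\calE_1^{\circ}$ is itself a finite projective \'etale $\varphi$-module over $\bfA_{K^{\rm basic},S}$, so that the target of $\Phi$ is a separated affine scheme of finite presentation and its zero section is a closed immersion. For the Isom statement your square is just the transpose of the paper's Cartesian square, and your justification of Cartesianness (the inverse of a bijective $\bfA_K$-linear map is again $\bfA_K$-linear and $\varphi$-equivariant) is exactly what is needed. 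One caveat: your parenthetical claim that the arrows $\underline\Isom^{\varphi}\to\underline\Hom^{\varphi}$ are \emph{open} immersions is unnecessary and almost certainly false — invertibility over $\bfA_S$ is a Weil-restriction-type condition on $\Spec(S)$ of the kind that the paper handles with de Rham prestacks and formal schemes, not an open condition — but this does no harm, since a closed immersion base-changes to a closed immersion along an arbitrary morphism.
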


\begin{proof}
    Let $f_i:\bfA_{K,S}\otimes_{\bfA_{K^{\rm basic},S}}\calE_{i}^{\circ}\to \calE_{i}^{\circ}$ be the map corresponding to the \'etale $\varphi$-structure over $\bfA_{K,R}$ of $\calE_i$ for $i=1,2$. Let $S$ be an $R$-algebra and $g:\calE_{1,S}^{\circ}\to \calE^{\circ}_{2,S}$ be an element in $\underline\Hom^{\varphi}(\calE_1^{\circ},\calE_2^{\circ})(S)$. Then $g$ is an $\bfA_{K,S}$-linear map if and only if for any $\lambda\in \bfA_{K,S}$ and $m\in \calE^{\circ}_{1,S}$, we have $g(f_1(\lambda\otimes m))=f_2(\lambda\otimes g(m))$. By \cite[Lemma 5.4.8]{EG21}, we know that $\underline\Hom^{\varphi}(\calE_1^{\circ},\calE_2^{\circ})$ is an affine scheme of finite presentation over $R$. Also following the argument in the proof of \cite[Lemma 5.4.8]{EG21}, we may assume both $\calE_1$ and $\calE_2$ are finite free. In this case, $g, f_1, f_2$ are matrices with entries in $\bfA_{K^{\rm basic},S}=(W(k_{\infty}^{\rm basic})\otimes_{\bZ_p}S)((T))$. Write $g=\sum_{i\geq s}^{\infty}g_iT^i$ with each $g_i$ being a matrix with entries in $W(k_{\infty}^{\rm basic})\otimes_{\bZ_p}S$. \cite[Lemma 5.4.8]{EG21} shows that $g$ is controlled by all $g_i$'s with $i\leq t$ and $t$ does not depend on $S$, which gives the finite presentation of $\underline{\Hom}^{\varphi}(\calE_1^{\circ},\calE_2^{\circ})$. Then $g(f_1(\lambda\otimes m))=f_2(\lambda\otimes g(m))$ is a system of equations with finitely many variables. Hence $\underline\Hom^{\varphi}(\calE_1,\calE_2)$ is the closed subscheme of $\underline\Hom^{\varphi}(\calE_1^{\circ},\calE_2^{\circ})$ cut out by the above equations. So we are done. The statement about the isomorphism functors then follows from the following Cartesian diagram
    \begin{equation}
    \xymatrix{
    \underline\Isom^{\varphi}(\calE_1,\calE_2)\ar[r]\ar[d]& \underline\Isom^{\varphi}(\calE_1^{\circ},\calE_2^{\circ})\ar[d]\\
    \underline\Hom^{\varphi}(\calE_1,\calE_2)\ar[r]& \underline\Hom^{\varphi}(\calE_1^{\circ},\calE_2^{\circ}).
    }
\end{equation}

\end{proof}

Now we are ready to prove Lemma \ref{representability-phi}.

\begin{proof}[Proof of Lemma \ref{representability-phi}]
Note that we have a Cartesian diagram
\begin{equation}
    \xymatrix{
    \underline\Hom_{\surj}^{\varphi}(\calE_1,\calE_2)\ar[r]\ar[d]& \underline\Hom_{\surj}^{\varphi}(\calE_1^{\circ},\calE_2^{\circ})\ar[d]\\
    \underline\Hom^{\varphi}(\calE_1,\calE_2)\ar[r]& \underline\Hom^{\varphi}(\calE_1^{\circ},\calE_2^{\circ}).
    }
\end{equation}
Then by Lemma \ref{reduction-basic}, we may assume $\bfA_R^{+}$ is $\varphi$-stable so that we can use all results concerning \'etale $\varphi$-modules in \cite{EG21}.

Recall that $R$ is of finite type over $\bZ/p^a$. We first assume $\calE_1$ and $\calE_2$ are finite free of rank $r,t$ respectively and $r\geq t$. Choosing bases of $\calE_1,\calE_2$, an element $g$ in $\Hom(\calE_1,\calE_2)$ is given by a matrix in $M_{r\times t}(\bfA_R)$. Write $g=\sum_{i\geq -s}g_iT^i$ with $g_i\in M_{r\times t}(W(k_{\infty})\otimes_{\bZ_p}R)$. For simplicity, we let $\tilde R:=W(k_{\infty})\otimes_{\bZ_p}R$. Let $X$ and $Y$ be the matrices of $\varphi_{\calE_1},\varphi_{\calE_2}$ with respect to the chosen bases respectively and $n\geq 0$ be an integer such that $X,Y,X^{-1},Y^{-1}$ all have entries with poles of degree at most $n$. Then by the proof of \cite[Proposition 5.4.8]{EG21}, we must have $s<(2n+ap)/(p-1)$ and all $g_i$ are determined by the $g_i$ for $i\leq (2n+(a-1)p)/(p-1)$.

Let $s_1=\lfloor {(2n+ap)/(p-1)}\rfloor$ and $s_2=\lfloor (2n+(a-1)p)/(p-1)\rfloor$. Then there is a natural morphism
\[
\iota:\underline\Hom^{\varphi}(\calE_1,\calE_2)\to \prod_{i\in [-s_1,s_2]} (\Res_{\tilde R/R}(\bA^{r\times t}))_{\dR}
\]
by sending each $g=\sum_{i\geq -s}g_iT^i\in \underline\Hom^{\varphi}(\calE_1,\calE_2)(S)$ to $\prod_{i\in [-s_1,s_2]}\bar g_i$, where $\bar g_i$ is the image of $g_i$ in $(W(k_{\infty})\otimes_{\bZ_p}S)_{\rm red}=k_{\infty}\otimes_{\bF_p}S_{\rm red}$ for any $R$-alegbra $S$. The reason why we consider this map is the following. If $g$ represents a surjective map, then by Lemma \ref{surj-reduced}, this is equivalent to that $\bar g\in M^{m\times n}((W(k_{\infty})\otimes_{\bZ_p}S)_{\rm red}((T)))$ represents a surjective map. Moreover that $\bar g$ represents a surjective map is equivalent to that the ideal generated by all the $(t\times t)$-minors of $\bar g=\sum_{i\geq -s}\bar g_iT^i$ is the unit ideal. As $(W(k_{\infty})\otimes_{\bZ_p}S)_{\rm red}$ is reduced, this is equivalent to that the leading coefficient $\bar g_i$ of $\bar g$ represents a surjective map from $(W(k_{\infty})\otimes_{\bZ_p}S)_{\rm red}^r$ to $(W(k_{\infty})\otimes_{\bZ_p}S)_{\rm red}^t$. So we can characterise surjective maps via their images under the morphism $\iota$.

By \cite[\href{https://stacks.math.columbia.edu/tag/09TP}{Tag 09TP}]{stacks-project}, the surjective Hom functor $\frakF:=\underline\Hom_{\surj}(\tilde R^r,\tilde R^t))$ is represented by an open subscheme of $\underline\Hom(\tilde R^r,\tilde R^t))=\bA_{\tilde R}^{r\times t}$. There is also the closed immersion $\Spec\tilde R\to \bA_{\tilde R}^{r\times t}$ corresponding to the trivial map from $\tilde R^r$ to $\tilde R^t$. By \cite[Proposition 7.6/2]{BLR12}, the map $\Res_{\tilde R/R}(\frakF)\to \Res_{\tilde R/R}(\bA_{\tilde R}^{r\times t})$ is still an open immersion and the map $\Res_{\tilde R/R}(\Spec\tilde R)\to \Res_{\tilde R/R}(\bA_{\tilde R}^{r\times t})$ is still a closed immersion.

For each $i\in [-s_1,s_2]$, we can define the following pullback diagram
\begin{equation}
    \xymatrix@=1cm{
   \frakA_i\ar[d]\ar[r] & (\prod_{j\in[-s_1,i-1]}(\Res_{\tilde R/R}(\Spec\tilde R))_{\dR})\times (\Res_{\tilde R/R}(\frakF))_{\dR}\ar[d] \\
    \prod_{j\in [-s_1,s_2]} (\Res_{\tilde R/R}(\bA_{\tilde R}^{m\times n}))_{\dR}\ar[r]^{{}} & \prod_{j\in [-s_1,i]} (\Res_{\tilde R/R}(\bA_{\tilde R}^{m\times n}))_{\dR}
    }
\end{equation}
where the bottom horizontal map is the projection map and the right vertical map is induced by applying the de Rham prestack functor to the product of the closed immersion and open immersion mentioned above. By Lemma \ref{dR-open-closed}, the left vertical map is representable in formal schemes which locally are completions of finite type affine schemes. As the trivial morphism is not surjective, we have $\bigcup_{i\in [-s_1,s_2]} \frakA_i=\coprod_{i\in [-s_1,s_2]} \frakA_i$.

Now by the above discussion on the leading coefficients of $\bar g$, we can see that $\underline\Hom_{\surj}^{\varphi}(\calE_1,\calE_2)$ is exactly the pullback of the following diagram
\begin{equation}
    \xymatrix@=1cm{
    \underline\Hom_{\surj}^{\varphi}(\calE_1,\calE_2)\ar[d]\ar[r] &\coprod_{i\in[-s_1,s_2]}\frakA_i\ar[d]\\
    \underline\Hom^{\varphi}(\calE_1,\calE_2)\ar[r] & \prod_{j\in [-s_1,s_2]} (\Res_{\tilde R/R}(\bA_{\tilde R}^{m\times n}))_{\dR}.
    }
\end{equation}
This implies the natural map $\underline\Hom_{\surj}^{\varphi}(\calE_1,\calE_2)\to  \underline\Hom^{\varphi}(\calE_1,\calE_2)$ is representable in formal schemes which locally are completions of finite type affine schemes. So we are done with the case of free modules.

For general $\calE_1,\calE_2$, we can always find $\calF_1,\calF_2$ such that $\calF_1\oplus \calE_1\oplus \calF_2$ and $\calE_2\oplus \calF_2$ are both finite free. Let $e_1$ be the idempotent attached to $\calF_1$ in $\calF_1\oplus \calE_1\oplus \calF_2$, $e_2$ be the idempotent attached to $\calE_1$ in $\calF_1\oplus \calE_1\oplus \calF_2$, $e_3$ be the idempotent attached to $\calF_2$ in $\calF_1\oplus \calE_1\oplus \calF_2$ and $e_4$ be the idempotent attached to $\calF_2$ in $\calE_2\oplus \calF_2$, $e_5$ be the idempotent attached to $\calE_2$ in $\calE_2\oplus \calF_2$. We first consider the closed subspace $\calY$ in $\underline\Hom_{\surj}^{\varphi}(\calF_1\oplus \calE_1\oplus \calF_2,\calE_2\oplus \calF_2)$ cut out by the equations $ge_1=0$, $e_4ge_2=0$ and $(1-e_4)ge_3=0$. Then there is a natural map $\calY\to \underline\Hom^{\varphi}(\calF_2,\calF_2)$. Let $i:\Spec(R)\to \underline\Hom^{\varphi}(\calF_2,\calF_2)$ be the morphism corresponding to the identity map. Then $i$ is a closed immersion. To see this, note that $\underline\Hom^{\varphi}(\calF_2,\calF_2)$ is the closed subscheme of $\underline\Hom^{\varphi}(\calE_2\oplus \calF_2,\calE_2\oplus\calF_2)$ cut out by the equations $ge_5=0, e_5g=0$. And the point corresponding to the identity map is the closed subspace of $\underline\Hom^{\varphi}(\calF_2,\calF_2)$ cut out further by the equation $g=e_4$. So $i:\Spec(R)\to \underline\Hom^{\varphi}(\calF_2,\calF_2)$ is a closed immersion. Now we see that $\underline\Hom_{\surj}^{\varphi}(\calE_1,\calE_2)$ is the the following pullback
\begin{equation}
    \xymatrix{
    \underline\Hom_{\surj}^{\varphi}(\calE_1,\calE_2)\ar[r]\ar[d]& \Spec(R)\ar[d]^{i}\\
    \calY\ar[r]& \underline\Hom^{\varphi}(\calF_2,\calF_2).\\
    }
\end{equation}
So $\underline\Hom_{\surj}^{\varphi}(\calE_1,\calE_2)$ is a closed subscheme of $\underline\Hom_{\surj}^{\varphi}(\calF_1\oplus \calE_1\oplus \calF_2,\calE_2\oplus \calF_2)$. We are done.
\end{proof}


Similarly, we can also define a prestack $\calX_G^{\circ}$ which sends each $R\in \Nilp_{\bZ_p}$ to the groupoid of pairs $(\calE_{\bfA_R},f:t(\calE_{\bfA_R})\twoheadrightarrow \calF_{\bfA_R})$ where $\calE_{\bfA_R}$ is an \'etale $(\varphi,\Gamma)$-module of rank $d$, $\calF_{\bfA_R}$ is an \'etale $(\varphi,\Gamma)$-module of rank $t(d)-1$, and $f$ is a surjective map of \'etale $(\varphi,\Gamma)$-modules. Then we have the factorisation $\calX_G\to \calX_G^{\circ}\to \calX_d\times \calX_{t(d)-1}$.

\begin{lem}\label{surj2}
    The morphism $\calX_G^{\circ}\to \calX_d\times \calX_{t(d)-1}$ is representable in formal schemes which locally are completions  of finite type affine schemes.
\end{lem}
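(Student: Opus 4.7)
The plan is to run the $(\varphi,\Gamma)$-analogue of the proof of Proposition \ref{surj1}, with the only new ingredient being that $\Gamma$-equivariance can be imposed as a closed condition on top of the formal scheme of surjective $\varphi$-equivariant homomorphisms already provided by Lemma \ref{representability-phi}. First I would reduce to the case that $R$ is a finitely generated $\bZ/p^a$-algebra, using that $\calX_d \times \calX_{t(d)-1}$ is limit preserving. A point $\Spec(R) \to \calX_d\times\calX_{t(d)-1}$ then supplies étale $(\varphi,\Gamma)$-modules $\calE_{\bfA_R}, \calF_{\bfA_R}$ of ranks $d$ and $t(d)-1$, and the fibre of $\calX_G^\circ$ at this point is the functor $\underline{\Hom}^{\varphi,\Gamma}_{\surj}(t(\calE_{\bfA_R}), \calF_{\bfA_R})$ sending each $R$-algebra $S$ to the set of surjective $(\varphi,\Gamma)$-equivariant maps $t(\calE_{\bfA_S}) \twoheadrightarrow \calF_{\bfA_S}$; it therefore suffices to show this functor is representable in formal schemes locally topologically of finite type over $R$.

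Next, writing $\calE^\circ, \calF^\circ$ for the underlying étale $\varphi$-modules (obtained by forgetting the $\Gamma$-action), I would consider the Cartesian diagram
\[
\xymatrix@=1cm{
\underline{\Hom}^{\varphi,\Gamma}_{\surj}(t(\calE), \calF) \ar[r]\ar[d] & \underline{\Hom}^{\varphi}_{\surj}(t(\calE^\circ), \calF^\circ)\ar[d] \\
\underline{\Hom}^{\varphi,\Gamma}(t(\calE), \calF) \ar[r] & \underline{\Hom}^{\varphi}(t(\calE^\circ), \calF^\circ).
}
\]
The heart of the argument is that the bottom horizontal arrow is a closed immersion. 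To see this, fix a topological generator $\gamma$ of $\Gamma$ and note that, by continuity of the $\Gamma$-action together with the density of $\langle\gamma\rangle$ in $\Gamma$, a $\varphi$-equivariant $\bfA_S$-linear map $g$ is $\Gamma$-equivariant if and only if $\gamma^{-1}\circ g\circ\gamma = g$. The operator $g\mapsto \gamma^{-1}\circ g\circ\gamma$ is $\bfA_S$-linear (the two semilinear twists cancel) and commutes with conjugation by $\varphi$ (since $\varphi\gamma = \gamma\varphi$), hence defines an endomorphism of the affine $R$-scheme of finite presentation $\underline{\Hom}^{\varphi}(t(\calE^\circ), \calF^\circ)$ (which is a scheme by \cite[Lemma 5.4.8]{EG21}); its fixed locus, equivalently the vanishing locus of $g - \gamma^{-1}g\gamma$, is then closed.

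Combining this Cartesian square with Lemma~\ref{representability-phi}, which equips the right vertical arrow with the structure of a formal scheme locally topologically of finite type over $R$, the left vertical arrow is realised as a closed subfunctor of a formal scheme locally topologically of finite type, hence is itself of the same form, which would conclude the proof. The main obstacle I expect is verifying the continuity step carefully: the scheme-theoretic condition $\gamma^{-1}g\gamma = g$ a priori only captures $\gamma$-equivariance, and promoting this to genuine (continuous) $\Gamma$-equivariance relies on the filtered-colimit/$T$-adic topology picture developed in Subsection~\ref{subsec2.1} together with the continuity of the $\Gamma$-action on finitely generated étale $(\varphi,\Gamma)$-modules, which ensures that for any fixed $g$ the orbit map $\Gamma \to \underline{\Hom}(t(\calE_{\bfA_S}), \calF_{\bfA_S})$ is continuous, so that $\gamma$-fixed points are automatically $\Gamma$-fixed.
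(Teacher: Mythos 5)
Your proposal is correct and follows essentially the same route as the paper: reduce to $R$ of finite type, invoke Lemma \ref{representability-phi} for the $\varphi$-equivariant surjective Hom functor, cut out $\gamma$-equivariance as a closed/equalizer condition (the paper phrases this as the pullback of the diagonal against $(\gamma^*,\id)$ on $\underline\Hom^{\varphi}_{\surj}$ itself rather than via your Cartesian square against the full Hom schemes, but this is only a cosmetic difference), and then use continuity of the $\Gamma$-action together with density of $\langle\gamma\rangle$ to identify $\Gamma_{\disc}$-equivariance with $\Gamma$-equivariance.
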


\begin{proof}
Let $\calE_{\bfA_R}\in\calX_d(R)$ and $\calF_{\bfA_{R}}\in \calX_{t(d)-1}$. As $\calX_d\times \calX_{t(d)-1}$ is limit preserving, we may assume $R$ is of finite type over $\bZ_p$.

By Lemma \ref{representability-phi}, we know that the surjective Hom functor $\underline\Hom_{\surj}^{\varphi}(t(\calE_{\bfA_R}),\calF_{\bfA_{R}})$ is representable by a formal scheme which locally is a completion of finite type affine scheme over $R$. Note that the surjective Hom functor $\underline\Hom_{\surj}^{\varphi,\Gamma_{\disc}}(t(\calE_{\bfA_R}),\calF_{\bfA_R})$ (as \'etale $(\varphi,\Gamma_{\disc})$-modules) can be defined as the pullback diagram
 \begin{equation}
     \xymatrix@=1cm{
     \underline\Hom_{\surj}^{\varphi,\Gamma_{\disc}}(t(\calE_{\bfA_S}),\calF_{\bfA_R})\ar[d]\ar[r]& \underline\Hom_{\surj}^{\varphi}(t(\calE_{\bfA_R}),\calF_{\bfA_{R}})\ar[d]^{(\gamma^*,id)}\\
     \underline\Hom_{\surj}^{\varphi}(t(\calE_{\bfA_R}),\calF_{\bfA_{R}})\ar[r]^-{\Delta} &\underline\Hom_{\surj}^{\varphi}(t(\calE_{\bfA_R}),\calF_{\bfA_{R}})\times \underline\Hom_{\surj}^{\varphi}(t(\calE_{\bfA_R}),\calF_{\bfA_{R}})
     }
 \end{equation}
 where $\gamma^*$ is the functor sending $f$ to $\gamma^{-1}\circ f\circ \gamma $ and $\Delta$ is the diagonal functor. Then we deduce that $\underline\Hom_{\surj}^{\varphi,\Gamma_{\disc}}(t(\calE_{\bfA_R}),\calF_{\bfA_R})$ is also representable by a formal scheme which locally is a completion of finite type affine scheme over $R$ by \cite[\href{https://stacks.math.columbia.edu/tag/0AIP}{Tag 0AIP}]{stacks-project}.
 
 Note that any surjective map $t(\calE_{\bfA_R})\to \calF_{\bfA_R}$ of \'etale $(\varphi,\Gamma_{\disc})$-modules is necessarily continuous with respect to the natural topology and hence must also be a surjective map of \'etale $(\varphi,\Gamma)$-modules. So the surjective Hom functor $\underline\Hom_{\surj}^{\varphi,\Gamma}(t(\calE_{\bfA_R}),\calF_{\bfA_R})$ of \'etale $(\varphi,\Gamma)$-modules is the same as $\underline\Hom_{\surj}^{\varphi,\Gamma_{\disc}}(t(\calE_{\bfA_R}),\calF_{\bfA_R})$. This means $\underline\Hom_{\surj}^{\varphi,\Gamma}(t(\calE_{\bfA_R}),\calF_{\bfA_R})$ is also representable by a formal scheme which locally is a completion of finite type affine scheme over $R$.

\end{proof}

As this point, we can also finish the proof of Lemma \ref{Lemma-Gamma}.
\begin{proof}[Proof of Lemma \ref{Lemma-Gamma}]
    As $\calX_G^{\circ}$ is limit preserving by Lemma \ref{surj2}, we know $\calX_G$ and the stack of \'etale $(\varphi,\Gamma)$-twists are both limit preserving by \cite[Lemma D.28]{EG22} and Lemma \ref{Gamma_disc}. Then this lemma follows from the case of finite type $\bZ_p$-algebras, which we have already proved.
\end{proof}

\subsection{Artin's criteria}
Now in order to study the morphism $\calX_G\to \calX_d\times \calX_{t(d)-1}$, it remains to study the morphism $\calX_G\to \calX_G^{\circ}$ thanks to Lemma \ref{surj2}. Note that $\calX_G\to \calX_G^{\circ}$ is a monomorphism as the existence of an fppf cover such that the pullback of a pair $(\calE_{\bfA_R},\calF_{\bfA_R})$ is isomorphic to $(V_{\bfA_R},V_{\bfA_R}/L_{\bfA_R})$ is a property instead of part of the data.  We will resort to Artin's axioms for the representability of functors by
algebraic spaces. 

\begin{lem}[{\cite[\href{https://stacks.math.columbia.edu/tag/07Y2}{Tag 07Y2}]{stacks-project}}]\label{artin}
    Let $\calF\to \calG$ be a transformation of functors $\alpha:(Sch/\Spec(\bZ_p))^{\rm opp}_{\rm fppf}\to Sets$. Assume that
    \begin{enumerate}
        \item $\alpha$ is injective,
        \item $\calF$ satisfies the following conditions
        \begin{enumerate}
            \item $\calF$ is a sheaf for the \'etale topology,
            \item $\calF$ is limit preserving,
            \item $\calF$ satisfies the Rim-Schlessinger condition,
            \item Every formal object is effective,
            \item $\calF$ satisfies openness of versality.
        \end{enumerate}
        \item $\calG$ is an algebraic space locally of finite presentation over $\bZ_p$.
    \end{enumerate}
    Then $\calF$ is an algebraic space locally of finite presentation over $\bZ_p$.
\end{lem}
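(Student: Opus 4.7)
The strategy is to use injectivity of $\alpha$ together with a smooth atlas of $\calG$ to reduce to the classical Artin representability criterion for subfunctors of schemes. First, I would fix a smooth surjective morphism $V \to \calG$ from a scheme $V$ locally of finite presentation over $\Spec(\bZ_p)$; such a $V$ exists because $\calG$ is an algebraic space locally of finite presentation. Set $\calF_V := \calF \times_{\calG} V$. Since $\alpha$ is injective on $T$-points for every scheme $T$, the prestack $\calF_V$ is identified with a subfunctor of $V$, hence in particular a set-valued sheaf in whatever topology $V$ is.

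Next I would check that each of properties (2a)--(2e) of $\calF$ descends to the subfunctor $\calF_V$ over the base $V$. The étale-sheaf property and limit preservation are stable under fibre products whose other factors enjoy the same properties, and $V$ satisfies both because it is locally of finite presentation over $\bZ_p$. The Rim--Schlessinger condition and effectivity of formal objects for $\calF_V$ follow from the corresponding conditions for $\calF$ combined with the fact that a scheme locally of finite presentation, and an algebraic space locally of finite presentation such as $\calG$, trivially satisfy them, so compatible formal objects of $\calF$ and $V$ over $\calG$ glue into a formal object of $\calF_V$. Openness of versality for $\calF_V$ is extracted from openness of versality for $\calF$ and the smoothness of $V \to \calG$, using that smooth base change preserves versal deformations.

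Once these axioms are in place, I would invoke the standard Artin representability theorem (Stacks Project, Tag~07XF) to conclude that $\calF_V$ is an algebraic space locally of finite presentation over $\Spec(\bZ_p)$. Finally, the morphism $\calF_V \to \calF$ is obtained as the base change along $\alpha$ of the smooth surjection $V \to \calG$, so it is itself smooth and surjective; combined with the étale-sheaf property of $\calF$, this produces an atlas of $\calF$ by an algebraic space locally of finite presentation, which is exactly what is to be proved.

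The main obstacle I anticipate lies in the propagation of the deformation-theoretic axioms from $\calF$ to $\calF_V$. Verifying Rim--Schlessinger and openness of versality is not entirely formal, since $\calG$ is allowed to be an algebraic space rather than a scheme and one must manipulate square-zero thickenings and versal families in a way that interacts cleanly with the smooth atlas $V \to \calG$; in particular, one has to track how the deformation functor of $\calF$ at a point is related to that of $\calF_V$ at any lift to $V$, which is non-trivial precisely because the fibres of $V \to \calG$ are positive-dimensional. Once those propagations are carried out, the remainder of the argument is a black-box application of the standard Artin machinery.
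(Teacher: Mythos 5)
This lemma is not proved in the paper at all: it is quoted verbatim from the Stacks Project (Tag 07Y2), whose proof is a one-liner of a very different shape from yours. The point of that result is that the one Artin axiom \emph{missing} from the hypothesis list --- Axiom [3], finite dimensionality of tangent spaces --- is supplied by the injection $\alpha:\calF\hookrightarrow\calG$, since $\calG$ is an algebraic space locally of finite presentation and therefore has finite-dimensional tangent spaces at finite type points; once [3] is in hand one applies the full Artin representability proposition directly to $\calF$, with no base change to an atlas. Your proposal never mentions Axiom [3], yet the ``standard Artin representability theorem'' you invoke for $\calF_V$ requires it. This is a genuine gap, and it is precisely the content of the lemma you are asked to prove. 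It is repairable inside your framework --- $\calF_V=\calF\times_{\calG}V$ is a subfunctor of the scheme $V$, so its tangent spaces embed (linearly, using RS) into those of $V$ and are finite dimensional --- but as written the key step is absent.

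Beyond that, your detour through $\calF_V$ trades the easy part of the argument for the hard part. The claims that formal effectivity and openness of versality ``descend'' to the fibre product are exactly where the work lies and are not formal: effectivity of a formal object of $\calF_V$ requires not only effectivity for $\calF$ and for the scheme $V$ but also injectivity of $\calG(R)\to\varprojlim\calG(R/\frakm^n)$ to match the two effective objects over $\calG$; and openness of versality for a fibre product of functors is a statement with its own nontrivial hypotheses on how the three deformation theories interact, which you acknowledge but do not resolve. Finally, the concluding bootstrap step (smooth representable surjection from an algebraic space onto an \'etale sheaf implies algebraic space) needs the fppf/\'etale sheaf bookkeeping spelled out. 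The direct route --- deduce [3] from injectivity into $\calG$, then cite the main Artin proposition for $\calF$ itself --- avoids every one of these issues.
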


We first discuss the case of \'etale $\varphi$-modules. For any map $\Spec(R)\to \calR_G^{\circ}$, let $\calB_R$ denote the pullback of the diagram $\calR_G\to \calR_G^{\circ}\xleftarrow{} \Spec(R)$. In particular, $\calB_R\to \Spec(R)$ is a monomorphism. We then have the following proposition.
\begin{prop}\label{etale}
   $\calB_R$ satisfies the conditions $(a),(b),(c)$ in Lemma \ref{artin}.
\end{prop}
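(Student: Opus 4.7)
The fundamental observation is that $\calB_R \to \Spec(R)$ is a monomorphism, so $\calB_R$ is a sieve on the representable functor $\Spec(R)$: for an $R$-algebra $S$, the set $\calB_R(S)$ is the singleton $\{\ast\}$ when the pair $(\calE_{\bfA_S}, f_{\bfA_S})$ obtained by pullback of the universal object over $\Spec(R)$ is fppf-locally isomorphic to $(V_{\bfA_S}, \pr_S)$, and is empty otherwise. Given this, (a) and (b) follow formally. For (a), $\calB_R = \Spec(R) \times_{\calR_G^\circ} \calR_G$ is a fiber product of fppf-stacks (the twist condition defining $\calR_G$ is itself fppf-local by design), hence in particular an \'etale sheaf. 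For (b), $\Spec(R)$ is trivially limit preserving, $\calR_G^\circ$ is limit preserving by Proposition \ref{surj1}, and $\calR_G$ is limit preserving by the same reasoning used to establish limit preservation of $\calX_G$ in the proof of Lemma \ref{Lemma-Gamma} (using \cite[Lemma D.28]{EG22} and Lemma \ref{Lem-varphi}); since filtered colimits commute with finite limits of sets, the fiber product inherits limit preservation.

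The substantive content is (c), the Rim--Schlessinger condition. Let $A' \twoheadrightarrow A$ be a square-zero surjection of $R$-algebras and $B \to A$ any morphism of $R$-algebras, and set $\tilde S := A' \times_A B$. Because $W(k_\infty) \otimes_{\bZ_p} (-)$ (by flatness of $W(k_\infty)$) and $(-)((T)) = \bigcup_n T^{-n}(-)[[T]]$ both commute with fiber products of rings, one has $\bfA_{\tilde S} = \bfA_{A'} \times_{\bfA_A} \bfA_B$; consequently, set-theoretically, $\Spec(\bfA_{\tilde S}) = \Spec(\bfA_{A'}) \cup \Spec(\bfA_B)$, with intersection $\Spec(\bfA_A)$. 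Both sides of the Rim--Schlessinger comparison map $\calB_R(\tilde S) \to \calB_R(A') \times_{\calB_R(A)} \calB_R(B)$ are subsingletons of $\Hom_R(R,\tilde S) = \{\ast\}$, so only surjectivity is at issue: assuming that both $(\calE_{\bfA_{A'}}, f_{\bfA_{A'}})$ and $(\calE_{\bfA_B}, f_{\bfA_B})$ are fppf-locally twists, I must show the same for $(\calE_{\bfA_{\tilde S}}, f_{\bfA_{\tilde S}})$.

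To this end I would consider the $G$-pseudo-torsor
\[
\calI := \underline{\Isom}\bigl((V_{\bfA_{\tilde S}},\pr_{\tilde S}),\,(\calE_{\bfA_{\tilde S}}, f_{\bfA_{\tilde S}})\bigr),
\]
which is affine and of finite presentation over $\Spec(\bfA_{\tilde S})$. Every geometric point of the base lies in at least one of the closed subschemes $\Spec(\bfA_{A'})$, $\Spec(\bfA_B)$, on which $\calI$ pulls back to the hypothesised $G$-torsors $\calI_{A'}$, $\calI_B$ and hence admits fppf-local sections. The main technical obstacle I anticipate is concluding, from this fiberwise information, that $\calI$ is itself an fppf $G$-torsor on all of $\Spec(\bfA_{\tilde S})$ --- the delicacy being that $G$ is not assumed smooth. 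The point, however, is that non-emptiness of an affine finitely presented scheme over a base is itself an fppf-local condition that may be verified at geometric points; so $\calI$ having fppf-local sections at every geometric point of $\Spec(\bfA_{\tilde S})$ forces it to be an honest $G$-torsor, whereby $(\calE_{\bfA_{\tilde S}}, f_{\bfA_{\tilde S}})$ is fppf-locally isomorphic to $(V_{\bfA_{\tilde S}},\pr_{\tilde S})$, as required.
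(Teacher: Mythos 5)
Your starting point --- reducing everything to subsingletons via the monomorphism $\calB_R\to\Spec(R)$, and the identification $\bfA_{\tilde S}=\bfA_{A'}\times_{\bfA_A}\bfA_B$ --- agrees with the paper. But the final step of your argument for (c) is the one that fails. A $G$-pseudo-torsor $\calI\to\Spec(\bfA_{\tilde S})$ is a torsor if and only if it admits sections fppf-locally, equivalently if and only if $\calI\to\Spec(\bfA_{\tilde S})$ is itself faithfully flat; flatness is not a fiberwise condition and is not implied by non-emptiness of geometric fibers. Already for $G$ the trivial group, $\Spec(A_{\rm red})\to\Spec(A)$ is a pseudo-torsor with non-empty fibers that is not a torsor, so the principle you invoke is false as stated. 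Moreover, since $\ker(\bfA_{\tilde S}\to\bfA_B)$ is square-zero, $\Spec(\bfA_B)\hookrightarrow\Spec(\bfA_{\tilde S})$ is already a homeomorphism, so your ``covering by two closed subschemes'' carries no information beyond the special fiber; what is actually needed is a gluing/deformation statement for torsors over the Milnor square $\bfA_{\tilde S}=\bfA_{A'}\times_{\bfA_A}\bfA_B$. The paper obtains exactly this by citing that $BG$ is an algebraic stack and that algebraic stacks satisfy the strong Rim--Schlessinger condition; your pointwise argument does not substitute for that input.

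Conditions (a) and (b) are also not ``formal'' in the way you claim. For (b), the whole difficulty is that $S\mapsto\bfA_S$ does not commute with filtered colimits, so an exact $\otimes$-functor into $\Mod^{\varphi}(\bfA_A)$ does not obviously come from any finite level $A_i$; asserting that $\calR_G$ is limit preserving is essentially the statement to be proved (limit preservation of $\calB_R$ for all $R$-points is what makes $\calR_G\to\calR_G^{\circ}$ limit preserving on objects). The paper's proof works with the intermediate ring $\bar A=\varinjlim_i\bfA_{A_i}\subsetneq\bfA_A$, uses the finite presentation of the Hom, Isom and surjective-Hom schemes (Lemmas \ref{reduction-basic} and \ref{representability-phi}) to descend the functor together with its exactness to $\bar A$, and then uses local finite presentation of the moduli of $G$-torsors to push the fppf-triviality witness down to a finite level. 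For (a), the twist condition is fppf-local over $\Spec(\bfA_S)$, not over $\Spec(S)$, so the sheaf property in the \'etale topology on $R$-algebras is not ``by design'': one must descend modules, morphisms and, most delicately, exactness along $\bfA_{A_1}\to\bfA_{A_2}$, and the faithful flatness of this map is only available for $A_1$ of finite type, forcing the reduction via (b) that the paper carries out.
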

\begin{proof}
 As $\calR^{\circ}_G$ is locally of finite presentation over $\Spf(\bZ_p)$, we may assume $R$ is also of finite type.  Let $(\calE_{\bfA_R}, t(\calE_{\bfA_R})\twoheadrightarrow \calF_{\bfA_R})$ be the data corresponding to $\Spec(R)\to \calR^{\circ}_G$.

\begin{enumerate}

  \item Condition (a): $\calB_R$ is an \'etale sheaf over $R$. As $\calB_R(A)$ for any $A$ is either empty set or a singleton set, we see $\calB_R(A_1\times A_2)=\calB_R(A_1)\times \calB_R(A_2)$ as $\bfA_{A_1\times A_2}\cong \bfA_{A_1}\times \bfA_{A_2}$. Now let $A_1\to A_2$ be a faithfully flat map of $R$-algebras. Then we need to prove $\calB_R(A_1)$ is the equalizer ${\rm Eq}(\calB_R(A_2)\rightrightarrows \calB_R(A_2\otimes_{A_1}A_2))$. It then suffices to prove when $\calB_R(A_2)=\{*\}$, we have $\calB_R(A_1)=\{*\}$. In other words, given an exact $\otimes$-functor $F_2: \Rep(G)\to \Mod^{\varphi}(\bfA_{A_2})$, we need to show $F_2$ descends to an exact $\otimes$-functor $F_1:\Rep(G)\to \Mod^{\varphi}(\bfA_{A_1})$. For each $V\in \Rep(G)$, we can define $F_1(V)$ to be the \'etale $\varphi$-module over $\bfA_{A_1}$ obtained by the faithfully flat descent of $F_2(V)$, which is guaranteed by that \'etale $\varphi$-modules satisfy faithfully flat descent. For any map $f:V_1\to V_2$, as $\underline\Hom^{\varphi}(F_1(V_1),F_1(V_2))$ is an affine scheme over $A_1$ by Lemma \ref{reduction-basic}, which is an fpqc sheaf, we get a morphism $F_1(f):F_1(V_1)\to F_1(V_2)$ by descending the morphism $F_2(f):F_2(V_1)\to F_2(V_2)$. Then for any short exact sequence $0\to V_1\to V_2\to V_3\to 0$ in $\Rep(G)$, we get a sequence $0\to F_1(V_1)\to F_1(V_2)\to F_1(V_3)\to 0$. We first assume $A_1$ is of finite type over $\bZ_p$, then $\bfA_{A_1}\to \bfA_{A_2}$ is faithfully flat by \cite[Lemma 5.1.7]{EG21}. So we see $0\to F_1(V_1)\to F_1(V_2)\to F_1(V_3)\to 0$ is also exact. It is clear that $F_1$ is a $\otimes$-functor, so $\calB_R(A_1)=\{*\}$. By the condition (b) which will be proved below, i.e. $\calB_R$ is limit preserving, we conclude $\calB_R$ is an \'etale sheaf by \cite[Chapter 2, Proposition 2.2.7]{GR19}.
    
    \item Condition (b): $\calB_R$ is limit preserving. Let $A$ be an $R$-algebra. Write $A=\varinjlim_i A_i$ as the filtered colimit of finite type $R$-subalgebras. Then we need to prove $\calB_R(A)=\varinjlim_i \calB_R(A_i)$. Note that $\calB_R(A_i)$ is either empty or a singleton set. So it suffices to prove that if $\calB_R(A)=\{*\}$, then there exists an $A_i$ such that $\calB_R(A_i)=\{*\}$.  Let $F:\Rep(G)\to \Mod^{\varphi}(\bfA_A)$ be the exact $\otimes$-functor corresponding to $\calB_R(A)$. Let $\bar A=\varinjlim_i \bfA_{A_i}$. For each $V\in \Rep(G)$, we have $F(V)\in \Mod^{\varphi}(\bfA_A)$. Then there exists $A_i$ such that $F(V)\in \Mod^{\varphi}(\bfA_{A_i})$ as the moduli stack of \'etale $\varphi$-modules is limit preserving. Now for any $h:V_1\to V_2\in \Rep(G)$, we first can choose a $j$ such that $F(V_1)$ and $F(V_2)$ are actually in $\Mod^{\varphi}(\bfA_{A_j})$. Now as the affine scheme $\underline\Hom^{\varphi}(F(V_1),F(V_2))$  by Lemma \ref{reduction-basic} is of finite presentation over $A_j$, we see that $\underline\Hom^{\varphi}(F(V_1),F(V_2))(A)=\varinjlim_i \underline\Hom^{\varphi}(F(V_1),F(V_2))(A_i)$. So there exists a $k$ such that $F(h)$ actually lies in $\underline\Hom^{\varphi}(F(V_1),F(V_2))(A_k)$. In particular, this mean we get a functor $\bar F: \Rep(G)\to \Mod^{\varphi}(\bar A)$ such that after base change along $\bar A\to \bfA_A$, we get the original $F:\Rep(G)\to \Mod^{\varphi}(\bfA_A)$. 

    It is clear that $\bar F$ is a $\otimes$-functor. Let us now prove it is exact. Let $0\to V_1\to V_2\to V_3\to 0$ be a short exact sequence in $\Rep(G)$. Then we have a short exact sequence $0\to F(V_1)\to F(V_2)\to F(V_3)\to 0$. As $\bar A\to \bfA_A$ is injective, we see that $\bar F(V_i)\to F(V_i)$ is also injective for $i=1,2,3$. So $\bar F(V_1)\to \bar F(V_2)$ is injective and the composite $\bar F(V_1)\to \bar F(V_2)\to \bar F(V_3)$ is $0$. Now by Lemma \ref{representability-phi}, we see the surjective Hom functor is limit preserving. So $\bar F(V_2)\to \bar F(V_3)$ is also surjective. Let $K$ be the kernel of $\bar F(V_2)\to \bar F(V_3)$. As the isomorphism Hom functor of \'etale $\varphi$-modules is also limit preserving by Lemma \ref{reduction-basic}, we see the natural map $\bar F(V_1)\to K$ is also an isomorphism. So we have a short exact sequence $0\to \bar F(V_1)\to \bar F(V_2)\to \bar F(V_3)\to 0$.

    This means we have an exact $\otimes$-functor $\bar F: \Rep(G)\to \Mod^{\varphi}(\bar A)$. As the moduli stack of $G$-torsors is locally of finite presentation, we then get a functor $\calF: \Rep(G)\to \Mod(\bfA_{A_j})$ for some $j$, whose base change to $\bar A$ coincides with the underlying $G$-torsor of $\bar F$. In particular, this means there exists a fppf cover $\calY_j\to \Spec(\bfA_{A_j})$ such that $(\calF(V)_{\calY_j},\calF(L)_{\calY_j})$
    is isomorphic to $(V_{Y_j},L_{Y_j})$. Now we can choose an $s$ such that $F(V),F(L)$ lie in $\Mod^{\varphi}(\bfA_{A_s})$ and there is a map from $\bfA_{A_j}\to \bfA_{A_s}$. Then we see $\calB_R(A_s)=\{*\}$.

    \item Condition (c): $\calB_R$ satisfies the Rim-Schlessinger condition, i.e. for any pullback diagram of local Artinian rings of finite type over $R$
    \begin{equation*}
        \xymatrix{
        A\ar@{->>}[r]\ar[d]& B\ar[d]\\
        C\ar@{->>}[r]& D,
        }
    \end{equation*}
    the functor $\calB_R(A)\to \calB_R(C)\times_{\calB_R(D)}\calB_R(B)$ is an equivalence. To verify this, it suffices to assume $C\to D$ is surjective with square zero kernel. Then we have a pullback diagram of  rings 
    \begin{equation*}
        \xymatrix{
        \bfA_A\ar@{->>}[r]\ar[d]& \bfA_B\ar[d]\\
        \bfA_C\ar@{->>}[r]& \bfA_D
        }
    \end{equation*}
    such that the square of the kernel of $\bfA_C\to \bfA_D$ is $0$. Now we can use the facts that the moduli stack of $G$-torsors is an algebraic stack and that an algebraic stack satisfies the strong Rim-Schlessinger condition (cf. \cite[\href{https://stacks.math.columbia.edu/tag/0CXP}{Tag 0CXP}]{stacks-project}). This implies $\calB_R$ satisfies the Rim-Schlessinger condition.

\end{enumerate}

\end{proof}

The remaining conditions $(d),(e)$ are more delicate. The rest of this subsection is devoted to proving condition $(d)$ for $\calB_R$. The condition $(e)$ will be dealt with in the next subsection.

We begin with a lemma concerning algebraic representations of $G$.

\begin{lem}\label{Lem-rep}
  Let $W$ be a finite free $\bZ_p$-representation of $G$. Then there exists finite free $\bZ_p$-representations $W_1,W_2$ such that 
  \begin{enumerate}
      \item $W$ is a subrepresentation of $W_1$ and $W_1/W$ is finite free over $\bZ_p$;
      \item  $W_1$ is a quotient representation of $W_2=(\bigotimes^{s'}(\Sym^d(V^{\oplus d}))^{\vee})\bigotimes (\bigoplus_{i=0}^s\Sym^i(V^{\oplus d}))$ for some integers $s,s'\geq 0$, where $V$ is our fixed $\bZ_p$-representation and $d=\dim_{\bZ_p}V$.
      
  \end{enumerate}
\end{lem}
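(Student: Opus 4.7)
The plan is to use that $V$ is a faithful $G$-representation, so the identification $G \simeq \underline\Aut(V,L) \hookrightarrow \GL(V)$ is a closed immersion, inducing a $G$-equivariant Hopf algebra surjection $\bZ_p[\GL(V)] \twoheadrightarrow \bZ_p[G]$ (for the left translation action on each). Given $W$, I will embed it into a sum of copies of the regular representation via the coaction $\rho \colon W \hookrightarrow W \otimes_{\bZ_p} \bZ_p[G] \cong \bZ_p[G]^{\oplus N}$, with $N = \rk_{\bZ_p} W$ and $G$ acting only on the second factor. Evaluating the matrix coefficients $a_{ij} = \rho(e_j)_i \in \bZ_p[G]$ at the identity $e \in G$ gives $a_{ij}(e) = \delta_{ij}$, so $\rho(W)$ is saturated in $\bZ_p[G]^{\oplus N}$; consequently $W_1/W$ will automatically be $\bZ_p$-free for any intermediate $G$-subrepresentation $\rho(W) \subset W_1 \subset \bZ_p[G]^{\oplus N}$.

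To produce $W_1$ as a quotient of $W_2$, identify $\bZ_p[\GL(V)] = \Sym^{\bullet}(V^{\oplus d})[\det^{-1}]$ using $\End(V) \cong V \otimes V^\vee \cong V^{\oplus d}$ as a $\GL(V)$-representation under left multiplication; here $\det \in \Sym^d(V^{\oplus d})$ is the antisymmetrisation of $v_1^{(1)} v_2^{(2)} \cdots v_d^{(d)}$ (where $v_1,\ldots,v_d$ is a basis of $V$ and $v^{(j)}$ denotes the image of $v$ in the $j$-th copy of $V$ in $V^{\oplus d}$), and spans a $\GL(V)$-subrepresentation isomorphic to $\det V = \bigwedge^d V$. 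The inclusion $\iota \colon \det V \hookrightarrow \Sym^d(V^{\oplus d})$ is $\GL(V)$-equivariant and admits a $\bZ_p$-linear splitting (take the coefficient of the monomial $v_1^{(1)} \cdots v_d^{(d)}$), so its dual $\iota^\vee$ is a $\GL(V)$-equivariant surjection $(\Sym^d(V^{\oplus d}))^\vee \twoheadrightarrow (\det V)^{-1}$. Setting
\[
U_{s,s'} := \det^{-s'} \cdot \bigoplus_{i=0}^{s}\Sym^i(V^{\oplus d}) \;\cong\; (\det V)^{\otimes -s'} \otimes \bigoplus_{i=0}^{s}\Sym^i(V^{\oplus d}),
\]
we have $\bZ_p[\GL(V)] = \bigcup_{s,s'} U_{s,s'}$; writing $\bar U_{s,s'} \subset \bZ_p[G]$ for the image of $U_{s,s'}$, these exhaust $\bZ_p[G]$. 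Tensoring $(\iota^\vee)^{\otimes s'}$ with $\bigoplus_{i=0}^{s} \Sym^i(V^{\oplus d})$ yields a canonical $\GL(V)$-equivariant surjection $W_2 \twoheadrightarrow U_{s,s'}$.

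Now pick $s_0, s_0'$ large enough that $\rho(W) \subset \bar U_{s_0, s_0'}^{\oplus N}$, and set $W_1 := \bar U_{s_0, s_0'}^{\oplus N}$. The main remaining step — and the principal obstacle — is to exhibit $W_1$ as a quotient of $W_2$: a priori it is only a quotient of $U_{s_0, s_0'}^{\oplus N}$, so one must produce, for suitably enlarged parameters, a $\GL(V)$-equivariant surjection $W_2 \twoheadrightarrow U_{s_0, s_0'}^{\oplus N}$. A single $U_{s,s'}$ contains each $\GL(V)$-irreducible with only bounded multiplicity (Weyl's dimension formula shows that $\det$-shifts preserve multiplicities), so the extra copies must come from the tensor power $((\Sym^d(V^{\oplus d}))^\vee)^{\otimes s'}$, whose multiplicities do grow without bound with $s'$ by iterated Littlewood--Richardson. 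Over $\bQ_p$ the surjection then follows from semisimplicity; the delicate integral refinement over $\bZ_p$ is the main content, and will be handled either by constructing an explicit integral intertwining operator or by combining the rational statement with a saturation argument exploiting the freeness of the multiplicity spaces.
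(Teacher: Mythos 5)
Your setup coincides with the paper's proof (which follows Milne's Theorem 4.12): embed $W$ into $\calO(G)^{\oplus N}$ via the coaction, use the counit/evaluation-at-$e$ retraction to get that the image is a split (hence saturated) $\bZ_p$-submodule, identify $\calO(\GL(V))$ with $\Sym^{\bullet}(V^{\oplus d})[\det^{-1}]$, and obtain the surjection $(\Sym^d(V^{\oplus d}))^{\vee}\twoheadrightarrow \det^{-1}$ from the split inclusion $\det\hookrightarrow \Sym^d(V^{\oplus d})$. Up to the construction of the equivariant surjection $W_2\twoheadrightarrow U_{s,s'}$ your argument is correct and is the paper's argument.

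The last step, however, contains a genuine gap, and it is one you create for yourself. By setting $W_1:=\bar U_{s_0,s_0'}^{\oplus N}$ you are forced to produce a $G$-equivariant surjection from a \emph{single} $W_2=(\bigotimes^{s'}(\Sym^d(V^{\oplus d}))^{\vee})\otimes(\bigoplus_{i=0}^{s}\Sym^i(V^{\oplus d}))$ onto an $N$-fold direct sum $U_{s_0,s_0'}^{\oplus N}$. You explicitly flag this as "the main content" and offer only two unexecuted strategies (an explicit integral intertwiner, or a rational multiplicity count via Littlewood--Richardson plus an integral saturation argument). Neither is carried out, and the integral statement is far from routine: $\GL_d$ over $\bZ_p$ is not linearly reductive, so multiplicity counts over $\bQ_p$ do not yield integral surjections, and there is no reason a given copy of an irreducible inside $U_{s_0,s_0'}^{\oplus N}$ is hit integrally by a map from $W_2$. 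As written, the proof is incomplete.

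The paper sidesteps this entirely by reducing to the single-copy case before choosing $W_1$: letting $W^j$ denote the image of $\rho(W)$ under the $j$-th projection $\calO(G)^{\oplus N}\to\calO(G)$, one has $\rho(W)\hookrightarrow\bigoplus_j W^j$ with each $W^j$ a $G$-subrepresentation of a single copy of $\calO(G)$, hence contained in a single $\bar U_{s,s'}$, which is a quotient of a single $W_2$ by exactly the surjection you constructed. The remaining direct sum over $j$ is external to the tensorial construction and is harmless in the application (Proposition 2.25 only needs $W$ to be a split submodule of a finite direct sum of subquotients of such $W_2$'s). So the fix is not to prove your surjection onto $U_{s_0,s_0'}^{\oplus N}$, but to project to each factor first and apply your single-copy surjection coordinatewise.
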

\begin{proof}
    The proof is basically the same as \cite[Theorem 4.12]{milne2017algebraic}.  We sketch the proof.
Write $\dim_{\bZ_p}(W)=m$. Let $(W,\rho)$ be the corresponding $\calO(G)$-comodule and $(\calO(G),\Delta)$ be the Hopf algebra. Then 
    $W\xrightarrow{
    \rho} W_0\otimes\calO(G)=\calO(G)^m$ is an injective homomorphism of $\calO(G)$-comodules, where $W_0$ is the underlying space of $W$ with trivial group action. In particular, $W$ is a direct summand of $W_0\otimes\calO(G)$ as $\bZ_p$-modules as $(id\otimes \epsilon)\circ\rho=id$ where $\epsilon: \calO(G)\to \bZ_p$ is the co-identity map. Let $W^j$ be the image of $W$ under the $j$-th projection map $\calO(G)^m\to \calO(G)$. Then $W\hookrightarrow \bigoplus_jW^j$. So we may assume $W\subset \calO(G)$.

    We choose a basis of $V$ and identity $G$ as a subgroup of $GL_d$. Then there is a surjective homomorphism
    \[
    \calO(GL_d)=\bZ_p[T_{ij},1/{\det}]_{1\leq i,j\leq }\twoheadrightarrow \calO(G)=\bZ_p[t_{ij},1/\det]_{1\leq i,j\leq d}
    \]
    sending $T_{ij}$ to $t_{ij}$. As $W$ is finite dimensional, it is contained in a subspace 
    \[
  W_1:= \{f(t_{ij})\mid \deg(f)\leq s\}\cdot {\rm det}^{-s'}
    \]
    of $\calO(G)$ for some $s,s'\in \bN$. 

   As $W$ is a direct summand of $\calO(G)$ as $\bZ_p$-modules, it is also true that $W$ is a direct summand of $W_1$ as $\bZ_p$-modules.  Note that $W_1$ is the image of 
   \[N:= \{f(T_{ij})\mid \deg(f)\leq s\}\cdot {\rm det}^{-s'}\subset \calO(GL_d).\]

   The space $\{f(T_{ij})\mid \deg(f)\leq s\}$ is exactly the direct sum $\bigoplus_{i=0}^s\Sym^i(V^{\oplus d})$. The determinant representation $\rm det$ is contained in $\Sym^d(V^{\oplus d})$, which implies $(\rm det)^{-1}$ is a quotient of $(\Sym^d(V^{\oplus d}))^{\vee}$. So $N$ is a quotient of $W_2=(\bigotimes^{s'}(\Sym^d(V^{\oplus d}))^{\vee})\bigotimes (\bigoplus_{i=0}^s\Sym^i(V^{\oplus d}))$, which implies that $W_1$ is also a quotient of $W_2$. We are done.

\end{proof}
Let $(S,\xi_n,f_n)$ be a formal object over $R$, i.e. $\xi_n:\Rep(G)\to \Mod^{\varphi}(\bfA_{S_n})$ is a compatible system of exact $\otimes$-functors, where $S$ is a Noetherian complete local $\bZ_p$-algebra with maximal ideal $\frakm$ and $S_n=S/\frakm^n$. We want to prove all these functors come from an exact $\otimes$-functor $\Rep(G)\to \Mod^{\varphi}(\bfA_{S})$.

Recall that $(\calE_{\bfA_R}, t(\calE_{\bfA_R})\twoheadrightarrow \calF_{\bfA_R})$ is the data corresponding to $\Spec(R)\to \calR^{\circ}_G$. Let $(\calE_{\bfA_S}, t(\calE_{\bfA_S})\twoheadrightarrow \calF_{\bfA_S})$ be the point $\Spec(S)\to \Spec(R)\to \calR^{\circ}_G$. Write $\calL_{\bfA_S}=\ker(t(\calE_{\bfA_S})\twoheadrightarrow \calF_{\bfA_S})$.  Then $P:=\underline{\Isom}((V_{\bfA_S},L_{\bfA_S}),(\calE_{\bfA_S},\calL_{\bfA_S}))$ is an affine scheme of finite presentation over $\bfA_S$ by Diagram \ref{Diagram-5} and we have an isomorphism $\varphi^*_{\bfA_S}P\simeq P$. Moreover, the formal object $(S,\xi_n,f_n)$ implies  that $P_n:=P\times_{\Spec(\bfA_S)}\Spec(\bfA_{S_n})$ is a $G$-torsor over $\bfA_{S_n}$ for each $n$. 

Now consider $P^1:=P\times_{\bfA_S}P$, which is just $\underline{\Isom}((V_P,L_P),(\calE_P,\calL_P))$. There is a natural isomorphism $(V_P,L_P)\simeq (\calE_P,\calL_P)$ corresponding to the identity map $id: P\to P$. In particular, this means $(\calE_P,\calL_P)$ corresponds to the trivial $G$-torsor over $P$, i.e. $P^1\simeq P\times_{\bZ_p}G$.

For each $W\in \Rep(G)$, we have a compatible system of finite projective \'etale $\varphi$-modules $\xi_n(W)$ over $\bfA_{S_n}$. We can consider the inverse limit $\varprojlim_n\xi_n(W)$, which is a finite projective \'etale $\varphi$-module over $\widehat {\bfA_S}:=\varprojlim_n \bfA_{S_n}$, which is just the $\frakm$-adic completion of $\bfA_S$. In particular, this induces an exact $\otimes$-functor $\hat \xi:\Rep(G)\to \Mod^{\varphi}(\widehat{\bfA_S})$.  So $\widehat P:=P\times_{\Spec(\bfA_S)}\Spec(\widehat\bfA_S)$ is a $G$-torsor over $\widehat \bfA_S$.

Note that there is a natural $G$-action on $P$ induced by the isomorphism $G\simeq \underline{\Isom}((V,L),(V,L))$. So $P$ is in fact a $G$-torsor over the quotient stack $[P/G]$. And the self-product $P\times_{[P/G]}P$ is the trivial $G$-torsor $P\times_{\Spec(\bZ_p)}G$ over $P$. Let $W\in \Rep(G)$. The construction $P\times^G(W\otimes_{\bZ_p}\calO_{[P/G]})$ also gives a vector bundle on $[P/G]$.

\begin{rmk}
   In fact, we can also write the $G$-torsor $P$ over $[P/G]$ as an isomorphism functors of $[P/G]$-twists. Namely, for any morphism $T\to [P/G]$, the corresponding  $G$-torsor $\widetilde T$ over $T$ is isomorphic to $\underline{\Isom}((V_T,L_T),(M_T,N_T))$ for some $T$-twist $(M_T,N_T)$. The $G$-torsor $\varphi^*\widehat T$ over $\widehat T$ then corresponds to $(\varphi^*M_T,\varphi^*N_T)$. The $T$-twist $(M_T,N_T)$ with \'etale $\varphi$-structure forms a compatible system when $T\to [P/G]$ varies. We then get an \'etale $\varphi$-$[P/G]$-twist $(M_{[P/G]},N_{[P/G]})$ over $[P/G]$ such that $P\simeq \underline{\Isom}((V_{[P/G]},L_{[P/G]}),(M_{[P/G]},N_{[P/G]}))$.
\end{rmk}

The vector bundle $P\times^G(W\otimes_{\bZ_p}\calO_{[P/G]})$ corresponds to a descent data for the cover $P\to [P/G]$. We can consider the induced equaliser of $\bfA_S$-modules
\begin{equation}\label{Diagram-xiW}
    \xymatrix{
    \xi(W)\ar[r]& \calO(P)\otimes_{\bZ_p}W\ar@<.5ex>[r]\ar@<-.5ex>[r]&\calO(P\times_{[P/G]}P)\otimes_{\bZ_p}W (\cong \calO(P)\otimes_{\bfA_S}\calO(P)\otimes_{\bZ_p} W).
    }
\end{equation}
All the equaliser diagrams we will see in the following are the base change of the above one.

 Recall that the \'etale $\varphi$-structure on $(\calE_{\bfA_S},\calL_{\bfA_S})$ induces an \'etale $\varphi$-structure on $\calO(P)$, i.e. there exists an isomorphism $\varphi_P:\varphi^*_{\bfA_S}\calO(P)\cong \calO(P)$. And the $G$-action induced by $(V,L)$ is compatible with the Frobenius induced by $(\calE_{\bfA_S},\calL_{\bfA_S})$. As $\varphi:\bfA_S\to \bfA_S$ is flat by \cite[Lemma 5.2.5]{EG21} and the paragraph above \cite[Lemma 2.2.16]{EG22}, there is an induced isomorphism $\varphi^*\xi(W)\cong \xi(W)$.

Since $\widehat P$ is a $G$-torsor over $\widehat \bfA_S$, the module $\widehat \xi(W)$ is just the equaliser given by the associated descent data by Theorem \ref{G-description}
\begin{equation}
    \xymatrix{
    \widehat\xi(W)\ar[r]& \calO(\widehat P)\otimes_{\bZ_p}W\ar@<.5ex>[r]\ar@<-.5ex>[r] &\calO(\widehat P)\otimes_{\widehat \bfA_S}\calO(\widehat P)\otimes_{\bZ_p} W.
    }
\end{equation}
As $\bfA_S$ is Noetherian, the map $\bfA_S\to \widehat \bfA_S$ is flat. So we see that $\xi(W)\otimes_{\bfA_S}\widehat\bfA_S\cong \widehat \xi(W)$.


Let $\xi_P:\Rep(G)\to \Mod^{\varphi}(\calO(P))$ (resp. $\xi_{\widehat P}:\Rep(G)\to \Mod^{\varphi}(\calO(\widehat P)$) be the exact $\otimes$-functor corresponding to the trivial $G$-torsor $P^1$ (resp. $\widehat P^1=\widehat P\times_{\widehat\bfA_S}\widehat P$) over $P$ (resp. $\widehat P$). Since $\varphi^*_{\bfA_S}\calO(P)\cong \calO(P)$ (resp. $\varphi^*_{\widehat \bfA_S}\calO(\widehat P)\cong \calO(\widehat P)$), for each $W\in \Rep(G)$, the module $\xi_P(W)$ (resp. $\xi_{\widehat P}(W)$) is also an \'etale $\varphi$-module over $\bfA_S$ (resp. $\widehat\bfA_S$), though not finitely generated over $\bfA_S$ (resp. $\widehat\bfA_S$). 

By the description in Theorem \ref{G-description}, we have the equaliser
\begin{equation}
    \xymatrix{
    \xi_P(W)\ar[r]& \calO(P^1)\otimes_{\bZ_p}W\ar@<.5ex>[r]\ar@<-.5ex>[r] &\calO(P^2)\otimes_{\bZ_p}W.
    }
\end{equation}

Considering the base change of the Equaliser diagram \ref{Diagram-xiW}  along $P\to \Spec(\bfA_S)$, we get a diagram
\begin{equation}
    \xymatrix{
    \xi(W)\otimes_{\bfA_S}\calO(P)\ar[r]& \calO(P^1)\otimes_{\bZ_p}W\ar@<.5ex>[r]\ar@<-.5ex>[r]&\calO(P^2)\otimes_{\bZ_p}W.
    }
\end{equation}
Then, we get a unique map $i_W:\xi(W)\to \xi(W)\otimes_{\bfA_S}\calO(P)\to \xi_P(W)$  by the universality of the equaliser.  As both $\xi(W)$ and $\xi_P(W)$ are (not necessarily finitely generated) \'etale $\varphi$-modules over $\bfA_S$ and $\varphi:\bfA_S\to \bfA_S$ is flat, the image $\Ima(i_W)$ also inherits an \'etale $\varphi$-module structure over $\bfA_S$.

When $W=V$, we have $\xi_P(V)=V\otimes_{\bZ_p}\calO(P)=:V_P$. There is also the universal isomorphism $\alpha:\calE_P:=\calE_{\bfA_S}\otimes_{\bfA_S}\calO(P)\cong\xi_P(V)$ corresponding to the identity $id: P\to P$. Note there is a natural map $\calE_{\bfA_S}\to \calE_P=\calE_{\bfA_S}\otimes_{\bfA_S}\calO(P)$, whose image we denote $\Ima(\calE_{\bfA_S})$. It is easy to see that $\Ima(\calE_{\bfA_S})\otimes_{\bfA_S}\widehat\bfA_S\cong \calE_{\widehat\bfA_S}$.

Let $\calE^{\circ}_V$ be the intersection $\Ima(\calE_{\bfA_S})\cap \Ima(i_V)$ in $\calE_P$. Then $\calE^{\circ}_V$ is a finitely generated \'etale $\varphi$-submodule of $\Ima(\calE_{\bfA_S})$. There is a left exact sequence of \'etale $\varphi$-modules
\[
0\to \calE^{\circ}_V\xrightarrow{f} \Ima(\calE_{\bfA_S})\oplus \Ima(i_V)\xrightarrow{g} \calE_P
\]
where $f(a)=a\oplus a$ and $g(x\oplus y)=x-y$. Now consider its base change along $\bfA_S\to \widehat\bfA_S$, we get a short exact sequence of \'etale $\varphi$-modules over $\widehat \bfA_S$
\[
0\to \calE^{\circ}_V\otimes_{\bfA_S}\widehat\bfA_S\xrightarrow{f} \calE_{\widehat \bfA_S}\oplus \Ima(i_V)\otimes_{\bfA_S}\widehat\bfA_S\xrightarrow{g} \calE_{\widehat P}.
\]

Recall that $\widehat P$ is a $G$-torsor over $\widehat \bfA_S$ and we have the following commutative diagram
\begin{equation}
    \xymatrix{
   \xi(V)\ar[r]\ar[d]^{i_V}& \widehat \xi(V)\ar[d]^{i_V\otimes_{\bfA_S}\widehat\bfA_S}\ar[dr]&\\
   \calE_P\ar[rd]\ar[r]& \calE_{\widehat P}\ar[rd] & \calE_{\widehat\bfA_S}\ar[d]\\
    & \calE_{\bfA_S}\otimes_{\bfA_S}\calO(P)\ar[r]& \calE_{\widehat\bfA_S}\otimes_{\bfA_S}\calO(P).
    }
\end{equation}
The rightmost small diagram is commutative as we explained in the paragraph after Theorem \ref{G-description}.

Then $\Ima(i_V)\otimes_{\bfA_S}\widehat\bfA_S$ is naturally isomorphic to $\calE_{\widehat\bfA_S}\subset \calE_{\widehat P}=\calE_{\widehat\bfA_S}\otimes_{\bfA_S}\calO(P)$. Most importantly, both $\Ima(i_V)$ and $\calE_{\bfA_S}$ land inside $\calE_{\widehat\bfA_S}\subset \calE_{\widehat\bfA_S}\otimes_{\bfA_S}\calO(P)$. This means that $\calE^{\circ}_V\otimes_{\bfA_S}\widehat \bfA_S$ is isomorphic to $\calE_{\widehat\bfA_S}$ as \'etale $\varphi$-modules. Moreover, as $\widehat \bfA_S\to \calO(\widehat P)$ is faithfully flat, the composite $\calE_{\bfA_S}\to \calE_{\widehat\bfA_S}\to \calE_{\widehat\bfA_S}\otimes_{\widehat\bfA_S}\calO(\widehat P)$ is injective, which implies $\Ima(\calE_{\bfA_S})=\calE_{\bfA_S}$.

By \cite[Theorem 5.5.20]{EG21}\footnote{\cite[Theorem 5.5.20]{EG21} is first proved by considering $\bF_p$-algebras. In this case, $\bfA_S^+$ is also $\varphi$-stable. So the same arguments work for $\bfA_S$. The step inducing the general case from the case of $\bF_p$-algebras is standard and works for $\bfA_S$ as well. So \cite[Theorem 5.5.20]{EG21} holds true for $\bfA_S$.} and the following Lemma \ref{canonical-descent}, we see that $\calE^{\circ}_V\cong \calE_{\bfA_S}$, i.e. $\calE_{\bfA_S}\subset \Ima(i_V)$.

\begin{lem}\label{canonical-descent}
    Let $\widehat M$ is a finite projective \'etale $\varphi$-module over $\widehat {\bfA_S}$. If $M_1\subset M_2\subset \widehat M$ are two finite projective \'etale $\varphi$-modules over $\bfA_S$ such that $M_1\otimes_{\bfA_S}\widehat{\bfA_S}\cong \widehat M\cong M_2\otimes_{\bfA_S}\widehat{\bfA_S}$, then $M_1=M_2$ as \'etale $\varphi$-modules over $\bfA_S$.
\end{lem}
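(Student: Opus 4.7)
The plan is to reduce the problem to a statement about morphisms of \'etale $\varphi$-modules over $\bfA_S$ and apply a fully faithful descent result. Concretely, the inclusion $\iota : M_1 \hookrightarrow M_2$ is itself a morphism of \'etale $\varphi$-modules over $\bfA_S$, and if I can show $\iota$ is an isomorphism then the set-theoretic containment inside $\widehat M$ is automatically an equality.

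First I would base change along $\bfA_S \to \widehat{\bfA_S}$ to obtain $\iota \otimes \widehat{\bfA_S} : M_1 \otimes \widehat{\bfA_S} \to M_2 \otimes \widehat{\bfA_S}$ and argue this is an isomorphism. For each $i$ the inclusion $M_i \subset \widehat M$ extends to a natural $\widehat{\bfA_S}$-linear map $\alpha_i : M_i \otimes \widehat{\bfA_S} \to \widehat M$ between finite projective $\widehat{\bfA_S}$-modules of rank $\mathrm{rk}\,\widehat M$. Because $\widehat{\bfA_S}$ is Hausdorff and complete in the $\frakm$-adic topology, the ideal $\frakm \widehat{\bfA_S}$ lies in its Jacobson radical, and Nakayama's lemma is available. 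One reduces the claim that $\alpha_i$ is an isomorphism to its surjectivity modulo $\frakm \widehat{\bfA_S}$: there the hypothesis $M_i \otimes \widehat{\bfA_S} \cong \widehat M$ forces equal ranks over $\widehat{\bfA_S}/\frakm \widehat{\bfA_S} = \bfA_{S/\frakm}$, and $M_i$ generates $\widehat M$ modulo $\frakm \widehat M$ since it already generates everything over $\widehat{\bfA_S}$. The chain $M_1 \subset M_2 \subset \widehat M$ then makes the evident triangle commute, so $\iota \otimes \widehat{\bfA_S} = \alpha_2^{-1} \circ \alpha_1$ is an isomorphism.

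Next I would apply \cite[Theorem 5.5.20]{EG21}, which gives that the base change functor along $\bfA_S \to \widehat{\bfA_S}$ is an equivalence (and in particular fully faithful) on finite projective \'etale $\varphi$-modules. Full faithfulness lifts the inverse of $\iota \otimes \widehat{\bfA_S}$ uniquely to a morphism $\tilde \iota : M_2 \to M_1$ over $\bfA_S$; faithfulness then forces $\tilde \iota \circ \iota = \mathrm{id}_{M_1}$ and $\iota \circ \tilde \iota = \mathrm{id}_{M_2}$ because both compositions become the identity after base change. Hence $\iota$ is an isomorphism, yielding $M_1 = M_2$ inside $\widehat M$.

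The main obstacle I anticipate is the step that promotes the abstract isomorphism $M_i \otimes \widehat{\bfA_S} \cong \widehat M$ furnished by the hypothesis into the statement that the \emph{natural} maps $\alpha_i$ are isomorphisms; $\bfA_S \to \widehat{\bfA_S}$ is flat but not faithfully flat in general (since $\frakm \bfA_S$ is typically not in the Jacobson radical of $\bfA_S$), so the $\frakm$-adic Nakayama argument must be carried out on the $\widehat{\bfA_S}$-side, not on $\bfA_S$ itself. If that reduction runs into difficulty, a fallback would be to work directly with $N := M_2/M_1$, a finitely generated $\bfA_S$-module which inherits an \'etale $\varphi$-structure (by flatness of $\varphi$) and satisfies $N \otimes \widehat{\bfA_S} = 0$, and to mimic the argument in the proof of \cite[Theorem 5.5.20]{EG21} to force $N = 0$ from the vanishing after $\frakm$-adic completion together with the Frobenius rigidity.
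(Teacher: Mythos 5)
Your fallback is the paper's actual proof, and it is shorter than your main route: the paper sets $N:=M_2/M_1$, notes that $N$ is a finitely generated \'etale $\varphi$-module over $\bfA_S$ with $N\otimes_{\bfA_S}\widehat{\bfA_S}=0$ (by flatness of $\bfA_S\to\widehat{\bfA_S}$ and the hypothesis), and invokes \cite[Theorem 5.5.20]{EG21} directly — no need to ``mimic'' its argument — to get $N=0$. Your main route (show $\iota\otimes\widehat{\bfA_S}$ is an isomorphism, then lift its inverse by full faithfulness of base change) is also valid and rests on the same input from \cite{EG21}; it buys nothing extra here, but it is a legitimate alternative.

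The one place where your write-up has a real problem is the first paragraph. You try to upgrade the \emph{abstract} isomorphisms $M_i\otimes\widehat{\bfA_S}\cong\widehat M$ to the statement that the \emph{natural} maps $\alpha_i$ are isomorphisms via Nakayama, but your justification that ``$M_i$ generates $\widehat M$ modulo $\frakm\widehat M$ since it already generates everything over $\widehat{\bfA_S}$'' presupposes exactly the surjectivity of $\alpha_i$ that you are trying to prove; as written this step is circular, and an equal-ranks argument alone cannot rule out $\alpha_i$ being a non-surjective injection between projective modules of the same rank. The resolution is that the lemma is meant to be read (and is only ever applied) with the isomorphisms being the natural maps induced by the inclusions $M_i\subset\widehat M$ — this is exactly what the paper's quotient computation $N\otimes_{\bfA_S}\widehat{\bfA_S}=0$ uses — so you should simply delete that paragraph and take $\alpha_i$ to be an isomorphism as part of the hypothesis, after which the rest of your argument (or the three-line quotient argument) goes through.
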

\begin{proof}
Let $N$ be the quotient $M_2/M_1$, which is a finitely generated \'etale $\varphi$-module over $\bfA_S$. Then $N\otimes_{\bfA_S}\widehat\bfA_S=0$. By \cite[Theorem 5.5.20]{EG21}, we see $N=0$. So $M_1=M_2$. 
\end{proof}

Let $\bar\xi(W)$ be the image of $\xi(W)$ in $\widehat\xi(W)$. As $\bfA_S\to \widehat \bfA_S$ is flat, we have $\xi(W)\otimes_{\bfA_S}\widehat\bfA_S\cong \bar \xi(W)\otimes_{\bfA_S}\widehat\bfA_S\cong \widehat \xi(W)$ as \'etale $\varphi$-modules. When $W=V$, we also have $\calE_{\bfA_S}\subset \bar\xi(V)=\Ima(i_V)\subset\calE_{\widehat P}$ as \'etale $\varphi$-modules. As exact  $\otimes$-functors between rigid categories preserve dualities by \cite[Proposition 1.9]{deligne1982tannakian}, we can apply the above argument to $V^{\vee}$ to get $\calE^{\vee}_{\bfA_S}\subset \bar\xi(V^{\vee})\subset\calE_{\widehat P}^\vee$ and $\bar \xi(V^{\vee})\otimes_{\bfA_S}\widehat\bfA_S\cong \widehat \xi(V^{\vee})$.

\begin{lem}\label{1-out-of-3}
   Let $0\to W_1\xrightarrow{f} W_2\xrightarrow{g}W_3\to 0$ be a short exact sequence in $\Rep(G)$. If there exists a finitely generated \'etale $\varphi$-submodule $\calE_2\subset \bar\xi(W_2)$ over $\bfA_S$ such that $\calE_2\otimes_{\bfA_S}\widehat{\bfA_S}\cong \bar\xi(W_2)\otimes_{\bfA_S}\widehat{\bfA_S}\cong \widehat\xi(W_2)$ as finite projective \'etale $\varphi$-modules over $\widehat{\bfA_S}$, then $\calE_2$ is finite projective and there exists a short exact sequence of finite projective \'etale $\varphi$-modules over $\bfA_S$ 
   \[
   0\to \calE_1\xrightarrow{\tilde f}\calE_2\xrightarrow{\tilde g}\calE_3\to 0
   \]
   whose base change to $\widehat{\bfA_S}$ is isomorphic to $0\to \widehat\xi(W_1)\xrightarrow{f} \widehat\xi(W_2)\xrightarrow{g}\widehat\xi(W_3)\to 0$.
  
\end{lem}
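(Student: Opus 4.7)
The plan is to push the short exact sequence through $\widehat\xi$, descend the completed sequence to $\bfA_S$ using flatness, and upgrade the resulting finitely generated \'etale $\varphi$-modules to finite projective ones via Fitting ideals and the rigidity theorem \cite[Theorem 5.5.20]{EG21}.

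First, exactness of $\widehat\xi$ yields $0\to\widehat\xi(W_1)\to\widehat\xi(W_2)\to\widehat\xi(W_3)\to 0$ over $\widehat\bfA_S$. Functoriality of the equaliser presentation of $\xi(W)$ in $W$ produces compatible maps $\bar\xi(W_1)\xrightarrow{\bar f}\bar\xi(W_2)\xrightarrow{\bar g}\bar\xi(W_3)$ over $\bfA_S$ whose base change recovers the above. Define $\calE_3:=\bar g(\calE_2)\subset\bar\xi(W_3)$ and $\calE_1:=\ker(\calE_2\twoheadrightarrow\calE_3)$: both are finitely generated \'etale $\varphi$-modules over $\bfA_S$ by Noetherianity, and $0\to\calE_1\to\calE_2\to\calE_3\to 0$ is exact by construction.

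Second, base changing along the flat map $\bfA_S\to\widehat\bfA_S$ preserves exactness. Flatness of the inclusion $\calE_3\hookrightarrow\widehat\xi(W_3)$ gives $\calE_3\otimes_{\bfA_S}\widehat\bfA_S\hookrightarrow\widehat\xi(W_3)$; the composition $\calE_2\otimes_{\bfA_S}\widehat\bfA_S\cong\widehat\xi(W_2)\twoheadrightarrow\widehat\xi(W_3)$ factors through this injection and coincides with $\widehat g$, so $\calE_3\otimes_{\bfA_S}\widehat\bfA_S\cong\widehat\xi(W_3)$. Taking kernels yields $\calE_1\otimes_{\bfA_S}\widehat\bfA_S\cong\widehat\xi(W_1)$, matching the completed sequence.

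Third, the main obstacle is promoting $\calE_2$ from finitely generated to finite projective (the argument for $\calE_3$ is identical, and finite projectivity of $\calE_1$ then follows from Lemma~\ref{canonical-descent} applied inside $\widehat\xi(W_1)$). My plan is to use Fitting ideals: with $d_2:=\dim_{\bZ_p}W_2$, finite projectivity of $\widehat\xi(W_2)$ of rank $d_2$ gives $\Fitt_{d_2}(\widehat\xi(W_2))=\widehat\bfA_S$ and $\Fitt_{d_2-1}(\widehat\xi(W_2))=0$. Since Fitting ideals commute with base change and $\bfA_S\to\widehat\bfA_S$ is flat and injective, one gets $\Fitt_{d_2-1}(\calE_2)=0$. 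For $\Fitt_{d_2}(\calE_2)=\bfA_S$, consider the cyclic quotient $C:=\bfA_S/\Fitt_{d_2}(\calE_2)$: the \'etale condition $\varphi^*\calE_2\cong\calE_2$ together with compatibility of Fitting ideals with pullback yield $\varphi(\Fitt_{d_2}(\calE_2))\cdot\bfA_S=\Fitt_{d_2}(\calE_2)$, so $C$ inherits an \'etale $\varphi$-module structure. By the previous step $C\otimes_{\bfA_S}\widehat\bfA_S=0$, so $C=0$ by \cite[Theorem 5.5.20]{EG21}. Both Fitting conditions together force $\calE_2$ to be locally free of rank $d_2$, hence finite projective. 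The delicate part is confirming that $C$ truly inherits a well-defined \'etale $\varphi$-module structure to which the rigidity theorem applies verbatim.
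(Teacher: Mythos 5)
Your proposal is correct in outline and follows essentially the same skeleton as the paper's proof: produce the sequence over $\bfA_S$ from $\calE_2$, base change along the flat map $\bfA_S\to\widehat{\bfA_S}$ to identify it with the completed sequence, and invoke \cite[Theorem 5.5.20]{EG21} to upgrade finitely generated to finite projective. Two points of comparison and one repair are worth noting. First, the paper constructs $\calE_1$ as the preimage of $\calE_2$ under $\bar\xi(W_1)\hookrightarrow\bar\xi(W_2)$ (via a left-exact ``difference'' sequence) and sets $\calE_3:=\calE_2/\calE_1$, whereas you take $\calE_3:=\bar g(\calE_2)$ and $\calE_1:=\ker(\calE_2\to\calE_3)$; these need not literally coincide (your $\calE_1$ is $\widehat f(\widehat\xi(W_1))\cap\calE_2$ rather than $\bar f(\bar\xi(W_1))\cap\calE_2$), but either choice satisfies the conclusion, so this is harmless. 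Second, the paper obtains finite projectivity of $\calE_1,\calE_2,\calE_3$ by citing \cite[Theorem 5.5.20]{EG21} directly, while you only use its Nakayama-type vanishing form ($N\otimes_{\bfA_S}\widehat{\bfA_S}=0\Rightarrow N=0$) and recover projectivity through Fitting ideals. Your Fitting-ideal argument is sound: the quotient $C=\bfA_S/\Fitt_{d_2}(\calE_2)$ does inherit an \'etale $\varphi$-structure because $\Fitt_{d_2}(\varphi^*\calE_2)=\varphi(\Fitt_{d_2}(\calE_2))\cdot\bfA_S$ and $\varphi^*\calE_2\cong\calE_2$, so the rigidity theorem applies to the cyclic module $C$ exactly as in Lemma \ref{canonical-descent}; this buys you a proof that works even if one only trusts the vanishing form of the rigidity theorem, at the cost of extra bookkeeping.

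The one genuine misstep is your claim that finite projectivity of $\calE_1$ ``follows from Lemma~\ref{canonical-descent}'': that lemma compares two \emph{already finite projective} lattices inside $\widehat{M}$ and concludes they are equal; it does not produce projectivity. You should instead note that $\calE_1$ is a finitely generated \'etale $\varphi$-module (kernel of a map of such, using flatness of $\varphi$ on $\bfA_S$) whose base change to $\widehat{\bfA_S}$ is $\widehat\xi(W_1)$, and then run the same Fitting-ideal argument (or cite \cite[Theorem 5.5.20]{EG21} as the paper does) for $\calE_1$ and $\calE_3$ as well.
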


\begin{proof}
That $\calE_2$ is finite projective follows from \cite[Theorem 5.5.20]{EG21}.

Now note that we have an injection of \'etale $\varphi$-modules $\bar\xi(W_1)\hookrightarrow\bar\xi(W_2)$. We let $\calE_1\subset \bar\xi(W_1)$ be the preimage of $\calE_2$. Then there is a left short exact sequence
\[
0\to \calE_1\xrightarrow{i} \calE_2\oplus \bar\xi(W_1)\xrightarrow{j} \bar\xi(W_2)
\]
where $i(a)=a\oplus a$ and $j(x\oplus y)=x-y$. In particular, $\calE_1$ is a finitely generated \'etale $\varphi$-module over $\bfA_S$. 

Now consider the base change of the above left exact sequence along $\bfA_S\to \widehat\bfA_S$
\[
0\to \calE_1\otimes_{\bfA_S}\widehat\bfA_S\xrightarrow{i} (\calE_2\oplus \bar\xi(W_1))\otimes_{\bfA_S}\widehat\bfA_S\xrightarrow{j} \bar\xi(W_2)\otimes_{\bfA_S}\widehat\bfA_S\cong \widehat\xi(W_2).
\]

By our assumption $\calE_2\otimes_{\bfA_S}\widehat{\bfA_S}\cong \bar\xi(W_2)\otimes_{\bfA_S}\widehat{\bfA_S}\cong \widehat\xi(W_2)$ and $\bar\xi(W_1)\otimes_{\bfA_S}\widehat\bfA_S\cong \widehat\xi(W_1)$, we see $\calE_1\otimes_{\bfA_S}\widehat\bfA_S\cong \widehat\xi(W_1)$. Then by \cite[Theorem 5.5.20]{EG21}, the \'etale $\varphi$-module $\calE_1$ is indeed finite projective.

Let $\calE_3$ be the quotient $\calE_2/\calE_1$. Then $\calE_3$ is a finitely generated \'etale $\varphi$-modules. The base change of the short exact sequences $0\to \calE_1\to \calE_2\to \calE_3\to 0$ along the map $\bfA_S\to \widehat\bfA_S$ is then  isomorphic to $0\to \widehat\xi(W_1)\xrightarrow{f} \widehat\xi(W_2)\xrightarrow{g}\widehat\xi(W_3)\to 0$. So we get $\calE_3\otimes_{\bfA_S}\widehat\bfA_S\cong \widehat\xi(W_3)$. By \cite[Theorem 5.5.20]{EG21} again, we see $\calE_3$ is a finite projective \'etale $\varphi$-module. 
\end{proof}

Now we are ready to prove $\calB_R$ satisfies condition $(d)$ In Lemma \ref{artin}.
\begin{prop}\label{effectivity}
    $\calB_R$ satisfies the condition $(d)$ in Lemma \ref{artin}, i.e. every formal object is effective.
\end{prop}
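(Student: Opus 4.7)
The plan is to construct an exact $\otimes$-functor $\tilde\xi \colon \Rep(G) \to \Mod^{\varphi}(\bfA_S)$ recovering each $\xi_n$ upon base change to $\bfA_{S_n}$. For each $W \in \Rep(G)$, I would build a finite projective \'etale $\varphi$-module $\tilde\xi(W)$ lying inside $\bar\xi(W)$ with $\tilde\xi(W) \otimes_{\bfA_S} \widehat{\bfA_S} \cong \widehat\xi(W)$. The key reduction is Lemma~\ref{Lem-rep}, which presents every $W$ as a sub-quotient of a concrete tensor construction of $V$.

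The construction has two steps. First, for $W_2 = (\bigotimes^{s'}(\Sym^d(V^{\oplus d}))^{\vee}) \otimes (\bigoplus_{i=0}^s \Sym^i(V^{\oplus d}))$ as in Lemma~\ref{Lem-rep}, I would directly set $\tilde\xi(W_2)$ to be the corresponding tensor construction applied to $\calE_{\bfA_S}$ and $\calE_{\bfA_S}^{\vee}$; this is manifestly a finite projective \'etale $\varphi$-module over $\bfA_S$. Combining the already-established inclusions $\calE_{\bfA_S} \subset \bar\xi(V)$ and $\calE_{\bfA_S}^{\vee} \subset \bar\xi(V^{\vee})$ with the lax monoidal natural transformations of the equaliser functor $\xi(-)$, and using the flatness of $\calE_{\bfA_S}$ and $\calE_{\bfA_S}^{\vee}$ to preserve injectivity under the iterated tensor operations, produces a natural inclusion $\tilde\xi(W_2) \hookrightarrow \bar\xi(W_2)$ that becomes the identification $\tilde\xi(W_2) \otimes_{\bfA_S} \widehat{\bfA_S} \cong \widehat\xi(W_2)$ after base change. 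Second, for an arbitrary $W$, Lemma~\ref{Lem-rep} provides an inclusion $W \hookrightarrow W_1$ with $W_1/W$ finite free together with a surjection $W_2 \twoheadrightarrow W_1$. Apply Lemma~\ref{1-out-of-3} to the short exact sequence $0 \to \ker(W_2 \twoheadrightarrow W_1) \to W_2 \to W_1 \to 0$ with input $\tilde\xi(W_2)$ to obtain $\tilde\xi(W_1)$, and then a second time to $0 \to W \to W_1 \to W_1/W \to 0$ with input $\tilde\xi(W_1)$ to obtain $\tilde\xi(W)$.

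Functoriality, exactness, and the tensor structure for $\tilde\xi$ are transported from the corresponding properties of $\widehat\xi$ by a uniqueness argument based on Lemma~\ref{canonical-descent}: a finite projective \'etale $\varphi$-submodule of $\widehat\xi(W)$ whose base change to $\widehat{\bfA_S}$ recovers all of $\widehat\xi(W)$ is uniquely determined, so two different presentations of $W$ give the same $\tilde\xi(W)$. Every morphism $W \to W'$ in $\Rep(G)$ induces $\widehat\xi(W) \to \widehat\xi(W')$, and its restriction to $\tilde\xi(W)$ lands in $\tilde\xi(W')$ after descending from $\widehat{\bfA_S}$ to $\bfA_S$ on the affine scheme $\underline\Hom^{\varphi}$ (Lemma~\ref{reduction-basic}), using the flatness of $\bfA_S \to \widehat{\bfA_S}$. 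Compatibility with the formal data is then immediate since $\tilde\xi(W) \otimes_{\bfA_S} \bfA_{S_n} \cong \widehat\xi(W) \otimes_{\widehat{\bfA_S}} \bfA_{S_n} \cong \xi_n(W)$, whence $\tilde\xi$ produces the desired effective realization of the formal object.

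The main obstacle is the first step: verifying that the tensor construction $\tilde\xi(W_2)$ built out of $\calE_{\bfA_S}$ and $\calE_{\bfA_S}^{\vee}$ genuinely lands in $\bar\xi(W_2)$ rather than only in $\widehat\xi(W_2)$. This requires careful tracking of the lax monoidal natural transformations on the equaliser functor $\xi(-)$ coming from the $G$-equivariance of $\calO(P) \otimes (-)$, and using that these become strict monoidal isomorphisms upon base change to $\widehat{\bfA_S}$; that is how the finite generation at the level of $\bfA_S$ gets propagated through tensor constructions so that Lemma~\ref{1-out-of-3} can be applied.
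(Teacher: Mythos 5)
Your proposal follows essentially the same route as the paper: tensor constructions of $\calE_{\bfA_S}$ (and $\calE_{\bfA_S}^{\vee}$) mapping into $\bar\xi(W_2)$ supply the input for Lemma \ref{1-out-of-3}, which is then applied through the subquotient presentation of Lemma \ref{Lem-rep}, with independence of the chosen presentation settled via Lemma \ref{canonical-descent}. The only step to make explicit is that Lemma \ref{canonical-descent} as stated requires nested submodules, so to compare two a priori incomparable finite projective $\varphi$-submodules $M_1,M_2\subset\bar\xi(W)$ you should first form $M_1\cap M_2$ via the left-exact sequence $0\to M_1\cap M_2\to M_1\oplus M_2\to\bar\xi(W)$ and verify it again satisfies the hypotheses (finitely generated, base change to $\widehat{\bfA_S}$ equal to $\widehat\xi(W)$, hence finite projective by \cite[Theorem 5.5.20]{EG21}), which is exactly how the paper concludes $M_1=M_2$.
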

\begin{proof}
    Let $(S,\xi_n,f_n)$ be a formal object, i.e. $\xi_n:\Rep(G)\to \Mod^{\varphi}(\bfA_{S_n})$ is a compatible system of exact $\otimes$-functors, where $S$ is a Noetherian complete local $\bZ_p$-algebra with maximal ideal $\frakm$ and $S_n=S/\frakm^n$.

    For any $n,m\in\bZ_{\geq 0}$, we have a natural map $\bigotimes^n(\calE_V^{\oplus m})\to \bar \xi(\bigotimes^n(V^{\oplus m}))$  by the above discussion. Then its image is a finitely generated \'etale $\varphi$-submodule of $\bar \xi(\bigotimes^n(V^{\oplus m}))$ satisfying the assumption of Lemma \ref{1-out-of-3}. The same is true for $\bigotimes^n(V^{\oplus m})^{\vee}$. Then by Lemma \ref{Lem-rep} and Lemma \ref{1-out-of-3}, for any short exact sequence $0\to W_1\xrightarrow{f} W_2\xrightarrow{g}W_3\to 0$ in $\Rep(W)$, we get a short exact sequence $0\to \xi(W_1)\xrightarrow{\tilde f} \xi(W_2)\xrightarrow{\tilde g}\xi(W_3)\to 0$ of finite projective \'etale $\varphi$-modules over $\bfA_S$, whose base change is $0\to \widehat\xi(W_1)\xrightarrow{ f} \widehat\xi(W_2)\xrightarrow{g}\widehat \xi(W_3)\to 0$. But in Lemma \ref{Lem-rep} there are many different presentations of $W$ as a subquotient of $(\bigotimes^{s'}(\Sym^d(V^{\oplus d}))^{\vee})\bigotimes (\bigoplus_{i=0}^s\Sym^i(V^{\oplus d}))$. So we need to verify they will give rise to the same finite projective \'etale $\varphi$-modules over $\bfA_S$. By the construction in Lemma \ref{1-out-of-3}, for each $W\in \Rep(G)$, all the possible $\xi(W)$ are contained in $\bar\xi(W)$. So we need to prove that if $M_1,M_2$ are two finite projective \'etale $\varphi$-submodules of $\bar\xi(W)$, then $M_1=M_2$. To prove this, consider the left short exact sequence
    \[
    0\to M_1\cap M_2\xrightarrow{\alpha} M_1\oplus M_2\xrightarrow{\beta} \bar\xi(W)
    \]
    where $\alpha(a)=a\oplus a$ and $\beta(x\oplus y)=x-y$. We  see that $M_1\cap M_2$ is still a finitely generated \'etale $\varphi$-module and its base change to $\widehat\bfA_S$ is isomorphic to $\widehat\xi(W)$. So $M_1\cap M_2$ is finite projective by \cite[Theorem 5.5.20]{EG21}. Then by Lemma \ref{canonical-descent}, we have $M_1=M_2$.

    Hence we get an exact $\otimes$-functor $\xi:\Rep(G)\to \Mod^{\varphi}(\bfA_{S})$ whose base change along $\bfA_S\to \widehat\bfA_S$ is exactly $\widehat \xi:\Rep(G)\to \Mod^{\varphi}(\widehat\bfA_{S})$.
\end{proof}



Now let $\Spec(R)\to \calX_G^{\circ}$ be an $R$-point and $\calB_R^{\Gamma}$ be the pullback of the diagram $\calX_G\to \calX_G^{\circ}\leftarrow \Spec(R)$. Then $\calB_R^{\Gamma}\to \Spec(R)$ is a monomorphism.
\begin{prop}\label{etale-gamma}
    $\calB_R^{\Gamma}$ satisfies the conditions (a),(b),(c),(d) in Lemma \ref{artin}.
\end{prop}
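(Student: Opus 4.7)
The plan is to upgrade the proofs of Proposition \ref{etale} and Proposition \ref{effectivity} from the $\varphi$-setting to the $(\varphi,\Gamma)$-setting. Since $\calB_R^{\Gamma} \to \Spec(R)$ is a monomorphism, for any $R$-algebra $A$ the groupoid $\calB_R^{\Gamma}(A)$ is either empty or a singleton, so each axiom reduces to a statement about when the ``$G$-trivialisation" of the pair $(\calE_{\bfA_A},f_{\bfA_A})$ over $\bfA_A$ exists and carries a compatible continuous $\Gamma$-action.

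For axioms (a), (b), (c) I would replay the three parts of Proposition \ref{etale} with the following $(\varphi,\Gamma)$-enhancements: (i) finite projective \'etale $(\varphi,\Gamma)$-modules satisfy faithfully flat descent, and the continuity of the descended $\Gamma$-action is automatic since it can be verified on the flat cover; (ii) by Lemma \ref{surj2} and the final remark of Subsection \ref{subsec2.2}, the relevant $(\varphi,\Gamma)$-surjective Hom and isomorphism functors are representable in formal schemes locally topologically of finite type, yielding both the étale sheaf property and the limit-preserving property by the same argument as in Proposition \ref{etale}(a), (b); (iii) for Rim--Schlessinger, pullback diagrams of local Artinian $R$-algebras remain pullback diagrams after applying $\bfA(-)$, and any $(\varphi,\Gamma)$-datum over such coefficients is automatically continuous as the finitely generated $\bfA$-module is $\frakm$-adically complete, so the $\varphi$-level Rim--Schlessinger property from Proposition \ref{etale}(c) lifts directly.

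The substantive axiom is (d), effectivity of formal objects. Starting with a formal object $(S,\xi_n,f_n)$ of $\calB_R^{\Gamma}$, forgetting the $\Gamma$-structure produces a formal object of $\calB_R$ to which Proposition \ref{effectivity} applies, yielding an exact $\otimes$-functor $\xi:\Rep(G)\to\Mod^{\varphi}(\bfA_S)$ with $\xi(W)\otimes_{\bfA_S}\widehat{\bfA_S}\cong \widehat{\xi}(W):=\varprojlim_n\xi_n(W)$. To upgrade $\xi$ to take values in $\Mod^{\varphi,\Gamma}(\bfA_S)$, I observe that the inverse system $\{\xi_n(W)\}$ carries a compatible continuous $\Gamma$-action, so $\widehat{\xi}(W)$ is canonically a finite projective \'etale $(\varphi,\Gamma)$-module over $\widehat{\bfA_S}$. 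Moreover, the $\Gamma$-action preserves the torsor $P=\underline{\Isom}((V_{\bfA_S},L_{\bfA_S}),(\calE_{\bfA_S},\calL_{\bfA_S}))$, since the target has a $\Gamma$-structure and $(V,L)$ is defined over $\bZ_p$. Consequently $\Gamma$ acts semi-linearly on $\calO(P)$ over $\bfA_S$, hence on $\widehat{\xi}(W)$ via the equaliser description used in Proposition \ref{effectivity}, and it preserves the finite projective \'etale $\varphi$-submodule $\xi(W)\subset\widehat{\xi}(W)$. This last stability uses the uniqueness statement embedded in Proposition \ref{effectivity}: any two finite projective \'etale $\varphi$-submodules of $\bar\xi(W)$ whose base changes to $\widehat{\bfA_S}$ coincide with $\widehat{\xi}(W)$ must be equal, so $\Gamma\cdot\xi(W)=\xi(W)$.

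The main obstacle I anticipate is checking continuity of this induced $\Gamma$-action on $\xi(W)$ with respect to its natural topology as a finite $\bfA_S$-module. My plan is to exploit that $\bfA_S\to\widehat{\bfA_S}$ is flat and that $\xi(W)\hookrightarrow\widehat{\xi}(W)$ is strict and closed under the natural Banach topologies: the $\Gamma$-action on $\widehat{\xi}(W)$ is continuous as the inverse limit of the continuous actions on the $\xi_n(W)$, and continuity restricts to the closed subspace $\xi(W)$. An alternative, perhaps cleaner, route is to observe that continuity can be checked modulo each $\frakm^n$, where it reduces to the continuity of the $\Gamma$-action on $\xi_n(W)$ given in the formal object. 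Once continuity is established, the functor $\xi$ takes values in $\Mod^{\varphi,\Gamma}(\bfA_S)$ and provides the desired effective lift, completing the verification of axiom (d) and hence of the proposition.
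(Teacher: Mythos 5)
Your overall architecture for axiom (d) — forget $\Gamma$, invoke Proposition \ref{effectivity} to get $\xi:\Rep(G)\to\Mod^{\varphi}(\bfA_S)$, and then recover $\Gamma$-stability of $\xi(W)\subset\widehat\xi(W)$ from the uniqueness of finite projective \'etale $\varphi$-submodules with prescribed base change (Lemma \ref{canonical-descent}) — is sound, and the algebraic part of your upgrades to (a)--(c) is fine. The genuine gap is in every place where you assert continuity of the induced $\Gamma$-action, and most seriously in (d). Your first argument claims $\xi(W)$ is closed in $\widehat\xi(W)$ and inherits continuity from there; but $\xi(W)$ is a module over $\bfA_S$ while $\widehat\xi(W)$ is a module over the strictly larger ring $\widehat{\bfA_S}=\varprojlim_n\bfA_{S_n}$, and $\xi(W)$ is \emph{dense}, not closed, in $\widehat\xi(W)$ for the $\frakm$-adic topology; there is no identification of the subspace topology with the natural topology of $\xi(W)$ as a finite module over the topological ring $\bfA_S$. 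Your alternative argument — check continuity modulo each $\frakm^n$ — only yields continuity for the inverse-limit ($\frakm$-adic) topology on $\xi(W)$ induced from $\varprojlim\xi_n(W)=\widehat\xi(W)$, which is not the topology with respect to which the continuity in the definition of $\Mod^{\varphi,\Gamma}(\bfA_S)$ is imposed. A similar unproved matching-of-topologies issue underlies your claims that continuity "can be verified on the flat cover" in (a) and is "automatic" for Artinian coefficients in (c): continuity of the action of the single generator $\gamma$ is indeed automatic (a semilinear endomorphism of a finite module is continuous), but continuity of the action of the profinite group $\Gamma$ is exactly what distinguishes $\Gamma$ from $\Gamma_{\disc}$ and does not come for free.

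The paper closes this gap by never confronting continuity at all: it first proves (a)--(d) for the monomorphism $\calR_G^{\Gamma_{\disc}}\to\calR_G^{\Gamma_{\disc},\circ}$ of prestacks of \'etale $(\varphi,\Gamma_{\disc})$-modules, where the $\gamma$-action is a purely algebraic commuting automorphism and the arguments of Propositions \ref{etale} and \ref{effectivity} apply verbatim, and then observes that the square
\[
\xymatrix{\calX_G\ar[r]\ar[d]&\calX_G^{\circ}\ar[d]\\ \calR_G^{\Gamma_{\disc}}\ar[r]&\calR_G^{\Gamma_{\disc},\circ}}
\]
is a pullback (the lifting condition is the fppf-local triviality of the underlying twist, which is continuity-blind, and by Lemma \ref{Lemma-Gamma} continuity of the full tensor functor is automatic once the underlying pair in $\calX_G^{\circ}$ carries it), so that $\calB_R^{\Gamma}=\calB_R^{\Gamma_{\disc}}$. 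If you want to salvage your direct approach, the correct move at the end of your argument for (d) is precisely this: having produced a $\Gamma_{\disc}$-equivariant lift, conclude via the pullback diagram (equivalently, Lemma \ref{Lemma-Gamma} applied to the already-continuous pair $(\calE_{\bfA_S},\calF_{\bfA_S})$) rather than trying to verify continuity of the action on each $\xi(W)$ by hand.
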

\begin{proof}

    We first consider the case of \'etale $(\varphi,\Gamma_{\disc})$-modules with $G$-structure. By using the same arguments as the proof of Proposition \ref{etale} and \ref{effectivity}, we can see the corresponding $\calB_R^{\Gamma_{\disc}}$ satisfies the conditions $(a),(b),(c),(d)$ in Lemma \ref{artin}.

    Note that there is a pullback diagram
    \begin{equation}
        \xymatrix{
        \calX_G\ar[r]\ar[d]&\calX_G^{\circ}\ar[d]\\
        \calR_G^{\Gamma_{\disc}}\ar[r]&\calR_{G}^{\Gamma_{\disc},\circ}
        }
    \end{equation}
    which implies $\calB_R^{\Gamma}=\calB_{R}^{\Gamma_{\disc}}$. So we are done.
\end{proof}

Now in order to show $\calB_R^{\Gamma}$ is an algebraic space locally of finite presentation over $\Spec(R)$, we have to prove the condition (e), i.e. the openness of versality. As always, this is the most difficult one to verify.

\subsection{Obstruction theory}\label{obstruction}
We focus on verifying the openness of versality for $\calB_R^{\Gamma}$ in this subsection. The basic tool is still provided by \cite{Art74}. Let us first recall some fundamental notions in \cite{Art74} tailored in our context. Let $\calF$ be a limit preserving functor from $\Nilp_{\bZ_p}$ to $\rm Sets$.

\begin{dfn}[Deformation theory {\cite[2.5]{Art74}}]
    Let $A_0$ be a reduced ring in $\Nilp_{\bZ_p}$. For any finite $a_0\in \calF(A_0)$ and finite $A_0$-module $M$, define $D_{a_0}(M):={\rm Fib}_{a_0}(\calF(A_0\oplus M)\to \calF(A_0))$.
\end{dfn}

\begin{dfn}[Obstruction theory {\cite[2.6]{Art74}}]\label{def-obstruction theory}
    An obstruction theory for $\calF$ is the following data.
    \begin{enumerate}
        \item For each infinitesimal extension $A\twoheadrightarrow A_0$, i.e. $A_0$ is reduced and the kernel of this map is nilpotent, and $a\in \calF(A)$, a functor
        \[
        \calO_a:\{\text{finite}\ A_0\text{-modules}\}\to \{\text{finite}\ A_0\text{-modules}\}.
        \]
        \item For each deformation situation, i.e. a diagram
        \[
        A'\to A\to A_0, \ \Ker(A'\to A)=M
        \]
        of infinitesimal extensions of a reduced ring $A_0$, where $A'\to A$ is surjective and $M$ is a finite $A_0$-module, and $a\in \calF(A)$, an element $o_a(A')\in \calO_a(M)$ which is zero if ${\rm Fib}_a(\calF(A')\to \calF(A))$ is not empty.
    \end{enumerate}
\end{dfn}

There are some conditions that the deformation theory and the obstruction theory are supposed to satisfy in order to prove the openness of versalituy.

\begin{cond}[{\cite[4.1]{Art74}}]\label{Artin-condition}
    \begin{enumerate}
    \item The modules $D$ and $\calO$ are compatible with \'etale localization: If $g:A\to B$ is \'etale, $b=f(a)$, then
    \[
    D_{b_0}(M\otimes_{A_0} B_0)\cong D_{a_0}(M)\otimes_{A_0} B_0,
    \]
    and
    \[
    \calO_b(M\otimes_{A_0} B_0)\cong \calO_a(M)\otimes_{A_0} B_0.
    \]
    
    \item $D$ is compatible with completions: If $\frakm$ is a maximal ideal of $A_0$, then
    \[
    D_{a_0}(M)\otimes \hat A_0\cong \varprojlim D_{a_0}(M/\frakm^nM).
    \]
    \item  There is an open dense subset of $\Spec(A_0)$ such that any finite type point $s$ in this subset satisfies 
    \[
    D_{a_0}(M)\otimes_{A_0} k(s)\cong D_{a_0}(M\otimes_{A_0} k(s)),
    \]
    and 
    \[
    \calO_a(M)\otimes_{A_0} k(s)\subseteq \calO_a(M\otimes_{A_0} k(s)).
    \]
\end{enumerate}
\end{cond}

These conditions will imply the openness of versality by \cite[Theorem 4.4]{Art74}. Let us first deal with the deformation theory, which is the easy part. In fact, the functor $\calB_R^{\Gamma}\to \Spec(R)$ is a monomorphism, which implies the fiber $D_{a_0}(M)$ is trivial. So $D_{a_0}(M)$ must satisfy all the above conditions. 

Now we move to study the obstruction theory. 
For any point $a:\Spec(A)\to \calB_R^{\Gamma}\to \Spec(R)$ with $R$ being of finite type over $\bZ_p$ and $A$ being of finite type over $R$, we need to construct a functor from the category of finite $A_0$-modules to the category of finite $A_0$-modules satisfying Artin's conditions, where $A_0$ is the reduced ring of $A$. Intuitively, given a finite $A_0$-module $M$, this functor is concerned with lifting the point $a$ to $\Spec(A\oplus M)$. Unveiling the definition of $\calB_R^{\Gamma}$, this only depends on whether we can lift the $G$-torsor over $\bfA_{A}$ to $\bfA_{A\oplus M}$ such that this lift has underlying pair given by the map $\Spec(A\oplus M)\to \Spec(R)$.

The above discussion leads us to consider the pullback diagram
\begin{equation*}
    \xymatrix{
    \calC_R\ar[r]\ar[d]&\Spec(\bfA_R)\ar[d]&\\
    BG\ar[r]&(BG)^{\circ}.
    }
\end{equation*}
where $(BG)^{\circ}$ is the stack of pairs $(\calE,\alpha:t(\calE)\twoheadrightarrow\calL)$ such that $\calE$ is a vector bundle of rank $d$ and $\calL$ is a vector bundle of rank $t(d)-1$, and $\alpha$ is a surjective map. The right vertical morphism is induced by the morphism $\Spec(R)\to \calX_G^{\circ}$ and Lemma \ref{Lemma-Gamma}.

\begin{lem}\label{BG-BGo}
    The stack $(BG)^{\circ}$ is an algebraic stack locally of finite presentation and the monomorphism $BG\to (BG)^{\circ}$ is representable and locally of finite presentation. 
    \end{lem}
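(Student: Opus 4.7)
The plan is to first establish algebraicity of $(BG)^\circ$ by comparing it to $B\GL_d \times B\GL_{t(d)-1}$, and then to use Broshi's theorem (Theorem \ref{G-description}) to control the monomorphism $BG \to (BG)^\circ$ via an isom-scheme argument.

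For algebraicity of $(BG)^\circ$, I will consider the forgetful morphism $\pi : (BG)^\circ \to B\GL_d \times B\GL_{t(d)-1}$ sending $(\calE, \alpha: t(\calE) \twoheadrightarrow \calF)$ to $(\calE, \calF)$. Pulled back along any $T \to B\GL_d \times B\GL_{t(d)-1}$ corresponding to bundles $(\calE_T, \calF_T)$, the fibre is $\underline{\Hom}_{\surj}(t(\calE_T), \calF_T)$, which by \cite[Tag 09TP]{stacks-project} is an open subscheme of the affine scheme of finite presentation $\underline{\Hom}(t(\calE_T), \calF_T)$. Hence $\pi$ is representable by schemes and locally of finite presentation; since $B\GL_d \times B\GL_{t(d)-1}$ is smooth and locally of finite presentation over $\bZ_p$, the same holds for $(BG)^\circ$.

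For the morphism $BG \to (BG)^\circ$: it is a monomorphism because the identification $G \simeq \underline{\Aut}(V,L)$ makes the automorphism sheaves of an object of $BG$ and its image in $(BG)^\circ$ naturally coincide. Consequently, for any scheme $T$, the fibre product $BG \times_{(BG)^\circ} T$ is an algebraic stack with trivial isotropy, hence an algebraic space. To describe it concretely, given $T = \Spec R \to (BG)^\circ$ with data $(\calE_T, \alpha_T)$, I consider
\[ I := \underline{\Isom}_{(BG)^\circ/T}((V_T, pr_T), (\calE_T, \alpha_T)), \]
which is representable by the closed subscheme of the $T$-scheme $\underline{\Isom}(V_T, \calE_T)$ cut out by the condition $t(f)(L_T) \subseteq \ker(\alpha_T)$, and is therefore a scheme of finite presentation over $T$. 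By Theorem \ref{G-description}, the fibre product $BG \times_{(BG)^\circ} T$ coincides with the fppf image of $I \to T$: wherever this image is non-empty, $I$ becomes a $G_T$-torsor (flat and of finite presentation, since $G$ is flat affine of finite type over $\bZ_p$), so $BG \times_{(BG)^\circ} T$ is presented as the fppf quotient $I/G$, which is an algebraic space.

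For the property of being locally of finite presentation: both $BG$ and $(BG)^\circ$ are locally of finite presentation over $\bZ_p$, so the morphism is automatically locally of finite type; being a monomorphism its relative diagonal is an isomorphism, hence locally of finite presentation, and by \cite[Tag 06Q3]{stacks-project} this upgrades the morphism itself to locally of finite presentation. The main subtlety I anticipate is verifying that the fppf image of $I \to T$ really represents the abstract fibre product $BG \times_{(BG)^\circ} T$: this is essentially the content of Broshi's equivalence between $G$-torsors and $(V,L)$-twists, combined with the fact that fppf $G$-torsors for a flat affine group scheme of finite type are automatically representable by schemes.
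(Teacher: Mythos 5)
Your first half (algebraicity of $(BG)^{\circ}$ via the forgetful map to a product of $B\GL$'s and the openness of the surjectivity locus) is essentially the paper's argument, which phrases the same thing through the relative Quot functor over $B\GL_d$. For the second half you take a genuinely different route. The paper follows Broshi's own strategy: it forms $\calI=\underline\Isom((V,L),(\calE,\calL))$ over $(BG)^{\circ}$, invokes \cite[Lemma 2.5]{Bro13} (with $X=\Spec(\bZ_p)$) to get an algebraic stack $\calT$, locally of finite presentation, representing the condition that $\calI$ becomes flat and pure, and then identifies $BG$ with the open locus of $\calT$ where $\calI$ is moreover surjective. You instead present the fibre product $BG\times_{(BG)^{\circ}}T$ directly as the fppf quotient $I/G$ of the finitely presented affine $T$-scheme $I$ by the free $G$-action. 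This works and is arguably cleaner in this degenerate setting, where Broshi's purity machinery (designed for the relative situation over a proper curve) is not really needed; what it buys is a self-contained argument resting only on Artin's theorem that the quotient of a scheme by a flat, finitely presented equivalence relation is an algebraic space, while the paper's version buys a verbatim citation of Broshi. Two points you should make explicit to close the argument: first, the structural fact that makes everything go through is the isomorphism $G\times_{\bZ_p}I\xrightarrow{\ \sim\ }I\times_T I$, $(g,f)\mapsto(f,f\circ g)$, coming from $G\simeq\underline\Aut(V,L)$ --- this is simultaneously what makes $I/G\to T$ a monomorphism, what identifies the quotient sheaf with the locus where $(\calE_T,\ker\alpha_T)$ is a twist, and what exhibits $(I,G\times I)$ as an equivalence relation with flat, finitely presented projections; second, the assertion that this quotient is an algebraic space is exactly Artin's theorem (e.g.\ \cite[\href{https://stacks.math.columbia.edu/tag/04S6}{Tag 04S6}]{stacks-project}) and should be cited, since ``the fppf image of $I\to T$ is representable'' is false for general morphisms and is only true here because of the free group action. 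Finally, the finite presentation of $BG\to(BG)^{\circ}$ follows more directly from the cancellation property for morphisms locally of finite presentation (both stacks being locally of finite presentation over $\bZ_p$) than from the diagonal argument you sketch, or alternatively from fppf descent along $I\to I/G$.
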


\begin{proof}
    We use the same arguments as in \cite[Section 2]{Bro13}. By the representability of relative Quotient functor, the natural map $(BG)^{\circ}\to B\GL_{d}$ is representable and locally of finite presentation. So $(BG)^{\circ}$ is an algebraic stack locally of finite presentation.
    
    Let  $(\calE,\calL\subset t(\calE))$ be the universal pair over $(BG)^{\circ}$. Let $\calI:=\underline\Isom((V,L),(\calE,\calL\subset t(\calE)))$. Then $\calI\to (BG)^{\circ}$ is representable in schemes, affine and of finite presentation.

     Now by \cite[lemma 2.5]{Bro13} (let $X=\Spec(\bZ_p)$ in \textit{loc.cit}), there is an Artin stack $\calT$ locally of finite presentation over $(BG)^{\circ}$ representing the condition on $(BG)^{\circ}$-schemes $T$ such that $\calI\times_{(BG)^{\circ}}T$ is flat and pure over $T$. In particular, $\calI\times_{(BG)^{\circ}}\calT$ is flat over $\calT$. Let $\calU$ be its open image in $\calT$. Then $\calU$ represents the condition on $(BG)^{\circ}$-schemes $T$ such that $\calU\times_{(BG)^{\circ}}T$ is flat, surjective and pure over $T$. As $G$ is faithfully flat over $\bZ_p$, it is pure over $\bZ_p$. Then by the last paragraph of the proof of \cite[Lemma 2.1]{Bro13}, we see $\calU\simeq BG$. Hence $BG\to (BG)^{\circ}$ is representable and locally of finite presentation.
\end{proof}
Lemma \ref{BG-BGo} then implies that $\calC_R\to \Spec(\bfA_R)$ is locally of finite presentation. In particular, this means the relative cotangent complex\footnote{We refer to \cite[Chapter 1]{GR17} for a systematic presentation of (relative) cotangent complex.} $T^*(\calC_R/\Spec(\bfA_R))$ is an almost perfect complex of $\calO_{\calC_R}$-modules, i.e. for each $n\geq 0$, there exists a perfect complex $T_n$ of $\calO_{\calC_R}$-modules such that $\tau_{\leq n}T_n\simeq \tau_{\leq n}T^*(\calC_R/\Spec(\bfA_R))$. Moreover, $H^1(T^*(\calC_R/\Spec(\bfA_R)))$ provides a natural obstruction theory in Artin's sense. Our observation is that there is a natural $(\varphi,\Gamma)$-action on $H^1(T^*(\calC_R/\Spec(\bfA_R)))$ which preserves the obstruction class. This will enable us to construct an obstruction theory for the morphism $\calB_R^{\Gamma}\to \Spec(R)$.

To make this idea work, we have to consider the derived version of $\calC_R$ (cf. Remark \ref{why-derived}). By abuse of notations, we still write $BG$\footnote{This is actually the same as Definition \ref{BG} we will introduce later.} (resp. $(BG)^{\circ}$) as the \'etale sheafification of the left Kan extension of $BG$ (resp. $(BG)^{\circ}$) along the inclusion from the classical commutative rings to animated rings. Then both $BG$ and $(BG)^{\circ}$ are derived Artin stack locally almost of finite presentation by \cite[Proposition 4.4.3 and 4.5.2]{GR17}. By the proof of \cite[Proposition 4.4.3]{GR17}, we see $BG\to(BG)^{\circ}$ is still representable. In particular, we can consider the following homotopy pullback diagram
\begin{equation*}
    \xymatrix{
    \bfC_R\ar[r]\ar[d]&\Spec(\bfA_R)\ar[d]&\\
    BG\ar[r]&(BG)^{\circ}
    }
\end{equation*}
where $\bfC_R$ is a derived ($0$-)Artin stack whose underlying classical stack is $\calC_R$. Since $\bfA_R$ is Noetherian which is locally almost of finite presentation, we see $\bfC_R$ is also locally almost of finite presentation as filtered colimit commutes with fiber product.

For any point $a:\Spec(A)\to \calB_R^{\Gamma}\to \Spec(R)$ with $R$ being of finite type over $\bZ_p$ and $A$ being of finite type over $R$, we can consider the following induced diagram
\begin{equation}\label{diagram for continuity}
    \xymatrix{
    \Spec(\widetilde{\bfA_A})\ar[r]^{\pi_2}\ar[d]^{\tilde a'}&\Spec(\bfA_A)\ar[d]^{a'}\ar[rd]^{a}& \\
    \widetilde{\bfC_R}\ar[r]^{\pi_1}\ar[d]^{\tilde \beta}&\bfC_R\ar[r]^{\iota_1}\ar[d]^{\beta}& \Spec(\bfA_R)\ar[d]^{\alpha}\\
    \Spec(\bZ_p)\ar[r]^{\pi_0}& BG\ar[r]^{\iota_0}& (BG)^{\circ}
    }
\end{equation}
where all the squares are homotopy pullback diagrams and $\pi_0:\Spec(\bZ_p)\to BG$ is the universal $G$-torsor over $BG$. In particular, the map $a':\Spec(\bfA_A)\to \bfC_R$ factors through $\calC_R$ as $\bfA_A$ is discrete. And the maps $\pi_0,\pi_1,\pi_2$ are all faithfully flat. So $\widetilde{\bfA_A}$ is also discrete. In fact, it is the $G$-torsor over $\bfA_A$ corresponding to the map $\beta\circ a'$.

Let $A_0$ still be the reduced ring of $A$ and $M$ be a finite $A_0$-module. Then $M\otimes_{A_0}\bfA_{A_0}$ is a finite $\bfA_{A_0}$-module and $\bfA_{A_0}$ is the reduced ring of $\bfA_{A}$. Now we consider the relative cotangent complex $T^*_{a'}(\bfC_R/\Spec(\bfA_R))$ of $\calC_R\to \Spec(\bfA_R)$ at the point $a'$. For simplicity, we write $T^*_{a'}:=T^*_{a'}(\bfC_R/\Spec(\bfA_R))=T^*(\bfC_R/\Spec(\bfA_R))\otimes^{\bL}_{\calO_{\bfC_R}}\bfA_A$. As $\bfC_R$ is locally almost of finite presentation, the cotangent complex $T^*(\bfC_R/\Spec(\bfA_R))$ is an almost perfect complex of $\calO_{\bfC_R}$-modules. So $T^*_{a'}$ is also an almost perfect complex of $\bfA_A$-modules.

Suppose there is a deformation situation $A'\to A\to A_0, \ \Ker(A'\to A)=M$. Then it induces a deformation situation $\bfA_{A'}\to \bfA_A\to \bfA_{A_0}, \ \Ker(\bfA_{A'}\to \bfA_A)=M\otimes_{A_0}\bfA_{A_0}$. By the general theory of cotangent complex, there exists an obstruction class $o_{a'}(\bfA_{A'})\in H^1(T^*_{a'}(M)):=H^1(\RHom(T^*_{a'}\otimes^{\bL}_{\bfA_A}\bfA_{A_0},M\otimes_{A_0}\bfA_{A_0})$, which vanishes if and only if there exists a lifting $\Spec(\bfA_{A'})\to \bfC_R$ of $a':\Spec(\bfA_A)\to \bfC_R$. More precisely, there is a square-zero extension
\begin{equation}\label{square-zero-ext}
    \xymatrix{
    \bfA_{A'}\ar[r]\ar[d]&\bfA_{A}\ar[d]^{\rm triv}\\
    \bfA_A\ar[r]^-{s}& \bfA_A\oplus (M\otimes_{A}\bfA_A)[1]
    }
\end{equation}
where ${\rm triv}:\bfA_A\to \bfA_A\oplus (M\otimes_{A}\bfA_A)[1]$ is the trivial section and $s:\bfA_A\to \bfA_A\oplus (M\otimes_{A}\bfA_A)[1]$ is the section induced by $M\hookrightarrow A'\twoheadrightarrow A$. Then the existence of a lifting $\Spec(\bfA_{A'})\to \bfC_R$ of $a':\Spec(\bfA_A)\to \bfC_R$ is equivalent to the equivalence between the composite $\Spec(\bfA_A\oplus (M\otimes_{A}\bfA_A)[1])\xrightarrow{s}\Spec(\bfA_A)\xrightarrow{a'}\bfC_R$ and the composite $\Spec(\bfA_A\oplus (M\otimes_{A}\bfA_A)[1])\xrightarrow{\rm triv}\Spec(\bfA_A)\xrightarrow{a'}\bfC_R$. By the definition of cotangent complex, the composite $\Spec(\bfA_A\oplus (M\otimes_{A}\bfA_A)[1])\xrightarrow{s}\Spec(\bfA_A)\xrightarrow{a'}\bfC_R$ corresponds to an element in $H^1(T^*_{a'}(M))$, which we denote by $o_{a'}(\bfA_{A'})$. In fact, $\Spec(\bfA_A\oplus (M\otimes_{A}\bfA_A)[1])\xrightarrow{s}\Spec(\bfA_A)$ corresponds to an element in $H^1(\RHom(T^*_a(\Spec(\bfA_A)/\Spec(\bfA_R)),M\otimes_{A}\bfA_A))$, whose image in $H^1(T^*_{a'}(M))$ is just  $o_{a'}(\bfA_{A'})$. Also the composite $\Spec(\bfA_A\oplus (M\otimes_{A}\bfA_A)[1])\xrightarrow{\rm triv}\Spec(\bfA_A)\xrightarrow{a'}\bfC_R$ corresponds to $0\in H^1(T^*_{a'}(M))$. So the existence of a lifting is equivalent to $o_{a'}(\bfA_{A'})=0$.

We claim that there is a $(\varphi,\Gamma)$-action on $T^*_{a'}$. In fact, there is an equivalence between $\alpha\circ\varphi_{\bfA_A}$ and $\alpha$, which induces an equivalence between $\beta\circ a'\circ\varphi$ and $\beta\circ a'$. So this induces further an isomorphism 
\[
T^*_{\beta\circ a'}(BG/(BG)^{\circ})\otimes^{\bL}_{\bfA_A,\varphi}\bfA_A\simeq T^*_{\beta\circ a'}(BG/(BG)^{\circ}).
\]
So we get a $\varphi$-action on $T^*_{\beta\circ a'}(BG/(BG)^{\circ})\simeq T^*_{a'}(\bfC_R/\Spec(\bfA_R))$. The same arguments can produce a compatible $\Gamma$-action $T^*_{\beta\circ a'}(BG/(BG)^{\circ})\simeq T^*_{a'}(\bfC_R/\Spec(\bfA_R))$. This also yields a $(\varphi,\Gamma)$-action on $H^1(T^*_{a'}(M))$. More precisely, as  $\varphi:\bfA_{A_0}\to \bfA_{A_0}$ is flat, we have 
\[
\varphi^*H^1(T^*_{a'}(M)):=H^1(T^*_{a'}(M))\otimes_{\bfA_{A_0},\varphi}\bfA_{A_0}=H^1(\RHom(T^*_{a'}\otimes^{\bL}_{\bfA_{A}}\bfA_{A_0},M\otimes_{A_0}\bfA_{A_0})\otimes_{\bfA_{A_0}}\bfA_{A_0}).
\]
Recall that $T^*_{a'}$ is an almost perfect complex. As $M\otimes_{A_0}\bfA_{A_0}$ is discrete and we are only concerned with $H^1$, we may assume $T^*_{a'}$ is a perfect complex. Then this implies
\[
H^1(\RHom(T^*_{a'}\otimes^{\bL}_{\bfA_{A}}\bfA_{A_0},M\otimes_{A_0}\bfA_{A_0})\otimes_{\bfA_{A_0}}\bfA_{A_0})=H^1(\RHom(T^*_{a'}\otimes^{\bL}_{\bfA_{A},\varphi}\bfA_{A_0},M\otimes_{A_0}\bfA_{A_0}\otimes_{\bfA_{A_0},\varphi}\bfA_{A_0})).
\]
Using $T^*_{a'}\otimes^{\bL}_{\bfA_{A},\varphi}\bfA_{A_0}\simeq T^*_{a'}\otimes^{\bL}_{\bfA_{A}}\bfA_{A_0}$ and $M\otimes_{A_0}\bfA_{A_0}\otimes_{\bfA_{A_0},\varphi}\bfA_{A_0}\simeq M\otimes_{A_0}\bfA_{A_0}$, we get an isomorphism $\varphi^*H^1(T^*_{a'}(M))\cong H^1(T^*_{a'}(M))$. Similarly, we can get a $\Gamma$-action on $H^1(T^*_{a'}(M))$.

Now go back to our deformation situation. Recall that there exists an obstruction class $o_{a'}(\bfA_{A'})\in H^1(T^*_{a'}(M)):=H^1(\RHom(T^*_{a'}\otimes^{\bL}_{\bfA_A}\bfA_{A_0},M\otimes_{A_0}\bfA_{A_0}))$, which vanishes if and only if there exists a lifting $\Spec(\bfA_{A'})\to \bfC_R$ of $a':\Spec(\bfA_A)\to \bfC_R$. As $\bfA_{A'}$ and $\bfA_A$ are both discrete, the existence of a lifting $\Spec(\bfA_{A'})\to \bfC_R$ of $a':\Spec(\bfA_A)\to \bfC_R$ is equivalent to the existence of a lifting $\Spec(\bfA_{A'})\to \calC_R$ of $a':\Spec(\bfA_A)\to \calC_R$, which is then controlled by the obstruction class $o_{a'}(\bfA_{A'})$.

We have the following lemma. 
\begin{lem}
We have $o_{a'}(\bfA_{A'})\in H^1(T^*_{a'}(M))^{\varphi=1,\Gamma=1}$.
\end{lem}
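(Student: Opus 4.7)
My plan is to establish the $(\varphi,\Gamma)$-invariance of $o_{a'}(\bfA_{A'})$ by observing that every ingredient entering its construction is itself canonically $(\varphi,\Gamma)$-equivariant, so the class it classifies must be fixed.

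First I would remark that the entire square-zero extension diagram \eqref{square-zero-ext}, together with both the trivial section and the section $s$, carries a natural $(\varphi,\Gamma)$-action: Frobenius and any $\gamma \in \Gamma$ act on each of $\bfA_{A'}$, $\bfA_A$, and $M \otimes_A \bfA_A$ through their action on $\bfA_{\bZ_p}$, and since $A$, $A'$, $M$ are discrete $\bZ_p$-algebras and modules carrying no intrinsic $(\varphi,\Gamma)$-structure, all four maps in the pushout square visibly commute with these actions.

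Next I would use the equivalences $\alpha \circ \varphi_{\bfA_R} \simeq \alpha$ and $\alpha \circ \gamma \simeq \alpha$ already recorded above: these are precisely the $(\varphi,\Gamma)$-structure on the pair $(\calE_{\bfA_R}, t(\calE_{\bfA_R}) \twoheadrightarrow \calF_{\bfA_R})$, and they are what produced the $\varphi$- and $\Gamma$-actions on $T^*_{a'}$ and hence on $H^1(T^*_{a'}(M))$. Combined with the previous paragraph they give an equivalence $a' \circ \varphi_{\bfA_A} \simeq a'$ (resp.\ $a' \circ \gamma \simeq a'$) compatible with both sections. By the defining property of the cotangent complex, $o_{a'}(\bfA_{A'})$ is the unique class in $H^1(T^*_{a'}(M))$ classifying the composite
\[
\Spec(\bfA_A \oplus (M \otimes_A \bfA_A)[1]) \xrightarrow{s} \Spec(\bfA_A) \xrightarrow{a'} \bfC_R
\]
relative to the composite using the trivial section. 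Pulling this whole datum back along $\varphi_{\bfA_A}$, the previous observations identify the resulting composite, under the canonical isomorphism $\varphi^* T^*_{a'}(M) \simeq T^*_{a'}(M)$ built from the very same equivariance data, with the original composite. Hence $\varphi^* o_{a'}(\bfA_{A'}) = o_{a'}(\bfA_{A'})$, and the identical argument with $\gamma$ in place of $\varphi$ gives $\Gamma$-invariance.

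The main obstacle I anticipate is purely coherence bookkeeping in the derived/animated setting: one must verify that the identification $\varphi^* T^*_{a'}(M) \simeq T^*_{a'}(M)$ used to define the $\varphi$-action is literally the one induced by pulling the defining diagram of $o_{a'}(\bfA_{A'})$ back along $\varphi$, rather than some other equivalence. Once this coherence is set up once and for all, the invariance becomes formal, since the deformation data $(A', A, M)$ are pure $\bZ_p$-structures and the whole $(\varphi,\Gamma)$-action is inherited from its action on $\bfA_R$ via the functoriality of pullback squares and cotangent complexes.
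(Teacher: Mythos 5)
Your argument is essentially the paper's own proof: the obstruction class is the class of the composite of $s$ with the structure map to $BG$, the $\varphi$-action on $H^1(T^*_{a'}(M))$ is by construction induced by the equivalence $\beta\circ a'\circ\varphi\simeq \beta\circ a'$, and since $s$ commutes with $\varphi$ the twisted composite is carried back to the original one, giving $\varphi(1\otimes o_{a'}(\bfA_{A'}))=o_{a'}(\bfA_{A'})$, with the same reasoning for $\Gamma$. One cosmetic correction: the equivariance should be asserted for $\beta\circ a'$ as maps to $BG$ (equivalently for the relative cotangent complex of $BG/(BG)^{\circ}$), not for $a'$ itself, since $a'\circ\varphi_{\bfA_A}\not\simeq a'$ as maps to $\bfC_R$ over $\Spec(\bfA_R)$ (their projections to $\Spec(\bfA_R)$ differ by $\varphi$) — this is exactly how the paper phrases it.
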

\begin{proof}
    We only have to show $o_{a'}(\bfA_{A'})$ is $\varphi$-invariant. The same argument can show $o_{a'}(\bfA_{A'})$ is also $\Gamma$-invariant.

    Recall that $o_{a'}(\bfA_{A'})$ corresponds to the composite $\Spec(\bfA_A\oplus (M\otimes_{A}\bfA_A)[1])\xrightarrow{s}\Spec(\bfA_A)\xrightarrow{\beta\circ a'}BG$, where $s$ is the section in Diagram \ref{square-zero-ext}. Then the image of $o_{a'}(\bfA_{A'})$ in $\varphi^*H^1(T^*_{a'}(M))$, i.e. $1\otimes o_{a'}(\bfA_{A'})$, corresponds to the composite $\Spec(\bfA_A\oplus (M\otimes_{A}\bfA_A)[1])\xrightarrow{s}\Spec(\bfA_A)\xrightarrow{\varphi}\Spec(\bfA_A)\xrightarrow{\beta\circ a'}BG$. Unraveling the construction, the isomorphism $\varphi:\varphi^*H^1(T^*_{a'}(M))\cong H^1(T^*_{a'}(M))$ is induced by the equivalence $\beta\circ a'\circ\varphi\simeq \beta\circ a':\Spec(\bfA_A)\to BG$, which actually gives the isomorphism $ \varphi^*_{\bfA_A}T^*_{\beta\circ a'}(BG/(BG)^{\circ})\simeq T^*_{\beta\circ a'}(BG/(BG)^{\circ})$. So under this equivalence, the composite $\Spec(\bfA_A\oplus (M\otimes_{A}\bfA_A)[1])\xrightarrow{s}\Spec(\bfA_A)\xrightarrow{\varphi}\Spec(\bfA_A)\xrightarrow{\beta\circ a'}BG$ is sent to  $\Spec(\bfA_A\oplus (M\otimes_{A}\bfA_A)[1])\xrightarrow{s}\Spec(\bfA_A)\xrightarrow{\beta\circ a'}BG$, which simply means $\varphi(1\otimes o_{a'}(\bfA_{A'}))=o_{a'}(\bfA_{A'})$. So we are done.

\end{proof}

Now we have a good candidate of an obstruction theory for the morphism $\calB_R^{\Gamma}\to \Spec(R)$. Given an infinitesimal extension $A\twoheadrightarrow A_0$ as in Definition \ref{def-obstruction theory}, let $a:\Spec(A)\to \calB_R^{\Gamma}\to \Spec(R)$. We define a functor
\[
\calO_a:\{\text{finite}\ A_0\text{-modules}\}\to \{A_0\text{-modules}\}
\]
by sending each finite $A_0$-module $M$ to the $A_0$-module $\calO_a(M):=H^1(T^*_{a'}(M))^{\varphi=1,\Gamma=1}$. For each deformation situation $A'\to A\to A_0, \ \Ker(A'\to A)=M$, we have shown there exists an obstruction class $o_{a'}(\bfA_{A'})\in H^1(T^*_{a'}(M))^{\varphi=1,\Gamma=1}$ controlling the existence of liftings. So in order to show our functor $\calO_a$ is indeed an obstruction theory, we need to prove $\calO_a(M)$ is indeed a finite $A_0$-module. This is our next task.

\begin{lem}
    For each finite $A_0$-module $M$, we have $H^1(T^*_{a'}(M))$ is a finite $\bfA_{A_0}$-module.
\end{lem}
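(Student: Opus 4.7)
The plan is to deduce the claim from the almost perfectness of the relative cotangent complex of $\bfC_R\to\Spec(\bfA_R)$ combined with the Noetherian hypothesis on the coefficient ring, via the hyperext spectral sequence.

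First I would verify that $T^*_{a'}$ is an almost perfect complex of $\bfA_A$-modules concentrated in cohomological degrees $\le 0$. By Lemma~\ref{BG-BGo} the morphism $\iota_0\colon BG\to (BG)^{\circ}$ is a representable monomorphism locally of finite presentation; after passing to the derived versions and forming the pullback, $\iota_1\colon \bfC_R\to \Spec(\bfA_R)$ remains representable (in derived algebraic spaces) and is locally almost of finite presentation, as already noted in the text. Consequently $T^*(\bfC_R/\Spec(\bfA_R))$ is almost perfect and lives in cohomological degrees $\le 0$, and pulling back along $a'\colon \Spec(\bfA_A)\to \bfC_R$ produces an almost perfect complex $T^*_{a'}$ of $\bfA_A$-modules in the same range of degrees. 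Since $A$ is of finite type over $\bZ_p$, both $\bfA_A$ and its quotient $\bfA_{A_0}$ are Noetherian, and hence every cohomology group $H^j\!\bigl(T^*_{a'}\otimes^{\bL}_{\bfA_A}\bfA_{A_0}\bigr)$, $j\le 0$, is a finitely generated $\bfA_{A_0}$-module.

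Next I would invoke the hyperext spectral sequence
\[
E_2^{p,q}=\Ext^{p}_{\bfA_{A_0}}\!\bigl(H^{-q}(T^*_{a'}\otimes^{\bL}_{\bfA_A}\bfA_{A_0}),\; M\otimes_{A_0}\bfA_{A_0}\bigr)\;\Longrightarrow\; H^{p+q}(T^*_{a'}(M)).
\]
Only terms with $p\ge 0$ (by $\Ext$-positivity) and $q\ge 0$ (since $T^*_{a'}$ sits in non-positive degrees) can be non-zero, so the abutment $H^1(T^*_{a'}(M))$ is a finite filtration whose subquotients are subquotients of $E_\infty^{0,1}$ and $E_\infty^{1,0}$. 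Both
\[
E_2^{0,1}=\Hom_{\bfA_{A_0}}\!\bigl(H^{-1}(T^*_{a'}\otimes^{\bL}_{\bfA_A}\bfA_{A_0}),\; M\otimes_{A_0}\bfA_{A_0}\bigr)
\]
and
\[
E_2^{1,0}=\Ext^{1}_{\bfA_{A_0}}\!\bigl(H^{0}(T^*_{a'}\otimes^{\bL}_{\bfA_A}\bfA_{A_0}),\; M\otimes_{A_0}\bfA_{A_0}\bigr)
\]
are $\Ext$ groups between two finitely generated modules over the Noetherian ring $\bfA_{A_0}$, hence themselves finitely generated. Therefore $H^1(T^*_{a'}(M))$ is a finite $\bfA_{A_0}$-module, as desired.

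The only mildly delicate point is controlling the amplitude and almost perfectness of $T^*_{a'}$; once that is in place via Lemma~\ref{BG-BGo} and the standard stability of almost perfectness under derived base change over Noetherian rings, the rest is a routine application of the hyperext spectral sequence together with the Noetherian hypothesis on $\bfA_{A_0}$.
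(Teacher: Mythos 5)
Your argument is correct and rests on the same key inputs as the paper's proof: $T^*_{a'}$ is almost perfect because $\bfC_R\to\Spec(\bfA_R)$ is locally almost of finite presentation, its base change to $\bfA_{A_0}$ therefore has finitely generated cohomologies over the Noetherian ring $\bfA_{A_0}$, and only finitely many of these (in degrees $\ge -1$) can contribute to $H^1$ of the $\RHom$. The paper extracts the finiteness by replacing $T^*_{a'}\otimes^{\bL}_{\bfA_A}\bfA_{A_0}$ with a perfect complex in the relevant range, whereas you run the hyperext spectral sequence; this is only a cosmetic difference in the last step.
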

\begin{proof}
    As $\bfC_R$ is locally almost of finite presentation over $\bfA_R$, we know $T^*_{a'}$ is an almost perfect complex of $\bfA_A$-modules. Then 
$T^*_{a'}\otimes^{\bL}_{\bfA_A}\bfA_{A_0}$ is also an almost perfect complex of $\bfA_{A_0}$-modules. As $M\otimes_{A_0}\bfA_{A_0}$ is discrete and we are only concerned with $H^1$, we may assume $T^*_{a'}\otimes^{\bL}_{\bfA_A}\bfA_{A_0}$ is a perfect complex and then it is easy to see $H^1(T^*_{a'}(M))$ is a finite $\bfA_{A_0}$-module.
\end{proof}

Now we know that $H^1(T^*_{a'}(M))$ is a finite $\bfA_{A_0}$-module with a $(\varphi,\Gamma)$-action. In particular, $H^1(T^*_{a'}(M))$ has a natural topology induced from that on $\bfA_{A_0}$. We claim that the $\Gamma$-action on $T^*_{a'}$ is continuous, which will be very important in our later proof that $\calO_a$ indeed defines an obstruction theory.

\begin{lem}\label{continuity}
    The $\Gamma$-action on $H^1(T^*_{a'}(M))$ is continuous.
\end{lem}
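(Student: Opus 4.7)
The strategy is to pull back along the faithfully flat cover $\pi_2:\Spec(\widetilde{\bfA_A})\to\Spec(\bfA_A)$, where the cotangent complex trivialises and the $\Gamma$-action becomes manifestly continuous, and then descend continuity to the base.

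First I would apply base change for cotangent complexes to the squares in Diagram \ref{diagram for continuity}. Since $\bfC_R\simeq BG\times_{(BG)^{\circ}}\Spec(\bfA_R)$, we have $T^*(\bfC_R/\Spec(\bfA_R))\simeq\beta^* T^*(BG/(BG)^{\circ})$, whence $T^*_{a'}\simeq b^*T^*(BG/(BG)^{\circ})$ for $b:=\beta\circ a'$. Since $b\circ\pi_2$ factors through the universal point $\pi_0:\Spec(\bZ_p)\to BG$ via $\tilde\beta\circ\tilde a'$, pulling back further gives
\[
\pi_2^*T^*_{a'}\simeq \calN\otimes^{\bL}_{\bZ_p}\widetilde{\bfA_A},\qquad \calN:=\pi_0^*T^*(BG/(BG)^{\circ}),
\]
where $\calN$ is an almost perfect complex of $\bZ_p$-modules by Lemma \ref{BG-BGo}.

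Next I would verify that under this identification, the pulled-back $\Gamma$-action on $\pi_2^*T^*_{a'}$ is induced purely by the $\Gamma$-action on $\widetilde{\bfA_A}$, with trivial action on the $\bZ_p$-complex $\calN$. This is because the equivalence $b\circ\gamma\simeq b$ pulls back to an automatic equivalence after composing with the unique structure map to $\Spec(\bZ_p)$. The $\Gamma$-action on $\widetilde{\bfA_A}$ is continuous by (the proof of) Lemma \ref{Key-0} applied to the $G$-torsor associated with the pair $(\calE_{\bfA_A},\calF_{\bfA_A})$. After truncating $\calN$ to a perfect complex (valid since we only care about $H^1$), it follows that the $\Gamma$-action on $H^1(\pi_2^*T^*_{a'}(M))$ is continuous in its natural topology as a finite $\widetilde{\bfA_{A_0}}$-module.

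Finally, flat base change for $\RHom$ of perfect complexes gives a $\Gamma$-equivariant identification $H^1(\pi_2^*T^*_{a'}(M))\simeq H^1(T^*_{a'}(M))\otimes_{\bfA_{A_0}}\widetilde{\bfA_{A_0}}$, and the resulting inclusion $H^1(T^*_{a'}(M))\hookrightarrow H^1(T^*_{a'}(M))\otimes_{\bfA_{A_0}}\widetilde{\bfA_{A_0}}$ is $\Gamma$-equivariant. The main obstacle is to show this inclusion is a topological embedding, so that continuity of the orbit maps $\gamma\mapsto \gamma(x)\otimes 1$ in the larger module forces continuity of $\gamma\mapsto\gamma(x)$ in $H^1(T^*_{a'}(M))$; this topological statement should follow from the uniqueness of the natural topology on finite $\bfA_{A_0}$-submodules of finite Banach $\widetilde{\bfA_{A_0}}$-modules, in the same style as the lemmas preceding Lemma \ref{key-1}, which invoke \cite[Remark 2.2.11(c)]{KL15}.
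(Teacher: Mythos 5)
Your proposal is correct and follows essentially the same route as the paper: pull back along the faithfully flat torsor $\pi_2$ (or rather its reduction $\pi_3$), identify $T^*_{a'}\otimes^{\bL}\widetilde{\bfA_{A_0}}$ with $T^*_{\pi_0}(BG/(BG)^{\circ})\otimes^{\bL}_{\bZ_p}\widetilde{\bfA_{A_0}}$, check that the $\Gamma$-action becomes $\mathrm{id}\otimes\gamma$, and invoke Lemma \ref{Key-0}. The final descent step you flag as the "main obstacle" is exactly what the paper's filtered-colimit-topology lemmas (in the proof of Lemma \ref{Lemma-Gamma}) are designed to handle, so your plan closes in the intended way.
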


\begin{proof}
    We need to use Diagram \ref{diagram for continuity}. In particular, let $\Spec(\widetilde{\bfA_{A_0}})\xrightarrow{\pi_3}\Spec(\bfA_{A_0})$ be the base change of $\Spec(\widetilde{\bfA_{A}})\xrightarrow{\pi_2}\Spec(\bfA_{A})$ along $\Spec(\bfA_{A_0})\to \Spec(\bfA_A)$. Then $\Spec(\widetilde{\bfA_{A_0}})$ is the corresponding geometric $G$-torsor over $\Spec(\bfA_{A_0})$. By Lemma \ref{Key-0}, we know that $\Gamma$-action on $\widetilde{\bfA_{A_0}}$ is continuous with respect to the filtered colimit topology on $\widetilde{\bfA_{A_0}}$.

  Put $K:=H^1(T^*_{a'}(M))\otimes_{\bfA_{A_0}}\widetilde{\bfA_{A_0}}$. If we can prove the $\Gamma$-action on $K$ is continuous with respect to its filtered colimit topology, then the $\Gamma$-action on $H^1(T^*_{a'}(M))$ will be automatically continuous.

  Note that $H^1(T^*_{a'}(M))\otimes_{\bfA_{A_0}}\widetilde{\bfA_{A_0}}=H^1(\RHom(T^*_{a'}\otimes^{\bL}_{\bfA_A}\bfA_{A_0},M\otimes_{A_0}\bfA_{A_0})\otimes^{\bL}_{\bfA_{A_0}}\widetilde{\bfA_{A_0}})$. Again as $M\otimes_{A_0}\bfA_{A_0}$ is discrete and we are only concerned with $H^1$, we may assume $T^*_{a'}\otimes^{\bL}_{\bfA_A}\bfA_{A_0}$ is a perfect complex. Then we have $K=H^1(\RHom(T^*_{a'}\otimes^{\bL}_{\bfA_A}\widetilde{\bfA_{A_0}}, M\otimes_{A_0}\widetilde{\bfA_{A_0}}))$.

  Now by Diagram \ref{diagram for continuity}, we see that 
  \[
  T^*_{a'}\otimes^{\bL}_{\bfA_A}\widetilde{\bfA_{A_0}}\simeq T^*_{\pi_0}(BG/(BG)^{\circ})\otimes_{\bZ_p}^{\bL}\widetilde{\bfA_{A_0}}.
  \]
  For any $\gamma\in \Gamma$, we have the following commutative diagram
  \begin{equation}
      \xymatrix{
      \Spec(\widetilde{\bfA_{A_0}})\ar[rddd]\ar[rrd]^{\pi_3}\ar[rd]^{\tilde\gamma}& &\\
      & \Spec(\gamma^*_{\bfA_{A_0}}\widetilde{\bfA_{A_0}})\ar[r]^{\gamma^*\pi_3}\ar[d]^{i}& \Spec(\bfA_{A_0})\ar[d]^{\gamma}\\
      &\Spec(\widetilde{\bfA_{A_0}})\ar[d]\ar[r]^{\pi_3}&\Spec(\bfA_{A_0})\ar[d]^{\alpha}\\
      &\Spec(\bZ_p)\ar[r]^{\pi_0}& BG
      }
  \end{equation}
  where the two small squares are pullback diagrams and $\tilde \gamma:\Spec(\widetilde{\bfA_{A_0}})\to \Spec(\gamma^*_{\bfA_{A_0}}\widetilde{\bfA_{A_0}})$ is induced by the equivalence $\alpha\circ \gamma\simeq \alpha: \Spec(\bfA_{A_0})\to BG$. By abuse of notation, we write $\gamma:\Spec(\widetilde{\bfA_{A_0}})\to \Spec(\widetilde{\bfA_{A_0}})$ the composite $i\circ \tilde\gamma$ (in fact, this is the same as the $\gamma$-action on $\Spec(\widetilde{\bfA_{A_0}})$ we constructed in Section ). Then the $\gamma$-action commutes with $\pi_3$.

  Note that the $\gamma$-action on $T^*_{a'}\otimes^{\bL}_{\bfA_A}\widetilde{\bfA_{A_0}}$ is induced by the equivalence $\alpha\circ\pi_3\simeq \alpha\circ\gamma\circ\pi_3$, which is exactly given by $\gamma:\Spec(\widetilde{\bfA_{A_0}})\to \Spec(\widetilde{\bfA_{A_0}})$. Under the equivalence $T^*_{a'}\otimes^{\bL}_{\bfA_A}\widetilde{\bfA_{A_0}}\simeq T^*_{\pi_0}(BG/(BG)^{\circ})\otimes_{\bZ_p}^{\bL}\widetilde{\bfA_{A_0}}$, we see that the $\gamma$-action is just $id\otimes \gamma: T^*_{\pi_0}(BG/(BG)^{\circ})\otimes_{\bZ_p}^{\bL}\widetilde{\bfA_{A_0}}\to  T^*_{\pi_0}(BG/(BG)^{\circ})\otimes_{\bZ_p}^{\bL}\widetilde{\bfA_{A_0}}$.

  So the $\Gamma$-action on $K=H^1(\RHom(T^*_{\pi_0}(BG/(BG)^{\circ})\otimes_{\bZ_p}^{\bL}\widetilde{\bfA_{A_0}}, M\otimes_{A_0}\widetilde{\bfA_{A_0}}))$ is simply induced by the $\Gamma$-action on $\widetilde{\bfA_{A_0}}$. As $M$ is discrete and we are only concerned with $H^1$, we may assume $T^*_{\pi_0}(BG/(BG)^{\circ})$ is a perfect complex of $\bZ_p$-modules. Then $K=H^1((T^*_{\pi_0}(BG/(BG)^{\circ})^{\vee}\otimes_{\bZ_p}^{\bL}M)\otimes_{A_0}^{\bL}\widetilde{\bfA_{A_0}})=H^1((T^*_{\pi_0}(BG/(BG)^{\circ})^{\vee}\otimes_{\bZ_p}^{\bL}M))\otimes_{A_0}^{\bL}\widetilde{\bfA_{A_0}}$. Since $H^1((T^*_{\pi_0}(BG/(BG)^{\circ})^{\vee}\otimes_{\bZ_p}^{\bL}M))$ is a finite $A_0$-module and the $\Gamma$-action on $\widetilde{\bfA_{A_0}}$ is continuous, we see the $\Gamma$-action on $K$ is also continuous. So we are done.

\end{proof}

\begin{rmk}\label{why-derived}
   The key point in the above proof is that $T^*_{a'}\otimes^{\bL}_{\bfA_A}\widetilde{\bfA_{A_0}}$ has a $\bZ_p$-model, i.e. $T^*_{\pi_0}(BG/(BG)^{\circ})$. This is possible as we work with derived Artin stacks or more precisely $\bfC_R$. On the contrary, if we work with $\calC_R$ and $T^*(\calC_R/\Spec(\bfA_R))$, then it is not obvious to us that $T^*(\calC_R/\Spec(\bfA_R))$ admits a $\bZ_p$-model so that we can run the same argument to show the continuity of $\Gamma$-action. This is because cotangent complexes only commute with flat base change (or arbitrary base change in the derived setting). But we do not know the flatness of $BG\to (BG)^{\circ}$ or $\Spec(\bfA_R)\to (BG)^{\circ}$. In fact, if $BG\to (BG)^{\circ}$ is flat, then it must be an open immersion, in which case the obstruction theory will be trivial.
\end{rmk}

As a result, $H^1(T^*_{a'}(M))$ becomes a finitely generated \'etale $(\varphi,\Gamma)$-module. Note that we are working with reduced rings over $\bF_p$. So it makes sense to talk about $(\varphi,\Gamma)$-stable lattices. In particular, we have the following lemma.

\begin{lem}\label{lattice}
    $H^1(T^*_{a'}(M))$ admits a $(\varphi,\Gamma)$-stable lattice.
\end{lem}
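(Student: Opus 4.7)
The strategy is to produce a $(\varphi,\Gamma)$-stable, finitely generated $\bfA_{A_0}^+$-submodule $L'$ of $M := H^1(T^*_{a'}(M))$ with $L'[1/T]=M$, in two stages: first build a $\varphi$-stable lattice, then enlarge it to gain $\Gamma$-stability, exploiting the continuity proved in Lemma \ref{continuity}. Note first that $A_0$ is a reduced $p$-nilpotent $\bZ_p$-algebra of finite type, hence a finitely generated $\bF_p$-algebra, and $k_\infty$ is finite over $\bF_p$, so $\bfA_{A_0}^+ = (k_\infty\otimes_{\bF_p}A_0)[[T]]$ is Noetherian. By Lemma \ref{reduction-basic} we may pass to $K=K^{\rm basic}$, so that $\bfA_{A_0}^+$ is $\varphi$-stable and, modulo $p$, the Frobenius satisfies $\varphi(T)=T^p$ (in particular it multiplies $T$-valuations by $p$).

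For the $\varphi$-stable lattice, choose generators $m_1,\ldots,m_r$ of $M$ as an $\bfA_{A_0}$-module and set $L_0:=\sum_i \bfA_{A_0}^+ m_i$. Write $\varphi(m_i)=\sum_j c_{ij}m_j$ with $c_{ij}\in\bfA_{A_0}$, pick $N\geq 0$ with $T^Nc_{ij}\in\bfA_{A_0}^+$ for all $i,j$, and put $L:=T^K L_0$ with $K:=\lceil N/(p-1)\rceil$. A direct calculation shows that $\varphi(T^K a\, m_i)=T^{pK}\varphi(a)\sum_j c_{ij}m_j$ lies in $T^K\bfA_{A_0}^+\cdot\{m_j\}$ since $T^{(p-1)K}c_{ij}\in\bfA_{A_0}^+$, so $\varphi(L)\subset L$; and clearly $L[1/T]=M$.

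Next we enlarge $L$ to a $\Gamma$-stable lattice. By Lemma \ref{continuity} the $\Gamma$-action on $M$ is continuous for the natural topology, and since $\Gamma\cong\bZ_p$ is compact, each orbit $\Gamma\cdot l$ is compact, hence bounded for the $T$-adic topology. Applying this to the finitely many generators of $L$, we obtain $s\geq 0$ with $\Gamma\cdot L\subset T^{-s}L$. Define
\[
L' \;:=\; \sum_{\gamma\in\Gamma}\bfA_{A_0}^+\cdot\gamma(L)\;\subset\; T^{-s}L.
\]
Since $T^{-s}L$ is a finitely generated $\bfA_{A_0}^+$-module and $\bfA_{A_0}^+$ is Noetherian, $L'$ is finitely generated. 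By construction $L'$ is $\Gamma$-stable; and since $\varphi$ commutes with $\Gamma$ and preserves $L$, we have $\varphi(L')\subset L'$. Because $L\subset L'\subset T^{-s}L$ we also have $L'[1/T]=M$, so $L'$ is the desired $(\varphi,\Gamma)$-stable lattice.

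The main obstacle I expect is Step 3, specifically ensuring that the continuity of the $\Gamma$-action in Lemma \ref{continuity} (formulated via the filtered colimit topology on the trivialization of $M$ by a $G$-torsor) implies boundedness with respect to the natural $T$-adic topology on $M$ itself. This is handled by noting that on a finitely generated $\bfA_{A_0}$-module the natural topology is canonical and agrees with the subspace topology inherited from any ambient module (the lemmas preceding Lemma \ref{Key-0}), so compactness of $\Gamma$-orbits translates to $T$-adic boundedness. All other steps are formal manipulations with Noetherian lattices.
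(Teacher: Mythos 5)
Your proof is correct and is essentially the argument the paper invokes: the paper's own proof simply cites \cite[Lemma 5.1.5]{EG22} (plus Lemma \ref{continuity}), and that argument is exactly your two-stage construction — first a $\varphi$-stable lattice via the valuation estimate $\varphi(T)\equiv T^p$, then $\Gamma$-saturation using continuity and compactness of $\Gamma$ together with Noetherianity of $\bfA_{A_0}^+$. The point you flag as the main obstacle (passing from the filtered colimit topology on the torsor trivialization to the natural $T$-adic topology on the finite module) is indeed handled by the topological compatibility lemmas preceding Lemma \ref{Key-0}, so no gap remains.
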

\begin{proof}
    The proof is the same as that of \cite[Lemma 5.1.5]{EG22} once we have Lemma \ref{continuity}.
\end{proof}

\begin{prop}
    The functor $\calO_a$ sending finite $A_0$-module $M$ to $H^1(T^*_{a'}(M))^{\varphi=1,\Gamma=1}$ gives an obstruction theory for the morphism $\calB^{\Gamma}_R\to \Spec(R)$.
\end{prop}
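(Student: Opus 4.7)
The plan is to verify the two requirements of Definition \ref{def-obstruction theory}: that $\calO_a$ takes values in finite $A_0$-modules and is functorial in $M$, and that the class $o_a(A') := o_{a'}(\bfA_{A'})$ lies in $\calO_a(M)$ and vanishes precisely when ${\rm Fib}_a(\calB_R^{\Gamma}(A') \to \calB_R^{\Gamma}(A))$ is nonempty. For finiteness, the preceding discussion has already established that $H^1(T^*_{a'}(M))$ is a finitely generated $\bfA_{A_0}$-module equipped with continuous semilinear $\varphi$- and $\Gamma$-actions (Lemma \ref{continuity}), admitting a $(\varphi,\Gamma)$-stable lattice by Lemma \ref{lattice}. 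Hence $\calO_a(M) = H^1(T^*_{a'}(M))^{\varphi=1,\Gamma=1}$ is finite over $A_0$ by the standard analysis of $(\varphi,\Gamma)$-invariants of finitely generated \'etale $(\varphi,\Gamma)$-modules over $\bfA_{A_0}$; functoriality in $M$ is inherited from that of $\RHom$ and the invariants functor.

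For the obstruction property, the class $o_a(A') := o_{a'}(\bfA_{A'})$ has already been constructed from the relative cotangent complex of $\bfC_R/\Spec(\bfA_R)$ and shown to be $(\varphi,\Gamma)$-invariant, so it defines an element of $\calO_a(M)$. Since $\calO_a(M) \hookrightarrow H^1(T^*_{a'}(M))$, its vanishing in $\calO_a(M)$ is equivalent to its vanishing in $H^1(T^*_{a'}(M))$, which by the universal property of the cotangent complex is equivalent to the existence of a lift $\tilde a' : \Spec(\bfA_{A'}) \to \bfC_R$ of $a'$. Because $\bfA_{A'}$ is discrete, such a lift factors through $\calC_R$ and corresponds to a $BG$-lift $P'$ over $\bfA_{A'}$ of the given $(BG)^\circ$-structure; this is the classical deformation-theoretic input.

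The remaining step is to promote this classical lift to a point of $\calB_R^{\Gamma}(A')$ restricting to $a$. Because the underlying $(BG)^\circ$-structure over $\bfA_{A'}$ is pulled back from $\Spec(R) \to \calX_G^\circ$ and therefore carries intrinsic $(\varphi,\Gamma)$-equivariance, and $BG \to (BG)^\circ$ is a representable monomorphism by Lemma \ref{BG-BGo}, the pullbacks $\varphi^* P'$ and $\gamma^* P'$ are also $BG$-lifts of the same $(BG)^\circ$-structure and so coincide canonically with $P'$ by the monomorphism property. These canonical isomorphisms restrict to the prescribed $(\varphi,\Gamma)$-action on the $BG$-structure over $\bfA_A$ by the same uniqueness, so $P'$ defines a point of $\calB_R^{\Gamma}(A')$ lifting $a$; the converse direction is clear upon forgetting the equivariance. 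The main obstacle is precisely this upgrade from a classical $G$-torsor lift to a $(\varphi,\Gamma)$-equivariant one: while the monomorphism $BG \to (BG)^\circ$ formally forces the equivariance, one has to check carefully that the canonical identifications it produces are compatible with the structure already present over $\bfA_A$. Everything else, namely the finiteness of $\calO_a(M)$, the invariance of $o_a(A')$, and the deformation-theoretic vanishing criterion, is already supplied by the preceding lemmas.
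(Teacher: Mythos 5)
Your proof is correct and follows essentially the same route as the paper: after the preceding lemmas the only substantive point left is the finiteness of $H^1(T^*_{a'}(M))^{\varphi=1,\Gamma=1}$ over $A_0$, which both you and the paper deduce from the $(\varphi,\Gamma)$-stable lattice of Lemma \ref{lattice} together with the argument of \cite[Theorem 5.1.22(1)]{EG22}. Your extra paragraph on upgrading the classical lift to a $(\varphi,\Gamma)$-equivariant point of $\calB_R^{\Gamma}(A')$ is a valid way of making explicit what the paper treats as automatic, namely that being an \'etale $(\varphi,\Gamma)$-twist is a property rather than extra data, so the monomorphism $\calB_R^{\Gamma}\to \Spec(R)$ leaves nothing to promote.
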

\begin{proof}
    It remains to show $H^1(T^*_{a'}(M))^{\varphi=1,\Gamma=1}$ is a finite $A_0$-module. Once we have Lemma \ref{lattice}, the arguments in the proof of \cite[Theorem 5.1.22(1)]{EG22} work verbatim here. 
\end{proof}

Now we need to verify Artin's conditions on the obstruction theory. As mentioned earlier, the conditions on the deformation theory is trivial as $D_a$ is trivial functor in our context. So we focus on the obstruction theory. By examining the paper \cite{Art74}, one can see that in order to prove the openness of versality \cite[Theorem 4.4]{Art74}, the Item (3) in Artin's conditions \ref{Artin-condition} concerning the obstruction theory is only used at the bottom of Page 177 and top of Page 178, where $A_0$ (the ring $B_0$ in \textit{loc.cit}) is integral and the finite $A_0$-module $M$ is assumed to be finite free. So we could make the same assumptions when dealing with Item (3).

\begin{prop}\label{obstruction-theory-valid}
    \begin{enumerate}
        \item The functor $\calO_a$ is compatible with \'etale localizations: If $f:A\to B$ is \'etale, then
        \[
        \calO_b(M\otimes B_0)\cong \calO_a(M)\otimes B_0
        \]
        where $b=\calB^{\Gamma}_R(f)(a)$ and $B_0$ is the reduced ring of $B$.
        \item Let $A_0$ be integral and $M$ be a finite free $A_0$-module. There is an open dense subset of $\Spec(A_0)$ such that any finite type point $s$ in this subset satisfies
        \[
        \calO_a(M)\otimes k(s)\subset \calO_a(M\otimes k(s)),
        \]
        where $k(s)$ is the residue field of the point $s$.
    \end{enumerate}
\end{prop}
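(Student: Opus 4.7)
Since only $H^1$ enters the definition of $\calO_a$, we may assume $T^*_{a'}:=T^*_{a'}(\bfC_R/\Spec(\bfA_R))$ is represented by a perfect complex of $\bfA_A$-modules; set $P:=T^*_{a'}\otimes^{\bL}_{\bfA_A}\bfA_{A_0}$, a perfect complex of $\bfA_{A_0}$-modules carrying the $(\varphi,\Gamma)$-action constructed above, so that $T^*_{a'}(N)=\RHom_{\bfA_{A_0}}(P,N\otimes_{A_0}\bfA_{A_0})$ for every $A_0$-module $N$.

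For Item~(1), the essential input is that the assignment $A\mapsto\bfA_A=(W(k_\infty)\otimes_{\Zp}A)((T))$ commutes with étale base change: $\bfA_A\otimes_A B\cong \bfA_B$ and $\bfA_{A_0}\otimes_{A_0}B_0\cong \bfA_{B_0}$, both $(\varphi,\Gamma)$-equivariantly, because étale morphisms commute with $T$-adic completion of polynomial rings and with inversion of $T$; in particular $\bfA_{A_0}\to\bfA_{B_0}$ is flat. Base change of the cotangent complex yields $T^*_{b'}\simeq T^*_{a'}\otimes^{\bL}_{\bfA_A}\bfA_B$, and flat base change for $\RHom$ (using perfectness of $P$) gives $T^*_{b'}(M\otimes B_0)\simeq T^*_{a'}(M)\otimes^{\bL}_{A_0}B_0$. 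Taking $H^1$ and then $(\varphi,\Gamma)$-invariants---a kernel that commutes with the flat map $A_0\to B_0$---produces the required isomorphism $\calO_b(M\otimes B_0)\cong\calO_a(M)\otimes_{A_0}B_0$.

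For Item~(2), the plan is to find an open dense $U\subset\Spec(A_0)$ on which the finite $\bfA_{A_0}$-modules $H^i(T^*_{a'}(M))$ for $i\in\{1,2\}$ are all $A_0$-flat. Granted this, for any finite-type point $s\in U$ the same base-change machinery as in Item~(1)---now along $A_0\to k(s)$, using $\bfA_{k(s)}\cong \bfA_{A_0}\otimes_{A_0}k(s)$---together with the flatness of $H^1$ and $H^2$ and the resulting degeneration of the base-change spectral sequence, yields $H^1(T^*_{a'}(M\otimes k(s)))\cong H^1(T^*_{a'}(M))\otimes_{A_0}k(s)$; the flatness of $H^1$ then forces the kernel defining $\calO_a$ to commute with $\otimes_{A_0}k(s)$, giving the identification $\calO_a(M)\otimes_{A_0}k(s)\cong\calO_a(M\otimes k(s))$, which is stronger than the required inclusion.

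The main obstacle is establishing the generic $A_0$-flatness of the finite $\bfA_{A_0}$-modules $H^i(T^*_{a'}(M))$: since $\bfA_{A_0}$ is not of finite type over $A_0$, the classical generic flatness theorem does not apply directly. The intended resolution uses the $(\varphi,\Gamma)$-stable $\bfA_{A_0}^+$-lattice $L\subset H^1(T^*_{a'}(M))$ provided by Lemma~\ref{lattice}: $L$ is finite over the Noetherian ring $\bfA_{A_0}^+=(W(k_\infty)\otimes_{\Zp}A_0)[[T]]$, and its associated graded $\bigoplus_n T^nL/T^{n+1}L$ is finitely generated over the polynomial ring $(W(k_\infty)\otimes_{\Zp}A_0)[T]$, which is of finite type over $A_0$ because $k_\infty$ is a finite extension of $\bF_p$. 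Classical generic flatness applied to this graded module, combined with the $T$-torsion-freeness of $L$ and a limit argument exploiting the perfectness of $P$ (which bounds the relevant $T$-adic layers), produces a common open dense $U$ on which $L[1/T]=H^1(T^*_{a'}(M))$ is $A_0$-flat, and an analogous argument handles $H^2$.
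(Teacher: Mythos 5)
Your plan for Item (1) breaks down at the step where you pass from the flat base change of $H^1(T^*_{a'}(M))$ to the base change of its $(\varphi,\Gamma)$-invariants. First, the identity $\bfA_A\otimes_AB\cong\bfA_B$ is false for a general \'etale map: $B$ is not finite as an $A$-module, and Laurent/power series do not commute with infinite base change (e.g.\ $A[x]((T))[1/x]\subsetneq A[x,1/x]((T))$). What is true, and what the paper uses, is only that $\bfA_A\to\bfA_B$ is flat. Consequently $\calO_b(M\otimes B_0)$ is computed as $H^0$ of the Herr complex of $\tilde M\otimes_{\bfA_{A_0}}\bfA_{B_0}$, where $\tilde M:=H^1(T^*_{a'}(M))$, whereas the "kernel commutes with flat base change $A_0\to B_0$" argument only identifies $\tilde M^{\varphi=1,\Gamma=1}\otimes_{A_0}B_0$ with the invariants of $\tilde M\otimes_{A_0}B_0$ --- and $\tilde M\otimes_{A_0}B_0$ is a proper submodule of $\tilde M\otimes_{\bfA_{A_0}}\bfA_{B_0}$. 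Bridging exactly this gap is the content of the paper's argument: using the $(\varphi,\Gamma)$-stable lattice $\frakM$ and the quasi-isomorphism $C^{\bullet}(\tilde M)\simeq C^{\bullet}(\tilde M/T^n\frakM)$ from \cite{EG22}, one reduces to the finite-level quotients $T^i\frakM/T^n\frakM$, for which $-\otimes_AB$ and $-\otimes_{\bfA_A}\bfA_B$ genuinely agree. Without some version of this finite-level reduction your Item (1) does not close.

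The same issue infects the last step of your Item (2): the assertion that flatness of $H^1(T^*_{a'}(M))$ "forces the kernel defining $\calO_a$ to commute with $\otimes_{A_0}k(s)$" is not a flat base change ($k(s)$ is a residue field), and even after establishing $\Herr(\tilde M)\otimes^{\bL}_{A_0}k(s)\simeq\Herr(\tilde M\otimes_{A_0}k(s))$ from $A_0$-flatness of $\tilde M$, the injectivity of $H^0(\Herr(\tilde M))\otimes k(s)\to H^0(\Herr(\tilde M\otimes k(s)))$ requires controlling the K\"unneth/Tor spectral sequence of the three-term Herr complex, whose cohomologies are different objects from $H^i(T^*_{a'}(M))$; the paper needs a further generic localization (via generic flatness applied to $H^{\bullet}(\Herr(\tilde M))$) to arrange this. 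That said, your route to generic $A_0$-flatness of $\tilde M$ --- applying classical generic flatness to the associated graded $\bigoplus_nT^n\frakM/T^{n+1}\frakM$ over $(W(k_\infty)\otimes_{\bZ_p}A_0)[T]$ and passing to the limit --- is a genuinely different and attractive alternative to the paper's device of working over the domain $A_0((T))$ and descending opens via leading coefficients of $f(T)$; if you supply the missing argument that the inverse limit of the $A_0$-flat quotients $\frakM/T^n\frakM$ is again $A_0$-flat, it would replace the first localization in the paper's Step 1. You would still need the paper's leading-coefficient trick (or an equivalent) in Step 2 to compare finite-type points of $\Spec(A_0((T)))$ with those of $\Spec(A_0)$.
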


\begin{proof}
Write $\tilde M:=H^1(T^*_{a'}(M))=H^1(\RHom(T^*_{a'},M\otimes_{A}\bfA_A))$ for simplicity.
    \begin{enumerate}
        \item For the first one, note that $\tilde M^{\varphi=1,\Gamma=1}=H^0(C^{\bullet}(\tilde M))$, where $C^{\bullet}(\tilde M)$ is the Herr complex associated to the finitely generated \'etale $(\varphi,\Gamma)$-module $\tilde M$. 
        
        Since $A\to B$ is flat, we have $\bfA_A\to \bfA_B$ is also flat by \cite[Proposition 2.2.4]{EG22}. In particular, 
        \[
      \tilde M\otimes_{\bfA_A}\bfA_B=  H^1(\RHom(T^*_{a'},M\otimes_{A}\bfA_A))\otimes_{\bfA_A}\bfA_B\cong H^1(\RHom(T^*_{a'},M\otimes_{A}\bfA_A)\otimes_{\bfA_A}\bfA_B).
        \]
        As $M$ is discrete and we are only concerned with $H^1$, we may assume $T^*_{a'}$ is a perfect complex of $\bfA_A$-modules. So 
        \[
        H^1(\RHom(T^*_{a'},M\otimes_{A}\bfA_A)\otimes_{\bfA_A}\bfA_B)\cong H^1(\RHom(T^*_{a'}\otimes_{\bfA_A}\bfA_B,M\otimes_{A}\bfA_B))
        \]
   and
        \[
        \calO_b(M\otimes_{A_0}B_0)=H^1(\RHom(T^*_{a'}\otimes_{\bfA_A}\bfA_B,M\otimes_{A}\bfA_B))^{\varphi=1,\Gamma=1}\cong (\tilde M\otimes_{\bfA_A}\bfA_B)^{\varphi=1,\Gamma=1}.
        \]
        
As $A\to B$ is \'etale, we have $B_0=A_0\otimes_AB$ and $M\otimes_{A_0}B_0=M\otimes_AB$. Then it suffices to show 
        \[
        C^{\bullet}(\tilde M)\otimes_AB\simeq  C^{\bullet}(\tilde M\otimes_{\bfA_A}\bfA_B).
        \]

       By the proof of \cite[Theorem 5.1.22]{EG22}, we have
        \[
        C^{\bullet}(\tilde M)\simeq C^{\bullet}(\tilde M/T^n\frakM)
        \]
        for any $n\geq 1$, where $\frakM$ is a $(\varphi,\Gamma)$-stable lattice in $\tilde M$.  Write $\tilde M$ as the filtered colimit $\varinjlim_iT^{-i}\frakM$. As each $T^{-i}\frakM$ is $(\varphi,\Gamma)$-stable by \cite[Lemma 5.1.6]{EG22}, it suffices to prove
        \[
        T^i\frakM/T^n\frakM\otimes_AB\cong T^i\frakM_B/T^n\frakM_B
        \]
        for all $i\leq n$. Here $\frakM_B:=\frakM\otimes_{\bfA_A^+}\bfA_B^+$, which is a $(\varphi,\Gamma)$-stable lattice in $\tilde M\otimes_{\bfA_A}\bfA_B$. As $\frakM$ is finitely presented over $\bfA_A^+$, we can find a short exact sequence
        \[
        \frakM_1\to \frakM_2\to \frakM\to 0
        \]
        where $\frakM_1$ and $\frakM_2$ are finite free. It is clear that $T^i\frakM_j/T^n\frakM_j\otimes_AB\cong T^i\frakM_{j,B}/T^n\frakM_{j,B}$ for $j=1,2$.  So we are done.

        \item As $A_0$ is finite type over $\bF_p$, it is a Jacobson ring. In particular, finite type points of $\Spec(A_0)$ are the same as its closed points.

        We proceed in two steps to find such an open subset. First we look for an open subset such that for any finite type point $s$, the following map is injective
        \[
\tilde M^{\varphi=1,\gamma=1}\otimes_{A_0}k(s)\hookrightarrow (\tilde M\otimes_{A_0}k(s))^{\varphi=1,\gamma=1}.
        \]
        Note that $\tilde M^{\varphi=1,\gamma=1}$ is just $H^0(\Herr(\tilde M)$. Recall that the Herr complex of an \'etale $(\varphi,\Gamma)$-module $\calE$ is defined to be the complex
        \[
        \calE\xrightarrow{(\varphi-1,\gamma-1)} \calE\oplus \calE\xrightarrow{(\gamma-1)\oplus(1-\gamma)}\calE.
        \]

        Similarly, $(\tilde M\otimes_{A_0}k(s))^{\varphi=1,\gamma=1}$ is $H^0(\Herr(\tilde M\otimes_{A_0}k(s)))$. 
        
        We know that $\tilde M$ is finitely generated over $\bfA_{A_0}=(k_{\infty}\otimes_{\bF_p}A_0)((T))$ and $\bfA_{A_0}$ is finite free over $A_0((T))$. By \cite[Lemma 5.1.5]{EG21}, the ring $A_0((T))$ is also integral. Then by \cite[\href{https://stacks.math.columbia.edu/tag/0529}{Tag 0529}]{stacks-project}, there exists an open subset of $A_0((T))$ such that $\tilde M$ is flat over this open subset. We may assume this open subset is $\Spec(A_0((T))[\frac{1}{f(T)}])$ for some $f(T)\in A_0((T))$. Let $f\in A_0$ be the leading coefficient of $f(T)$. Then we have an inclusion $A_0((T))[\frac{1}{f(T)}]\subset A_0[\frac{1}{f}]((T))$. This implies the base change $\tilde M\otimes_{A_0((T))}A_0[\frac{1}{f}]((T))\cong \tilde M\otimes_{\bfA_{A_0}}\bfA_{A_0[\frac{1}{f}]}$ is flat over $A_0[\frac{1}{f}]((T))$. As the ring $A_0[\frac{1}{f}]((T))$ is flat over $A_0[\frac{1}{f}]$, the module $\tilde M\otimes_{\bfA_{A_0}}A_{\bfA_0[\frac{1}{f}]}$ is also flat over $A_0[\frac{1}{f}]$.

        Up to replacing $A_0$ with $A_0[\frac{1}{f}]$, we may assume $\tilde M$ is flat over $A_0$. This implies  \[
        \Herr(\tilde M)\otimes^{\bL}_{A_0}k(s)\simeq \Herr(\tilde M\otimes_{A_0}k(s)).
        \]

        By using the Kunneth spectral sequence $\Tor^{A_0}_q(H^p(\Herr(\tilde M),k(s)))\Longrightarrow H^{p-q}(\Herr(\tilde M)\otimes_{A_0}k(s)$ and \cite[\href{https://stacks.math.columbia.edu/tag/0529}{Tag 0529}]{stacks-project}, we can find a $g\in A_0$ such that \[H^0(\Herr(\tilde M))\otimes_{A_0}k(s)\otimes_{A_0}A_0[\frac{1}{g}]\hookrightarrow H^0(\Herr(\tilde M)\otimes_{A_0}k(s)\otimes_{A_0}A_0[\frac{1}{g}])\] for all finite type points $s\in \Spec(A_0[\frac{1}{g}])$. Using the facts that $\tilde M$ is flat over $A_0$ and $H^{\bullet}(\Herr(\tilde M))$ are finitely generated $A_0$-modules (note that the argument in the proof of \cite[Theorem 5.1.22(1)]{EG22} works for $\tilde M$), we can get 
        \[
        \Herr(\tilde M)\otimes_{A_0}A_0[\frac{1}{g}]\simeq \Herr(\tilde M\otimes_{\bfA_{A_0}}\bfA_{A_0[\frac{1}{g}]})
        \]
        by the same argument as in the proof of \cite[Theorem 5.1.22(2)]{EG22}. So we finish our first step.

        The second step is to show there exists an open subset of $\Spec(A_0)$ such that for all finite type points $s$ in this open subset, there is an injection
        \begin{equation}\label{step2}
       \tilde M\otimes_{A_0}k(s)\hookrightarrow H^1(  \RHom(T^*_{a'}\otimes^{\bL}_{\bfA_{A}}\bfA_{k(s)},M\otimes_{A}\bfA_{k(s)}))
        \end{equation}
        which induces an injection 
        \[
        (\tilde M\otimes_{A_0}k(s))^{\varphi=1,\Gamma=1}\hookrightarrow H^1(  \RHom(T^*_{a'}\otimes^{\bL}_{\bfA_{A}}\bfA_{k(s)},M\otimes_{A}\bfA_{k(s)}))^{\varphi=1,\Gamma=1}=\calO_a(M\otimes_{A_0}k(s))
        \]

        Note that 
        \[
       \tilde M\otimes_{A_0}k(s)\cong H^1(\RHom(T^*_{a'},M\otimes_A\bfA_A))\otimes_{\bfA_A}\bfA_{k(s)}.
       \]

 By \cite[\href{https://stacks.math.columbia.edu/tag/056V}{Tag 056V}]{stacks-project}, we may assume $A_0$ is regular and integral. Then $A_0$ has finite global dimension by \cite[\href{https://stacks.math.columbia.edu/tag/00OE}{Tag 00OE}]{stacks-project}. Moreover $\bfA_{A_0}$ will also be regular hence of finite gloabl dimension. In particular, $\bfA_{k(s)}$ has a resolution of projective $\bfA_{A_0}$-modules of length at most $\dim \bfA_{A_0}$. As $T^*_{a'}$ is an almost perfect complex and we are only concerned with $H^1$, i.e. $H^1(\RHom(T^*_{a'},M\otimes_A\bfA_A))$ and $H^1(  \RHom(T^*_{a'}\otimes^{\bL}_{\bfA_{A}}\bfA_{k(s)},M\otimes_{A}\bfA_{k(s)}))$, we may assume $T^*_{a'}$ is a perfect complex.

 Then 
 \[\RHom(T^*_{a'}\otimes^{\bL}_{\bfA_{A}}\bfA_{k(s)},M\otimes_{A}\bfA_{k(s)})\simeq (T^{*,\vee}_{a'}\otimes^{\bL}_AM)\otimes^{\bL}_{\bfA_{A_0}}\bfA_{k(s)}\simeq (T^{*,\vee}_{a'}\otimes^{\bL}_AM)\otimes^{\bL}_{A_0((T))}k(s)((T))\] and \[H^1(\RHom(T^*_{a'},M\otimes_A\bfA_A))\otimes_{\bfA_A}\bfA_{k(s)}\cong H^1(T^{*,\vee}_{a'}\otimes^{\bL}_AM)\otimes_{\bfA_{A_0}}\bfA_{k(s)}\cong H^1(T^{*,\vee}_{a'}\otimes^{\bL}_AM)\otimes_{A_0((T))}k(s)((T)).\]

        As $T^{*,\vee}_{a'}\otimes^{\bL}_{A}M$ is a perfect complex over $\bfA_{A_0}$, it is also a perfect complex over $A_0((T))$. Recall that $A_0((T))$ is integral. By some standard arguments about the base change of cohomology, we know there exists an $f(T)\in A_0((T))$ such that for any point $\tilde s\in \Spec(A_0((T))[\frac{1}{f(T)}])$ with residue field $k(\tilde s)$, there is an isomorphism
        \[
        H^1(T^{*,\vee}_{a'}\otimes^{\bL}_{A}M)\otimes_{A_0((T))}k(\tilde s)\cong H^1(T^{*,\vee}_{a'}\otimes^{\bL}_{A}M\otimes^{\bL}_{A_0((T))}k(\tilde s)).
        \]
 Now let $f$ be the leading coefficient of $f(T)$. For any maximal ideal $\frakm\subset A_0$, we see that if $f\notin \frakm$,  then $f(T)\notin \frakm A_0((T))$. Note that $(A_0/\frakm)((T))\cong A_0((T))/\frakm$. So for any finite type point $s\in \Spec(A_0[\frac{1}{f}])\subset \Spec(A_0)$ with maximal ideal $\frakm_s\subset A_0$, the corresponding point $\tilde s$ corresponding to $\frakm_s((T))$ lies in $\Spec(A_0((T))[\frac{1}{f(T)}])$ and has residue field $k(\tilde s)=k(s)((T))$. Hence the injection \ref{step2} holds for any $s\in \Spec(A_0[\frac{1}{f}])$.
        
Putting the two steps together, we then find an open dense subset of $\Spec(A_0)$ which satisfies the condition.

       \end{enumerate}
\end{proof}

\begin{prop}\label{main-2}
    $\calB_R^{\Gamma}$ is an algebraic space locally of finite presentation over $R$. In particular, it is a scheme.
\end{prop}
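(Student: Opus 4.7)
The plan is to apply Artin's criteria (Lemma \ref{artin}) to the monomorphism $\calB_R^{\Gamma}\to \Spec(R)$. Since $\calX_G^{\circ}$ is limit preserving, we may reduce to the case where $R$ is of finite type over $\bZ_p$, so that $\Spec(R)$ is an algebraic space locally of finite presentation over $\bZ_p$; this takes care of condition (3) of Lemma \ref{artin}. Condition (1), injectivity, is immediate from the fact that $\calB_R^{\Gamma}\to \Spec(R)$ is a monomorphism, and conditions (a)--(d) of item (2) have already been verified in Proposition \ref{etale-gamma}. Hence the only axiom remaining is (e), the openness of versality.

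For (e), the plan is to invoke \cite[Theorem 4.4]{Art74}. Because $\calB_R^{\Gamma}\to \Spec(R)$ is a monomorphism, the deformation functor $D_{a_0}$ is identically trivial and every condition placed on $D_{a_0}$ in Condition \ref{Artin-condition} is vacuously satisfied. The obstruction theory is provided by the functor $\calO_a(M):=H^1(T^*_{a'}(M))^{\varphi=1,\Gamma=1}$ constructed in Section \ref{obstruction}; the proposition immediately preceding the current statement ensures that $\calO_a(M)$ is indeed a finite $A_0$-module for every finite $A_0$-module $M$, and the definition of the obstruction class $o_a(\bfA_{A'})$ guarantees it vanishes exactly when a lifting exists. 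The compatibilities of $\calO_a$ with \'etale localisation and with residue fields on a dense open subset required by Condition \ref{Artin-condition} are precisely the content of Proposition \ref{obstruction-theory-valid}. Thus the hypotheses of \cite[Theorem 4.4]{Art74} are satisfied, yielding the openness of versality.

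Combining these verifications via Lemma \ref{artin} identifies $\calB_R^{\Gamma}$ as an algebraic space locally of finite presentation over $\Spec(R)$. For the final \emph{in particular} assertion, the plan is to use the standard fact that a monomorphism of algebraic spaces that is locally of finite type is automatically representable in schemes, since any such morphism is locally quasi-finite and separated (cf.\ \cite[Tag 0418]{stacks-project}); applied to $\calB_R^{\Gamma}\to \Spec(R)$, this exhibits $\calB_R^{\Gamma}$ as a scheme. The genuinely substantial part of the argument is of course the openness of versality, which rests entirely on the $(\varphi,\Gamma)$-equivariant obstruction theory constructed in Section \ref{obstruction} and in particular on the continuity result of Lemma \ref{continuity}, whose proof crucially used the derived setting (see Remark \ref{why-derived}); once that infrastructure is in place, the present proposition is a formal matter of assembling the Artin criterion.
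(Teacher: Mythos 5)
Your proposal is correct and follows essentially the same route as the paper: reduce to $R$ of finite type, cite Proposition \ref{etale-gamma} for conditions (a)--(d) of Lemma \ref{artin}, obtain openness of versality from the trivial deformation theory together with the $(\varphi,\Gamma)$-equivariant obstruction theory and Proposition \ref{obstruction-theory-valid} via \cite[Theorem 4.4]{Art74}, and then deduce the scheme statement from the monomorphism being separated and locally quasi-finite over a scheme. Nothing essential differs from the paper's argument.
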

\begin{proof}
As mentioned earlier, together with the trivial deformation theory, Proposition \ref{obstruction-theory-valid} is sufficient to induce the openness of versality of the morphism $\calB^{\Gamma}_R\to \Spec(R)$. Combined with Proposition \ref{etale-gamma} and Lemma \ref{artin}, we conclude $\calB_R^{\Gamma}$ is an algebraic space locally of finite presentation over $R$. As $\calB^{\Gamma}_R\to \Spec(R)$ is a monomorphism, the last statement follows from \cite[\href{https://stacks.math.columbia.edu/tag/0418}{Tag 0418}]{stacks-project} and \cite[\href{https://stacks.math.columbia.edu/tag/0463}{Tag 0463}]{stacks-project}.
\end{proof}

\begin{thm}\label{main-1}
    The morphisms  $\calX_G\to \calX_{d}\times \calX_{t(d)-1}$ is representable in formal schemes and locally of finite presentation. In particular, $\calX_G$ is a formal algebraic stack locally of finite presentation over $\Spf(\bZ_p)$.
\end{thm}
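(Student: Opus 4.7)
The plan is to assemble the representability statement from the factorization
\[
\calX_G \xrightarrow{\;i\;} \calX_G^{\circ} \xrightarrow{\;\pi\;} \calX_{d}\times_{\Spf(\bZ_p)}\calX_{t(d)-1},
\]
which exists by Lemma \ref{Lemma-Gamma} and Lemma \ref{easy-description}. The strategy is to show that each of the two arrows is representable (in a suitable sense) and then compose them.

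First, I would invoke Lemma \ref{surj2}: the forgetful morphism $\pi : \calX_G^{\circ}\to \calX_d\times\calX_{t(d)-1}$ is representable in formal schemes locally topologically of finite type, since its fibers are given by surjective $(\varphi,\Gamma)$-Hom functors which were shown to be formal schemes locally topologically of finite type over the base. Next I would handle the monomorphism $i:\calX_G\to \calX_G^{\circ}$. Given any morphism $\Spec(R)\to \calX_G^{\circ}$ with $R\in\Nilp_{\bZ_p}$, form the pullback $\calB_R^{\Gamma}$; by Proposition \ref{etale-gamma} this functor satisfies Artin's conditions (a)--(d), and by Proposition \ref{main-2} (which applies the openness-of-versality check furnished by the obstruction-theoretic input of Section \ref{obstruction} and Proposition \ref{obstruction-theory-valid}), $\calB_R^{\Gamma}$ is an algebraic space, in fact a scheme, locally of finite presentation over $R$. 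Thus $i$ is representable by schemes locally of finite presentation.

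Composing these two facts, the morphism $\calX_G\to \calX_d\times\calX_{t(d)-1}$ is representable in formal schemes locally of finite presentation: given any $\Spec(R)\to \calX_d\times\calX_{t(d)-1}$, the fiber $\calX_G\times_{\calX_d\times\calX_{t(d)-1}}\Spec(R)$ is the pullback of a scheme locally of finite presentation along the structure morphism of a formal scheme locally topologically of finite type over $R$, hence is itself a formal scheme locally topologically of finite type over $R$, and the composition of ``locally of finite presentation'' morphisms remains locally of finite presentation.

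For the second assertion, the Emerton--Gee stack $\calX_d$ (and likewise $\calX_{t(d)-1}$) is a formal algebraic stack locally of finite presentation over $\Spf(\bZ_p)$ by the main result of \cite{EG22}, so their product is as well. Since formal algebraicity is stable under base change along morphisms representable in formal schemes, and local finite presentation is preserved by composition, $\calX_G$ inherits the property of being a formal algebraic stack locally of finite presentation over $\Spf(\bZ_p)$. I expect no new obstacle in this final step: all the genuine work has already been absorbed into Lemma \ref{surj2} and Proposition \ref{main-2}, whose hardest ingredient was the construction of a $(\varphi,\Gamma)$-equivariant obstruction theory via the derived fiber product $\bfC_R$ and the continuity of the $\Gamma$-action on $H^1(T^*_{a'}(M))$ established in Lemma \ref{continuity}.
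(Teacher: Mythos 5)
Your proposal is correct and follows essentially the same route as the paper: the paper's proof of Theorem \ref{main-1} is precisely the composition of Lemma \ref{surj2} (representability of $\calX_G^{\circ}\to\calX_d\times\calX_{t(d)-1}$ in formal schemes) with Proposition \ref{main-2} (representability of $\calX_G\to\calX_G^{\circ}$ by schemes via Artin's criteria and the obstruction theory), exactly as you describe. The only cosmetic difference is that the paper also explicitly cites Lemma \ref{surj1}, whose content you have implicitly absorbed into your appeal to Lemma \ref{surj2}.
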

\begin{proof}
    This follows from Proposition \ref{main-2}, Lemma \ref{surj1} and \ref{surj2}.
\end{proof}

\begin{rmk}
   It is not obvious that our construction of the obstruction theory for $\calB_R^{\Gamma}$ can apply to $\calB_R$. The key point is that we are not sure if the Frobenius invaiants of a finitely generated \'etale $(\varphi,\Gamma)$-module over $\bfA_{A_0}$ is a finite $A_0$-module, which seems necessary to define a reasonable construction theory.
\end{rmk}

\section{Derived Emerton--Gee stack for general groups}

In \cite{Min23}, we proved that the underlying stack of the derived stack of Laurent $F$-crystals on $(\calO_K)_{\Prism}$ is equivalent to the Emerton--Gee stack. Moreover, we proved that the derived stack of Laurent $F$-crystals is classical up to nilcompletion, i.e. when restricted to truncated animated rings, it is the \'etale sheafification of the left Kan extension of the Emerton--Gee stack along the inclusion from discrete commutative rings to animated rings. In this section, we aim to generalise these results to more general groups. 

\subsection{Classifyings stack $BG$}
Let $G$ still be a flat affine group scheme of finite type over $\bZ_p$. In order to define derived Laurent $F$-crystals with $G$-structure, we will use the classifying stack $BG$. 

\begin{dfn}\label{BG}
    Let $G^{\bullet}$ be the simplicial sheaves such that each $G^n$ is representable by the $n$-fold product of $G$. We define $BG$ to be the homotopy colimit (or geometric realisation) of $G^{\bullet}$ in the $\infty$-topos ${\rm Shv}(\bfAni(\Ring),\bfAni)$ with respect to the fpqc topology, where $\bfAni$ is the $\infty$-category of anima and $\bfAni(\Ring)$ is the $\infty$-category of animated rings.
\end{dfn}
If $R$ is a discrete animated ring, then $BG(R)$ is just the groupoid of $G$-torsors over $R$. As we have seen before, discrete $G$-torsors have several different descriptions. For general animated ring $R$, the natural way is to use Tannaka duality to study $BG(R)$. Unfortunately, it is not clear to us if there is a Tannaka duality in the setting of anmiated rings\footnote{There is a Tannka duality in the setting of $\bE_{\infty}$-rings (cf. \cite[Corollary 9.3.7.3]{Lur18}), whose existence suggests that there should be no Tannka duality for $p$-nilpotent animated rings.}. Therefore, new ideas are required to deal with $BG$. Our strategy is to use the cotangent complex to linearise everything.

\subsection{Derived stack of Laurent $F$-crystals with $G$-structure}
Our first task is to define a derived prestack of Laurent $F$-crystals with $G$-structure. In \cite{Min23}, the descent theorem \cite[Theorem 2.4]{Min23} due to Drinfeld and Mathew plays an important role in the study of such a derived prestack. However, it is not clear to us if the $\infty$-category of $G$-torsors satisfies the $I$-completely flat topology. This causes lots of trouble in studying Laurent $F$-crystals. For example, we do not know if \cite[Proposition 2.7]{BS23} still holds true for $G$-torsors in the animated setting.

To make the $I$-completely flat descent well-behaved, we consider the category of transveral prisms. Let us recall its definition.
\begin{dfn}[\cite{BL22}, Definition 2.1.3]
    A prism $(A,I)$ is called a transversal prism if $A/I$ has no $p$-torsion.
\end{dfn}
\begin{exam}
    The Breuil--Kisin prism $(\frakS=W(k)[[u]],(E(u)))$ is a transveral prism, where $E(u)$ is the Eisenstein polynomial of a fixed uniformiser $\pi$ of $\calO_K$ and $k$ is the residue field.
\end{exam}
Note that for a transveral prism $(A,I)$, the ring $A$ is $\bZ_p$-flat as it has no $p$-torsion. 
\begin{dfn}
    Let $(\calO_K)_{\Prism}^{\rm tr}$ denote the transversal absolute prismatic site, i.e. the objects in $(\calO_K)_{\Prism}^{\rm tr}$ are the transversal prisms in $(\calO_K)_{\Prism}$ and the topology is still the $I$-completely flat topology.
\end{dfn}

Let $R\in \Nilp_{\bZ_p}$ and $(A,I=(d))$ be a transveral prism. Then by \cite[Lemma 3.28]{Min23}, for any $n\geq 0$, we have $\pi_n(A\widehat\otimes^{\bL}_{\bZ_p}R[\frac{1}{I}])\cong A\widehat\otimes_{\bZ_p}\pi_n(R)[\frac{1}{I}]$, which makes $I$-completely flat descent behave well as we will see in the proof of Lemma \ref{flat descent}. Now we give the following definition of Laurent $F$-crystals with $G$-structure.

\begin{dfn}[Derived prestack Laurent $F$-crystals with $G$ structure]
Let $\frakX_G:\textbf{Nilp}_{\bZ_p}\to \textbf{Ani}$ denote the functor sending an animated ring $R\in  \textbf{Nilp}_{\bZ_p}$ to the anima of Laurent $F$-crystals with $G$-structure 
\[
\frakX_G(R):=\varprojlim_{(A,I)\in (\calO_K)_{\Prism}^{\rm tr}}BG(A\widehat\otimes^{\bL}R[\frac{1}{I}])^{\varphi=1}
\]
where 
\begin{equation*}
    \xymatrix{
    BG(A\widehat\otimes^{\bL}R[\frac{1}{I}])^{\varphi=1}=Eq(BG(A\widehat\otimes^{\bL}R[\frac{1}{I}])\ar@<.5ex>[r]^-{id}\ar@<-.5ex>[r]_-{\varphi^*}&BG(A\widehat\otimes^{\bL}R[\frac{1}{I}])).
    }
\end{equation*}
    
\end{dfn}

\begin{lem}\label{flat descent}
   Let $(A,I)\to (B,I)$ be a flat cover in $(\calO_K)_{\Prism}^{\rm tr}$ and $(B^{\bullet},I)$ be the associated \v Cech nerve. Then for any truncated animated ring $R\in \textbf{Nilp}^{<\infty}_{\bZ_p}$, we have
   \[
   BG(A\widehat\otimes^{\bL}R[\frac{1}{I}])^{\varphi=1}\simeq \varprojlim_iBG(B^i\widehat\otimes^{\bL}R[\frac{1}{I}])^{\varphi=1}.
   \]
\end{lem}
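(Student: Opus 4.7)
The plan is to reduce the statement to fpqc descent for $BG$ in the $\infty$-topos $\mathrm{Shv}(\bfAni(\Ring),\bfAni)$, combined with the fact that $\varphi$-fixed points, being an equalizer (the fiber of $\mathrm{id}-\varphi$), commute with all limits. So the Frobenius can be moved past the totalization in the last step for free, and the real content is in establishing descent for $BG$ along the specific cover.

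First I would verify that the map
\[
A\widehat\otimes_{\bZ_p}^{\bL}R\bigl[\tfrac{1}{I}\bigr] \longrightarrow B\widehat\otimes_{\bZ_p}^{\bL}R\bigl[\tfrac{1}{I}\bigr]
\]
is a faithfully flat cover in $\bfAni(\Ring)$. Because $R$ is truncated and both $(A,I)$ and $(B,I)$ are transversal, \cite[Lemma 3.28]{Min23} identifies the homotopy groups of both sides with $A\widehat\otimes_{\bZ_p}\pi_n(R)[\tfrac{1}{I}]$ and $B\widehat\otimes_{\bZ_p}\pi_n(R)[\tfrac{1}{I}]$ respectively, and in particular both animated rings are truncated of the same amplitude as $R$. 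Faithful flatness in the animated sense then reduces to the discrete statement that on each $\pi_n$, the target is obtained from the source by faithfully flat base change along $A\widehat\otimes_{\bZ_p}\pi_0(R)[\tfrac{1}{I}] \to B\widehat\otimes_{\bZ_p}\pi_0(R)[\tfrac{1}{I}]$; this follows from the $(p,I)$-completely faithful flatness of $(A,I)\to (B,I)$ by inverting $I$ and applying $-\otimes_{\bZ_p}\pi_n(R)$.

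Second, I would identify the Čech nerve of this map in $\bfAni(\Ring)$ with $\{B^i\widehat\otimes_{\bZ_p}^{\bL}R[\tfrac{1}{I}]\}_{[i]\in\Delta}$. Again transversality plus truncatedness of $R$ makes the relevant derived $(p,I)$-completed tensor products computable homotopy-group by homotopy-group via \cite[Lemma 3.28]{Min23}, matching the classical identification of $B^i$ as the iterated $(p,I)$-completed tensor product of $B$ over $A$ in the prismatic site. Since $BG$ is by construction an fpqc sheaf on $\bfAni(\Ring)$, descent then gives
\[
BG\bigl(A\widehat\otimes^{\bL}R[\tfrac{1}{I}]\bigr) \;\simeq\; \varprojlim_{[i]\in\Delta} BG\bigl(B^i\widehat\otimes^{\bL}R[\tfrac{1}{I}]\bigr).
\]
Taking $\varphi$-fixed points, which is a finite limit in $\bfAni$, commutes with this totalization and yields the claimed equivalence.

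The main obstacle is the Čech-nerve identification in the second step: derived $(p,I)$-completed tensor products do not in general commute with further base change, so one must genuinely exploit both the transversality of the prisms and the truncation of $R$ in order to kill higher derived effects and reduce everything to a homotopy-group level computation that matches the classical prismatic Čech nerve. Once this is in place, the rest is a formal combination of sheaf-theoretic descent for $BG$ and the commutation of limits.
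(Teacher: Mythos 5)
There is a genuine gap in your second step. The cosimplicial ring $\{B^i\widehat\otimes^{\bL}R[\tfrac{1}{I}]\}$ is \emph{not} the \v Cech nerve of $A\widehat\otimes^{\bL}R[\tfrac{1}{I}]\to B\widehat\otimes^{\bL}R[\tfrac{1}{I}]$ in $\bfAni(\Ring)$: the prismatic $B^i$ are $(p,I)$-\emph{completed} tensor powers of $B$ over $A$, whereas the \v Cech nerve in animated rings is built from uncompleted relative tensor products of the localized rings. Already on $\pi_0$ these differ — e.g.\ for $A=\Zp[[u]]$, $R=\Fp$, the ring $(B/p)^\wedge_u[\tfrac1u]\otimes_{\Fp((u))}(B/p)^\wedge_u[\tfrac1u]$ is not $((B\otimes_AB)^\wedge/p)^\wedge_u[\tfrac1u]$. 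Transversality and truncatedness of $R$ control the interaction of the completion with $-\otimes^{\bL}R$ (that is what \cite[Lemma 3.28]{Min23} buys you), but they do not make the completed \v Cech nerve agree with the uncompleted one. Consequently the sheaf property of $BG$ for the fpqc topology on $\bfAni(\Ring)$ gives you nothing here: descent along the \emph{completed} \v Cech nerve is precisely the nontrivial content of the Drinfeld--Mathew theorem, which is known for $\Vect$ and perfect complexes but not for an arbitrary fpqc sheaf such as $BG$ for general $G$. (This is exactly why the paper restricts to transversal prisms and does not argue as you propose; the introduction explicitly notes the absence of a Drinfeld--Mathew-type result for general groups.)

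The argument the paper actually runs is: for discrete $R$, use Tannaka duality to rewrite $BG(-)^{\varphi=1}$ as exact tensor functors into $\Vect(-)^{\varphi=1}$ and then apply Drinfeld--Mathew descent for $\Vect$ along the completed \v Cech nerve; then induct on the truncation degree using the square-zero extension $\tau_{\leq n+1}R\to\tau_{\leq n}R$, where the fiber of $BG(-)$ over a point is controlled by the cotangent complex $T^*_e(BG)$, a shifted finite projective module, for which Drinfeld--Mathew descent again applies. If you want to salvage your outline, you must replace the appeal to the fpqc sheaf property of $BG$ by some such linearization that reduces the descent claim to modules. Your final observation that $(-)^{\varphi=1}$, being a fiber, commutes with the totalization is fine.
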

\begin{proof}
    We prove by induction on the truncation degree of $R$. Assume $R$ is discrete, i.e. $R\cong \pi_0(R)$. Then by Tannaka duality, we have
    \[
    BG(A\widehat\otimes^{\bL}R[\frac{1}{I}])^{\varphi=1}\simeq (\Fun^{{\rm ex},\otimes}(\Rep(G),\Vect(A\widehat\otimes^{\bL}R[\frac{1}{I}])))^{\varphi=1}.
    \]
    The latter is also equivalent to $\Fun^{{\rm ex},\otimes}(\Rep(G),\Vect(A\widehat\otimes^{\bL}R[\frac{1}{I}])^{\varphi=1})$. This is also true by replacing $A$ with $B^i$ for all $i$.

    By the descent theorem \cite[Theorem 2.4]{Min23} of Drinfeld--Mathew, we know 
    \[
    \Vect(A\widehat\otimes^{\bL}R[\frac{1}{I}])\simeq \varprojlim_i\Vect(B^i\widehat\otimes^{\bL}R[\frac{1}{I}])
    \]
    which induces
    \[
    \Vect(A\widehat\otimes^{\bL}R[\frac{1}{I}])^{\varphi=1}\simeq \varprojlim_i\Vect(B^i\widehat\otimes^{\bL}R[\frac{1}{I}])^{\varphi=1}.
    \]
    As all the maps $ \Vect(A\widehat\otimes^{\bL}R[\frac{1}{I}])^{\varphi=1}\to  \Vect(B^i\widehat\otimes^{\bL}R[\frac{1}{I}])^{\varphi=1}$ and $ \Vect(B^i\widehat\otimes^{\bL}R[\frac{1}{I}])^{\varphi=1}\to  \Vect(B^j\widehat\otimes^{\bL}R[\frac{1}{I}])^{\varphi=1}$ are exact tensor functors, we get
    \[
    \Fun^{{\rm ex},\otimes}(\Rep(G),\Vect(A\widehat\otimes^{\bL}R[\frac{1}{I}])^{\varphi=1})\simeq\varprojlim_i\Fun^{{\rm ex},\otimes}(\Rep(G),\Vect(B^i\widehat\otimes^{\bL}R[\frac{1}{I}])^{\varphi=1}).
    \]
    This proves the lemma for discrete rings.

    Assume the lemma holds true for all $R\in \textbf{Nilp}_{\bZ_p}^{\leq n}$. Let $R\in\textbf{Nilp}_{\bZ_p}^{\leq n+1}$. Recall that we have the pullback diagram
    \begin{equation}\label{truncation}
        \xymatrix{
        \tau_{\leq {n+1}}R\ar[r]\ar[d]&\tau_{\leq n}R\ar[d]\\
        \tau_{\leq n}R\ar[r]&\tau_{\leq n}R\oplus \pi_{n+1}(R)[n+2].
        }
    \end{equation}

    As we have mentioned, $A\widehat\otimes^{\bL}R[\frac{1}{I}]$ is also in $\textbf{Nilp}_{\bZ_p}^{\leq n+1}$ and $\pi_{n+1}(A\widehat\otimes^{\bL}R[\frac{1}{I}])=A\widehat\otimes\pi_{n+1}(R)[\frac{1}{I}]$ by \cite[Lemma 3.28]{Min23}.

    Let $e:\Spec(A\widehat\otimes^{\bL}\tau_{\leq n}R[\frac{1}{I}])\to BG$ be a $G$-torsor over $A\widehat\otimes^{\bL}\tau_{\leq n}R[\frac{1}{I}]$ and $e_i:\Spec(B^i\widehat\otimes^{\bL}\tau_{\leq n}R[\frac{1}{I}])\to BG$ be the corresponding $G$-torsor over $A\widehat\otimes^{\bL}\tau_{\leq n}R[\frac{1}{I}]$ for each $i$.

By the definition of cotangent complex, we have pullback diagrams
\begin{equation}
    \xymatrix{
    \Map(T^*_e(BG),A\widehat\otimes\pi_{n+1}(R)[\frac{1}{I}][n+2])\ar[r]\ar[d]& {*}_e\ar[d]\\
    BG(A\widehat\otimes^{\bL} R)[\frac{1}{I}])\ar[r]& BG(A\widehat\otimes^{\bL} \tau_{\leq n}R)[\frac{1}{I}]).
    }
\end{equation}
and
\begin{equation}
    \xymatrix{
    \Map(T^*_{e_i}(BG),B^i\widehat\otimes\pi_{n+1}(R)[\frac{1}{I}][n+2])\ar[r]\ar[d]& {*}_{e_i}\ar[d]\\
    BG(B^i\widehat\otimes^{\bL} R)[\frac{1}{I}])\ar[r]& BG(B^i\widehat\otimes^{\bL} \tau_{\leq n}R)[\frac{1}{I}]).
    }
\end{equation}

Putting them together, we get a big diagram consisting of pullback diagrams
\begin{equation*}
    \xymatrix{
    \Map(T^*_e(BG),A\widehat\otimes\pi_{n+1}(R)[\frac{1}{I}][n+2])\ar[r]\ar[d]& \varprojlim_i\Map(T^*_{e_i}(BG),B^i\widehat\otimes\pi_{n+1}(R)[\frac{1}{I}][n+2])\ar[r]\ar[d] & {*}_e\ar[d]\\
    \calY_A\ar[r]\ar[d] & \varprojlim_i\calY_{B^i}\ar[r]\ar[d] & [*_e/{\Aut(*_e)}]\ar[d]\\
    BG(A\widehat\otimes^{\bL} R)[\frac{1}{I}])\ar[r]& \varprojlim_iBG(B^i\widehat\otimes^{\bL} R)[\frac{1}{I}])\ar[r]& BG(A\widehat\otimes^{\bL} \tau_{\leq n}R)[\frac{1}{I}])
    }
\end{equation*}
where $\calY_A$ (resp. $\calY_{B^i}$) is the pullback of $BG(A\widehat\otimes^{\bL} R)[\frac{1}{I}])$ (resp. $BG(B^i\widehat\otimes^{\bL} R)[\frac{1}{I}])$) along the inclusion $[*_e/{\Aut(*_e)}]\to BG(A\widehat\otimes^{\bL} \tau_{\leq n}R)[\frac{1}{I}])$ (resp. $[*_{e_i}/{\Aut(*_{e_i})}]\to BG(B^i\widehat\otimes^{\bL} \tau_{\leq n}R)[\frac{1}{I}])$.

Note that $T^*_e(BG)$ (resp. $T^*_{e_i}(BG)$) is a finite projective module over $A\widehat\otimes^{\bL} \tau_{\leq n}R[\frac{1}{I}]$ (resp. $B^i\widehat\otimes^{\bL}\tau_{\leq n}R[\frac{1}{I}]$) shifted by $[-1]$. By \cite[Theorem 2.4]{Min23} of Drinfeld--Mathew, we see
\[
T^*_{e_i}(BG)\simeq T^*_e(BG)\otimes_{A\widehat\otimes^{\bL} \tau_{\leq n}R[\frac{1}{I}]}B^i\widehat\otimes^{\bL}\tau_{\leq n}R[\frac{1}{I}].
\]

This implies
\begin{equation*}
    \begin{split}
        \varprojlim_i\Map(T^*_{e_i}(BG),B^i\widehat\otimes\pi_{n+1}(R)[\frac{1}{I}][n+2]) & \simeq \varprojlim_i\Map(T^*_e(BG),B^i\widehat\otimes\pi_{n+1}(R)[\frac{1}{I}][n+2])\\
        &\simeq \Map(T^*_e(BG),\varprojlim_iB^i\widehat\otimes\pi_{n+1}(R)[\frac{1}{I}][n+2]).
    \end{split}
\end{equation*}
On the one hand, we have an exact triangle for each $i$
\[
B^i\widehat\otimes^{\bL}\tau_{\leq n}(R)[\frac{1}{I}]\to B^i\widehat\otimes^{\bL}R[\frac{1}{I}]\to B^i\widehat\otimes^{\bL}\pi_{n+1}(R)[\frac{1}{I}][n+2].
\]
The same holds by replacing $B^i$ with $A$. On the other hand, we know that
\[
A\widehat\otimes^{\bL}\tau_{\leq n}(R)[\frac{1}{I}]\simeq \varprojlim_i B^i\widehat\otimes^{\bL}\tau_{\leq n}(R)[\frac{1}{I}]
\]
and
\[
A\widehat\otimes^{\bL}R[\frac{1}{I}]\simeq \varprojlim_i B^i\widehat\otimes^{\bL}R[\frac{1}{I}].
\]
So we get 
\[
A\widehat\otimes^{\bL}\pi_{n+1}(R)[\frac{1}{I}][n+2]\simeq \varprojlim_i B^i\widehat\otimes^{\bL}\pi_{n+1}(R)[\frac{1}{I}][n+2],
\]
which then implies
\[
\Map(T^*_e(BG),A\widehat\otimes\pi_{n+1}(R)[\frac{1}{I}][n+2])\xrightarrow{\simeq} \varprojlim_i\Map(T^*_{e_i}(BG),B^i\widehat\otimes\pi_{n+1}(R)[\frac{1}{I}][n+2]).
\]

In particular, this means the two $\infty$-groupoid objects obtained by pulling back $\calY_A$ and $\varprojlim_i\calY_{B^i}$ along $*_e\to [*_e/\Aut{*_e}]$ are equivalent. We then conclude
\[
\calY_A\xrightarrow{\simeq}\varprojlim_i\calY_{B^i}
\]
by the fact that colimit in the $\infty$-topos $\bfAni$ is universal (cf. \cite[Theorem 6.1.0.6]{Lur09a}).

Write $H=\pi_0(BG(A\widehat\otimes^{\bL} \tau_{\leq n}R)[\frac{1}{I}]))$. As $BG(A\widehat\otimes^{\bL} \tau_{\leq n}R)[\frac{1}{I}])=\bigsqcup_{*_e\in H}[*_e/\Aut(*_e)]$, we see
\[
BG(A\widehat\otimes^{\bL} R)[\frac{1}{I}])\xrightarrow{\simeq} \varprojlim_iBG(B^i\widehat\otimes^{\bL} R)[\frac{1}{I}]).
\]
We are done.

\end{proof}

\begin{dfn}
    We define the nilcompletion of $\frakX_G$ to be $\frakX_G^{\nil}:\textbf{Nilp}_{\bZ_p}\to \textbf{Ani}$ sending each $R\in \textbf{Nilp}_{\bZ_p}$ to $\frakX_G^{\nil}(R):=\varprojlim_n\frakX_G(\tau_{\leq n}R)$.
\end{dfn}

\begin{prop}
    The derived prestack $\frakX_G^{\nil}$ is a derived stack for the flat topology.
\end{prop}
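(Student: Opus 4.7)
The plan is to reduce flat descent for $\frakX_G^{\nil}$ to the built-in fpqc descent for the classifying stack $BG$, via two compatible reductions: one exploiting the nilcompletion, and one exploiting the limit formula defining $\frakX_G$. First, given a flat cover $R\to S$ in $\textbf{Nilp}_{\bZ_p}$ with \v Cech nerve $S^\bullet$, since $\frakX_G^{\nil}(R)=\varprojlim_n\frakX_G(\tau_{\leq n}R)$ and limits commute with limits, it suffices to prove, for each fixed $n\geq 0$, that
\[
\frakX_G(\tau_{\leq n}R)\simeq \varprojlim_\bullet \frakX_G(\tau_{\leq n}S^\bullet).
\]
Postnikov truncation preserves flat covers of animated rings (the defining condition $\pi_i(S)\simeq\pi_i(R)\otimes_{\pi_0(R)}\pi_0(S)$ is stable under truncation), so this reduces the problem to flat descent for $\frakX_G$ on truncated $p$-nilpotent animated rings.

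Next, I would unpack the definition of $\frakX_G$: commuting the prism-indexed limit defining $\frakX_G$ with the limit over the \v Cech nerve, and then commuting the $(-)^{\varphi=1}$ limit through, it suffices to show that for each transversal prism $(A,I)$ and each truncated flat cover $R\to S$,
\[
BG\bigl(A\widehat\otimes^{\bL}_{\bZ_p}R[\tfrac{1}{I}]\bigr)\simeq \varprojlim_\bullet BG\bigl(A\widehat\otimes^{\bL}_{\bZ_p}S^\bullet[\tfrac{1}{I}]\bigr).
\]
By Definition \ref{BG}, $BG$ is already an fpqc sheaf on $\bfAni(\Ring)$, so this is automatic once we establish that $A\widehat\otimes^{\bL}_{\bZ_p}R[\tfrac{1}{I}]\to A\widehat\otimes^{\bL}_{\bZ_p}S[\tfrac{1}{I}]$ is itself a (faithfully) flat cover of animated rings. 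For transversal prisms $(A,I)$ and truncated $R,S$, the key input is the formula $\pi_n(A\widehat\otimes^{\bL}_{\bZ_p}T[\tfrac{1}{I}])=A\widehat\otimes_{\bZ_p}\pi_n(T)[\tfrac{1}{I}]$ from \cite[Lemma 3.28]{Min23}: combined with the $\bZ_p$-flatness of $A$ (a consequence of transversality) and standard preservation of (faithful) flatness under classical base change and localisation, this yields the required flatness of the base-changed map.

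The main subtlety — and the reason nilcompletion is needed in the statement — lies precisely in this last verification. The derived completed tensor product $A\widehat\otimes^{\bL}_{\bZ_p}(-)[\tfrac{1}{I}]$ is not known to preserve flatness on unbounded animated rings; the restriction to the truncated setting, enabled by the nilcompletion, together with the transversality assumption on $(A,I)$, is what allows the $\pi_n$-formula to apply and reduces descent for $\frakX_G^{\nil}$ to the pure sheaf-theoretic descent of $BG$. Passing the conclusion back through $(-)^{\varphi=1}$, the prism limit, and the inverse limit over truncations then assembles into the desired flat descent for $\frakX_G^{\nil}$.
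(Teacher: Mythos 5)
Your outer reductions are fine and essentially match the paper's: commuting the truncation limit with the \v Cech limit, noting that Postnikov truncation preserves flat covers, and commuting the prism-indexed limit and $(-)^{\varphi=1}$ through, so that everything comes down to descent for $BG(A\widehat\otimes^{\bL}(-)[\frac{1}{I}])$ along a truncated flat cover $R\to S$. The gap is in the final step, where you invoke the fpqc sheaf property of $BG$ directly. Two things go wrong. First, the flatness claim: the map $A\widehat\otimes^{\bL}_{\bZ_p}R[\frac{1}{I}]\to A\widehat\otimes^{\bL}_{\bZ_p}S[\frac{1}{I}]$ involves a derived $I$-adic completion, and completion does not preserve flatness over non-Noetherian bases. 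The formula $\pi_n(A\widehat\otimes^{\bL}T[\frac{1}{I}])\cong A\widehat\otimes\pi_n(T)[\frac{1}{I}]$ reduces you to asking whether $A\widehat\otimes_{\bZ_p}\pi_0(R)[\frac{1}{I}]\to A\widehat\otimes_{\bZ_p}\pi_0(S)[\frac{1}{I}]$ is faithfully flat, and this is not ``classical base change and localisation'': it is base change followed by completion, which is only known to preserve flatness under finiteness hypotheses (compare \cite[Lemma 5.1.7]{EG21}, which assumes finite type coefficients). What one actually has is that the map is $I$-completely faithfully flat, which says nothing after inverting $I$; if honest flatness were available here, Drinfeld--Mathew's descent theorem would be superfluous. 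Second, even granting flatness, fpqc descent for $BG$ computes the limit over the \v Cech nerve of that map in animated rings, i.e.\ over the \emph{uncompleted} relative tensor powers of $A\widehat\otimes^{\bL}S[\frac{1}{I}]$ over $A\widehat\otimes^{\bL}R[\frac{1}{I}]$, whereas the cosimplicial object you need is $A\widehat\otimes^{\bL}S^{\bullet}[\frac{1}{I}]$, whose terms are completed before localising; these two cosimplicial rings are not identified.

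The paper circumvents both problems by never invoking flat descent for $BG$ on the nose: it establishes the needed equivalence $BG(\frakS^n\widehat\otimes^{\bL}\tau_{\leq j}R[\frac{1}{E}])\simeq\varprojlim_iBG(\frakS^n\widehat\otimes^{\bL}\tau_{\leq j}S^i[\frac{1}{E}])$ by the same induction on truncation degree as in Lemma \ref{flat descent}. For discrete coefficients one uses Tannaka duality to rewrite $BG$ in terms of exact tensor functors into $\Vect(-)^{\varphi=1}$, for which the Drinfeld--Mathew theorem supplies descent along $(p,I)$-completely faithfully flat maps; the inductive step then only requires descent for mapping spaces out of the cotangent complex $T^*_e(BG)$, a finite projective module, which again follows from descent for $\Vect$. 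To repair your argument, replace the appeal to the fpqc sheaf property of $BG$ by this Postnikov induction.
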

\begin{proof}
    Let $R\to S$ be a faithfully flat map in $\textbf{Nilp}_{\bZ_p}$. Let $S^{\bullet}$ be the \v Cech nerve associated to this cover. Then we need to prove
    \[
    \frakX_G(R)\xrightarrow{\simeq}\varprojlim_i\frakX_G(S^i).
    \]

Let $(\frakS,(E))$ be the Breuil--Kisin prism, which is a cover of the topos ${\rm Shv}(((\calO_K)_{\Prism}^{\rm tr})$, and $(\frakS^{\bullet},(E))$ be the associated \v Cech nerve in ${\rm Shv}(((\calO_K)_{\Prism}^{\rm tr})$. By Lemma \ref{flat descent}, we then have
\[
\frakX_G(A)\xrightarrow{\simeq}\varprojlim_nBG(\frakS^n\widehat\otimes^{\bL}A[\frac{1}{E}])^{\varphi=1}
\]
for any $A\in \textbf{Nilp}^{<\infty}_{\bZ_p}$. Note that for each $i$ and $j$, the map $\frakS^n\otimes^{\bL}\tau_{\leq j}R\to\frakS^i\otimes^{\bL}\tau_{\leq j}S $ is $E$-completely faithfully flat. By the same argument as Lemma \ref{flat descent}, we can show
\[
BG(\frakS^n\widehat\otimes^{\bL}\tau_{\leq j}R[\frac{1}{E}])\simeq \varprojlim_iBG(\frakS^n\widehat\otimes^{\bL}\tau_{\leq j}S^i[\frac{1}{E}]).
\]
This shows
\[
\frakX_G(\tau_{\leq j}R)\simeq \varprojlim_nBG(\frakS^n\widehat\otimes^{\bL}\tau_{\leq j}R[\frac{1}{E}])^{\varphi=1}\simeq \varprojlim_n\varprojlim_iBG(\frakS^n\widehat\otimes^{\bL}\tau_{\leq j}S^i[\frac{1}{E}])^{\varphi=1}\simeq \varprojlim_i\frakX_G(\tau_{\leq j}S^i),
\]
which implies $\frakX^{\nil}_G(R)\simeq \varprojlim_i\frakX_G^{\nil}(S^i)$. So we are done.
    
\end{proof}

\subsection{The underlying classical stack $^{\rm cl}\frakX_G$}
In this subsection, we look at the underlying stack $^{\rm cl}\frakX_G$ of $\frakX_G$ and compare it to the Emerton--Gee stack $\calX_G$. Note that for any discrete ring $R$, we have
\begin{equation*}
    \begin{split}
        ^{\rm cl}\frakX_G(R)& \simeq \varprojlim_nBG(\frakS^n\widehat\otimes R[\frac{1}{E}])^{\varphi=1}\\
        &\simeq \varprojlim_n\Fun^{\rm ex,\otimes}(\Rep(G),\Vect(\frakS^n\widehat\otimes R[\frac{1}{E}])^{\varphi=1})\\
        &\simeq \Fun^{\rm ex, \otimes}(\Rep(G),\Vect((\calO_K)_{\Prism},\calO_{\Prism,R}[\frac{1}{\calI_{\Prism}}])^{\varphi=1})
    \end{split}
\end{equation*}
by Tannaka duality and the exact $\otimes$-equivalence $\Vect((\calO_K)_{\Prism},\calO_{\Prism,R}[\frac{1}{\calI_{\Prism}}])^{\varphi=1}\simeq \varprojlim_n \Vect(\frakS^n\widehat\otimes R[\frac{1}{E}])^{\varphi=1}$.

Recall the following result from \cite{Min23}.

\begin{prop}[\cite{Min23}, Corollary 2.27]\label{finite type}
    Let $R$ be a finite type $\bZ/p^a$-algebra. Then there is an equivalence of categories
    \[
    \Vect((\calO_K)_{\Prism},\calO_{\Prism,R}[\frac{1}{\calI_{\Prism}}])^{\varphi=1}\simeq \Mod^{\varphi,\Gamma}(\bfA_R).
    \]
\end{prop}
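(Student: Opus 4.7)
The plan is to adapt the Bhatt--Scholze equivalence between Laurent $F$-crystals and \'etale $(\varphi,\Gamma)$-modules (which handles the case $R=\bZ_p$) to coefficients in a finite type $\bZ/p^a$-algebra $R$, via \v Cech descent on a well-chosen cover of the absolute prismatic site. First, I would reduce to the case $K=K^{\rm basic}$ via the argument of Lemma \ref{reduction-basic} so that the Breuil--Kisin prism $(\frakS,(E))$ has $\frakS^+$ being $\varphi$-stable. Since $(\frakS,(E))$ is a cover of $(\calO_K)_{\Prism}$ in the $I$-completely flat topology, the LHS unfolds to the limit over the \v Cech nerve $\frakS^\bullet$ of the categories $\Vect((\frakS^n\widehat\otimes_{\bZ_p}R)[1/E])^{\varphi=1}$, and hence to descent data consisting of a finite projective $\varphi$-module over $(\frakS\widehat\otimes_{\bZ_p}R)[1/E]$ together with a coherent isomorphism along $\frakS^1$.

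Second, I would identify the \v Cech nerve of $\frakS$ in $(\calO_K)_{\Prism}$ with the cyclotomic tower over $K$, which is the heart of the matter. There is an identification
\[
    (\frakS\widehat\otimes_{\bZ_p}R)[1/E]\simeq \bfA_R,
\]
(functorially in $R$, by the linearization of the classical identification for $R=\bZ_p$), and the self-product $\frakS^{(1)}$ in the absolute prismatic site controls a Kummer-type extension whose Galois group is $\Gamma=\Gal(K_{\rm cyc}/K)$. Concretely, the descent datum along $\frakS^\bullet$ becomes a semilinear automorphism $\gamma:M\to\gamma^*M$ attached to a chosen topological generator $\gamma\in\Gamma$ (together with a cocycle condition); this exhibits the descent datum as a $\Gamma$-action compatible with $\varphi$.

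Third, I would check the remaining structural conditions: the \'etaleness condition $\varphi^*M\xrightarrow{\simeq}M$ transports from the prismatic side (where it is automatic after inverting $\calI_{\Prism}$) to the $(\varphi,\Gamma)$-side, and the $\Gamma$-action is continuous with respect to the natural $T$-adic Banach topology on $\bfA_R$. The finite type hypothesis on $R$ over $\bZ/p^a$ is exactly what makes $\bfA_R$ Noetherian and every finite $\bfA_R$-module automatically a Banach module, so continuity is free. The inverse functor glues an \'etale $(\varphi,\Gamma)$-module $(M,\varphi_M,\Gamma)$ over $\bfA_R$ into a Laurent $F$-crystal by taking its value on $(\frakS,(E))$ to be $M$ (via the identification above) and declaring the descent datum to be $\gamma$; the finitely presented descent condition closes up to an actual crystal because $\bZ_p\to R$ is finite type.

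The main obstacle is the second step: verifying that the \v Cech nerve of the Breuil--Kisin prism in $(\calO_K)_{\Prism}$ really does produce the cyclotomic $\Gamma$-action (and not merely a semilinear $\gamma$-action on which continuity must be imposed by hand). This is a computation of prismatic envelopes which, in the absolute setting, is subtle because $(\calO_K)_{\Prism}$ has more objects than the Nygaard-style relative site; the ramification of $K$ further complicates matters and is the reason we first reduce to $K^{\rm basic}$. Once this geometric identification is established, the $R$-coefficient extension is a routine linearization, because both sides are defined by $\widehat\otimes R$ of the $R=\bZ_p$ constructions, and finite type over $\bZ/p^a$ eliminates all topological pathologies.
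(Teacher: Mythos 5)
The paper does not actually prove this proposition; it is quoted verbatim from [Min23, Corollary 2.27], so there is no internal proof to compare against. Judged on its own terms, however, your proposal has a genuine gap at its central step, and you have correctly identified where it is but underestimated how serious it is.

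The problem is the second step. The Breuil--Kisin prism $(\frakS,(E))$ is attached to a compatible system of $p$-power roots of a uniformiser, so its \v Cech nerve in $(\calO_K)_{\Prism}$ is governed by the Kummer tower $K(\pi^{1/p^\infty})/K$, which is not Galois and is not the cyclotomic tower; after perfection the descent groupoid sees the full group $G_K$ (via $\Ainf$ of the completed algebraic closure), not the procyclic group $\Gamma=\Gal(K_{\cyc}/K)$. Consequently the descent datum along $\frakS^{\bullet}$ cannot be repackaged as ``a single semilinear automorphism attached to a topological generator $\gamma\in\Gamma$ plus a cocycle condition''. Relatedly, the asserted identification $(\frakS\widehat\otimes_{\bZ_p}R)[\frac{1}{E}]\simeq \bfA_R$ is false even for $R=\bZ_p$ and $K=\bQ_p$: the left side is $W(k)((u))^{\wedge}_p$ with $\varphi(u)=u^p$, while $\bfA_{\bZ_p}$ is $\bZ_p((T))^{\wedge}_p$ with $\varphi(T)=(1+T)^p-1$; these are non-isomorphic $\varphi$-rings sitting differently inside $W(C^{\flat})$, and there is no ``classical identification'' to linearise. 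The actual route (as in [Min23] and in Bhatt--Scholze/Wu) is: first show restriction to the perfect transversal prismatic site is an equivalence (this is where the Breuil--Kisin \v Cech nerve and its perfection $A_\infty^{\bullet}$ are used, cf.\ the proof of Proposition \ref{perf-G}); on the perfect site, Laurent $F$-crystals become $\varphi$-modules over $W(C^{\flat})\widehat\otimes R$ with continuous $G_K$-descent data; and only then does one descend along the cyclotomic tower, using the coefficient version of Fontaine's theory (Emerton--Gee), to land in $\Mod^{\varphi,\Gamma}(\bfA_R)$. The cyclotomic $\Gamma$ enters through this last, Fontaine-theoretic descent, not through the prismatic \v Cech nerve of $\frakS$. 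Your first and third steps (reduction to $K^{\rm basic}$, Noetherianity of $\bfA_R$ giving automatic continuity) are sound and do appear in this circle of arguments, but without replacing step two the proof does not go through.
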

It is easy to see that the above equivalence is an exact tensorial equivalence. This immediately implies that $^{\rm cl}\frakX_G(R)\simeq \calX_G(R)$ for any finite type $\bZ/p^a$-algebra $R$. We claim that this is true for all $p$-nilpotent rings.
\begin{lem}\label{all}
    Let $R$ be any $p$-nilpotent ring. Then there is an exact tensorial equivalence of categories
    \[
    \Vect((\calO_K)_{\Prism},\calO_{\Prism,R}[\frac{1}{\calI_{\Prism}}])^{\varphi=1}\simeq \Mod^{\varphi,\Gamma}(\bfA_R).
    \]
\end{lem}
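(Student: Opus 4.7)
The plan is to bootstrap Proposition \ref{finite type} from the finite type case to the general case by a filtered colimit argument. Write $R$ as the filtered colimit $R = \colim_i R_i$ of its finitely generated $\bZ/p^a$-subalgebras (for some $a$ with $p^a R = 0$). The natural functor from Laurent $F$-crystals to étale $(\varphi,\Gamma)$-modules is compatible with base change in $R$, so to upgrade Proposition \ref{finite type} to general $R$ it suffices to verify that both the source and target categories, viewed as fibered categories over $\Nilp_{\bZ_p}$, commute with filtered colimits in the coefficient ring.

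For the target $\Mod^{\varphi,\Gamma}(\bfA_R)$, this is essentially known: the Emerton--Gee stack $\calX_{\GL_d}$ is limit preserving (\cite[Lemma D.28]{EG22}), which handles objects up to isomorphism. For morphisms, I would appeal to Lemma \ref{reduction-basic} and the fact that $\underline\Hom^{\varphi}$ (and hence $\underline\Hom^{\varphi,\Gamma}$, obtained by a further $\gamma$-equivariance condition as in Lemma \ref{surj2}) is representable by an affine scheme of finite presentation over the base ring, hence commutes with filtered colimits. Together these show that the map $\colim_i \Mod^{\varphi,\Gamma}(\bfA_{R_i}) \to \Mod^{\varphi,\Gamma}(\bfA_R)$ is an equivalence of categories.

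For the source $\Vect((\calO_K)_{\Prism},\calO_{\Prism,R}[\frac{1}{\calI_{\Prism}}])^{\varphi=1}$, I would use the description via the Breuil--Kisin prism cover: by flat descent for vector bundles, an object is the same as an étale $\varphi$-module over $\frakS\widehat\otimes R[\frac{1}{E}]$ together with descent datum over $\frakS^{\bullet}\widehat\otimes R[\frac{1}{E}]$. Each $\frakS^n \widehat\otimes R[\frac{1}{E}]$ is $\colim_i \frakS^n\widehat\otimes R_i[\frac{1}{E}]$ (since $\frakS^n$ is already $p$-complete flat over $\bZ_p$ and the tensor product is discrete here). The stack of finite projective étale $\varphi$-modules over $\frakS^n\widehat\otimes (-)[\frac{1}{E}]$ is again of finite presentation, so limit preserving, and the Hom functors between such modules are affine schemes of finite presentation by the same type of argument as Lemma \ref{reduction-basic}. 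Hence the category of compatible descent data is also a filtered colimit in $R$.

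Having established that both sides commute with filtered colimits in the coefficient ring, the equivalence for general $R$ follows by taking the filtered colimit of the equivalences provided by Proposition \ref{finite type} at each $R_i$, and it is tensorial and exact because each equivalence at finite type level is so and these properties are preserved under filtered colimits. The main technical point, which will require some care, is the colimit description of the source: one must justify that the descent data (a morphism over the double intersection satisfying a cocycle condition) is captured by finitely presented data even though the site is infinite, so that it can be sourced from a finite type subring. This is essentially an exercise in unravelling the \v Cech description against the representability results already proved in the paper.
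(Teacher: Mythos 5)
Your overall strategy is the same as the paper's: write $R=\varinjlim_i R_i$ over finite type subrings and bootstrap Proposition \ref{finite type}. Essential surjectivity on both sides via limit preservation of the underlying groupoids is fine, and your treatment of the target category $\Mod^{\varphi,\Gamma}(\bfA_R)$ via representability of $\underline\Hom^{\varphi}$ by an affine scheme of finite presentation (Lemma \ref{reduction-basic}, plus the $\gamma$-equivariance fiber product) is a legitimate way to get fully faithfulness; it encodes the same finiteness as the paper's lattice argument.

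The gap is on the prismatic side, in the parenthetical claim that $\frakS^n\widehat\otimes R[\frac{1}{E}]$ equals $\varinjlim_i \frakS^n\widehat\otimes R_i[\frac{1}{E}]$. This is false: even though $p$ is nilpotent in $R$, the completed tensor product still involves $E$-adic (equivalently $u$-adic) completion, so $\frakS\widehat\otimes R$ consists of power series with arbitrary coefficients in $R$, whereas the colimit only contains power series whose coefficients all lie in a single finite type subring $R_i$. (The same phenomenon is why $\bfA_R=R((T))^{\wedge}$ is not $\varinjlim_i\bfA_{R_i}$.) Consequently you cannot deduce that the category of descent data over the completed Čech nerve is a filtered colimit merely from limit preservation of the moduli of $\varphi$-modules over each ring; the rings themselves do not form the colimit you need. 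The correct fix, and what the paper does, is to work at the level of Hom-sets: the Hom between two Laurent $F$-crystals is the finite limit $\varprojlim_{n\leq 2}\Hom(\bM(\frakS^n,E),\bN(\frakS^n,E))^{\varphi=1}$, and one shows that the $\varphi$-invariants of these finite modules commute with the filtered colimit in $R$ despite the completion, using $\varphi$-stable lattices and the contraction property of Frobenius (\cite[Lemma 2.24]{Min23}); any $\varphi$-fixed element of the completed colimit already descends to some $R_i$. Your concluding worry about infinitely many descent conditions is a non-issue (the Čech description truncates at level $2$); the completion is the real obstacle, and your argument as written does not address it.
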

\begin{proof}
    Write $R=\varinjlim_iR_i$ where $R_i$ goes through all finite type $\bZ_p$-subalgebras of $R$. We claim that 
    \[
     \varinjlim_i\Vect((\calO_K)_{\Prism},\calO_{\Prism,R_i}[\frac{1}{\calI_{\Prism}}])^{\varphi=1}\xrightarrow{\simeq}\Vect((\calO_K)_{\Prism},\calO_{\Prism,R}[\frac{1}{\calI_{\Prism}}])^{\varphi=1}
    \]
    and
    \[
     \varinjlim_i\Mod^{\varphi,\Gamma}(\bfA_{R_i})\xrightarrow{\simeq}\Mod^{\varphi,\Gamma}(\bfA_{R}).
    \]
    Note that both equivalences hold once we take the underlying groupoids. In particular, the above natural functors are essentially surjective. It remains to prove the fully faithfulness.

    Let $\bM_i\in \Vect((\calO_K)_{\Prism},\calO_{\Prism,R_i}[\frac{1}{\calI_{\Prism}}])^{\varphi=1}$ and $\bN_j\in \Vect((\calO_K)_{\Prism},\calO_{\Prism,R_j}[\frac{1}{\calI_{\Prism}}])^{\varphi=1}$. Then we know \[
    \Hom(\bM_i,\bN_j)=\varinjlim_{i,j\to k}\Hom(\bM_{ik},\bN_{jk})\]
    where $\bM_{ik},\bN_{jk}$ are the base change of $\bM_i$,$\bN_j$ along $\calO_{\Prism,R_i}[\frac{1}{\calI_{\Prism}}]\to \calO_{\Prism,R_k}[\frac{1}{\calI_{\Prism}}]$, $\calO_{\Prism,R_j}[\frac{1}{\calI_{\Prism}}]\to \calO_{\Prism,R_k}[\frac{1}{\calI_{\Prism}}]$ respectively.

    Note that 
    \[
    \Hom(\bM_{ik},\bN_{jk})\simeq \varprojlim_{n\leq 2} \Hom(\bM_{ik}(\frakS^n,E),\bN_{jk}(\frakS^n,E))^{\varphi=1}.
    \]
    Let $\bM$, $\bN$ be the base change of $\bM_i$, $\bN_j$ along $\calO_{\Prism,R_i}[\frac{1}{\calI_{\Prism}}]\to \calO_{\Prism,R}[\frac{1}{\calI_{\Prism}}]$, $\calO_{\Prism,R_j}[\frac{1}{\calI_{\Prism}}]\to \calO_{\Prism,R}[\frac{1}{\calI_{\Prism}}]$ respectively. By the same argument as the Proof of \cite[Lemma 2.29]{Min23}, i.e. using \cite[Lemma 2.24]{Min23}, we can get
    \[
    \varinjlim_{i,j\to k}\Hom(\bM_{ik},\bN_{jk})\simeq \Hom(\bM,\bN),
    \]
    which then implies
    \[
     \varinjlim_i\Vect((\calO_K)_{\Prism},\calO_{\Prism,R_i}[\frac{1}{\calI_{\Prism}}])^{\varphi=1}\xrightarrow{\simeq}\Vect((\calO_K)_{\Prism},\calO_{\Prism,R}[\frac{1}{\calI_{\Prism}}])^{\varphi=1}
    \]
    For the part of \'etale $(\varphi,\Gamma)$-modules, let $K^{\rm basic}:=K\cap K_0(\zeta_{p^{\infty}})$ (cf. \cite[Definition 3.2.3]{EG22}), where $K_0$ is the maximal unramified extension of $\bQ_p$ inside $K$. An \'etale $(\varphi,\Gamma)$-module $M$ of rank $n$ over $\bfA_{R}$ can be regarded as an \'etale $(\varphi,\Gamma)$-module of rank $n[K:K^{\rm basic}]$ over $\bfA_{K^{\rm basic},R}$.

    Let $M_i\in \Mod^{\varphi,\Gamma}(\bfA_{R_i})$ and $N_j\in \Mod^{\varphi,\Gamma}(\bfA_{R_j})$. It suffices to prove
    \[
    \varinjlim_{i,j\to k}(M_{ik}^{\vee}\otimes N_{jk})^{\varphi=1,\Gamma=1}\cong (M^\vee\otimes N)^{\varphi=1,\Gamma=1}
    \]
    where $M_{ik},N_{jk}$ are the base change of $M_i,N_j$ along $\bfA_{R_i}\to \bfA_{R_j}$ and similar for $M,N$. We claim 
    \[
    \varinjlim_{i,j\to k}(M_{ik}^{\vee}\otimes N_{jk})^{\varphi=1}\cong (M^\vee\otimes N)^{\varphi=1}
    \]
    which then imply the above isomorphism by taking $\gamma$-invariants.

    Now without loss of generality, we may assume $M_i,N_j$ are both finite free and view them as \'etale $\varphi$-modules over $\bfA^+_{K^{\rm basic},R_i}$, $\bfA^+_{K^{\rm basic},R_j}$ respectively. As $\bfA^+_{K^{\rm basic}}$ is $\varphi$-stable, we can always find $\varphi$-stable lattices of $M_i, N_j$. This enables us to use \cite[Lemma 2.24]{Min23} again to conclude $\varinjlim_{i,j\to k}(M_{ik}^{\vee}\otimes N_{jk})^{\varphi=1}\cong (M^\vee\otimes N)^{\varphi=1}$.

    Using Lemma \ref{finite type}, we get
    \[
    \Vect((\calO_K)_{\Prism},\calO_{\Prism,R}[\frac{1}{\calI_{\Prism}}])^{\varphi=1}\simeq \Mod^{\varphi,\Gamma}(\bfA_{R}),
    \]
    which is clearly an exact tensorial equivalence.

\end{proof}

\begin{thm}\label{classical}
    There is an equivalence of stacks
    \[
    \calX_G\simeq {^{\rm cl}\frakX_G}.
    \]
\end{thm}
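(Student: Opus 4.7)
The plan is to piece together equivalences that have already been established in the preceding text, together with Tannaka duality for discrete rings. The key observation is that the description of ${^{\rm cl}\frakX_G}(R)$ computed just before Proposition \ref{finite type} combines with Lemma \ref{all} to give the comparison with $\calX_G(R)$ essentially immediately; the only work is to check naturality in $R$.

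First, for any discrete $R\in \Nilp_{\bZ_p}$, the ring $\frakS^n\widehat\otimes R[1/E]$ is discrete (since $\frakS^n$ is $p$-completely flat over $\bZ_p$ and is in a transversal prism, so the $I$-completed base change stays concentrated in degree $0$), so $BG(\frakS^n\widehat\otimes R[1/E])$ is just the ordinary groupoid of $G$-torsors, and by Tannaka duality (for discrete rings) we have
\[
BG(\frakS^n\widehat\otimes R[1/E])^{\varphi=1}\simeq \Fun^{\rm ex,\otimes}(\Rep(G),\Vect(\frakS^n\widehat\otimes R[1/E])^{\varphi=1}).
\]
Taking the limit over the \v Cech nerve of the Breuil--Kisin cover of $(\calO_K)_{\Prism}^{\rm tr}$ and using that a limit of exact tensor functor categories is itself described by exact tensor functors into the limit category, I obtain
\[
{^{\rm cl}\frakX_G}(R)\simeq \Fun^{\rm ex,\otimes}(\Rep(G),\Vect((\calO_K)_{\Prism},\calO_{\Prism,R}[\tfrac{1}{\calI_{\Prism}}])^{\varphi=1})^{\simeq}.
\]

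Second, Lemma \ref{all} supplies an exact $\otimes$-equivalence
\[
\Vect((\calO_K)_{\Prism},\calO_{\Prism,R}[\tfrac{1}{\calI_{\Prism}}])^{\varphi=1}\xrightarrow{\ \simeq\ }\Mod^{\varphi,\Gamma}(\bfA_R)
\]
for every $p$-nilpotent ring $R$. Post-composing an exact $\otimes$-functor $F\colon \Rep(G)\to\Vect(\cdots)^{\varphi=1}$ with this equivalence yields an exact $\otimes$-functor into $\Mod^{\varphi,\Gamma}(\bfA_R)$, and since the equivalence is an equivalence of symmetric monoidal exact categories, this defines an equivalence of groupoids
\[
\Fun^{\rm ex,\otimes}(\Rep(G),\Vect((\calO_K)_{\Prism},\calO_{\Prism,R}[\tfrac{1}{\calI_{\Prism}}])^{\varphi=1})^{\simeq}\xrightarrow{\ \simeq\ }\Fun^{\rm ex,\otimes}(\Rep(G),\Mod^{\varphi,\Gamma}(\bfA_R))^{\simeq}=\calX_G(R).
\]

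The remaining point is functoriality: both constructions are evidently natural in the ring $R$ (base change of vector bundles on $(\calO_K)_{\Prism}$ on one side, base change of $\bfA_R$-modules on the other), and the equivalence in Lemma \ref{all} is itself natural in $R$, as one sees by tracking it through the isomorphism $\frakS^n\widehat\otimes R[\frac{1}{E}]\simeq \bfA_R$ underlying Proposition \ref{finite type} and the compatible colimit presentation used in the proof of Lemma \ref{all}. Assembling these equivalences as $R$ varies gives an equivalence of prestacks $\calX_G\simeq {^{\rm cl}\frakX_G}$, and since $\calX_G$ is a stack for the flat topology by Theorem \ref{main-1-intro}, this is in fact an equivalence of stacks. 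The whole argument is essentially bookkeeping: the only non-trivial inputs are the Tannaka duality step (valid only on discrete rings, which is exactly the setting for ${^{\rm cl}\frakX_G}$) and Lemma \ref{all}, both already in hand, so no serious obstacle is expected.
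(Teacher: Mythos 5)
Your proposal is correct and follows essentially the same route as the paper: the displayed chain of equivalences preceding Proposition \ref{finite type} (Tannaka duality on discrete rings plus descent along the Breuil--Kisin cover) reduces everything to the exact $\otimes$-equivalence of Lemma \ref{all}, which is exactly how the paper deduces Theorem \ref{classical}. The extra remarks on discreteness of $\frakS^n\widehat\otimes R[\frac{1}{E}]$ and naturality in $R$ are fine but are the same bookkeeping the paper leaves implicit.
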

\begin{proof}
    This directly follows from Lemma \ref{all}.
\end{proof}

\subsection{Classicality of $\frakX_G$}
In this subsection, we plan to investigate the classicality of the derived stack $\frakX_G$, i.e. compare $\frakX_G^{\nil}$ to $(\Lan\calX_G)^{\#,\rm nil}$. The basic strategy is the same as \cite{Min23}, i.e. use the following criterion.

\begin{prop}[{\cite[Chapter 1, Proposition 8.3.2]{GR17}}]\label{GR-criterion}
    Let $\calF_1\to \calF_2$ be a map of derived prestacks admitting a deformation theory. Suppose there exists a commutative diagram,
    \begin{equation*}
        \xymatrix@=0.6cm{
        & ^{\rm cl}\calF_0\ar[ld]^{g_1}\ar[rd]^{g_2}&\\
        \calF_1\ar[rr]^{f}&&\calF_2
        }
    \end{equation*}
where $g_1,g_2$ are nilpotent embeddings, and $^{\rm cl}\calF_{0}$ is a classical prestack. Suppose also that for any discrete commutative ring $R\in {Nilp}_{\bZ_p}$, and a map $x_0\in {^{\rm cl}\calF_{0}}(R)$ and $x_i=g_i\circ x_0$ where $i=1,2$, the induced map
    \[
    T^*_{x_2}(\calF_2)\to T^*_{x_1}(\calF_1)
    \]
    is an isomorphism. Then $f$ is an isomorphism.
\end{prop}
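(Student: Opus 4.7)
The strategy is to propagate the equivalence up the Postnikov tower of an arbitrary animated test ring, using the pro-cotangent complex comparison at classical points as the engine for each induction step. Since $\calF_1$ and $\calF_2$ both admit a deformation theory, each $\calF_i$ is nilcomplete, so $\calF_i(R) \simeq \varprojlim_n \calF_i(\tau_{\leq n}R)$; thus it suffices to show that $f(\tau_{\leq n}R)$ is an equivalence for every $n \geq 0$ and every $R$. The base case $n = 0$ is the classical comparison: the hypothesis that $g_1$ and $g_2$ are nilpotent embeddings together with the classicality of $^{\rm cl}\calF_0$ forces the induced maps $^{\rm cl}g_i : {^{\rm cl}\calF_0} \to {^{\rm cl}\calF_i}$ to be equivalences, and the 2-out-of-3 property applied to $^{\rm cl}g_2 \simeq {^{\rm cl}f} \circ {^{\rm cl}g_1}$ yields that $^{\rm cl}f$ is an equivalence, so $f(R)$ is an equivalence for every discrete $R$.

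For the inductive step, assuming the equivalence on $\tau_{\leq n}R$ for all $R$, one uses the canonical pullback square
\begin{equation*}
\xymatrix{
\tau_{\leq n+1}R \ar[r] \ar[d] & \tau_{\leq n}R \ar[d] \\
\tau_{\leq n}R \ar[r] & \tau_{\leq n}R \oplus \pi_{n+1}(R)[n+2]
}
\end{equation*}
which exhibits $\tau_{\leq n+1}R$ as a square-zero extension of $\tau_{\leq n}R$ by $\pi_{n+1}(R)[n+1]$. Admitting a deformation theory means that each $\calF_i$ converts this into a pullback of anima; concretely, the fiber of $\calF_i(\tau_{\leq n+1}R) \to \calF_i(\tau_{\leq n}R)$ over a point $x$ is, when nonempty, $\Map(T^*_x(\calF_i), \pi_{n+1}(R)[n+2])$, and whether it is nonempty is controlled by an obstruction class living in the same mapping space. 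The inductive hypothesis identifies the base $\calF_1(\tau_{\leq n}R) \simeq \calF_2(\tau_{\leq n}R)$, so it remains to identify, functorially in $x$, the pro-cotangent complexes $T^*_x(\calF_1)$ and $T^*_x(\calF_2)$ together with the associated obstruction classes.

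The main obstacle is that the hypothesis supplies the pro-cotangent comparison only at classical points $x_0 : \Spec(\pi_0(R)) \to \calF_i$, whereas we need it at a point $x : \Spec(\tau_{\leq n}R) \to \calF_i$ which is in general not classical. This is resolved by the base-change property that is part of the definition of admitting a pro-cotangent complex: one has canonical identifications $T^*_x(\calF_i) \simeq T^*_{x_0}(\calF_i) \otimes^{\bL}_{\pi_0(R)} \tau_{\leq n}R$ compatible with the map $f$, reducing the cotangent identification at $x$ to the hypothesized identification at $x_0$. Matching obstruction classes requires the parallel naturality of the obstruction map with respect to this base change, again guaranteed by the deformation-theoretic axioms. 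Once both are in hand the induction closes, so the proof is essentially formal: the content lives in verifying that $\calF_1, \calF_2$ indeed admit deformation theories with the required base-change compatibilities, which is precisely what the Gaitsgory--Rozenberg axioms encode.
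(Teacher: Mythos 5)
Note first that the paper offers no proof of this statement: it is imported verbatim from \cite[Chapter 1, Proposition 8.3.2]{GR17}, so your proposal can only be measured against the standard argument. Your skeleton (nilcompleteness to reduce to truncated rings, then Postnikov induction using the square-zero extension $\tau_{\leq n+1}R\to\tau_{\leq n}R$ and infinitesimal cohesiveness) is the right shape, but two of its load-bearing steps are wrong.

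The base case is not automatic. A nilpotent embedding induces an isomorphism only on \emph{reduced} classical affine schemes; it does not force ${}^{\rm cl}g_i$ to be an equivalence on all discrete rings. The prototype $\Spec(k)\to\Spec(k[\epsilon])$ is a nilpotent embedding with ${}^{\rm cl}\calF_0=\Spec(k)$ classical, yet is not an isomorphism of classical prestacks; if your claim held, applying the proposition to $\calF_0=\calF_1=\Spec(k)$, $\calF_2=\Spec(k[\epsilon])$ would prove a false statement without ever touching the cotangent hypothesis. In fact the hypothesis at \emph{discrete} $R$ exists precisely to run the base case: one starts from $R_{\rm red}$, where the nilpotent-embedding condition identifies everything with $\calF_0$, and climbs to a general discrete $R$ through square-zero extensions by discrete modules, comparing the fibers at each stage via the assumed isomorphism $T^*_{x_2}(\calF_2)\to T^*_{x_1}(\calF_1)$ at classical points. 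Dropping this, your induction has no valid starting point.

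The base-change identification driving your inductive step is also not meaningful as written: there is no ring map $\pi_0(R)\to\tau_{\leq n}R$ (the map goes the other way), so $T^*_x(\calF_i)\simeq T^*_{x_0}(\calF_i)\otimes^{\bL}_{\pi_0(R)}\tau_{\leq n}R$ does not parse, and base change for pro-cotangent complexes gives the opposite statement $T^*_{x_0}(\calF_i)\simeq T^*_{x}(\calF_i)\otimes_{\tau_{\leq n}R}\pi_0(R)$. The correct mechanism is that the square-zero extension $\tau_{\leq n+1}R\to\tau_{\leq n}R$ has fiber $\pi_{n+1}(R)[n+1]$, a module induced from $\pi_0(R)$, so by adjunction
$\Map_{\tau_{\leq n}R}(T^*_x(\calF_i),\pi_{n+1}(R)[n+2])\simeq\Map_{\pi_0(R)}(T^*_{x_0}(\calF_i),\pi_{n+1}(R)[n+2])$,
and it is this that reduces the comparison of fibers to the classical point $x_0$. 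Even then you must still explain why the hypothesis, stated only at points factoring through $\calF_0$, applies to the arbitrary classical point $x_0$ arising as the restriction of $x$; this is a genuine step (it uses the nilpotent-embedding condition together with the already-established classical case) that your proposal does not address.
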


Guided by the above proposition, we proceed in four steps.
\begin{enumerate}
    \item[\textbf{Step 1:}]Show both $\frakX_G^{\nil}$ and $(\Lan\calX_G)^{\#,\rm nil}$ have a deformation theory. Then by \cite[Chapter 1, Proposition 8.3.2]{GR17}, we are reduced to comparing their pro-cotangent complexes at all classical points.
    \item[\textbf{Step 2:}] Show both of them are locally almost of finite type. This reduces the comparison of pro-cotangent complexes to all classical finite type points.
    \item[\textbf{Step 3:}] Show their pro-cotangent complexes at classical finite type points are indeed cotangent complexes, which reduces the comparison further to points valued in finite fields.
    \item[\textbf{Step 4:}] Relate both of them to the derived representations and use the result on the local Galois deformation ring to compare the cotangent complexes at points valued in finite fields.
\end{enumerate}

 We refer to \cite[Section 3.1]{Min23} and \cite[Chapter 1]{GR17} for more information about the derived deformation theory. Now let us begin with \textbf{Step 1}, i.e.  show that both $\frakX_G^{\nil}$ and $(\Lan\calX_G)^{\#,\rm nil}$ admit a deformation theory.

\begin{prop}
  The derived stack   $\frakX_G^{\nil}$ admits a deformation theory.
\end{prop}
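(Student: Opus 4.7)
The plan is to verify the three conditions that constitute a deformation theory in the sense of \cite[Chapter 1]{GR17}: convergence, infinitesimal cohesiveness, and the existence of a pro-cotangent complex at every point of $\frakX_G^{\nil}$.

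Convergence is immediate from the definition, as $\frakX_G^{\nil}(R) = \varprojlim_n \frakX_G(\tau_{\leq n}R)$ by construction. For infinitesimal cohesiveness, given a pullback diagram of animated rings in $\textbf{Nilp}_{\bZ_p}$ in which the two downward maps are surjections with square-zero kernels, I would assemble the cohesiveness from its components: derived base change $- \widehat\otimes^{\bL}_{\bZ_p} -$ and localization at $I$ are both exact and so preserve such pullbacks; $BG$ is a derived Artin stack locally almost of finite presentation (as used earlier in constructing $\bfC_R$ via the pullback against $(BG)^\circ$) and hence is infinitesimally cohesive; taking the equalizer with $\varphi$ and the limit over $(\calO_K)_{\Prism}^{\tr}$ commute with pullback squares, as do truncations and their limits.

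For the pro-cotangent complex at a point $x : \Spec(R) \to \frakX_G^{\nil}$, the key computation is
\begin{align*}
\Fib_x\bigl(\frakX_G^{\nil}(R\oplus M)\to \frakX_G^{\nil}(R)\bigr)
&\simeq \varprojlim_{(A,I)}\Fib_{x_A}\bigl(BG(A\widehat\otimes^{\bL}(R\oplus M)[\tfrac{1}{I}]) \to BG(A\widehat\otimes^{\bL}R[\tfrac{1}{I}])\bigr)^{\varphi=1}\\
&\simeq \varprojlim_{(A,I)}\Map_{A\widehat\otimes^{\bL}R[\frac{1}{I}]}\!\bigl(T^*_{x_A}(BG),\, A\widehat\otimes^{\bL}M[\tfrac{1}{I}]\bigr)^{\varphi=1},
\end{align*}
using that $BG$ has a cotangent complex given, up to a shift, by the coadjoint representation pulled back along $x_A$. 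Each $T^*_{x_A}(BG)$ is a finite projective $A\widehat\otimes^{\bL}R[\tfrac{1}{I}]$-module in degree $-1$, so the right-hand side defines a pro-corepresentable functor in $M$, giving the required pro-cotangent complex. The Frobenius equalizer and limit over the transversal prismatic site package naturally into a pro-object in the $\infty$-category of connective $R$-modules.

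The main obstacle will be infinitesimal cohesiveness on non-discrete animated rings: the argument needs the derived base change $A\widehat\otimes^{\bL}_{\bZ_p}R[\tfrac{1}{I}]$ to interact well with square-zero extensions, and this in turn relies on the transversality hypothesis $A/I[p]=0$ to ensure $\pi_n(A\widehat\otimes^{\bL}_{\bZ_p}R[\tfrac{1}{I}]) \simeq A\widehat\otimes_{\bZ_p}\pi_n(R)[\tfrac{1}{I}]$ (which is why the transversal site was introduced and already exploited in Lemma \ref{flat descent}). This flatness of the prismatic base change along the structural maps is the technical ingredient that makes all three conditions propagate through $\varprojlim_{(A,I)}(-)^{\varphi=1}$ without loss, and checking it carefully — especially as a stepping-stone to \textbf{Step 2} (local almost finite type) — is the heart of the argument.
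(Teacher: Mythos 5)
Your proposal is correct and follows essentially the same route as the paper, which simply observes that $BG$ is a derived Artin stack locally of finite presentation over $\Spec(\bZ_p)$ and invokes the argument of \cite[Proposition 3.25]{Min23}; your write-up is an unpacking of exactly that argument (deformation theory of $BG$ propagated through the exact functors $A\widehat\otimes^{\bL}(-)[\tfrac1I]$, the Frobenius equalizer, the limit over transversal prisms, and nilcompletion). The one point to state with care is that for merely flat $G$ the cotangent complex of $BG$ need only be almost perfect rather than finite projective in a single degree, but pro-corepresentability — and hence the existence of the pro-cotangent complex — only requires the former.
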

\begin{proof}
    Note that $BG$ is a derived Artin stack locally of finite presentation over $\Spec(\bZ_p)$. Then the proof of \cite[Proposition 3.25]{Min23} still works here.
\end{proof}

\begin{prop}\label{EG-def}
    The derived stack $(\Lan\calX_G)^{\#,\rm nil}$ admits a deformation theory.
\end{prop}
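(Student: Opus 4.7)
The plan is to verify the three defining axioms of a deformation theory in the sense of \cite[Chapter 1]{GR17}: convergence (nilcompleteness), infinitesimal cohesiveness, and existence of a pro-cotangent complex at each classical point. The approach parallels the proof of the analogous statement for $\calX_{\GL_d}$ in \cite{Min23}, with the essential new input being Theorem \ref{main-1}, which supplies the classical algebro-geometric properties of $\calX_G$.

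Convergence is immediate from the definition of $(-)^{\nil}$. For infinitesimal cohesiveness, given a pullback $A' = A \times_B B'$ in $\textbf{Nilp}_{\bZ_p}$ with $A \to B$ surjective and with nilpotent kernel on $\pi_0$, I would first reduce via convergence to truncated animated rings, and then induct on the truncation degree. The base case (discrete rings) reduces to the Rim-Schlessinger property for the formal algebraic stack $\calX_G$, which holds by Theorem \ref{main-1}. The inductive step from $\tau_{\leq n}R$ to $\tau_{\leq n+1}R$ is controlled by the pro-cotangent complex through the standard Postnikov pullback square (compare Diagram \ref{truncation}); the \'etale sheafification does not obstruct this since \'etale covers of an animated ring are determined by those of its $\pi_0$, and both left Kan extension and nilcompletion commute with the relevant finite limits.

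For the pro-cotangent complex at a classical point $x:\Spec(R)\to (\Lan\calX_G)^{\#,\nil}$, which by adjunction is the same as an $R$-point of $\calX_G$, I would exploit that $\calX_G$, being a formal algebraic stack locally of finite presentation, is smooth-locally at $x$ a formal completion of an Artin stack $\calY$ lfp along a closed substack $\calZ$. The relative cotangent complex of $\calY/\Spec(\bZ_p)$ at $x$, viewed as a pro-object indexed by the system of thickenings cutting out the formal structure, serves as the required pro-cotangent complex; its universal property (representing square-zero extensions) is inherited from that of $\calY$ by unwinding the definitions of $\Lan$, $(-)^{\#}$, and $(-)^{\nil}$ in the \'etale $\infty$-topos on $\textbf{Nilp}_{\bZ_p}$.

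The main obstacle is in the pro-cotangent step: because $\calX_G$ is a \emph{formal} rather than honest algebraic stack, the cotangent complex exists only as a genuine pro-object, and one must verify that this pro-object correctly controls deformations of $(\Lan\calX_G)^{\#,\nil}$ uniformly across truncated animated rings. This reduces to checking that $\Lan$, sheafification, and nilcompletion each interact predictably with the presentation of a formal algebraic stack lfp as a filtered colimit of algebraic stacks lfp along nilpotent closed immersions. Since each of these three operations preserves filtered colimits of presheaves of anima (and the pro-structure on the cotangent complex is, by construction, the one dual to this colimit presentation), the required compatibility follows, and the pro-cotangent complex represents maps to $(\Lan\calX_G)^{\#,\nil}(R \oplus M)$ over $(\Lan\calX_G)^{\#,\nil}(R)$ in the expected way.
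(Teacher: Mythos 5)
There is a genuine gap, and it sits exactly where you try to wave it away: the claim that left Kan extension, sheafification, and nilcompletion ``commute with the relevant finite limits.'' Left Kan extension is a colimit construction and does not commute with the finite limits of animated rings that define infinitesimal cohesiveness (the pullback $A\times_B B'$) or the corepresentability of the pro-cotangent complex (the square-zero extensions $R\oplus M$ and the Postnikov squares as in Diagram \ref{truncation}). This non-commutation is precisely the difficulty the proposition has to overcome, so it cannot be assumed. Relatedly, your inductive step for cohesiveness is ``controlled by the pro-cotangent complex,'' but the existence of a pro-cotangent complex with the required base-change behaviour is part of what is being proved, so the induction is circular as stated. (A secondary issue: a formal algebraic stack locally of finite presentation is smooth-locally an ind-affine scheme along nilpotent closed immersions, not in general a formal completion of an Artin stack along a closed substack, so the local model in your third paragraph is also not available.)

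The paper sidesteps all of this by descent rather than direct verification. Using Theorem \ref{main-1}, one chooses a smooth covering $\calY=\bigsqcup_i\Spf(A_i)\to\calX_G$ by Noetherian affine formal schemes. Since $\Lan$ commutes with colimits, $(\Lan\calY)^{\#,\nil}$ is a disjoint union of filtered colimits of affine schemes along nilpotent closed immersions, and such ind-affine prestacks are known to admit a deformation theory; by \cite[Proposition 4.4.3]{GR19} the map $(\Lan\calY)^{\#,\nil}\to(\Lan\calX_G)^{\#,\nil}$ remains a smooth covering after Kan extension and sheafification. Then \cite[Chapter 1, Lemma 7.4.3]{GR19} (Lemma \ref{def-cri}) transports the deformation theory from the cover down to $(\Lan\calX_G)^{\#,\nil}$. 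In short: the only place where one verifies the axioms by hand is on the affine formal cover, where $\Lan$ is explicitly computable; everything else is descent. I would rewrite your argument along these lines rather than attempting the truncation induction directly on the Kan extension.
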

Note that we do not know whether $\calX_G$ is an ind-Artin stack or not. So the proof of \cite[Theorem 3.29]{Min23} can not be used here. Instead, we use the following lemma.
\begin{lem}[\cite{GR19} Chapter 1, Lemma 7.4.3]\label{def-cri}
Let $f:\calY\to \calX$ be a map in $\PreStk$. Assume that:
\begin{enumerate}
    \item $\calX$ satisfies \'etale descent;
    \item $f$ is \'etale-locally surjective;
    \item $\calY$ admits deformation theory;
    \item $\calY$ admits deformation theory relative to $\calX$;
    \item $\calY$ is formally smooth over $\calX$.
\end{enumerate}
Then $\calX$ admits deformation theory.
    
\end{lem}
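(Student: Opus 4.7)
The plan is to unpack the definition of ``admits a deformation theory'' and transport each constituent from $\calY$ to $\calX$ along the morphism $f$. Recall that a prestack admits a deformation theory when (i) it is infinitesimally cohesive, i.e. it sends pushouts of animated rings along maps with nilpotent kernels to pullbacks of anima, and (ii) at every point $x:\Spec(R)\to \calX$ with $R$ Noetherian connective it admits a pro-cotangent complex $T^*_x(\calX)$, together with whatever convergence/nilcompleteness axiom is in force. Hypotheses (1)--(5) are arranged precisely so that each of these properties descends along $f$.

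First I would construct the pro-cotangent complexes. Given $x:\Spec(R)\to \calX$, use (2) to choose an \'etale cover $\Spec(R')\to \Spec(R)$ over which $x$ lifts to $y:\Spec(R')\to \calY$. By (3) we have $T^*_y(\calY)$, and by (4) we have the relative pro-cotangent complex $T^*_y(\calY/\calX)$ fitting into the fiber sequence
\begin{equation}
f^* T^*_{x|_{R'}}(\calX) \longrightarrow T^*_y(\calY) \longrightarrow T^*_y(\calY/\calX).
\end{equation}
This defines $f^* T^*_{x|_{R'}}(\calX)$ as a fiber over $\Spec(R')$. I would then descend it to $\Spec(R)$ using (1), checking the cocycle condition on the \v Cech nerve $\calY^\bullet$ of $f$ by identifying the two pullbacks to $\calY\times_\calX\calY$ through the relative fiber sequence.

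Next I would verify infinitesimal cohesiveness. Given a square-zero pushout $S'\sqcup_S S''$ of animated rings (with nilpotent kernels) and a point $x:\Spec(S)\to \calX$, after refining by an \'etale cover and using (2) to lift, one reduces the cohesiveness square for $\calX$ to the one for $\calY$, which holds by (3). Formal smoothness (5) ensures that compatible lifts through the square-zero extension can be chosen, and (4) provides the relative compatibility needed to glue the pieces; (1) then lets us conclude back on $\calX$. Convergence, if required, is checked in the same fashion from the corresponding property of $\calY$.

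The main obstacle I anticipate is the descent/independence step: two different lifts $y_1,y_2$ of $x$ give \emph{a priori} different candidates for $T^*_x(\calX)$, and one must exhibit a canonical equivalence between them functorial in $R$. The natural strategy is to pass to the self-product $\calY\times_\calX\calY$, use formal smoothness (5) to realise transitions between lifts as maps in $\calY$ over $\calX$, and then apply the relative fiber sequence to identify the two candidate cotangent complexes. Packaging this coherently across the entire \v Cech nerve, rather than at the level of individual lifts, is the technical heart of the argument and is what ultimately invokes the full strength of \'etale descent on $\calX$.
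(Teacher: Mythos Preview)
The paper does not prove this lemma at all: it is stated with the citation \cite{GR19}, Chapter~1, Lemma~7.4.3, and immediately used to establish Proposition~\ref{EG-def}, with no accompanying proof. So there is nothing in the paper to compare your argument against.

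That said, your outline is a reasonable sketch of the standard argument one finds in the cited reference. You have correctly identified the two ingredients of ``admits a deformation theory'' (infinitesimal cohesiveness and existence of pro-cotangent complexes), the mechanism by which each is transported from $\calY$ to $\calX$ (lift \'etale-locally via (2), use (3)--(5) on $\calY$, descend via (1)), and the genuine technical point, namely that the candidate for $T^*_x(\calX)$ built from a local lift must be shown to be independent of that lift and to glue coherently over the \v Cech nerve of $f$. One small refinement: in the Gaitsgory--Rozenblyum framework the cleanest way to organise the descent is not to compare pairs of lifts by hand but to work systematically with the simplicial object $\calY^{\bullet/\calX}$ and use that deformation theory for each $\calY^n$ (which follows from (3), (4) and base change) assembles into a cosimplicial diagram of pro-cotangent data whose totalisation, via (1), produces the desired object on $\calX$. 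Your final paragraph already points in this direction.
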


\begin{proof}[Proof of Proposition \ref{EG-def}]
    As $\calX_G$ is a locally Noetherian formal algebraic stack, there is a smooth covering $\calY=\bigsqcup_i\Spf(A_i)\to \calX_G$ where each $\Spf(A_i)$ is a Noetherian affine formal scheme. 

   It is clear that the sheafified left Kan extension $(\Lan\calY)^{\#,\rm nil}$ admits a deformation theory as left Kan extension commutes with colimits. By \cite[Proposition 4.4.3]{GR19} and its proof, $(\Lan\calY)^{\#,\rm nil}\to (\Lan\calX_G)^{\#,\rm nil}$ is still a smooth covering. So all the conditions in Lemma \ref{def-cri} are satisfied. We then conclude $(\Lan\calX_G)^{\#,\rm nil}$ admits a deformation theory.
\end{proof}
\begin{rmk}
    We can see from the above proof that the key input is that the Emerton--Gee stack is a formal algebraic stack.
\end{rmk}

Next we proceed to \textbf{Step 2}, i.e. deal with the issue of being locally almost of finite type. 

\begin{prop}
    The derived stack $(\Lan\calX_G)^{\#,\rm nil}$ is locally almost of finite type.
\end{prop}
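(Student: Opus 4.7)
The plan is to verify the property locally via smooth descent, reducing to the case of Noetherian formal affine schemes, and then to the case of classical affine schemes of finite type via a colimit argument.

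To begin, by Theorem \ref{main-1} the stack $\calX_G$ admits a smooth cover $\calY = \bigsqcup_i \Spf(A_i) \to \calX_G$ with each $A_i$ a Noetherian adic $\bZ_p$-algebra and $\Spf(A_i) \to \Spf(\bZ_p)$ locally of finite presentation. By the same reasoning as in the proof of Proposition \ref{EG-def}, the induced morphism $(\Lan\calY)^{\#,\nil} \to (\Lan\calX_G)^{\#,\nil}$ remains a smooth cover of derived prestacks admitting deformation theory. Since being locally almost of finite type is smooth-local (for derived prestacks admitting deformation theory) and is preserved under disjoint unions, it suffices to show that each $(\Lan\Spf(A_i))^{\#,\nil}$ is locally almost of finite type.

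Next, for fixed $A = A_i$ with an ideal of definition $I \supset (p^a)$ for some $a$, I would write $\Spf(A) \simeq \colim_n \Spec(A/I^n)$ as a classical filtered colimit along nilpotent closed immersions. Each $A/I^n$ is a classical Noetherian ring of finite presentation over $\bZ/p^b$ for some $b$, so its derived spectrum (equivalently, $(\Lan\Spec(A/I^n))^{\#,\nil}$) is locally almost of finite type by standard results. Since $\Lan$ commutes with colimits, the plan is to deduce
\[
(\Lan\Spf(A))^{\#,\nil} \simeq \colim_n (\Lan\Spec(A/I^n))^{\#,\nil},
\]
and this ind-object is locally almost of finite type as it is assembled from locally a.f.t.~pieces along nilpotent thickenings.

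The main obstacle is the compatibility of étale sheafification and nilcompletion with the filtered colimit presentation of $\Spf(A)$, since these two operations do not commute with filtered colimits in general. One resolves this by observing that the transition maps $\Spec(A/I^{n+1}) \to \Spec(A/I^n)$ are nilpotent thickenings whose effect on any fixed $\leq m$-truncated animated ring stabilizes after finitely many steps, so the three operations $\Lan$, $\#$, and $\nil$ can be commuted past the colimit after restriction to each truncation level. This parallels the arguments in \cite[Section 3]{Min23} for the case of $\GL_d$.
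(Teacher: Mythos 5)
Your reduction hinges on the assertion that ``being locally almost of finite type is smooth-local for derived prestacks admitting deformation theory.'' This is the crux of your argument and it is not justified: the descent statements of this kind available in \cite{GR17} are for derived \emph{Artin} stacks, where laft-ness can be checked on an atlas, but $(\Lan\calX_G)^{\#,\nil}$ is not known to be a derived Artin stack. For a general prestack $\calX$ receiving a formally smooth, \'etale-locally surjective map from a laft prestack $\calY$, proving $\calX(\colim_i R_i)\simeq \colim_i\calX(R_i)$ on truncated rings requires lifting points and descent data along the cover and spreading them out over the filtered system; that is precisely the content one would need to establish, and you simply assume it. A secondary problem is your justification for commuting $\Lan$, $\#$ and $(-)^{\nil}$ past $\colim_n\Spec(A/I^n)$: this colimit does \emph{not} ``stabilize after finitely many steps'' on a fixed test ring (take $R=A/I^N$ with $N$ large). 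What is actually true, and what you should say instead, is that nilcompletion is the identity on restrictions to $\leq m$-truncated test rings (the tower $\tau_{\leq j}R$ is eventually constant), and that a filtered colimit of prestacks each commuting with filtered colimits again commutes with filtered colimits.

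The paper's proof bypasses all of this: it invokes \cite[Proposition 3.33]{Min23}, whose only input is that $\calX_G$ is limit preserving. Since $\calX_G$ is limit preserving, $\Lan\calX_G(R)$ can be computed as a colimit over finite type classical rings mapping to $R$, and such rings are compact objects, so $\Lan\calX_G$ commutes with filtered colimits on each truncation level; \'etale sheafification preserves this property for convergent prestacks, and nilcompletion is harmless as above. In particular the smooth atlas of $\calX_G$ — which the paper does use, but only for the deformation-theory statement, Proposition \ref{EG-def} — is not needed here, and your detour through it is what introduces the unproven locality claim. If you want to salvage your approach, you would have to either prove the smooth-descent statement for laft-ness in this generality or, more simply, drop the atlas and argue directly from limit preservation.
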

\begin{proof}
    This follows from the same proof of \cite[Proposition 3.33]{Min23} as the only necessary input is that $\calX_G$ is locally of finite presentation over $\Spf(\bZ_p)$, which we have proved.
\end{proof}

To prove this for $\frakX_G^{\nil}$ is a bit more complicated as the Tannaka duality is not available. We will prove by induction and use the cotangent complex.

\begin{prop}
    The derived stack $\frakX_G^{\nil}$ is locally almost of finite type.
\end{prop}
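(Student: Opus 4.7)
The strategy is induction on truncation degree, parallel to the argument in Lemma~\ref{flat descent}. Recall that a convergent derived prestack is locally almost of finite type precisely when, for each $n \geq 0$, its restriction to the $n$-truncated objects $\tau_{\leq n}\textbf{Nilp}_{\bZ_p}$ commutes with filtered colimits. Convergence is built into the definition of $\frakX_G^{\nil}$, and the base case $n=0$ reduces via Theorem~\ref{classical} to the statement that $\calX_G$ is locally of finite presentation over $\Spf(\bZ_p)$, which is Theorem~\ref{main-1}.

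For the inductive step, suppose the restriction of $\frakX_G^{\nil}$ to $\tau_{\leq n}\textbf{Nilp}_{\bZ_p}$ preserves filtered colimits. For an $(n+1)$-truncated animated ring $R$, the truncation pullback square
\[
\xymatrix@=0.6cm{
R \ar[r] \ar[d] & \tau_{\leq n}R \ar[d]^{\mathrm{triv}} \\
\tau_{\leq n}R \ar[r]^-{s} & \tau_{\leq n}R \oplus \pi_{n+1}(R)[n+2]
}
\]
together with the deformation theory already proved for $\frakX_G^{\nil}$ gives, at any classical point $e$ of $\frakX_G^{\nil}(\tau_{\leq n}R)$, an identification
\[
\mathrm{fib}_e\bigl(\frakX_G^{\nil}(R) \to \frakX_G^{\nil}(\tau_{\leq n}R)\bigr) \simeq \Map\bigl(T^*_e(\frakX_G^{\nil}),\, \pi_{n+1}(R)[n+2]\bigr).
\]
Writing $R = \varinjlim_i R_i$ as a filtered colimit in $\tau_{\leq n+1}\textbf{Nilp}_{\bZ_p}$, both $\tau_{\leq n}$ and $\pi_{n+1}$ commute with filtered colimits of animated rings, so the induction hypothesis handles the base $\frakX_G^{\nil}(\tau_{\leq n}R)$ and the set of classical points. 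It therefore suffices to show that $\Map\bigl(T^*_e(\frakX_G^{\nil}), -\bigr)$ commutes with filtered colimits when applied to shifts of filtered colimits of discrete modules.

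This in turn reduces to the claim that $T^*_e(\frakX_G^{\nil})$ is an almost perfect complex. I would obtain this by combining three ingredients: first, the \v{C}ech presentation $\frakX_G(S) \simeq \varprojlim_{\bullet} BG(\frakS^{\bullet} \widehat\otimes^{\bL} S[\frac{1}{E}])^{\varphi=1}$ from Lemma~\ref{flat descent}, which exhibits $\frakX_G^{\nil}$ as a finite totalisation of pullbacks of $BG$; second, the fact that $BG$ is a derived Artin stack locally almost of finite presentation, so its cotangent complex at any point is almost perfect; and third, that almost perfectness is preserved under the base changes $\frakS^n \widehat\otimes^{\bL}(-)[\frac{1}{E}]$ appearing in the totalisation and under the passage to $\varphi$-invariants, which is a finite equaliser. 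Concretely, at a classical point this identifies $T^*_e(\frakX_G^{\nil})$ with a shift of the Herr complex of the adjoint \'etale $(\varphi,\Gamma)$-module attached to $e$, in line with the remark following Theorem~\ref{classicality}. Since mapping out of an almost perfect complex into a shift of discrete coefficients commutes with filtered colimits, the inductive step closes. The main obstacle is the rigorous bookkeeping for the cotangent complex of $\frakX_G^{\nil}$ without Tannakian input, and in particular verifying that almost perfectness is preserved through the interaction of the transversal prismatic limit, the derived completed tensor product, and the $\varphi$-equaliser used to form $\frakX_G^{\nil}$.
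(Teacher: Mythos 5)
Your skeleton (induction on truncation degree, base case via Theorem \ref{classical} and Theorem \ref{main-1}, square-zero extensions and the fibre $\Map(T^*_e,\pi_{n+1}(R)[n+2])$ in the inductive step) matches the paper's argument. The gap is in how you close the inductive step. You reduce everything to the claim that $T^*_e(\frakX_G^{\nil})$ is an almost perfect complex of $R$-modules at an arbitrary point $e$ valued in a truncated animated ring, identified with a shifted dual Herr complex, and that almost perfectness survives ``passage to $\varphi$-invariants.'' Neither claim is available at this stage. The finiteness of the cohomology of the Herr complex over the coefficient ring (equivalently, finite generation of $\varphi$- and $(\varphi,\Gamma)$-invariants) is only known for \emph{finite type} discrete coefficient rings, via the Emerton--Gee lattice arguments; the paper proves $T^*_e$ is a genuine (perfect) complex only at such points (Proposition \ref{cot-2}), and the whole purpose of the present proposition is to \emph{earn} the reduction to finite type points. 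Asserting almost perfectness of $T^*_e(\frakX_G^{\nil})$ over general $R$ therefore either begs the question or invokes a finiteness statement that is false in the generality you need (the paper explicitly flags, in a remark after Theorem \ref{main-1}, that Frobenius invariants of finitely generated \'etale $\varphi$-modules need not be finite over the coefficient ring).

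Relatedly, ``mapping out of an almost perfect complex commutes with filtered colimits'' does not address the actual difficulty. After unwinding along the \v{C}ech nerve of the Breuil--Kisin prism, the coefficients are $\frakS^j\widehat\otimes\pi_{n+1}(R)[\frac{1}{E}][n+2]$, and the $E$-completed tensor product does \emph{not} commute with the filtered colimit $R=\varinjlim_i R_i$. The commutation only holds after taking $\varphi$-invariants: the paper chooses $\varphi$-stable finite free lattices in the (finite projective, level-$j$) cotangent complexes $T^*_{e_j}(BG)$ and invokes \cite[Lemma 2.24]{Min23}, an Artin--Schreier-type statement comparing $(\varinjlim_i(-))^{\wedge}_E$ with the completion of the colimit on $\varphi$-invariants. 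That lattice argument, applied levelwise on the \v{C}ech nerve where the cotangent complex of $BG$ genuinely is finite projective, is the substantive content of the proof; it is exactly the ``interaction of the completed tensor product and the $\varphi$-equaliser'' that you name as the main obstacle but do not resolve. (A minor further point: the \v{C}ech presentation is an infinite totalisation, not a finite one; it is only effectively finite here because the mapping spectra involved are bounded.)
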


\begin{proof}
     By definition, this means we need to prove $\frakX_G^{\rm nil}$ preserves filtered colimit when restricted to $\textbf{Nilp}_{\bZ_p}^{\leq n}$ for each $n\geq 0$.    Write $R=\varinjlim_iR_i\in\textbf{Nilp}_{\bZ_p}^{\leq n} $ as a filtered colimit. We then need to prove $\varinjlim_i\frakX_G(R_i)\simeq\frakX_G(R)$. We argue by induction. When $R$ is discrete, this is true by Corollary \ref{classical}. 
     
     Now assume this is true for all animated rings in $\textbf{Nilp}_{\bZ_p}^{\leq n}$ for some $n\geq 0$. Let $R\in \textbf{Nilp}_{\bZ_p}^{\leq n+1}$. Let $e:\Spec(\tau_{\leq n}R)\to \frakX_G$. Then by the definition of pro-cotangent complex, we have
     \begin{equation*}
         \xymatrix{
         \Map(T^*_e(\frakX_G),\pi_{n+1}(R)[n+2])\ar[r]\ar[d]& {*}_e\ar[d]\\
         \frakX_G(\tau_{\leq n}R\oplus\pi_{n+1}(R)[n+2])\ar[r]&\frakX_G(\tau_{\leq n}R).
         }
     \end{equation*}
     We have similar diagrams for all $R_i$. Note that $\tau_{\leq n}R\simeq \varinjlim_i\tau_{\leq n}R_i$ and $\pi_{n+1}(R)\cong \varinjlim_i\pi_{n+1}(R_i)$.

     using the Breuil--Kisin prism $(\frakS,(E))$ and the associated \v Cech nerve $(\frakS^{\bullet},(E))$, we see that
     \[
     \Map(T^*_e(\frakX_G),\pi_{n+1}(R)[n+2])\simeq \varprojlim_j\Map(T^*_{e_j}(BG),\frakS^j\widehat\otimes\pi_{n+1}(R)[\frac{1}{E}][n+2])^{\varphi=1}
     \]
     where $e_j:\Spec(\frakS^j\widehat\otimes^{\bL}\tau_{\leq n}R[\frac{1}{E}])\to BG$ is the point corresponding to $e:\Spec(\tau_{\leq n}R)\to \frakX_G$.

     Again, all these hold similarly for all $R_i$. By the same argument as the proof of Lemma \ref{all}, it suffices to prove
     \[
      \Map(T^*_e(\frakX_G),\pi_{n+1}(R)[n+2])\simeq \varinjlim_i \Map(T^*_{e^i}(\frakX_G),\pi_{n+1}(R_i)[n+2])
     \]
     where the filtered colimit goes through all $i$ such that the point $e$ comes from $e^i\in \frakX_G(R_i)$.

     Hence it suffices to prove for each $j$, we have
     \begin{equation}\label{ij}
     \Map(T^*_{e_j}(BG),\frakS^j\widehat\otimes\pi_{n+1}(R)[\frac{1}{E}][n+2])^{\varphi=1}\simeq \varinjlim_i\Map(T^*_{e^i_j}(BG),\frakS^j\widehat\otimes\pi_{n+1}(R_i)[\frac{1}{E}][n+2])^{\varphi=1}.
     \end{equation}

     Note that $T^*_{e_j}(BG)$ (resp. $T^*_{e^i_j}(BG)$) is a finite projective module over $\frakS^j\widehat\otimes^{\bL}\tau_{\leq n}R[\frac{1}{E}]$ (resp. $\frakS^j\widehat\otimes^{\bL}\tau_{\leq n}R_i[\frac{1}{E}]$) and $T^*_{e^i_j}(BG)\otimes \frakS^j\widehat\otimes^{\bL}(\tau_{\leq n}R[\frac{1}{E}])\simeq T^*_{e_j}(BG)$.

As $R$ is the filtered colimit $\varinjlim_iR_i$, we have \[\frakS^j\widehat\otimes \tau_{\leq n}R\simeq (\varinjlim_i\frakS^j\widehat\otimes^{\bL}\tau_{\leq n}R_i)^{\wedge}_{E}\] and \[\frakS^j\widehat\otimes \pi_{n+1}(R)\simeq (\varinjlim_i\frakS^j\widehat\otimes^{\bL}\pi_{n+1}(R_i))^{\wedge}_{E}.\]

Without loss of generality, we may assume $T^*_{e_j}(BG), T^*_{e^i_j}(BG)$ are all finite free (cf. \cite[Lemma 5.2.14]{EG21}). In this case, we can find $\varphi$-stable finite free lattice in $T^*_{e^i_j}(BG)$ as in the proof of \cite[Proposition 3.31]{Min23}. Write $T^*_{e^i_j}(BG)^{\circ}$ for the finite free lattice. Then we can get
\[
(\varinjlim_i (T^*_{e^i_j}(BG)^{\circ})^{\vee}\otimes (\frakS^j\widehat\otimes\pi_{n+1}(R_i)[n+2]))^{\wedge}_E\simeq (T^*_{e_j}(BG)^{\circ})^{\vee}\otimes (\frakS^j\widehat\otimes\pi_{n+1}(R)[n+2]).
\]
Then \ref{ij} follows from \cite[Lemma 2.24]{Min23}.
\end{proof}

Now using \cite[Proposition 3.30]{Min23}, we are able to reduce the comparison of $\frakX_G^{\nil}$ and $(\Lan\calX_G)^{\#,\rm nil}$ to the comparison of their pro-cotangent complexes $T^*_e(\frakX_G^{\nil})$ and  $T^*_e((\Lan\calX_G)^{\#,\rm nil})$ at all classical points $e:\Spec(R)\to {^{\rm cl}\frakX_G}\simeq \calX_G$ with $R$ being finite type over $\bZ_p$.

In order to reduce further to the case where $R$ is a finite field, we need to show both $T^*_e(\frakX_G^{\nil})$ and  $T^*_e((\Lan\calX_G)^{\#,\rm nil})$ are indeed cotangent complexes as in \cite[Proposition 3.38 and 3.40]{Min23}, which is our \textbf{Step 3}.

\begin{prop}\label{cot-1}
    Let $e:\Spec(R)\to (\Lan\calX_G)^{\#,\rm nil}$ be a classical point with $R$ being a finite type algebra over $\bZ/p^a$ for some $a$. Then the pro-cotangent complex $T^*_{e}((\Lan\calX_G)^{\#,\rm nil})$ is a complex of $R$-modules.
\end{prop}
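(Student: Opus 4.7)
The approach is to leverage that $\calX_G$ is a formal algebraic stack locally of finite presentation (Theorem \ref{main-1}) and reduce the computation to a formal affine atlas $\Spf(A)$. The plan is then to identify the pro-cotangent complex with the $J$-adically completed cotangent complex of $A$, which becomes a single complex (not genuinely pro) when base-changed to the $J$-nilpotent ring $R'$ lifting $R$ along a smooth cover.

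First, I would choose a smooth atlas $\pi\colon \Spf(A) \to \calX_G$ with $A$ topologically of finite presentation over $\bZ_p$ and ideal of definition $J \supseteq (p)$. \'Etale-locally on $\Spec(R)$, the point $e$ lifts to $\tilde e\colon \Spec(R') \to \Spf(A)$; since $R'$ is a finite-type $\bZ/p^a$-algebra, it is $J$-nilpotent, and $\tilde e$ factors through $\Spec(A/J^{n_0})$ for some $n_0$. Using the cotangent triangle for the smooth morphism $\pi$, whose relative part is a shift of a finite-rank locally free sheaf, it suffices to prove the claim for $(\Lan \Spf(A))^{\#,\nil}$ at $\tilde e$.

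Next, since left Kan extension commutes with colimits and $\Spf(A) = \colim_n \Spec(A/J^n)$, one has $\Lan \Spf(A) \simeq \colim_n \Spec(A/J^n)$ as derived prestacks, a description which $\#$ and $\nil$ preserve on truncated rings. The pro-cotangent complex at $\tilde e$ is thus presented by the pro-system $\{L_{(A/J^n)/\bZ_p} \otimes^{\bL}_{A/J^n} R'\}_{n \geq n_0}$. I would then show that this pro-system represents the same functor on $R'$-modules as the single complex $L_{A/\bZ_p}^{\wedge} \otimes^{\bL}_A R'$, where $L_{A/\bZ_p}^{\wedge}$ denotes the $J$-adically completed cotangent complex of $A$. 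Using the cotangent fiber sequences $L_{A/\bZ_p} \otimes A/J^n \to L_{(A/J^n)/\bZ_p} \to L_{(A/J^n)/A}$, the discrepancy is controlled by the conormal term $L_{(A/J^n)/A}$, whose transition maps in the pro-system involve multiplication by elements of $J$; by the $J$-nilpotence of $R'$, the contribution of this term to mapping spaces vanishes in the colimit over $n$. Since $A$ is topologically of finite presentation, $L_{A/\bZ_p}^{\wedge} \otimes^{\bL}_A R'$ is almost perfect over $R'$.

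The main obstacle will be the rigorous verification of the vanishing of the conormal contribution in the colimit, especially since $J$ need not be generated by a regular sequence and the conormal complex $L_{(A/J^n)/A}$ may have cohomology in multiple degrees. Once this is settled, the pro-cotangent complex at $e$ becomes (\'etale-locally on $R$, hence by descent globally) a single complex of $R$-modules, as required.
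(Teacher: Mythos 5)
Your proposal is correct and takes essentially the same route as the paper, which reduces étale-locally to a smooth atlas $\bigsqcup_i\Spf(A_i)$ of the formal algebraic stack $\calX_G$ (noting that a finite type point factors through finitely many charts) and then invokes \cite[Proposition 3.38]{Min23} for exactly the ind-affine computation you describe. The obstacle you flag --- pro-vanishing of $\{L_{(A/J^n)/A}\otimes^{\bL}R'\}_n$ --- is handled by the standard Noetherian pro-isomorphism $\{L_{(A/J^n)/A}\}_n\simeq\{J^n/J^{2n}[1]\}_n$ together with Artin--Rees and the $J$-nilpotence of $R'$, so it is not a genuine gap.
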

\begin{proof}
    The proof is actually the same as that of \cite[Proposition 3.38]{Min23}. The only difference is that we do not know $\calX_G$ is quasi-compact or not. So we only have a smooth covering $\calY=\bigsqcup_i\Spf(A_i)\to \calX_G$ where each $\Spf(A_i)$ is a Noetherian affine formal scheme. But for any point $e:\Spec(R)\to \calX_G$, it can factor through $\calY=\bigsqcup_i\Spf(A_i)\to \calX_G$ \'etale locally. And any map $\Spec(A)\to \calY=\bigsqcup_i\Spf(A_i)$ must factor through some finite unions of $\Spf(A_i)$. So we are indeed in the same case as \cite[Proposition 3.38]{Min23}.
\end{proof}

\begin{prop}\label{cot-2}
 Let $e:\Spec(R)\to \frakX_G^{\nil}$ be a classical point with $R$ being a finite type algebra over $\bZ/p^a$ for some $a$. Then the pro-cotangent complex $T^*_{e}(\frakX_G^{\nil})$ is a complex of $R$-modules, which is exactly the shift by $(-1)$ of the dual of the Herr complex associated to the adjoint of the \'etale $(\varphi,\Gamma)$-module with $G$-structure corresponding to $e$.
\end{prop}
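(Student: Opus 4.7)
The plan is to adapt the argument of \cite[Proposition~3.40]{Min23} (which treats $G=\GL_d$) to general $G$, with the rank-$d$ module replaced by the adjoint \'etale $(\varphi,\Gamma)$-module $\mathrm{ad}(\mathbf{M})$, where $\mathbf{M}$ is the \'etale $(\varphi,\Gamma)$-module with $G$-structure over $\bfA_R$ corresponding to $e$ under Lemma~\ref{all}. Concretely, for any finite $R$-module $N$ and $j\geq 0$, one has
\[
\Map_R\bigl(T^*_e(\frakX_G^{\nil}),N[j]\bigr)\simeq \mathrm{fib}\bigl(\frakX_G^{\nil}(R\oplus N[j])\to \frakX_G^{\nil}(R)\bigr)_{e},
\]
so the goal is to rewrite this fiber as a mapping space out of the $(-1)$-shifted dual of the Herr complex of $\mathrm{ad}(\mathbf{M})$.

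The first step is to use the Breuil--Kisin prism $(\frakS,(E))$, which covers the transversal prismatic site, to present
\[
\frakX_G(R')\simeq \varprojlim_{[n]\in\Delta}BG\bigl(\frakS^n\widehat\otimes^{\bL}R'[\tfrac{1}{E}]\bigr)^{\varphi=1}
\]
as in Lemma~\ref{flat descent}. For each cosimplicial level $n$, set $S_n:=\frakS^n\widehat\otimes R[\tfrac{1}{E}]$ and let $\calP_n$ be the $G$-torsor over $S_n$ corresponding to the induced point $e_n:\Spec(S_n)\to BG$. The cotangent complex of $BG$ at $e_n$ is $\mathrm{ad}(\calP_n)^{\vee}[-1]$, and taking $\varphi$-fixed points produces the $2$-term fiber sequence
\[
\mathrm{ad}(\calP_n)^{\vee}[-1]\xrightarrow{\varphi^*-1}\mathrm{ad}(\calP_n)^{\vee}[-1]\to T^*_{e_n}\bigl(BG(S_n)^{\varphi=1}\bigr).
\]

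Totalising the cosimplicial object in $n$ and applying the tensor-exact equivalence $\Vect((\calO_K)_{\Prism},\calO_{\Prism,R}[\tfrac{1}{\calI_{\Prism}}])^{\varphi=1}\simeq \Mod^{\varphi,\Gamma}(\bfA_R)$ of Lemma~\ref{all}, the adjoint $G$-torsor on the prismatic side is matched with the adjoint $(\varphi,\Gamma)$-module $\mathrm{ad}(\mathbf{M})$ over $\bfA_R$, while the remaining cosimplicial differential becomes $\gamma-1$. The resulting total complex is exactly the $(-1)$-shifted dual of the Herr complex
\[
C^{\bullet}(\mathrm{ad}(\mathbf{M})):\ \mathrm{ad}(\mathbf{M})\xrightarrow{(\varphi-1,\gamma-1)}\mathrm{ad}(\mathbf{M})^{\oplus 2}\xrightarrow{(\gamma-1,1-\varphi)}\mathrm{ad}(\mathbf{M}).
\]
For the finiteness statement, $\mathbf{M}$ is finite projective with $G$-structure and $R$ is of finite type over $\bZ/p^a$, so $\mathrm{ad}(\mathbf{M})$ is a finitely generated \'etale $(\varphi,\Gamma)$-module over $\bfA_R$; by the argument of \cite[Theorem~5.1.22]{EG22} invoked in Proposition~\ref{obstruction-theory-valid} (reduction to a $(\varphi,\Gamma)$-stable lattice, then to a perfect complex over $R$), its Herr complex is perfect over $R$, so $T^*_e(\frakX_G^{\nil})$ is a genuine complex rather than a pro-complex, concentrated in cohomological degrees $[-1,1]$, equal to $C^{\bullet}(\mathrm{ad}(\mathbf{M}))^{\vee}[-1]$.

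The main obstacle will be rigorously identifying the cosimplicial differential coming from the Breuil--Kisin \v Cech nerve with the $\gamma-1$ operator on the $(\varphi,\Gamma)$-module side. Since no Tannaka duality is available in the animated setting, one cannot transfer the identification at the stack level; instead the comparison has to be carried out at the level of the already linear cotangent complexes, using Lemma~\ref{all} together with the faithfully flat descent of Lemma~\ref{flat descent} to match the two linear cosimplicial objects and their total complexes.
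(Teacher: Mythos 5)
Your proposal is correct and follows essentially the same route as the paper: the paper's proof simply invokes the argument of \cite[Proposition 3.40]{Min23} together with the single new input that $T^*_e(BG)\simeq \mathrm{ad}(M_e)[-1]$ (the shifted coadjoint bundle of the corresponding $G$-torsor), which is exactly the reduction you carry out by replacing the rank-$d$ module with $\mathrm{ad}(\mathbf{M})$ and totalising over the Breuil--Kisin \v Cech nerve. Your finiteness argument via a $(\varphi,\Gamma)$-stable lattice is likewise consistent with how the paper establishes perfectness of Herr complexes elsewhere.
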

\begin{proof}
    The proof is the same as that of \cite[Proposition 3.40]{Min23} once we know that for any point $e:\Spec(A)\to BG$, the cotangent complex $T^*_e(BG)$ is given by $ad(M_e)[-1]$ where $M_e$ is the corresponding $G$-torsor over $A$ and $ad(M_e)$ is the finite projective module over $A$ corresponding to the vector bundle $ M_e\times^G\frakg^\vee$ with $\frakg$ the Lie algebra of $G$ (cf. \cite[Section 2.4]{Khan24}).
\end{proof}

\subsection{Derived Laurent $F$-crystals and derived representations}
We have finished the first three steps of our strategy. Next we start dealing with \textbf{Step 4}. As part of it, we need to relate $\frakX_G^{\nil}$ to derived representations. This is achieved by first comparing derived Laurent $F$-crystals on the absolute prismatic site to those on the perfect absolute prismatic site. Let us begin with the precise definition of the latter.

\begin{dfn}
Let $(\calO_K)_{\Prism}^{\rm perf, tr}$ be the perfect transveral absolute prismatic site, whose objects are the transveral perfect prisms in $(\calO_K)_{\Prism}^{\rm perf}$ and topology is still the flat topology. For any animated ring $R\in \textbf{Nilp}^{<\infty}_{\bZ_p}$, define\[
    \frakX_G^{\rm perf}(R):=\varprojlim_{(A,I)\in (\calO_K)_{\Prism}^{\rm perf, tr}}BG(A\widehat\otimes^{\bL}R[\frac{1}{I}])^{\varphi=1},
    \]
    which is $\infty$-category of derived Laurent $F$-crystals with $G$-structure on the perfect transveral absolute prismatic site 
\end{dfn}
Then we have the following result.
\begin{prop}\label{perf-G}
      Let $R$ be an animated ring in $\textbf{Nilp}_{\bZ_p,\rm ft}^{\leq n}$ for some $n$. Then there is an equivalence 
      \[
      \frakX_G(R)\simeq \frakX_G^{\rm perf}(R).
      \]
\end{prop}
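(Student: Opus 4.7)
There is a natural restriction morphism $\alpha\colon\frakX_G(R)\to \frakX_G^{\rm perf}(R)$ induced by the inclusion of sites $(\calO_K)_{\Prism}^{\rm perf,tr}\hookrightarrow (\calO_K)_{\Prism}^{\rm tr}$. The plan is to show $\alpha$ is an equivalence by induction on the truncation degree $n$ of $R\in \textbf{Nilp}_{\bZ_p,{\rm ft}}^{\leq n}$.

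For the base case where $R$ is discrete, Tannaka duality is available, so as in the proof of Theorem~\ref{classical} the left hand side is equivalent to $\Fun^{\rm ex,\otimes}(\Rep(G),\Vect((\calO_K)_{\Prism},\calO_{\Prism,R}[\frac{1}{\calI_{\Prism}}])^{\varphi=1})$ and the right hand side to the analogous category on $(\calO_K)_{\Prism}^{\rm perf}$. The equivalence between the two underlying exact $\otimes$-categories of Laurent $F$-crystals is precisely the content of the analogous $\GL_d$-result (and its extension to arbitrary ranks) established in \cite{Min23}, which in turn uses the $I$-completely faithfully flat descent along transversal prisms to perfect prisms. Passing to $\Fun^{\rm ex,\otimes}(\Rep(G),-)$ preserves this equivalence.

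For the inductive step, assume the statement on $\textbf{Nilp}^{\leq n}_{\bZ_p,\rm ft}$ and take $R\in \textbf{Nilp}^{\leq n+1}_{\bZ_p,\rm ft}$. Because $BG$ admits a deformation theory, so do $\frakX_G$ and $\frakX_G^{\rm perf}$ (the argument being identical to the one used to prove $\frakX_G^{\nil}$ has a deformation theory). Applying both sides to the square-zero pullback diagram \eqref{truncation} and unfolding the definition of the pro-cotangent complex, we reduce to comparing the mapping spaces
\[
\Map(T^*_e(\frakX_G),\pi_{n+1}(R)[n+2])\longrightarrow \Map(T^*_e(\frakX_G^{\rm perf}),\pi_{n+1}(R)[n+2])
\]
at any classical point $e\colon \Spec(\tau_{\leq n}R)\to \frakX_G$. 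Using the limit descriptions of the two derived stacks, each of these pro-cotangent complexes can be written as a limit (over the respective prismatic site) of the cotangent complexes of $BG(A\widehat\otimes^{\bL}\tau_{\leq n}R[\tfrac{1}{I}])^{\varphi=1}$, which by \cite[Section 2.4]{Khan24} are shifts of the adjoint representation $\mathrm{ad}(P_{e_A})[-1]$ evaluated against the $\varphi=1$ equaliser, i.e.\ essentially the shifted adjoint Herr-type complex. The comparison of these limits across the two sites is then exactly the Min23 comparison applied rank by rank to the finite projective modules $\mathrm{ad}(P_{e_A})$; since $\mathrm{ad}$ is a tensorial construction, it comes along from the $\GL_d$-case.

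The main technical obstacle will be ensuring that the limits over $(\calO_K)_{\Prism}^{\rm tr}$ and over $(\calO_K)_{\Prism}^{\rm perf,tr}$ computing the pro-cotangent complexes genuinely agree on the adjoint complex -- this is where the flat descent arguments of Lemma~\ref{flat descent} together with \cite[Lemma 2.24]{Min23} enter, and one must check that passage to $\varphi=1$ commutes with the relevant filtered colimits and cofiber constructions needed to identify the cotangent complex with its explicit adjoint-Herr form. Once that identification is in place, the comparison with the perfect side is formal from the $\GL_d$ result, and the inductive step closes.
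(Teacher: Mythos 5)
Your proposal follows essentially the same route as the paper: induction on the truncation degree, Tannaka duality plus the $\GL_d$-comparison of \cite{Min23} in the discrete case, and linearisation via the cotangent complex of $BG$ in the inductive step. The paper merely makes the site comparison concrete by totalising over the \v Cech nerve of the Breuil--Kisin prism and its perfection (where one needs full faithfulness in positive cosimplicial degrees and an equivalence only in degree zero), but this is a presentational rather than a substantive difference.
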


\begin{proof}
   Using the Breuil--Kisin prism $(\frakS,(E))$ and its perfection $(A_{\infty},(E))$, we have 
   \[
   \frakX_G(R)=\varprojlim_iBG(\frakS^i\widehat\otimes^{\bL}R[\frac{1}{E}])^{\varphi=1}
   \]
   and
   \[
   \frakX_G^{\rm perf}(R)\simeq\varprojlim_iBG(A_{\infty}^i\widehat\otimes^{\bL}R[\frac{1}{E}])^{\varphi=1}.
   \]
   It suffices to prove that
   \[
   BG(\frakS^i\widehat\otimes^{\bL}R[\frac{1}{E}])^{\varphi=1}\to BG(A_{\infty}^i\widehat\otimes^{\bL}R[\frac{1}{E}])^{\varphi=1}
   \]
   is fully faithful for all $i$ and is an equivalence when $i=0$.

   We argue by induction. When $R$ is discrete, we can use Tannakian duality. For each $n$, we have \[
   BG(\frakS^i\widehat\otimes R[\frac{1}{E}])^{\varphi=1}\simeq \Fun^{\rm ex,\otimes}(\Rep(G),\Vect(\frakS^i\widehat\otimes R[\frac{1}{E}])^{\varphi=1})\]
   and
   \[
   BG(A_{\infty}^i\widehat\otimes R[\frac{1}{E}])^{\varphi=1}\simeq \Fun^{\rm ex,\otimes}(\Rep(G),\Vect(A_{\infty}^i\widehat\otimes R[\frac{1}{E}])^{\varphi=1}).
   \]
By the proof of \cite[Proposition 2.23]{Min23}, we know that the functor $\Vect(\frakS^i\widehat\otimes R[\frac{1}{E}])^{\varphi=1}\to \Vect(A_{\infty}^i\widehat\otimes R[\frac{1}{E}])^{\varphi=1}$ is fully faithful for all $i$ and is an equivalence when $i=0$. So we are done when $R$ is discrete.

Now assume the result holds for all animated rings in $\textbf{Nilp}_{\bZ_p,\rm ft}^{\leq n}$. Let $R\in \textbf{Nilp}_{\bZ_p,\rm ft}^{\leq n+1}$ and $e\in \frakX_G(\tau_{\leq n}R)\simeq \frakX_G^{\rm perf}(R)$. Then by using the cotangent complex, it suffices to prove
\[
\varprojlim_j\Map(T^*_{e_i}(BG),\frakS^i\widehat\otimes\pi_{n+1}(R)[\frac{1}{E}][n+2])^{\varphi=1}\simeq \varprojlim_j\Map(T^*_{\tilde e_i}(BG),A_{\infty}^i\widehat\otimes\pi_{n+1}(R)[\frac{1}{E}][n+2])^{\varphi=1}
\]
where $e_i\in BG(\frakS^i\widehat\otimes\tau_{\leq n}R[\frac{1}{E}])^{\varphi=1}$ and $e_i\in BG(A_{\infty}^i\widehat\otimes\tau_{\leq n}R[\frac{1}{E}])^{\varphi=1}$ are the points corresponding to $e$. By the same argument as in the proof of \cite[Proposition 2.23]{Min23}, we can show \[((T^*_{e_i}(BG))^{\vee}\otimes(\frakS^i\widehat\otimes\pi_{n+1}(R)[\frac{1}{E}][n+2]))^{\varphi=1}\simeq ((T^*_{\tilde e_i}(BG))^\vee\otimes (A_{\infty}^i\widehat\otimes\pi_{n+1}(R)[\frac{1}{E}])[n+2]))^{\varphi=1}.\] So we are done.
\end{proof}

Now we could proceed further to relate Laurent $F$-crystals to derived representations. For readers' convenience, we recall the unframed derived local Galois deformation functor defined in \cite[Definition 5.4]{GV18}.

\begin{cons}[Unframed derived local Galois deformation functor]
    Let $\bar \rho:G_K\to G(k_f)$ be a residual representation. We can define a functor $\calF_{K,G}:{\textbf{Art}}\to \textbf{Ani}$ as
    \[
    \calF_{K,G}(R):=\Map_{\textbf{Ani}}(|G_K|,|G(R)|):=\varinjlim_i\Map_{\textbf{Ani}}(|G_i|,|G(R)|)
    \]
    where $G_K=\varprojlim_iG_i$ is the inverse limit of finite quotient groups and $|\cdot|$ means geometric realization of the corresponding simplicial anima, i.e. $|G_i|=\varinjlim_jG_i^j$ and $|G(R)|=\varinjlim_j\GL_d^j(R)=\varinjlim_j\Map(\calO(G)^{\otimes j},R))$ . Then the unframed derived Galois deformation functor is $\calF_{K,G,\bar\rho}:{\textbf{Art}_{/k_f}}\to \textbf{Ani}$ is defined as
    \[
   \calF_{K,G,\bar\rho}(R):={\rm Fib}_{\bar\rho}( \calF_{K,G}(R)\to  \calF_{K,G}(k_f))
    \]
\end{cons}

The next proposition shows that Laurent $F$-crystals are the same as derived representations when the coefficient rings are Artinian local animated rings.

\begin{prop}\label{derivedrep-laurent}
Asssume $G$ is a connected reductive group over $\bZ_p$.    Let $R$ be an Artinian local ring in $\textbf{Nilp}_{\bZ_p}^{\leq n}$ for some $n$, whose residue field is finite. There is an equivalence
    \[
    \frakX_G(R)\simeq \calF_{K,G}(R). 
    \]
\end{prop}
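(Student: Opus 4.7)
The strategy is to combine Proposition~\ref{perf-G} with an inductive argument on the truncation level of $R$, using the connected reductive hypothesis to trivialize $G$-torsors appearing along the way. By Proposition~\ref{perf-G}, one has $\frakX_G(R)\simeq \frakX_G^{\rm perf}(R)$. Writing $A_\infty$ for the perfection of the Breuil--Kisin prism and $A_\infty^\bullet$ for the \v Cech nerve in the perfect transversal site, the right hand side becomes
\[
\frakX_G^{\rm perf}(R)\simeq \varprojlim_{i} BG\bigl((A_\infty^i\widehat\otimes^{\bL}_{\bZ_p}R)[\tfrac{1}{E}]\bigr)^{\varphi=1}.
\]
The cosimplicial direction records the continuous $G_K$-action on the relevant period ring, so once all $G$-torsors in the diagram are trivializable the totalization unfolds explicitly as the anima of continuous $\varphi$-equivariant $G_K$-cocycles with values in $G(R)$ modulo $G(R)$-conjugation, which is by construction $\calF_{K,G}(R)$.

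For the base case $R$ discrete, Theorem~\ref{classical} identifies $\frakX_G(R)$ with $\calX_G(R)$, the groupoid of \'etale $(\varphi,\Gamma)$-modules with $G$-structure. Via the Tannakian formalism and the classical Fontaine equivalence between $\Mod^{\varphi,\Gamma}(\bfA_R)$ and continuous $R$-valued $G_K$-representations, together with the triviality of $G$-torsors over $R$ provided by Lang's theorem on the finite residue field and smoothness lifting along the nilpotent part, this matches the underlying groupoid of $\calF_{K,G}(R)$. For the inductive step, given $R\in \textbf{Nilp}^{\leq n+1}_{\bZ_p}$, both $\frakX_G^{\nil}$ and $\calF_{K,G}$ admit deformation theories compatible with the square-zero pullback~\ref{truncation}, so matters reduce to comparing the pro-cotangent complexes at a classical point $\bar\rho$. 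By Proposition~\ref{cot-2}, $T^*_{\bar\rho}(\frakX_G)$ is the shift of the Herr complex of $\mathrm{ad}(\bar\rho)$; on the Galois side, the cotangent complex of $\calF_{K,G}$ at $\bar\rho$ is the shifted continuous Galois cohomology $R\Gamma_{\rm cts}(G_K,\mathrm{ad}(\bar\rho))$ by the standard derived Galois deformation theory of~\cite{GV18}. These two complexes coincide because the Herr complex computes continuous Galois cohomology for \'etale $(\varphi,\Gamma)$-modules.

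The hard part, in my view, is justifying the unfolding of $\varprojlim_i BG\bigl((A_\infty^i\widehat\otimes^{\bL}R)[\tfrac{1}{E}]\bigr)^{\varphi=1}$ into an explicit anima of derived Galois cocycles, since no Tannaka duality is available for animated rings to handle $BG$ directly on the nose. This is precisely where the connected reductive hypothesis and the finite residue field condition are essential: they guarantee that for such $R$, every $G$-torsor appearing along the \v Cech nerve is trivial up to unique automorphism (Lang plus lifting along nilpotents, propagated through the derived truncations by the smoothness of $BG$), so the totalization genuinely unfolds in terms of $G(R)$-valued cocycles rather than requiring abstract tensor-functorial reasoning. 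Once this is in place, the comparison of cotangent complexes closes the induction cleanly.
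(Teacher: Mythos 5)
Your overall skeleton matches the paper's: pass to the perfect transversal site via Proposition \ref{perf-G}, use Lang's theorem propagated through the Postnikov tower to trivialize all $G$-torsors (the paper proves $\pi_0(BG(R))=\{*\}$ by exactly your ``Lang plus lifting along nilpotents'' induction, via $\pi_0(G(k_f\oplus k_f[m]))=G(k_f)$), and then induct on the truncation degree using cotangent complexes. The gap is in how you close the induction. You assert that ``matters reduce to comparing the pro-cotangent complexes at a classical point,'' but that reduction presupposes a map of prestacks (or a zigzag through a common intermediate) between $\frakX_G$ restricted to Artinian local animated rings and $\calF_{K,G}$: an abstract isomorphism between the Herr complex of $\mathrm{ad}(\bar\rho)$ and $R\Gamma_{\mathrm{cts}}(G_K,\mathrm{ad}(\bar\rho))$ does not by itself identify the two spaces of liftings through a square-zero extension compatibly with the identification already made over $\tau_{\leq n}R$. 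Constructing that comparison is the actual content of the proof, and you never produce it --- your ``unfolding'' of $\varprojlim_i BG((A_\infty^i\widehat\otimes^{\bL}R)[\frac{1}{E}])^{\varphi=1}$ into an anima of Galois cocycles is asserted, not established, and no Tannakian shortcut is available once $R$ is non-discrete.

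The paper fills this in by introducing an explicit intermediate object $BG(R,G_K):=\varprojlim BG(C(G_K^{\bullet},\bZ/p^a)\otimes R)$ built from the cosimplicial ring of continuous functions on $G_K^{\bullet}$, which maps both to $\frakX_G^{\rm perf}(R)$ and to $|G(R,G_K)|\simeq\calF_{K,G}(R)$. The comparison with the prismatic side is then linearised, level by level in the cosimplicial direction and degree by degree in the Postnikov tower, using the cotangent complex of $BG$ together with the Artin--Schreier-type exact sequence
\[
0\to C(G_K^j,\bZ/p^a)\to C(G_K^j,A_{\inf}[\tfrac{1}{\xi}]/p^a)\xrightarrow{\varphi-1}C(G_K^j,A_{\inf}[\tfrac{1}{\xi}]/p^a)\to 0,
\]
which is what actually converts $\varphi$-fixed points over the period ring into $\bZ/p^a$-coefficients. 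If you want to salvage your route, you would need to exhibit this (or an equivalent) natural comparison before any cotangent-complex argument can be invoked; your appeal to Proposition \ref{cot-2} and to ``the Herr complex computes Galois cohomology'' only compares the two sides after the comparison map already exists.
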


\begin{proof}
    We use the same strategy as the proof of \cite[Proposition 3.47]{Min23}. We still define an anima $BG(R,G_K)^{\simeq}$ as follows 
\[
BG(R,G_K)^{\simeq}:=\varprojlim BG(R)^{\simeq}\rightrightarrows BG(C(G_K,\bZ/p^a)\otimes R)^{\simeq}\rightthreearrow\cdots BG(C(G_K^{n+1},\bZ/p^a)\otimes R)^{\simeq}
\]
where the cosimplicial animated rings $C(G_K^{\bullet},\bZ/p^a)\otimes R$ is defined using the trivial $G_K$-action on $\bZ/p^a$.

We will then compare both $\frakX_G(R)\simeq \frakX_G^{\rm perf}(R)$ and $ \calF_{K,G}(R)$ with $BG(R,G_K)^{\simeq}$. But we need some new inputs.

    We claim that $\pi_0(BG(R))=\{*\}$, i.e. there is a unique $G$-torsor over $R$ up to equivalences. To prove this, recall that for any Artinian local ring with finite residue field in $\textbf{Nilp}_{\bZ_p}^{\leq n}$, there exists a sequence $R=R_n\to R_{n-1}\to\cdots R_1\to R_0=k_f$, where $k_f$ is a finite field, such that there is an equivalence from $R_i$ to the homotopy pullback of a diagram $A_{i-1}\to k_f\oplus k_f[m_i]\xleftarrow{}k_f$ for some $m_i\geq 1$.

Note that for any $m\geq 1$, we have $\pi_0(G(k_f\oplus k_f[m])=G(k_f)$, which follows from that $G(k_f\oplus k_f[m])\simeq \bigsqcup_{e\in G(k_f)}\Map(T^*_e(G),k_f[m])$ and $\pi_0(\Map(T^*_e(G),k_f[m]))=\{*\}$.  
    
    Now assume $\pi_0(BG(R_i))=*$. We have the pullback diagram by \cite[Chapter 1, Proposition 7.2.2]{GR17}
    \begin{equation*}
        \xymatrix{
        BG(R_{i+1})\ar[r]\ar[d]&BG(k_f)\ar[d]\\
        BG(R_{i})\ar[r]& BG(k_f\oplus k_f[m_{i+1}]).
        }
    \end{equation*}
    Then by $\pi_0(G(k_f\oplus k_f[m_{i+1}])=G(k_f)$ and $\pi_0(BG(R_i))=\{*\}$, we also have $\pi_0(BG(R_{i+1}))=\{*\}$. This finally shows $\pi_0(BG(R))=\{*\}$ as $\pi_0(BG(k_f))=\{*\}$ by Lang's theorem. Then we can use the same argument as in \cite[Proposition 3.47]{Min23} to show $BG(R,G_K)^{\simeq}\simeq \frakX_G^{\perf}(R)$.

    Next in order to use the argument in \cite[Proposition 3.47]{Min23} to prove $BG(R,G_K)^{\simeq}\simeq  \frakX_G^{\perf}(R)$, we need to show that for each $j\in[0,n+1]$, the base change functor \[BG(C(G_K^j,\bZ/p^a)\otimes R)\to BG(C(G_K^j,A_{\inf}[\frac{1}{\xi}]/p^a)\otimes R)^{\varphi=1}\] is fully faithful and is an equivalence when $j=0$. 
    
    Again we argue by induction. When $R$ is discrete, we could use the Tannaka duality. Then this follows from $\Vect(C(G_K^j,\bZ/p^a)\otimes R)\to \Vect(C(G_K^j,A_{\inf}[\frac{1}{\xi}]/p^a)\otimes R)^{\varphi=1}$ is fully faithful for all $j\in [0,n+1]$ and is an equivalence when $j=0$, which has been proved in the proof of \cite[Proposition 3.47]{Min23}.

    Now assume this holds true for all $n$-th truncated rings. Let $R\in \textbf{Nilp}_{\bZ_p}^{\leq n+1}$ and $e:\Spec(C(G_K^j,\bZ/p^a)\otimes\tau_{\leq n}R)\to BG$. As usual, by using cotangent complex, we are reduced to showing
    \[
    \Map(T^*_e(BG), C(G_K^j,\bZ/p^a)\otimes\pi_{n+1}(R))\simeq \Map(T^*_{\tilde e}(BG),C(G_K^j,A_{\inf}[\frac{1}{\xi}]/p^a)\otimes\pi_{n+1}(R))^{\varphi=1}
    \]
    where $\tilde e$ is the corresponding point in $BG(C(G_K^j,A_{\inf}[\frac{1}{\xi}]/p^a)\otimes R)^{\varphi=1}$.

    It suffices to show
    \[
      (T^*_e(BG))^\vee\otimes(C(G_K^j,\bZ/p^a)\otimes\pi_{n+1}(R))\simeq ((T^*_{\tilde e}(BG))^\vee\otimes (C(G_K^j,A_{\inf}[\frac{1}{\xi}]/p^a)\otimes\pi_{n+1}(R)))^{\varphi=1}
    \]
    which then follows from \cite[Sequence 3.8]{Min23}
    \[
    0\to C(G_K^j,\bZ/p^a)\to C(G_K^j,A_{\inf}[\frac{1}{\xi}]/p^a)\xrightarrow{\varphi-1}C(G_K^j,A_{\inf}[\frac{1}{\xi}]/p^a)\to 0.
    \]
\end{proof}

\subsection{The case of connected reductive groups}
Now we are going to finish the \textbf{Step 4}. Let $G$ be a connected reductive group. Fixing a finite-field point $\bar \rho\in \calX_{\rm G}(k_f)\simeq \calF_{K,G}(k_f)$, i.e. a residual representation of the absolute Galois group $G_K$, we need to investigate the functor $(\Lan\calX_{G})^{\#,\rm nil}_{\bar \rho}: {{\textbf{Art}}_{/k_f}}\to \textbf{Ani}$ defined as
\[
R\mapsto {\rm Fib}_{\bar \rho}((\Lan\calX_{G})^{\#,\rm nil}(R)\to (\Lan\calX_{G})^{\#,\rm nil}(k_f)),
\]
and the functor $\frakX_{G,\bar \rho}^{\nil}: {{\textbf{Art}}_{/k_f}}\to \textbf{Ani}$ defined as
\[
R\mapsto {\rm Fib}_{\bar \rho}(\frakX_G^{\nil}(R)\to \frakX_G^{\nil}(k_f)),
\]
where ${{\textbf{Art}}_{/k_f}}$ is the $\infty$-category of Artinian local animated ring $R$ with an identification of $k_f$ with the residue field of $R$. Recall that an animated ring $R$ is called Artinian local if $\pi_0(R)$ is a classical Artinian local ring and $\pi_*(R)$ is a finitely generated $\pi_0(R)$-module. In particular all Artinian local animated rings over $\bZ_p$ are truncated. For any $M\in {\rm Perf}(k_f)^{[n,0]}$, we have $k_f\oplus M$ is an Artinian local animated ring. So if $(\Lan\calX_{G})^{\#}_{\bar \rho}=(\Lan\calX_{G})^{\#,\rm nil}_{\bar \rho}\simeq \frakX^{\nil}_{G,\bar \rho}=\frakX_{G,\bar \rho}$, we can deduce $T^*_{\bar \rho}(\frakX_G^{\rm nil})\to T^*_{\bar \rho}((\Lan\calX_{G})^{\#,\rm nil})$ is an isomorphism.

We get an equivalence of functors $\frakX_{G,\bar \rho}\simeq \calF_{K,G,\bar\rho}: {\textbf{Art}}_{/k_f}\to \textbf{Ani}$ by Proposition \ref{derivedrep-laurent}. Then in order to prove $(\Lan\calX_{G})^{\#}_{\bar \rho}\simeq \frakX_{G,\bar \rho}$, it remains to prove the equivalence between $(\Lan\calX_{G})^{\#}_{\bar \rho}$ and $\calF_{K,G,\bar\rho}$. To this end, we also need to introduce the derived prestack of derived representations in \cite{Zhu20}.
\begin{dfn}[{\cite[Definition 2.4.3]{Zhu20}}]
       We define the derived prestack of framed $G$-valued continuous representations of $G_K$ over $\bZ_p$ as 
    \[
    \calR_{G_K,G}^{c}:{\textbf{Nilp}_{\bZ_p}}\to {\textbf{Ani}}, \ \ \ A\mapsto \varinjlim_r\Map_{{\textbf{Nilp}_{\bZ_p}^{\Delta}}}(\bZ/p^r[G^{\bullet}],C_{\rm cts}(G_K^{\bullet},A)).
    \]
    We can also define the derived prestack $ \calR_{G_K,G/G}^{c}$ of unframed $G$-valued continuous representations of $G_K$ over $\bZ_p$ as the geometric realization of 
    \begin{equation}\label{unframed}
        \xymatrix{
        \cdots\ar@<-.9ex>[r]\ar@<-.3ex>[r]\ar@<.3ex>[r]\ar@<.9ex>[r]&G\times G\times  \calR_{G_K,G}^{c}\ar@<-.6ex>[r]\ar@<.0ex>[r]\ar@<.6ex>[r]&G\times  \calR_{G_K,G}^{c}\ar@<-.3ex>[r]\ar@<.3ex>[r]&\calR_{G_K,G}^{c}
        }
    \end{equation}
    in the $\infty$-category $\PreStk$.
\end{dfn}

As in \cite{Min23}, the last crucial input is the following theorem.

\begin{dfn}[{\cite[Definition 2.5]{PQ24}}]
    A generalised reductive group scheme over a scheme $S$ is a smooth
aﬃne $S$-group scheme $G$, such that the geometric fibres of $G^0$ are reductive and $G/G^0\to S$ is finite.
\end{dfn}

\begin{thm}[{\cite[Theorem 1.1]{PQ24}}]\label{PQ24}
   Let $G$ be a generalised reductive group. Let $k_f$ be a finite field of characteristic $p$. Let $\bar\rho:G_K\to G(k_f)$ be a residual representation. Then the framed local Galois deformation ring $R^{\square}_{\bar \rho}$ is a local complete intersection.
\end{thm}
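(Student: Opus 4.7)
The plan is to adapt the strategy of \cite{BIP23} for $\GL_n$ to the generalised reductive setting: present $R^\square_{\bar\rho}$ explicitly as a quotient of a power series ring and show that the relations form a regular sequence by matching the Krull dimension with the expected complete-intersection dimension.

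First I would produce the standard presentation $R^\square_{\bar\rho}\cong W(k_f)[[x_1,\ldots,x_r]]/(f_1,\ldots,f_s)$ from continuous Galois cohomology and obstruction theory, with $r=\dim_{k_f} Z^1_{\mathrm{cts}}(G_K,\mathrm{ad}\bar\rho)$ (the size of the framed tangent space) and $s\leq\dim_{k_f} H^2_{\mathrm{cts}}(G_K,\mathrm{ad}\bar\rho)$; here $\mathrm{ad}\bar\rho$ denotes the Lie algebra $\mathfrak{g}$ of $G$ reduced modulo $p$, with $G_K$-action through $\bar\rho$ and the adjoint action. The local Euler characteristic formula for finite $\bF_p[G_K]$-modules, combined with $\dim Z^1=\dim H^1+\dim\mathfrak{g}-\dim H^0$, then yields
\[
r-s \;\geq\; \bigl(1+[K:\bQ_p]\bigr)\dim_{k_f}\mathfrak{g},
\]
and hence the lower bound $\dim R^\square_{\bar\rho}\geq 1+(1+[K:\bQ_p])\dim_{k_f}\mathfrak{g}$. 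The complete-intersection property is equivalent to equality here: if the Krull dimension matches this lower bound, then $s$ must attain the maximal value $\dim H^2$ and the sequence $(f_1,\ldots,f_s)$ must be regular in the regular ambient ring, giving CI by Matsumura's criterion.

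The matching upper bound is the heart of the argument. I would first reduce from $G$ to its identity component $G^\circ$: since $G/G^\circ$ is a finite \'etale $\bZ_p$-group scheme, the deformation functor for $G$ is a fibration over that for $G^\circ$ with fibre controlled by continuous cohomology of the finite quotient, and the contributions to both generators and relations cancel, so CI for $G^\circ$ implies CI for $G$. In the connected reductive case, I would follow the spirit of \cite{BIP23}: bound the generic-fibre dimension of $\Spec R^\square_{\bar\rho}[1/p]$ from above by constructing an explicit family of Borel-valued deformations --- local class field theory supplying the torus-valued part and Hochschild cohomology of the unipotent radical supplying the remainder --- and check that this family exhausts the expected generic dimension. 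An integral flatness argument would then transfer the equality to the special fibre.

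The main obstacle will be handling residual representations $\bar\rho$ whose image does not land in any proper parabolic subgroup of $G^\circ$, so that the Borel-valued construction fails to fill out the tangent space. For $\GL_n$, BIP23 circumvents this via an explicit matrix calculation and a Hilbert-series comparison that does not require such parabolic factorization; for general $G$ one must replace matrix computations with root-datum combinatorics. I expect the key new input to be a uniform presentation of the framed deformation functor in terms of the root datum of $G^\circ$, together with a careful analysis of $\mathrm{ad}\bar\rho$ as a Galois representation via $p$-adic Hodge theory, allowing a Hilbert-series argument to proceed in the absence of an explicit matrix description. Carrying out this structural analysis uniformly in $\bar\rho$ is the principal technical hurdle.
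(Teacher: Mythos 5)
First, a point of context: the paper does not prove this statement at all. It is quoted verbatim as \cite[Theorem 1.1]{PQ24} and used as a black box (its role here is to supply the infinitesimal classicality input via \cite[Lemma 7.5]{GV18}). So there is no internal proof to compare against; what you have written is an attempted reconstruction of the Pa\v{s}k\=unas--Quast argument itself, and it has to be judged on those terms.

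As such a reconstruction, the framework you set up is correct but is also the easy half. The obstruction-theoretic presentation $R^{\square}_{\bar\rho}\cong W(k_f)[[x_1,\dots,x_r]]/(f_1,\dots,f_s)$ with $s\leq\dim_{k_f}H^2(G_K,\mathrm{ad}\,\bar\rho)$, the local Euler characteristic formula giving $\dim R^{\square}_{\bar\rho}\geq 1+(1+[K:\bQ_p])\dim_{k_f}\mathfrak{g}$, and the observation that equality of Krull dimension with this bound forces $(f_1,\dots,f_s)$ to be a regular sequence (hence CI) are all standard and correctly stated. The genuine gap is that the entire content of the theorem is the matching upper bound on $\dim R^{\square}_{\bar\rho}$, uniformly in $\bar\rho$, and your proposal does not establish it. The Borel-valued family you describe can only certify that \emph{some} component of the deformation space is large; an upper bound requires showing that \emph{no} component exceeds the expected dimension, which is a statement about all deformations, not about an explicitly constructed family. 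You concede that the construction fails precisely when the image of $\bar\rho$ is not contained in a proper parabolic, which is the hardest and most important case, and ``root-datum combinatorics'' plus ``a Hilbert-series argument'' are placeholders rather than an argument. (For what it is worth, the $\GL_n$ proof in \cite{BIP23} bounds the special fibre by stratifying the representation space according to the semisimplification of the residual representation and controlling each stratum by induction, not by exhibiting a Borel-valued family that exhausts the generic fibre.)

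A secondary but real problem is the reduction to $G^{\circ}$. A representation $\bar\rho:G_K\to G(k_f)$ whose image meets several components is not a $G^{\circ}$-valued representation, and its deformations are deformations of a \emph{twisted} form: the component group acts on $G^{\circ}$ and on $\mathfrak{g}$ through $\bar\rho$, and this twisting is exactly the point of working with generalised reductive groups (e.g.\ $L$-groups, where $\Gal(E/K)$ acts nontrivially on $\widehat{G}$). One cannot simply quote the connected case; the dimension bound has to be run for the twisted adjoint action directly, and your claim that ``the contributions to both generators and relations cancel'' is asserted rather than proved.
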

Indeed, Theorem \ref{PQ24} implies that the derived functor $\calF_{K,G,\bar\rho}$ is classical by \cite[Lemma 7.5]{GV18}. This is the infinitesimal version of the global classicality theorem we aim to prove, which is also one of the key ingredients in the proof of the global classicality theorem.

Now we prove the classicality result for connected reductive groups.

\begin{thm}\label{main-connected}
Let $G$ be a connected reductive group over $\bZ_p$. Let $k_f$ be a finite field and  $\bar\rho:G_K\to G(k_f)$ be a residual representation. Then there is an equivalence of functors
    \[
    (\Lan\calX_{G})^{\#}_{\bar \rho}\simeq\calF_{K,G,\bar\rho}.
    \]
    As a consequence, we have $(\Lan\calX_{G})^{\#,\nil}\simeq \frakX_G^{\nil}$.
\end{thm}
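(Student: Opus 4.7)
The plan is to apply the Gaitsgory--Rozenblyum criterion (Proposition \ref{GR-criterion}) to the natural comparison map $(\Lan\calX_G)^{\#,\nil}\to \frakX_G^{\nil}$. Both sides admit a deformation theory and are locally almost of finite type (Steps 1 and 2 above), so it suffices to show that the induced map on pro-cotangent complexes is a quasi-isomorphism at every classical finite type point $e:\Spec(R)\to {^{\cl}\frakX_G}\simeq \calX_G$. By Propositions \ref{cot-1} and \ref{cot-2}, these pro-cotangent complexes are genuine complexes of $R$-modules; for $\frakX_G^{\nil}$ one has in addition the explicit identification with the shift by $(-1)$ of the dual Herr complex of the adjoint $(\varphi,\Gamma)$-module attached to $e$.

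Granted this, I would reduce from arbitrary finite type $R$ to the case $R=k_f$ a finite field. Because the objects in question are honest complexes rather than pro-systems, this reduction is amenable to standard faithfully flat descent combined with Nakayama's lemma, exactly as in the $\GL_d$-case \cite[Proposition 3.51]{Min23}. It then remains to match cotangent complexes at every residual representation $\bar\rho:G_K\to G(k_f)$, which I would deduce from the stronger infinitesimal equivalence
\[
(\Lan\calX_G)^{\#}_{\bar\rho}\simeq \calF_{K,G,\bar\rho}
\]
of functors on $\textbf{Art}_{/k_f}$; by Proposition \ref{derivedrep-laurent} this identifies both sides with $\frakX_{G,\bar\rho}$ on Artinian local animated rings, from which the cotangent complex comparison is immediate. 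To prove this infinitesimal equivalence, combine Theorem \ref{inf} of Paškūnas--Quast (the framed local deformation ring $R^{\square}_{\bar\rho}$ is a local complete intersection) with \cite[lemma 7.5]{GV18} to conclude that $\calF_{K,G,\bar\rho}$ is classical, i.e. is the left Kan extension of its restriction to classical Artinian local rings. On the other hand $(\Lan\calX_G)^{\#}_{\bar\rho}$ is by construction the sheafified left Kan extension of the classical completion of $\calX_G$ at $\bar\rho$, and that classical completion is the classical unframed Galois deformation groupoid by Theorem \ref{main-1-intro} combined with Theorem \ref{classical} and the classical comparison between $\Mod^{\varphi,\Gamma}_G(\bfA_R)$ and $G$-valued Galois representations for $R$ Artinian; hence both sides are left Kan extensions of the same classical functor.

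The hard part is the reduction from classical finite type points to finite field points: it rests delicately on the fact that the pro-cotangent complexes at finite type points are actual complexes. For $(\Lan\calX_G)^{\#,\nil}$ this relies on $\calX_G$ being a formal algebraic stack (Theorem \ref{main-1-intro}), which controls the behaviour of formal neighbourhoods; for $\frakX_G^{\nil}$ it relies on the explicit Herr-complex identification of Proposition \ref{cot-2}. Both inputs were developed precisely to enable this reduction, and once they are in hand the flat-descent-plus-Nakayama argument of \cite[Proposition 3.51]{Min23} carries over verbatim, while the infinitesimal classicality theorem of Paškūnas--Quast supplies the final piece needed at finite field points; the consequence $(\Lan\calX_G)^{\#,\nil}\simeq \frakX_G^{\nil}$ then falls out of Proposition \ref{GR-criterion}.
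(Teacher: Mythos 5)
Your proposal is correct and follows essentially the same route as the paper: reduce via the Gaitsgory--Rozenblyum criterion and the laft/deformation-theory/honest-cotangent-complex results to finite-field points, then establish the infinitesimal equivalence $(\Lan\calX_{G})^{\#}_{\bar \rho}\simeq\calF_{K,G,\bar\rho}$ by showing both sides are left Kan extensions of the same classical deformation functor, with the Pa\v{s}k\={u}nas--Quast theorem plus \cite[Lemma 7.5]{GV18} supplying the classicality of the derived representation functor. The only cosmetic difference is that the paper factors the infinitesimal comparison explicitly through Zhu's unframed derived representation prestack $\calR^{c}_{G_K,G/G}$ in three steps, whereas you phrase the same content directly.
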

\begin{proof}
    To prove $(\Lan\calX_{G})^{\#}_{\bar \rho}\simeq\calF_{K,G,\bar\rho}$, the arguments in \cite[Proposition 3.51]{Min23} still work here. We briefly explain the strategy. The proof can be divided into three steps:
\begin{enumerate}
    \item prove $(\Lan\calX_{G})_{\bar \rho}\simeq (\Lan ^{\rm cl}\calR_{G_K,G/G}^{c})_{\bar \rho}$;
    \item prove $(\Lan^{\rm cl}\calR_{G_K,G/G}^{c})_{\bar \rho}\simeq\calR_{G_K,G/G,\bar\rho}^{c}$;
    \item prove $\calR_{G_K,G/G,\bar\rho}^{c}\simeq \calF_{K,G,\bar\rho}$.
\end{enumerate}
For Step 1, we need to use that both $\calX_G$ and $^{\rm cl}\calR_{G_K,G/G}^{c}$ are limit preserving, as well as the equivalence between the category of \'etale $(\varphi,\Gamma)$-modules with $G$-structure and the category of Galois representations valued in $G$ when the coefficient rings are finite Artinian local rings (cf. Proposition \ref{derivedrep-laurent}). For Step 2, we need Theorem \ref{PQ24} or more precisely the infinitesimal classicality result guaranteed by Theorem \ref{PQ24}. The Step 3 is just about the definitions of both sides.

To conclude $(\Lan\calX_{G})^{\#,\nil}\simeq \frakX_G^{\nil}$, we use Proposition \ref{GR-criterion} and the reduction process to the case of finite-field points we have already finished.
\end{proof}

\subsection{The case of generalised reductive groups}
Now we proceed to discuss the case of generalised reductive groups. Indeed, this is crucial in the setting of Langlands parameters: one important example of generalised reductive groups is the $L$-group. More precisely, let $G$ be a (connected) reductive group over $K$. Write $\widehat G$ for the associated Langlands dual group over $\bZ_p$, which is a split (connected) reductive group. And there exists a finite Galois extension $E/K$ such that $\Gal(E/K)$ acts on $\hat G$. The semi-direct product $^LG:=\widehat G\rtimes \Gal(E/K)$ is called the $L$-group of $G$.

For a generalised reductive group $G$, we need to modify the definition of derived representations. Let us begin with the unframed local Galois deformation functor.
\begin{dfn}[Generalised unframed local deformation functor]
 Let $\bar \rho:G_K\to G(k_f)$ be a residual representation. We can define a functor $\calF_{K,G}^{\gen}:{\textbf{Art}}\to \textbf{Ani}$ by the following pullback diagram
 \begin{equation}
     \xymatrix{
     \calF_{K,G}^{\gen}(R)\ar[r]\ar[d]& |G^\circ(R)|\ar[d]\\
     \calF_{K,G}(R)\ar[r]& |G(R)|.
     }\end{equation}
   Then the unframed derived Galois deformation functor is $\calF_{K,G,\bar\rho}^{\gen}:{\textbf{Art}_{/k_f}}\to \textbf{Ani}$ is defined as
    \[
   \calF_{K,G,\bar\rho}^{\gen}(R):={\rm Fib}_{\bar\rho}( \calF_{K,G}^{\gen}(R)\to  \calF_{K,G}^{\gen}(k_f))
    \] 
\end{dfn}

For general coefficients, we consider the derived prestack $ \calR_{G_K,G/G^{\circ}}^{c}$ of unframed $G$-valued continuous representations of $G_K$ over $\bZ_p$ as the geometric realization of 
    \begin{equation}\label{unframed}
        \xymatrix{
        \cdots\ar@<-.9ex>[r]\ar@<-.3ex>[r]\ar@<.3ex>[r]\ar@<.9ex>[r]&G^{\circ}\times G^{\circ}\times  \calR_{G_K,G}^{c}\ar@<-.6ex>[r]\ar@<.0ex>[r]\ar@<.6ex>[r]&G^{\circ}\times  \calR_{G_K,G}^{c}\ar@<-.3ex>[r]\ar@<.3ex>[r]&\calR_{G_K,G}^{c}
        }
    \end{equation}
    in the $\infty$-category $\PreStk$, i.e. we only consider the $G^{\circ}$-conjugation now.

Accordingly, we give a definition of generalised derived stack of Laurent $F$-crystals.

\begin{dfn}\label{dfn-gen-derived}
    For a generalised reductive group $G$, we define $\frakX_G^{\gen}$ by the following pullback diagram
    \begin{equation}
        \xymatrix{
        \frakX_G^{\gen}\ar[r]\ar[d]& \Spf(\bZ_p)\ar[d]\\
        \frakX_G\ar[r]& \frakX_{\bar G}.
        }
    \end{equation}
    We write $\bar G:=G/G^{\circ}$ for simplicity and the right vertical map $\Spf(\bZ_p)\to \calX_{\bar G}$ means the trivial derived Laurent $F$-crystals with $\bar G$-structure.
\end{dfn}

As both $\Spf(\bZ_p)$ and $\frakX_{\bar G}$ admit a deformation theory and are locally almost of finite type, we see $\frakX_G^{\gen}$ also admits a deformation theory and is locally almost of finite type by \cite[Proposition 3.23]{Min23}. Moreover, as $\bar G$ is a finite \'etale group scheme, it has trivial Lie algebra. This means $T^*_{*_{\rm triv}}(\frakX_{\bar G})=0$ by Proposition \ref{cot-2} and its proof. In particular, the relative cotangent complex $T^*(\Spf(\bZ_p)/\frakX_{\bar G}):={\rm coFib}(T^*_{*_{\rm triv}}(\frakX_{\bar G})\to T^*(\bZ_p)=0)$ is $0$ . Then the relative cotangent complex $T^*(\frakX_G^{\gen}/\frakX_G)=T^*(\Spf(\bZ_p)/\frakX_{\bar G})\otimes_{\calO_{\frakX_{\bar G}}}\calO_{\frakX_G^{\gen}}=0$. This implies that Proposition \ref{cot-2} holds true if $\frakX_G$ is replaced by $\frakX_{G}^{\gen}$.

Similarly, we can define a generalised Emerton--Gee stack.

\begin{dfn}\label{dfn-gen-EG}
      For a generalised reductive group $G$, we define $\frakX_G^{\gen}$ by the following pullback diagram
    \begin{equation}
        \xymatrix{
        \calX_G^{\gen}\ar[r]\ar[d]& \Spf(\bZ_p)\ar[d]\\
        \calX_G\ar[r]& \calX_{\bar G}
        }
    \end{equation}
    where the right vertical map $\Spf(\bZ_p)\to \calX_{\bar G}$ means the trivial \'etale $(\varphi,\Gamma)$-module with $\bar G$-structure.
\end{dfn}

As the diagonal of $\calX_{\bar G}$ is representable in algebraic spaces and of finite presentation, we get that $\Spf(\bZ_p)\to \calX_{\bar G}$ is representable in algebraic spaces and of finite presentation, which implies $\calX_G^{\gen}\to \calX_G$ is representable in algebraic spaces  and of finite presentation. So $\calX_G^{\gen}$ is still a formal algebraic stack locally of finite presentation over $\Spf(\bZ_p)$.

Now we prove Proposition \ref{derivedrep-laurent} for these generalised definitions.

\begin{prop}\label{generalised rep-phiGamma}
      Let $R$ be an Artinian local ring in $\textbf{Nilp}_{\bZ_p}^{\leq n}$ for some $n$, whose residue field is finite. There is an equivalence
    \[
    \frakX_G^{\gen}(R)\simeq \calF_{K,G}^{\gen}(R). 
    \]
\end{prop}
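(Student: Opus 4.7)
The plan is to adapt the strategy of Proposition \ref{derivedrep-laurent} to the generalised reductive setting, exploiting the pullback descriptions of both sides. The key observation is that $\bar G = G/G^{\circ}$ is finite \'etale, so (as noted after Definition \ref{dfn-gen-derived}) the cotangent complex of $\frakX_{\bar G}$ at the trivial point vanishes, and everything should reduce to the connected reductive group $G^{\circ}$, to which Proposition \ref{derivedrep-laurent} applies.

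Concretely, I would first argue that $\frakX_G^{\gen}(R) \simeq \frakX_{G^{\circ}}(R)$: trivializing the associated $\bar G$-Laurent $F$-crystal of a $G$-Laurent $F$-crystal amounts to reducing the structure group to $G^{\circ}$, and since $\bar G$-Laurent $F$-crystals over Artinian local $R$ are rigid (their deformation theory is trivial by the vanishing cotangent complex), the choice of trivialization carries no extra homotopical data. Similarly, I would identify $\calF_{K,G}^{\gen}(R) \simeq \calF_{K,G^{\circ}}(R)$: the pullback $\calF_{K,G}(R) \times_{|G(R)|} |G^{\circ}(R)|$ selects precisely the derived representations factoring through $G^{\circ}(R)$, up to $G^{\circ}$-conjugation. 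Then Proposition \ref{derivedrep-laurent}, applied to the connected reductive group $G^{\circ}$, yields the desired equivalence $\frakX_{G^{\circ}}(R) \simeq \calF_{K, G^{\circ}}(R)$, completing the chain.

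The main obstacle is in the careful matching of the two pullback structures. On the Laurent $F$-crystal side, the map $\frakX_G \to \frakX_{\bar G}$ is defined Tannakially via pushforward along $G \to \bar G$; on the Galois side, one needs to verify that under the equivalence, this corresponds to the natural map on derived representations induced by $G \to \bar G$, compatibly with the description of the map $\calF_{K,G}(R) \to |G(R)|$ used in the definition of $\calF_{K,G}^{\gen}$. Since $\bar G$ is finite \'etale and $R$ is Artinian local, the comparison for $\bar G$-Laurent $F$-crystals and $\bar G$-valued continuous representations reduces via classical Galois cohomology to the same combinatorial data, namely a continuous homomorphism $G_K \to \bar G(R)$ up to conjugation, so the bookkeeping should go through. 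Finally, the inductive cotangent complex argument from the proof of Proposition \ref{derivedrep-laurent} applies verbatim on the $G^{\circ}$-part once $\pi_0(BG^{\circ}(k_f)) = *$ is verified via Lang's theorem, which propagates up the Postnikov tower of $R$ by the same cotangent complex argument used there.
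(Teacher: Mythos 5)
Your overall architecture --- identify $\frakX_G^{\gen}(R)$ and $\calF_{K,G}^{\gen}(R)$ with the corresponding objects for the connected reductive group $G^{\circ}$ and then invoke Proposition \ref{derivedrep-laurent} --- is essentially the route the paper takes; the paper just runs the two reductions and the cosimplicial comparison of Proposition \ref{derivedrep-laurent} simultaneously, comparing both sides with a totalization $|G^{\circ}(R,G_K)|$. The problem is with how you justify the load-bearing step $\frakX_G^{\gen}(R)\simeq\frakX_{G^{\circ}}(R)$. The claim that ``the choice of trivialization carries no extra homotopical data'' because $\bar G$-objects are rigid is false: the space of trivializations of the induced $\bar G$-Laurent $F$-crystal, when nonempty, is a torsor under the automorphism group of the trivial object, which is (roughly) all of $\bar G(R)$ with its trivial $(\varphi,\Gamma)$-action --- far from contractible. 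That data is not negligible; it is exactly what turns the fiber $\frakX_G(R)\times_{\frakX_{\bar G}(R)}\{*\}$ into $G^{\circ}$-objects rather than a full subcategory of $G$-objects (for instance it is what cuts the conjugation group down from $G(R)$ to $G^{\circ}(R)$). Vanishing of $T^*(B\bar G)$ controls deformations along square-zero extensions, not the size of automorphism or trivialization spaces.

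What is actually needed, and what the paper proves as its claims (a) and (b), is the identification $BG^{\circ}\simeq BG\times_{B\bar G}\{*\}$ at two different kinds of rings. At the non-local levels of the cosimplicial resolutions (the rings $C(G_K^i,\bZ/p^a)\otimes R$, resp.\ $A\widehat\otimes^{\bL}R[\frac{1}{I}]$) one only needs and only gets full faithfulness, and the input is that $G^{\circ}$ is the homotopy pullback of $G\to\bar G\leftarrow\Spec(\bZ_p)$, using flatness of $G\to\bar G$. At level zero, over the Artinian local $R$ itself, one must show that every $G$-torsor with trivialized $\bar G$-pushout genuinely reduces to $G^{\circ}$; this is where Lang's theorem (vanishing of $H^1(k_f,G^{\circ})$, giving the pullback square over $k_f$) and the Postnikov induction with cotangent complexes enter, and it is there --- not in the trivialization --- that $T^*(B\bar G)=0$ is used, to propagate the square up the tower of $R$. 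Your proposal presupposes this identification rather than proving it, so the real content of the proposition is missing. The same caution applies to your claim that $\calF_{K,G}(R)\times_{|G(R)|}|G^{\circ}(R)|$ ``selects representations factoring through $G^{\circ}(R)$'': pulling back along $|G^{\circ}(R)|\to|G(R)|$ a priori only replaces $G(R)$-conjugation by $G^{\circ}(R)$-conjugation, and identifying the result with $\calF_{K,G^{\circ}}(R)$ again requires the levelwise comparison $G^{\circ}(S)\simeq G(S)\times_{\bar G(S)}\{*\}$ that the paper supplies. Once these identifications are in place, quoting Proposition \ref{derivedrep-laurent} for $G^{\circ}$ (which is indeed a connected reductive group scheme over $\bZ_p$) does finish the argument.
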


\begin{proof}
 On the one hand, we still define an anima $BG(R,G_K)^{\simeq}$ as follows 
\[
BG(R,G_K):=\varprojlim BG(R)\rightrightarrows BG(C(G_K,\bZ/p^a)\otimes R)\rightthreearrow\cdots BG(C(G_K^{n+1},\bZ/p^a)\otimes R)
\]
where the cosimplicial animated rings $C(G_K^{\bullet},\bZ/p^a)\otimes R$ is defined using the trivial $G_K$-action on $\bZ/p^a$.

On the other hand, we can define an anima $|G(R,G_K)|$ as follows
\[
|G(R,G_K)|:=\varprojlim |G(R)|\rightrightarrows |G(C(G_K,\bZ/p^a)\otimes R)|\rightthreearrow\cdots |G(C(G_K^{n+1},\bZ/p^a)\otimes R)|.
\]

By the proof of \cite[Proposition 3.47]{Min23}, we can see that $\calF_{K,G}(R)\simeq |G(R,G_K)|$ and $BG(R,G_K)\simeq \frakX_G^{\perf}(R)$, $B\bar G(R,G_K)\simeq \frakX_{\bar G}^{\perf}(R)$. By Proposition \ref{perf-G}, we then get $BG(R,G_K)\simeq \frakX_G(R)$ and $B\bar G(R,G_K)\simeq \frakX_{\bar G}(R)$.

Using $\calF_{K,G}(R)\simeq |G(R,G_K)|$, we see that $\calF_{K,G}^{\gen}(R)\simeq |G(R,G_K)|\times_{|G(R)|}|G^{\circ}(R)|$. Since $G^{\circ}\to G$ is an open immersion, the natural morphism $G^{\circ}(C(G_K^{i},\bZ/p^a)\otimes R)\to |G^{\circ}(C(G_K^{i},\bZ/p^a)\otimes R)|\times_{|G(R)|}|G^{\circ}(R)|$ is an equivalence. So we get $\calF_{K,G}^{\gen}(R)\simeq |G^{\circ}(R,G_K)|$, where $|G^{\circ}(R,G_K)|$ is defined similarly as $|G(R,G_K)|$.

Next we are going to show $\frakX_G^{\gen}(R)$ is also equivalent to $|G^{\circ}(R,G_K)|$. Note that now we have $\frakX_G^{\gen}(R)\simeq BG(R,G_K)\times_{B\bar G(R,G_K)}*_{\rm triv}$. There is a natural morphism from $|G^{\circ}(R,G_K)|$ to $BG(R,G_K)\times_{B\bar G(R,G_K)}*_{\rm triv}$. To prove it is an equivalence, it suffices to prove
\begin{enumerate}
    \item[(a)] The map  $|G^{\circ}(C(G_K^{i},\bZ/p^a)\otimes R)|\to BG(C(G_K^{i},\bZ/p^a)\otimes R)\times_{B\bar G(C(G_K^{i},\bZ/p^a)\otimes R)}*_{\rm triv}$ is fully faithful for all $i$.
    \item[(b)] The map $|G^{\circ}(R)|\to BG(R)\times_{B\bar G(R)}*_{\rm triv}$ is an equivalence.
\end{enumerate}

Let us first look at (a). In fact, it is sufficient to show for any animated $\bZ/p^a$-algebra $S$, the map $G^{\circ}(S)\to G(S)\times_{\bar G(S)}*$ is an equivalence. This follows from the fact that $G\to \bar G$ is (faithfully) flat, which implies $G^{\circ}$ is isomorphic to the homotopy pullback of the diagram $G\to \bar G\leftarrow \Spec(\bZ_p)$.

Now we proceed to prove (b). Recall that as $G^\circ$ is a connected reductive group, we have $\pi_0(BG^\circ(R))=\{*\}$, i.e. the trivial torsor is the only $G^{\circ}$-torsor over $R$ up to isomorphisms. Then the natural map $|G^{\circ}(R)|\to BG^\circ(R)$ is an equivalence as the mapping spaces $\Map(*,*)$ in both of them are equivalent to $G^{\circ}(R)$ (cf. \cite[Lemma 5.2]{GV18}).

We also know that there exists a sequence $R=R_n\to R_{n-1}\to\cdots R_1\to R_0=k_f$ such that there is an equivalence from $R_i$ to the homotopy pullback of a diagram $R_{i-1}\to k_f\oplus k_f[m_i]\xleftarrow{}k_f$ for some $m_i\geq 1$. Associated to the short exact sequence $0\to G^{\circ}\to G\to \bar G\to 0$, there is a long exact sequence of pointed sets
\[
0\to G^0(k_f)\to G(k_f)\to \bar G(k_f)\to H^1(k_f,G^{\circ})\to H^1(k_f,G)\to H^1(k_f,\bar G).
\]
As $BG^{\circ}(k_f)=\{*\}$, we get a short exact sequence $0\to G^0(k_f)\to G(k_f)\to \bar G(k_f)\to 0$ and an injection $H^1(k_f,G)\hookrightarrow H^1(k_f,\bar G)$. This gives us a homotopy pullback diagram

\begin{equation*}
    \xymatrix{
    BG^\circ(k_f)\ar[r]\ar[d]& {*}_{\rm triv}\ar[d]\\
    BG(k_f)\ar[r]& B\bar G(k_f).
    }
\end{equation*}

Since $\bar G$ is a finite \'etale group scheme over $\bZ_p$, it has trivial cotangent complex. This implies $T^*_e(BG^{\circ})\simeq T^*_e(BG)$ for any point $e:\Spec(A)\to BG^{\circ}\to BG$. By the definition of cotangent complex, we have homotopy pullback diagrams
\begin{equation}
    \xymatrix{
    \Map(T^*(BG),k_f[m_i])\ar[r]\ar[d]& {*}_{\rm triv}\ar[d]\\
    BG(k_f\oplus k_f[m_i])\times_{BG(k_f)}BG^{\circ}(k_f)\ar[r]\ar[d]& BG^{\circ}(k_f)\ar[d]\\
    BG(k_f\oplus k_f[m_i])\ar[r]&BG(k_f).
    }
\end{equation}
In particular, we have \[BG(k_f\oplus k_f[m_i])\times_{BG(k_f)}BG^{\circ}(k_f)\simeq [\Map(T^*(BG),k_f[m_i])/G^{\circ}(k_f)].\] 
As $\Map(T^*(BG),k_f[m_i])\simeq \Map(T^*(B^{\circ}),k_f[m_i])$ and we know $[\Map(T^*(BG),k_f[m_i])/G^{\circ}(k_f)]\simeq BG^{\circ}(k_f\oplus k_f[m_i])$, we then have $BG^{\circ}(k_f\oplus k_f[m_i])\simeq BG(k_f\oplus k_f[m_i])\times_{BG(k_f)}BG^{\circ}(k_f)$. In other words, we have pullback diagrams
\begin{equation}
    \xymatrix{
    BG^{\circ}(k_f\oplus k_f[m_i])\ar[r]\ar[d]& BG^{\circ}(k_f)\ar[r]\ar[d]& {*}_{\rm triv}\ar[d]\\
    BG^(k_f\oplus k_f[m_i])\ar[r]& BG(k_f)\ar[r]& B\bar G(k_f)\simeq B\bar G(k_f\oplus k_f[m_i])
    }
\end{equation}
where $B\bar G(k_f)\simeq B\bar G(k_f\oplus k_f[m_i])$ is due to the triviality of the cotangent complex $T^*(B\bar G)=0$.

Now as $BG^{\circ}, BG, B\bar G$ are all infinitesimally cohesive, i.e. preserve square-zero extensions, we finish the proof of (b). So we are done.

\end{proof}

Once again, we have all the necessary ingredients and can use the full strength of \cite[Theorem 1.1]{PQ24} to prove the classicality of derived Emerton--Gee stack for generalised reductive groups.

\begin{thm}\label{main-gen}
    Let $G$ be a generalised reductive group over $\bZ_p$. Let $k_f$ be a finite field and $\bar\rho:G_K\to G(k_f)$ be a residual representation. Then there is an equivalence of functors
    \[
    (\Lan\calX^{\gen}_{G})^{\#}_{\bar \rho}\simeq\calF^{\gen}_{K,G,\bar\rho}.
    \]
    As a consequence, we have $(\Lan\calX^{\gen}_{G})^{\#,\nil}\simeq \frakX_G^{\gen,\nil}$.
\end{thm}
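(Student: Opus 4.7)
The plan is to follow the same four-step strategy used in the proof of Theorem \ref{main-connected}, replacing each ingredient by its ``gen''-version. First, I would verify that both $\frakX_G^{\gen,\nil}$ and $(\Lan \calX_G^{\gen})^{\#,\nil}$ admit a deformation theory and are locally almost of finite type. Since $\frakX_G^{\gen}$ is a pullback of $\frakX_G$ and $\Spf(\bZ_p)$ over $\frakX_{\bar G}$, all three of which admit a deformation theory and are locally almost of finite type (by the results already proved in the preceding subsections), the same holds for $\frakX_G^{\gen,\nil}$ by stability of these properties under finite limits (\cite[Proposition 3.23]{Min23}). On the other side, $\calX_G^{\gen}$ is a formal algebraic stack locally of finite presentation over $\Spf(\bZ_p)$ (as noted right after Definition \ref{dfn-gen-EG}), so the same smooth-cover argument used in Proposition \ref{EG-def} applies verbatim.

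With the Gaitsgory--Rozenblyum criterion (Proposition \ref{GR-criterion}) in place, the problem reduces to comparing pro-cotangent complexes at classical finite-type points. Here the key simplification is that the forgetful map $\frakX_G^{\gen} \to \frakX_G$ has trivial relative cotangent complex: the text already observes that $T^*_{\ast_{\mathrm{triv}}}(\frakX_{\bar G}) = 0$ because $\bar G$ is finite \'etale, so $T^*(\frakX_G^{\gen}/\frakX_G) = 0$. Consequently, the analogues of Propositions \ref{cot-1} and \ref{cot-2} for the generalised versions are immediate, and the pro-cotangent complexes at finite-type classical points are honest complexes. A standard faithfully flat descent plus Nakayama argument then reduces the whole comparison to points valued in finite fields.

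At a finite-field point $\bar \rho : G_K \to G(k_f)$, I will identify both functors $(\Lan \calX_G^{\gen})^{\#}_{\bar\rho}$ and $\frakX_{G,\bar\rho}^{\gen}$ on $\textbf{Art}_{/k_f}$ with $\calF_{K,G,\bar\rho}^{\gen}$. For the derived side, Proposition \ref{generalised rep-phiGamma} already supplies the equivalence $\frakX_{G,\bar\rho}^{\gen} \simeq \calF_{K,G,\bar\rho}^{\gen}$. For the left Kan extension side, I would run through the three steps of the proof of Theorem \ref{main-connected}: (a) $(\Lan \calX_G^{\gen})_{\bar\rho} \simeq (\Lan \,{}^{\cl}\calR^{c}_{G_K,G/G^{\circ}})_{\bar\rho}$, using that $\calX_G^{\gen}$ is limit preserving and that the underlying classical stack matches classical $G^{\circ}$-conjugacy classes of representations; (b) $(\Lan\, {}^{\cl}\calR^{c}_{G_K,G/G^{\circ}})_{\bar\rho} \simeq \calR^{c}_{G_K,G/G^{\circ},\bar\rho}$; and (c) $\calR^{c}_{G_K,G/G^{\circ},\bar\rho} \simeq \calF_{K,G,\bar\rho}^{\gen}$, which is essentially tautological from the definitions of unframed derived representations with only $G^{\circ}$-conjugation.

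The hard part, and the only place genuinely new input is used, is Step (b). This is where one needs that $\calF_{K,G,\bar\rho}^{\gen}$ (equivalently, the unframed derived deformation functor) is classical in the infinitesimal sense, i.e.\ agrees with the left Kan extension of its truncation. This follows by combining Theorem \ref{inf} (= \cite[Theorem 1.1]{PQ24}) with \cite[Lemma 7.5]{GV18}, exactly as in the connected case: Paškūnas--Quast's theorem that the framed local deformation ring $R^{\square}_{\bar\rho}$ is a local complete intersection is stated for generalised reductive $G$, so the same citation works. Once Steps (a)--(c) give $(\Lan \calX_G^{\gen})^{\#}_{\bar\rho} \simeq \calF_{K,G,\bar\rho}^{\gen} \simeq \frakX_{G,\bar\rho}^{\gen}$, evaluating on square-zero extensions $k_f \oplus M$ for $M \in \mathrm{Perf}(k_f)^{[n,0]}$ identifies the cotangent complexes at $\bar\rho$, completing the reduction and yielding $(\Lan \calX_G^{\gen})^{\#,\nil} \simeq \frakX_G^{\gen,\nil}$ via Proposition \ref{GR-criterion}.
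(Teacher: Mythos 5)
Your proposal follows essentially the same route as the paper: the same reduction via Proposition \ref{GR-criterion} to finite-field points (using that $\calX_G^{\gen}$ is still a formal algebraic stack and that $T^*(\frakX_G^{\gen}/\frakX_G)=0$ since $\bar G$ is finite \'etale), followed by the same three-step comparison $(\Lan\calX^{\gen}_{G})_{\bar \rho}\simeq (\Lan\,{}^{\cl}\calR_{G_K,G/G^{\circ}}^{c})_{\bar \rho}\simeq\calR_{G_K,G/G^{\circ},\bar\rho}^{c}\simeq \calF^{\gen}_{K,G,\bar\rho}$, with Proposition \ref{generalised rep-phiGamma} feeding Step (a) and Theorem \ref{inf} plus \cite[Lemma 7.5]{GV18} feeding Step (b). The argument is correct and matches the paper's proof in both structure and the key inputs used.
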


\begin{proof}
We can use the same arguments in the proof of Theorem \ref{main-connected} or \cite[Proposition 3.51]{Min23}, i.e. proceed in three steps:
\begin{enumerate}
    \item prove $(\Lan\calX^{\gen}_{G})_{\bar \rho}\simeq (\Lan ^{\rm cl}\calR_{G_K,G/G^{\circ}}^{c})_{\bar \rho}$;
    \item prove $(\Lan^{\rm cl}\calR_{G_K,G/G^{\circ}}^{c})_{\bar \rho}\simeq\calR_{G_K,G/G^{\circ},\bar\rho}^{c}$;
    \item prove $\calR_{G_K,G/G^{\circ},\bar\rho}^{c}\simeq \calF^{\gen}_{K,G,\bar\rho}$.
\end{enumerate}
For Step 1, we need to use that both $\calX^{\gen}_G$ and $^{\rm cl}\calR_{G_K,G/G^{\circ}}^{c}$ are limit preserving, as well as the equivalence between the category of generalised \'etale $(\varphi,\Gamma)$-modules with $G$-structure and the category of generalised Galois representations valued in $G$ when the coefficient rings are finite Artinian local rings (cf. Proposition \ref{generalised rep-phiGamma}). For Step 2, we use the full strangth of Theorem \ref{PQ24}. Again, the Step 3 is just about the definitions of both sides.

To conclude $(\Lan\calX^{\gen}_{G})^{\#,\nil}\simeq \frakX_G^{\gen,\nil}$, we use Proposition \ref{GR-criterion} and the reduction process to the case of finite-field points.
\end{proof}

\addcontentsline{toc}{section}{References}

\bibliographystyle{alpha}
\bibliography{sample}

\end{document}